\theoremstyle{plain}
\newtheorem{thm}{\protect\theoremname}[section]
  \theoremstyle{plain}
  \newtheorem{conjecture}[thm]{\protect\conjecturename}
  \theoremstyle{definition}
  \theoremstyle{definition}
  \newtheorem{defn}[thm]{\protect\definitionname}
  \theoremstyle{plain}
  \newtheorem{prop}[thm]{\protect\propositionname}
  \theoremstyle{plain}
  \newtheorem{cor}[thm]{\protect\corollaryname}
  \theoremstyle{definition}
  \newtheorem{example}[thm]{\protect\examplename}
  \theoremstyle{plain}
  \newtheorem{lem}[thm]{\protect\lemmaname}
  \theoremstyle{definition}
  \newtheorem{rem}[thm]{Remark}
  \theoremstyle{definition}
  \newtheorem{setup}[thm]{Setup}
\newcommand{\cc}[1]{\sigma (#1)}
\DeclareMathOperator{\Bin}{Bin}
\DeclareMathOperator{\im}{Im}
\newcommand{\OO}{\Omega}
  \providecommand{\conjecturename}{Conjecture}
  \providecommand{\corollaryname}{Corollary}
  \providecommand{\definitionname}{Definition}
  \providecommand{\examplename}{Example}
  \providecommand{\lemmaname}{Lemma}
  \providecommand{\problemname}{Problem}
  \providecommand{\propositionname}{Proposition}
\providecommand{\theoremname}{Theorem}
\providecommand{\claimname}{Claim}
\begin{document}

\global\long\def\connected{\text{highly connected}}
\global\long\def\tra{\mathsf{T}_{q\to p}}
\global\long\def\art{\mathsf{T}^{p\to q}}
 \global\long\def\f{\mathcal{F}}
\global\long\def\pn{\mathcal{P}\left[n\right]}
\global\long\def\g{\mathcal{G}}
\global\long\def\s{\mathcal{S}}
\global\long\def\j{\mathcal{J}}
\global\long\def\d{\mathcal{D}}
\global\long\def\Inf{}
\global\long\def\p{\mathcal{P}}
\global\long\def\h{\mathcal{H}}
\global\long\def\n{\mathbb{N}}
\global\long\def\a{\mathcal{A}}
\global\long\def\b{\mathcal{B}}
\global\long\def\c{\mathcal{C}}
\global\long\def\e{\mathbb{E}}
\global\long\def\x{\mathbf{x}}
\global\long\def\y{\mathbf{y}}
\global\long\def\z{\mathbf{z}}
\global\long\def\c{\mathbf{c}}
\global\long\def\av{\mathsf{A}}
\global\long\def\chop{\mathrm{Chop}}
\global\long\def\stab{\mathrm{Stab}}
\global\long\def\t{\mathsf{T}}
\global\long\def\fabs#1{\hat{|}#1\hat{|}}

\newcommand\comment[1]{\fbox{\tt #1}}
\newcommand\E{\mathop{\mathbb{E}}}
\newcommand\card[1]{\left| {#1} \right|}
\newcommand\sett[2]{\left\{ \left. #1 \;\right\vert #2 \right\}}
\newcommand\settright[2]{\left\{ #1 \;\left\vert\; #2 \right.\right\}}
\newcommand\set[1]{{\left\{ #1 \right\}}}
\newcommand\Prob[2]{{\Pr_{#1}\left[ {#2} \right]}}
\newcommand\BProb[2]{{\Pr_{#1}\Big[ {#2} \Big]}}
\newcommand\bProb[2]{{\Pr_{#1}\big[ {#2} \big]}}
\newcommand\cProb[3]{{\Pr_{#1}\left[ \left. #3 \;\right\vert #2 \right]}}
\newcommand\cbProb[3]{{\Pr_{#1}\big[ \big. #3 \;\big\vert #2 \big]}}
\newcommand\cBProb[3]{{\Pr_{#1}\Big[ \Big. #3 \;\Big\vert #2 \Big]}}
\newcommand\Expect[2]{{\mathop{\mathbb{E}}_{#1}\left[ {#2} \right]}}
\newcommand\bExpect[2]{{\mathop{\mathbb{E}}_{#1}\big[ {#2} \big]}}
\newcommand\BExpect[2]{{\mathop{\mathbb{E}}_{#1}\Big[ {#2} \Big]}}
\newcommand\cProbright[3]{{\Pr_{#1}\left[ #3\;\left\vert  #2\right. \right]}}
\newcommand\cExpect[3]{{\mathbb{E}_{#1}\left[ \left. #3 \;\right\vert #2 \right]}}
\newcommand\norm[1]{\| #1 \|}
\newcommand\power[1]{\set{0,1}^{#1}}
\newcommand\ceil[1]{\lceil{#1}\rceil}
\newcommand\round[1]{\left\lfloor{#1}\right\rfloor}
\newcommand\half{{1\over2}}
\newcommand\quarter{{1\over4}}
\newcommand\eigth{{1\over8}}
\newcommand\defeq{\stackrel{def}{=}}
\newcommand\skipi{{\vskip 10pt}}
\newcommand\sqrtn{ {\sqrt n} }
\newcommand\cube[1]{ { \set{-1,1}^{#1} } }
\newcommand\minimum[2] {  \min\left\{{{#1}, {#2}}\right\}   }
\newcommand\spn{{\sf Span}}
\newcommand\spnn[1]{{\sf Span}({#1})}
\newcommand\inner[2]{\langle{#1},{#2}\rangle}
\newcommand\eps{\varepsilon}
\newcommand\sgn[1]{sign({#1})}
\newcommand*\xor{\mathbin{\oplus}}
\newcommand*\bxor{\mathbin{\bigoplus}}
\newcommand*\Hell[2]{{H\left({#1}, \ {#2}\right)}}
\newcommand*\cHell[4]{{H\left({#1}\vert({#2}),\ {#3}\vert({#4}) \right)}}
\newcommand*\dimm{{\sf dim}}
\newcommand*\tp{t}
\newcommand*\R{\mathbb{R}}
\newcommand*\mult[1]{\vec{#1}}
\newcommand*\zin[2]{\mu_{{\sf in}({#1})}({#2})}
\newcommand*\zout[2]{\mu_{{\sf out}({#1})}({#2})}
\newcommand*\zinout[3]{\mu_{{\sf in}({#1}), {\sf out}({#2})}({#3})}
\newcommand\lvlfn[2]{{#1}_{={#2}}}
\newcommand\approxlvlfn[2]{{#1}_{\approx{#2}}}
\newcommand{\cupdot}{\mathbin{\mathaccent\cdot\cup}}
\newcommand{\M}{\mathcal{M}}
\newcommand{\indeg[1]}{{\sf in}\text{-}{\sf deg}({#1})}
\newcommand{\outdeg[1]}{{\sf out}\text{-}{\sf deg}({#1})}
\newcommand{\NP}{\sf NP}

\newcommand{\mc}[1]{\mathcal{#1}}
\newcommand{\mb}[1]{\mathbb{#1}}
\newcommand{\mf}[1]{\mathfrak{#1}}
\newcommand{\nib}[1]{\noindent {\bf #1}}
\newcommand{\nim}[1]{\noindent {\em #1}}
\newcommand{\brac}[1]{\left( #1 \right)}
\newcommand{\bracc}[1]{\left\{ #1 \right\}}
\newcommand{\bsize}[1]{\left| #1 \right|}
\newcommand{\brak}[1]{\left[ #1 \right]}
\newcommand{\bgen}[1]{\left\langle #1 \right\rangle}
\newcommand{\sgen}[1]{\langle #1 \rangle}
\newcommand{\bfl}[1]{\left\lfloor #1 \right\rfloor}
\newcommand{\bcl}[1]{\left\lceil #1 \right\rceil}

\newcommand{\sub}{\subset}
\newcommand{\subn}{\subsetneq}
\newcommand{\sups}{\supseteq}
\newcommand{\ex}{\mbox{ex}}
\newcommand{\nsub}{\notsubseteq}

\newcommand{\lra}{\leftrightarrow}
\newcommand{\Lra}{\Leftrightarrow}
\newcommand{\Ra}{\Rightarrow}
\newcommand{\sm}{\setminus}
\newcommand{\ov}{\overline}
\newcommand{\ul}{\underline}
\newcommand{\wt}{\widetilde}
\newcommand{\es}{\emptyset}
\newcommand{\pl}{\partial}
\newcommand{\wg}{\wedge}

\newcommand{\la}{\leftarrow}
\newcommand{\ra}{\rightarrow}
\newcommand{\da}{\downarrow}
\newcommand{\ua}{\uparrow}

\newcommand{\aA}{\alpha}
\newcommand{\bB}{\beta}
\newcommand{\gG}{\gamma}
\newcommand{\dD}{\delta}
\newcommand{\iI}{\iota}
\newcommand{\kK}{\kappa}
\newcommand{\zZ}{\zeta}
\newcommand{\lL}{\lambda}
\newcommand{\tT}{\theta}
\newcommand{\sS}{\sigma}
\newcommand{\oO}{\omega}
\newcommand{\ups}{\upsilon}

\newcommand{\Tt}{\Theta}
\newcommand{\DD}{\Delta}
\newcommand{\Oo}{\Omega}

\newcommand{\xb}{\boldsymbol{x}}
\newcommand{\yb}{\boldsymbol{y}}
\newcommand{\zb}{\boldsymbol{z}}

\title{Global hypercontractivity and its applications}
\date{\vspace{-5ex}}

\author{Peter Keevash\thanks{Mathematical Institute,
University of Oxford, Oxford, UK. Email: keevash@maths.ox.ac.uk.
\newline \hspace*{1.8em}Research supported
in part by ERC Consolidator Grant 647678.}
\and Noam Lifshitz\thanks{Einstein Institute for Mathematics, Hebrew University of Jerusalem. Email: noamlifshitz@gmail.com. 
\newline \hspace*{1.8em}Research supported in part by ERC advanced grant 834735.}
\and Eoin Long\thanks{University of Birmingham, Birmingham, UK. Email: e.long@bham.ac.uk.}
\and Dor Minzer\thanks{Department of Mathematics, Massachusetts Institute of Technology. Email: minzer.dor@gmail.com.
\newline \hspace*{1.8em}Some of the work was done while the author was a postdoc in the Institute for Advanced Study, Princeton, 
supported NSF grant CCF-1412958 and Rothschild Fellowship.}
}

\maketitle

\begin{abstract}
The classical hypercontractive inequality for the noise operator
on the discrete cube plays a crucial role in many of the
fundamental results in the Analysis of Boolean functions,
such as the KKL (Kahn-Kalai-Linial) theorem,
Friedgut's junta theorem and the invariance principle
of Mossel, O'Donnell and Oleszkiewicz.
In these results the cube is equipped
with the uniform ($1/2$-biased) measure,
but it is desirable, particularly for applications
to the theory of sharp thresholds, to also obtain
such results for general $p$-biased measures.
However, simple examples show that when $p$ is small
there is no hypercontractive inequality
that is strong enough for such applications.

In this paper, we establish an effective hypercontractive
inequality for general $p$ that applies to `global functions',
i.e.\ functions that are not significantly
affected by a restriction of a small set of coordinates.
This class of functions appears naturally,
e.g.\ in Bourgain's sharp threshold theorem,
which states that such functions exhibit a sharp threshold.
We demonstrate the power of our tool by
strengthening Bourgain's theorem,
thereby making progress on a conjecture of Kahn and Kalai
and by establishing a $p$-biased analog of the seminal invariance principle
of Mossel, O'Donnell, and Oleszkiewicz.

Our sharp threshold results also have significant applications in
Extremal Combinatorics. Here we obtain new results on the Tur\'an number
of any bounded degree uniform hypergraph obtained
as the expansion of a hypergraph of bounded uniformity. These
are asymptotically sharp over an essentially optimal
regime for both the uniformity and the number of edges and solve
a number of open problems in the area.
In particular, we give general conditions under which
the crosscut parameter asymptotically determines the Tur\'an number,
answering a question of Mubayi and Verstra\"ete.
We also apply the Junta Method to refine our asymptotic results
and obtain several exact results, including proofs of
the Huang--Loh--Sudakov conjecture on cross matchings
and the F\"uredi--Jiang--Seiver conjecture on path expansions.
\end{abstract}

\newpage

\section*{Introduction}

The field of Analysis of Boolean functions is centered around the
study of functions on the discrete cube $\{0,1\}^{n}$,
via their Fourier--Walsh expansion,
often using the classical hypercontractive inequality
for the noise operator, obtained independently
by Bonami \cite{bonami1970etude},
Gross \cite{gross1975logarithmic}
and Beckner \cite{beckner1975inequalities}.
In particular, the fundamental `KKL' theorem
of Kahn, Kalai and Linial \cite{kahn1988influence} applies hypercontractivity to obtain
structural information on Boolean valued functions with
small `total influence' / `edge boundary'
(see Section \ref{subsec:isoinf});
such functions cannot be `global':
they must have a co-ordinate with large influence.

The theory of sharp thresholds is closely connected
(see Section \ref{sec:kahn kalai}) to the structure
of Boolean functions of small total influence,
not only in the KKL setting of uniform measure on the cube,
but also in the general $p$-biased setting.
However, we will see below that the hypercontractivity theorem
is ineffective for small $p$.
This led Friedgut \cite{friedgut1999sharp},
Bourgain \cite[appendix]{friedgut1999sharp},
and Hatami \cite{hatami2012structure} to develop new ideas for proving
$p$-biased analogs of the KKL theorem.
The theme of these works
can be roughly summarised by the statement:\
an effective analog of the KKL theorem holds
for a certain class of `global' functions.
However, these theorems were incomplete
in two important respects:
\begin{itemize}
\item{\emph{Sharpness:}}
Unlike the KKL theorem,
they are not sharp up to constant factors.
\item{\emph{Applicability:}}
They are only effective in the
`dense setting' when $\mu_p(f)$
is bounded away from $0$ and $1$,
whereas the `sparse setting' $\mu_p(f)=o(1)$
is needed for many important open problems.
\end{itemize}
A sparse analogue of the KKL theorem was a key missing
ingredient in a strategy suggested by Kahn and Kalai \cite{kahn2007thresholds}
for their well-known conjecture relating
critical probabilities to expectation thresholds.

\subsection*{Main contribution}

The most fundamental new result of this paper
is a hypercontractive theorem for functions that are `global'
(in a sense made precise below). This has many applications,
of which the most significant are as follows.
\begin{itemize}
\item We strengthen Bourgain's Theorem
by obtaining an analogue of the KKL theorem
that is both quantitively tight and
applicable in the sparse regime.
\item We obtain a sharp threshold result
for global monotone functions in the spirit
of the Kahn-Kalai conjecture, bounding the ratio between
the critical probability (where $\mu_p(f)=\frac{1}{2}$)
and the smallest $p$ for which $\mu_p(f)$ is non-negligible.
\item We obtain a $p$-biased generalisation of
the seminal invariance principle
of Mossel, O'Donnell and Oleszkiewicz \cite{mossel2010noise}
(itself a generalisation of the Berry-Esseen theorem
from linear functions to polynomials of bounded degree),
thus opening the door to $p$-biased versions of its
many striking applications in Hardness of Approximation
and Social Choice Theory
(see O'Donnell \cite[Section 11.5]{o2014analysis})
and Extremal Combinatorics (see Dinur--Friedgut--Regev
\cite{dinur2008independent}).
\item We obtain strong new estimates on a wide class of
hypergraph Tur\'an numbers, which are central and challenging
parameters in Extremal Combinatorics.
Our results apply to bounded degree uniform hypergraphs which are obtained
as an expansion of a hypergraph of bounded uniformity,
and allow us to solve a number of open problems in the area.
\end{itemize}

\subsection*{Structure of the paper}

To facilitate navigation between various topics
we have divided this paper into five parts.
The first part is an extended synopsis
in which we motivate and state our main results.
The second part introduces global hypercontractivity,
our fundamental new contribution that underpins the
applications in the subsequent three parts of the paper.
After presenting the basic form of our hypercontractivity inequality
needed for the later applications, we continue to develop the general theory,
as this has independent interest, and also has further applications
via our generalised invariance principle.
The third part contains our analytic applications
to results on sharp thresholds and isoperimetric stability.
We then move to combinatorial applications in the final two parts,
where for the benefit of any reader whose primary interest lies in these applications,
we would highlight that these draw upon the earlier parts in a `black-box' manner,
which are therefore not pre-requisite reading for the final parts.
The fourth part concerns pseudorandomness notions for set systems
and their application to approximation by juntas,
which is the basis of the Junta Method in Extremal Combinatorics.
We then apply these results in the fifth part
to obtain several exact results on hypergraph Tur\'an numbers.

\subsection*{Subsequent work} \label{subsect: Further applications}

Since the first appearance of this paper,
our global hypercontractivity inequality
has found several further applications.
    Noise sensitivity of sparse sets is related to small-set expansion
    on graphs, which has found many applications in Computer Science.
    Here the interpretation of Theorem \ref{thm:Noise sensitivity}
    is that although not all small sets in the $p$-biased cube expand,
    global small sets do expand.
    Results of a similar nature were proved for the Grassman graph
    (see \cite{subhash2018pseudorandom}) and the Johnson graph
    (see \cite{khot2018johnson}). The former result
    was essential in the proof of the $2$-to-$2$ Games
    Conjecture, a prominent problem in the field of hardness of
    approximation. Both these works involve long calculations, and have
    sub-optimal parameters. In subsequent works
    \cite{ellis2019hypercontractivity,filmus2019hypercontractivity_slice,
      filmus2019hypercontractivity_symmetric, keevash2019random}
     hypercontractive  results for global functions
     are proven for various domains by reducing to
    the $p$-biased cube and using
    Theorem \ref{thm:Hypercontractivity}. The results of
    \cite{ellis2019hypercontractivity,filmus2019hypercontractivity_slice}
    imply the
    corresponding results about small expanding sets in the
    Grassman/Johnson graph with optimal
    parameters. A similar result was also established for a certain
    noise operator on the symmetric group
    \cite{filmus2019hypercontractivity_symmetric}.

\newpage

\part{Results} \label{part:results}

This part is an extended synopsis of our paper,
in which we motivate and state our results.
Section \ref{sec:hyp-intro} concerns our
results on global hypercontractivity.
We consider its applications to Analysis
in the two subsequent sections.
In Section \ref{sec:kahn kalai}
we discuss sharp thresholds and the Kahn--Kalai Conjecture.
Section \ref{sec:noise-intro} concerns noise sensitivity,
which gives an alternative approach to sharp thresholds
and is of interest in its own right.
We conclude the part by discussing our applications
to Extremal Combinatorics in Section \ref{sec:turan-intro}.

\section{Hypercontractivity of global functions} \label{sec:hyp-intro}

Before formally stating our main theorem,
we start by recalling (the $p$-biased version of)
the classical hypercontractive inequality.
Let $p \in \left(0,\frac{1}{2}\right]$
(the case $p>\frac{1}{2}$ is similar).
For $r \ge 1$ we write $\|\cdot\|_r$
(suppressing $p$ from our notation)
for the norm on $L^r(\{0,1\}^n,\mu_p)$.

\begin{defn}[Noise operator]
  For  $x \in \{0,1\}^n$
  we define the $\rho$-correlated distribution
$N_{\rho}(x)$ on $\left\{ 0,1\right\} ^{n}$: a sample
$\mathbf{y}\sim N_\rho(x)$ is obtained by, independently for each $i$
setting $\mathbf{y}_i=x_i$  with probability $\rho$,
or otherwise (with probability $1-\rho$) we resample $\mathbf{y}_i$
with $\mb{P}(\mathbf{y}_i=1)=p$. We define
the noise operator $\mathrm{T}_\rho$ on $L^2(\{0,1\}^n,\mu_p)$ by
\[
\mathrm{T}_{\rho}\left(f\right)\left(x\right)=\mathbb{E}_{\yb
\sim N_{\rho}\left(x\right)}\left[f\left(\yb\right)\right].
  \]
\end{defn}

H\"older's inequality gives $\|f\|_r\le\|f\|_s$ whenever $r\le s$.
The hypercontractivity theorem gives an inequality
in the other direction after applying noise to $f$;
for example, for $p=1/2$, $r=2$ and $s=4$ we have
\[\|\mathrm{T}_{\rho}f\|_4 \le \|f\|_2\]
for any $\rho\le\frac{1}{\sqrt{3}}$.
A similar inequality also holds when $p=o(1)$,
but the correlation $\rho$ has to be so small
that it is not useful in applications;
e.g.\  if $f(x)=x_1$
(the `dictator' or `half cube'),
then $\|f\|_2 = \sqrt{\mu_p(f)} = \sqrt{p}$ and
$\mathrm{T}_{\rho}f(x)
= \mathbb{E}_{\yb \sim N_{\rho}\left(x\right)} \mathbf{y}_1
= \rho x_1 + (1-\rho)p$,
so $\|\mathrm{T}_{\rho}f\|_4
> (\mb{E}[\rho^4 x_1^4])^{1/4} = \rho p^{1/4}$.
Thus we need $\rho = O(p^{1/4})$ to obtain any
hypercontractive inequality for general $f$.

\subsection{Local and global functions}
To resolve this issue, we note that the tight examples
for the hypercontractive inequality are \emph{local},
in the sense that a small number of
coordinates can significantly influence the output of the function.
On the other hand, many functions of interest are \emph{global},
in the sense that a small
number of coordinates can change the output of the function only with a
negligible probability; such global functions appear naturally
in Random Graph Theory \cite{achlioptas1999sharp},
Theoretical Computer Science \cite{friedgut1999sharp}
and Number Theory \cite{friedgut2016sharp}.
Our hypercontractive inequality will show that
constant noise suffices for functions that are global
in a sense captured by \emph{generalised influences},
which we will now define.

Let $f\colon\left\{ 0,1\right\} ^{n}\to\mathbb{R}$.
For $S\sub[n]$ and $x\in\{0,1\}^S$, we write $f_{S\to x}$
for the function obtained from $f$ by restricting the
coordinates of $S$ according to $x$
(if $S=\{i\}$ is a singleton we simplify notation to $f_{i\to x}$).
We write $|x|$ for the number of ones in $x$.
For $i \in [n]$, the $i$th influence is
$\mathrm{I}_i(f) = \| f_{i \to 1} - f_{i \to 0} \|_2^2$,
where the norm is with respect to the implicit measure $\mu_p$. In general,
we define the influence with respect to any $S \sub [n]$
by sequentially applying the operators $f\mapsto f_{i\to 1}-f_{i \to
  0}$ for all $i \in S$, as follows.
\begin{defn}
\label{def:generalised influences}
For $f\colon\left\{ 0,1\right\} ^{n}\to\mathbb{R}$ and $S \sub [n]$
we let (suppressing $p$ in the notation)
\[
\mathrm{I}_{S}\left(f\right)=\mathbb{E}_{\mu_{p}}\bigg [\Big (\sum_{x\in\left\{ 0,1\right\} ^{S}}\left(-1\right)^{\left|S\right|-\left|x\right|}f_{S\to x}\Big )^{2}\bigg ].
\]
We say $f$ has \emph{$\bB$-small generalised influences} if
$\mathrm{I}_S(f)\le\bB\ \mb{E}[f^2]$ for all $S\subseteq\left[n\right].$
\end{defn}

The reader familiar with the KKL theorem and the invariance principle
may wonder why it is necessary to introduce generalised influences
rather than only considering influences (of singletons).
The reason is that under the uniform measure
the properties of having small influences
or small generalised influences are qualitatively equivalent,
but this is no longer true in the $p$-biased setting
for small $p$ (consider $f(x) =
\frac{x_1x_2 +\cdots +x_{n-1}x_n}{\|x_1x_2 +\cdots +x_{n-1}x_n\|}$).

We are now ready to state our main theorem,
which shows that global\footnote{Strictly speaking,
our assumption is stronger than the most natural notion
of global functions: we require all generalised influences
to be small, whereas a function should be considered global
if it has small generalised influences
$I_S(f)$ for small sets $S$.
However, in practice, the hypercontractivity Theorem
is typically applied to low-degree truncations of Boolean functions
(see Section \ref{sec:practice}),
when there is no difference between these notions,
as $I_S(f)=0$ for large $S$.} functions are
hypercontractive for a noise operator with a constant rate.
Moreover, our result applies to general $L^r$ norms and product spaces
(see Section \ref{sec:hyp}), but for simplicity here we just highlight
the case of $(4,2)$-hypercontractivity in the cube.

\begin{thm} \label{thm:Hypercontractivity}
Let $p \in \left(0,\frac{1}{2}\right]$.
Suppose $f\in L^{2}\left(\left\{ 0,1\right\} ^{n},\mu_{p}\right)$
has $\bB$-small generalised influences (for $p$).
Then $\|\mathrm{T}_{1/5} f\|_{4} \le \bB^{1/4} \|f\|_2$.
\end{thm}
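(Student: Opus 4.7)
The plan is to prove Theorem~\ref{thm:Hypercontractivity} by induction on $n$, following the classical Bonami--Beckner strategy of decomposing along a single coordinate, but exploiting the globalness hypothesis at each step to cancel the factors of $p$ that otherwise obstruct a constant-rate hypercontractive inequality in the $p$-biased cube.

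Fix $i \in [n]$ and decompose $f = g + \chi_i h$, where $\chi_i(x_i) = (x_i - p)/\sqrt{p(1-p)}$ is the normalised biased character and $g, h$ depend only on the remaining coordinates. Writing $\tilde{\mathrm{T}}_\rho$ for the noise operator on $\{0,1\}^{[n]\setminus\{i\}}$, we have $\mathrm{T}_\rho f = \tilde{\mathrm{T}}_\rho g + \rho\chi_i\tilde{\mathrm{T}}_\rho h$ and the orthogonal decomposition $\|f\|_2^2 = \|g\|_2^2 + \|h\|_2^2$. A short calculation in the biased Fourier basis gives $\mathrm{I}_S(f) = \mathrm{I}_S(g) + \mathrm{I}_S(h)$ for $S \not\ni i$, and $\mathrm{I}_{S\cup\{i\}}(f) = \mathrm{I}_S(h)/(p(1-p))$. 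Hence $g$ inherits the bound $\mathrm{I}_S(g) \le \bB\|f\|_2^2$, while $h$ satisfies the much stronger bound $\mathrm{I}_S(h) \le p(1-p)\bB\|f\|_2^2$---the key structural fact being that the $\chi_i$-component of a global function must itself be exceptionally global.

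Expanding $(\tilde{\mathrm{T}}_\rho g + \rho\chi_i\tilde{\mathrm{T}}_\rho h)^4$ and using independence of $\chi_i$ from the other coordinates together with $\E[\chi_i]=0$, $\E[\chi_i^2]=1$, $|\E[\chi_i^3]| \le (p(1-p))^{-1/2}$, and $\E[\chi_i^4] \le (p(1-p))^{-1}$, one arrives at
\[
\|\mathrm{T}_\rho f\|_4^4 \le \|\tilde{\mathrm{T}}_\rho g\|_4^4 + 6\rho^2\,\E\!\left[(\tilde{\mathrm{T}}_\rho g)^2(\tilde{\mathrm{T}}_\rho h)^2\right] + \tfrac{4\rho^3}{\sqrt{p(1-p)}}\,\big|\E[\tilde{\mathrm{T}}_\rho g\cdot(\tilde{\mathrm{T}}_\rho h)^3]\big| + \tfrac{\rho^4}{p(1-p)}\,\|\tilde{\mathrm{T}}_\rho h\|_4^4.
\]
By the inductive hypothesis (in the form $\|\mathrm{T}_\rho u\|_4^4 \le \bB_u\|u\|_2^4$ for $\bB_u$-global $u$), $\|\tilde{\mathrm{T}}_\rho g\|_4^4 \le \bB\|f\|_2^2\|g\|_2^2$ and $\|\tilde{\mathrm{T}}_\rho h\|_4^4 \le p(1-p)\bB\|f\|_2^2\|h\|_2^2$, so the factors of $(p(1-p))^{\pm 1}$ coming from the moments of $\chi_i$ cancel exactly against those coming from the globalness of $h$, yielding a bound on every term that is independent of $p$.

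The main obstacle---and the technical heart of the argument---is controlling the two mixed terms above tightly enough. The crude H\"older bound $\E[(\tilde{\mathrm{T}}_\rho g)^2(\tilde{\mathrm{T}}_\rho h)^2] \le \|\tilde{\mathrm{T}}_\rho g\|_4^2\|\tilde{\mathrm{T}}_\rho h\|_4^2$, together with its analogue for the odd term, is too lossy in the regime $\|h\|_2 \ll \sqrt{p}\,\|f\|_2$: in this regime the inductive $L^4$-bound on $\tilde{\mathrm{T}}_\rho h$ translates poorly back to the $L^2$ scale on which the target inequality $\|\mathrm{T}_\rho f\|_4^4 \le \bB\|f\|_2^4$ is phrased. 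Overcoming this requires either a sharper cross-term estimate based on a two-function hypercontractive inequality that exploits the independence of $\chi_i$ from $g, h$ and the disjoint Fourier supports of $g$ and $h$, or an inductive hypothesis strengthened beyond the pure $L^4$--$L^2$ bound (for example by simultaneously tracking mixed moments of the form $\E[f^2\cdot(\mathrm{T}_\rho f)^2]$) so that the self-bootstrap closes uniformly in the ratio $\|h\|_2/\|f\|_2$. Once the correct cross-term estimate is in place, the concrete value $\rho = 1/5$ will follow from verifying a single polynomial inequality.
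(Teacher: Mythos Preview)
Your single-coordinate decomposition and the structural observation that $h$ inherits an extra factor of $p(1-p)$ in its globalness parameter are both correct, and this is precisely the mechanism that makes the pure quartic terms cancel their $p$-dependence. But the gap you flag in the mixed terms is genuine and is not closed by either of the fixes you propose. With the bare inductive hypothesis $\|\mathrm{T}_\rho u\|_4^4 \le \bB_u\|u\|_2^4$, Cauchy--Schwarz on the quadratic cross-term gives a bound of order $\rho^2\sS\,\bB\|f\|_2^2\|g\|_2\|h\|_2$, and the cubic term gives one of order $\rho^3\sS^{1/2}\,\bB\|f\|_2^2\|g\|_2^{1/2}\|h\|_2^{3/2}$; the slack available on the right is only $(1-\rho^4)\bB\|f\|_2^2\|h\|_2^2$. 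When $\|h\|_2\ll\sS\|g\|_2$ (which is exactly the regime forced by globalness when $p$ is small), both cross-terms dominate the slack by an unbounded factor, and no constant $\rho$ can absorb them. Your suggestion of exploiting ``disjoint Fourier supports'' does not apply: once $\chi_i$ is stripped, $g$ and $h$ are arbitrary functions on $\{0,1\}^{[n]\setminus\{i\}}$ with no support separation. Tracking mixed moments does not obviously close the induction either.

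The paper sidesteps this entirely by a Lindeberg replacement rather than a direct induction in $\mu_p$. One interpolates from $\mu_p$ to $\mu_{1/2}$ one coordinate at a time, using a mixed noise operator with rate $\rho$ on the remaining $p$-biased coordinates and rate $2\rho$ on the already-replaced uniform ones. The core step is the pointwise one-variable inequality
\[
\E_{x_t\sim\mu_p}\big[(\rho d\chi^p_t+e)^4\big]\ \le\ \E_{x_t\sim\mu_{1/2}}\big[(2\rho d\chi^{1/2}_t+e)^4\big]\ +\ 3\lL\rho^4 d^4,
\]
where $d,e$ are constants (the values of $\tilde{\mathrm{T}}h$ and $\tilde{\mathrm{T}}g$ at a fixed point). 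Here the cubic cross-term on the left is handled by Cauchy--Schwarz and AM--GM, and the quadratic cross-term $6\rho^2 d^2e^2$ is simply dominated by the $24\rho^2 d^2e^2$ appearing on the uniform side---this is exactly what the doubled rate $2\rho$ buys. Iterating over all coordinates yields
\[
\|\mathrm{T}_\rho f\|_4^4\ \le\ \sum_{S\subseteq[n]}(3\lL\rho^4)^{|S|}\,\big\|\mathrm{T}_{2\rho}\big((\mathrm{D}_S f)_n\big)\big\|_4^4,
\]
with the right-hand $4$-norms taken in $\mu_{1/2}$, where standard hypercontractivity applies for $2\rho\le 1/\sqrt{3}$ and collapses each summand to $\|\mathrm{D}_S f\|_2^4=\sS^{4|S|}\mathrm{I}_S(f)^2$. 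The globalness hypothesis then enters only once, at the very end, to sum the resulting series. Thus the cross-term difficulty you are wrestling with is not resolved inside $\mu_p$ at all: it is transferred to $\mu_{1/2}$, where the cubic moment of $\chi^{1/2}$ vanishes and the quadratic cross-term is tame.
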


We now move on to demonstrate the power
of global hypercontractivity in the contexts
of isoperimetry, noise sensitivity, sharp thresholds, and invariance.
We emphasise that Theorem \ref{thm:Hypercontractivity} is the only new
ingredient required for these applications, so we expect that it will
have many further applications to generalising results proved via
usual hypercontractivity on the cube with uniform measure.

\subsection{Isoperimetry and influence} \label{subsec:isoinf}

Stability of isoperimetric problems is a prominent open problem at the interface of Geometry, Analysis and Combinatorics.
This meta-problem is to characterise sets whose boundary is close to
the minimum possible given their volume; there are many specific problems
obtained by giving this a precise meaning. Such results in Geometry
were obtained for the classical setting of
Euclidean Space by Fusco, Maggi and Pratelli \cite{fusco2008iso}
and for Gaussian Space by Mossel and Neeman \cite{mossel2015robust}.

The relevant setting for our paper is that of the cube $\{0,1\}^n$,
endowed with the $p$-biased measure $\mu_p$. We refer to this problem
as the ($p$-biased) edge-isoperimetric stability problem.
We identify any subset of $\{0,1\}^n$ with its characteristic
Boolean function $f:\{0,1\}^n \to \{0,1\}$,
and define its `boundary' as the (total) influence\footnote{
Everything depends on $p$, which we fix and suppress in our notation.}
\[\mathrm{I}\left[f\right] =\sum_{i=1}^{n}\mathrm{I}_{i}\left[f\right],
\text{ where each }
\mathrm{I}_i\left[f\right]=\Pr_{\xb\sim\mu_{p}}
\left[f\left(\xb\oplus e_{i}\right)\ne f\left(\xb\right)\right],\]
i.e.\ the \emph{$i$th influence} $\mathrm{I}_i\left[f\right]$
of $f$ is the probability that $f$ depends on bit $i$
at a random input according to $\mu_p$.
(The notion of influence for real-valued functions,
given in Section \ref{sec:hyp-intro}, coincides with this notion
for Boolean-valued functions).
When $p=1/2$ the total influence corresponds to the classical
combinatorial notion of edge-boundary\footnote{
For the vertex boundary, stability results
showing that approximately isoperimetric sets
are close to Hamming balls were obtained independently by
Keevash and Long \cite{keevash2018vertexiso}
and by Przykucki and Roberts \cite{prz2018vertexiso}.}.

The KKL theorem of Kahn, Kalai and Linial \cite{kahn1988influence}
concerns the structure of functions $f:\{0,1\}^n \to \{0,1\}$,
considering the cube under the uniform measure,
with variance bounded away from $0$ and $1$
and with total influence is upper bounded by some number $K$.
It states that $f$ has a coordinate with influence
at least $e^{-O\left(K\right)}$.
The tribes example of Ben-Or and Linial \cite{ben1990collective}
shows that this is sharp.

\subsection{$p$-biased versions}
The $p$-biased edge-isoperimetric stability problem is
somewhat understood in the \emph{dense regime}
(where $\mu_p\left(f\right)$ is bounded away from $0$ and $1$)
especially for Boolean functions $f$ that are \emph{monotone}
(satisfy $f\left(x\right)\le f\left(y\right)$
whenever all $x_{i}\le y_{i}$).
Roughly speaking, most edge-isoperimetric stability results in the dense regime say that Boolean functions of small influence have some
`local' behaviour (see the seminal works
of Friedgut--Kalai \cite{friedgut1996every},
Friedgut \cite{friedgut1998boolean, friedgut1999sharp},
Bourgain \cite[Appendix]{friedgut1999sharp},
and Hatami \cite{hatami2012structure}).
In particular, Bourgain
(see also \cite[Chapter 10]{o2014analysis})
showed that for any monotone Boolean function $f$
with $\mu_{p}\left(f\right)$ bounded away from $0$ and $1$ and
$pI\left[f\right]\le K$
there is a set $J$ of $O\left(K\right)$ coordinates
such that $\mu_{p}\left(f_{J\to1}\right)
\ge\mu_{p}\left(f\right)+e^{-O\left(K^{2}\right)}$.
This result is often interpreted as `almost isoperimetric (dense) subsets of the
$p$-biased cube must be local' or on the contrapositive as `global
functions have large total influence'. Indeed, if a
restriction of a small set of coordinates can significantly boost the $p$-biased measure of a function, then this intuitively means
that it is of a local nature.

For monotone functions, the conclusion in Bourgain's theorem
is equivalent (see Section \ref{sec:equiv})
to having some set $J$ of size $O(K)$ with
$\mathrm{I}_J\left(f\right) \ge e^{-O\left(K^2\right)}$.
Thus Bourgain's theorem can be viewed as a
$p$-biased analog of the KKL theorem, where influences are replaced
by generalised influences. However, unlike the KKL Theorem,
Bourgain's result is not sharp,
and the anti-tribes example of Ben-Or and Linial only shows that
the $K^2$ term in the exponent cannot drop below $K$.

As a first application of our hypercontractivity theorem we replace the
term $e^{-O(K^2)}$ by the term $e^{-O(K)}$, which is sharp by Ben-Or
and Linial's example, see Section \ref{sec:equiv}.

\begin{thm}
\label{thm:Bourgain+}
Let $p\in\left(0,\frac{1}{2}\right]$, and let  $f\colon\left\{ 0,1\right\} ^{n}\to\left\{ 0,1\right\} $
be a monotone Boolean function
with $\mu_{p}\left(f\right)$ bounded away from $0$ and $1$
and $\mathrm{I}\left[f\right]\le\frac{K}{p}$.
Then there is a set $J$ of $O\left(K\right)$ coordinates such that
$\mu_{p}\left(f_{J\to1}\right)\ge\mu_{p}\left(f\right)+e^{-O\left(K\right)}$.
\end{thm}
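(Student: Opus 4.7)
The plan is to prove the contrapositive. Fix large constants $C$ and $c$ (with $c$ chosen large relative to $C$), set $D := CK$ and $\varepsilon := e^{-cK}$, and assume that no $J$ with $|J| \le D$ satisfies $\mu_p(f_{J\to 1}) \ge \mu_p(f) + \varepsilon$; I will deduce $\mathrm{I}[f] > K/p$, contradicting the hypothesis. The first step is to invoke the equivalence for monotone Boolean functions (Section \ref{sec:equiv}) between ``boost under a $1$-restriction on $J$'' and the generalised influence $\mathrm{I}_J(f)$: this translates the assumption into $\mathrm{I}_J(f) \le \beta$ for every nonempty $J$ with $|J| \le D$, where $\beta = e^{-c'K}$ with $c'$ as large as desired, provided $c$ is taken large.

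Next, I pass to a low-degree truncation. Let $g := f^{\le D} - \mu_p(f)$, the $p$-biased Fourier--Walsh projection of $f$ onto levels $1,\dots, D$. Since the alternating sum defining $\mathrm{I}_J$ kills the constant $\mu_p(f)$ for $|J| \ge 1$, one has $\mathrm{I}_J(g) \le \mathrm{I}_J(f) \le \beta$ for $1 \le |J| \le D$, and $\mathrm{I}_J(g) = 0$ for $|J| > D$. Thus $g$ has $(\beta/\|g\|_2^2)$-small generalised influences, and Theorem \ref{thm:Hypercontractivity} yields $\|\mathrm{T}_{1/5} g\|_4^4 \le \beta \|g\|_2^2$. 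Combining Jensen's inequality $\|\mathrm{T}_{1/5} g\|_2^4 \le \|\mathrm{T}_{1/5} g\|_4^4$ with the Fourier identity
\[
\|\mathrm{T}_{1/5} g\|_2^2 = \sum_{1 \le |T| \le D} 25^{-|T|}\, \hat f(T)^2 \;\ge\; 25^{-D} \|g\|_2^2
\]
gives $\|g\|_2^2 \le 25^{2D} \beta$.

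Finally, I choose $c'$ so that $25^{2D} \beta \le \mathrm{Var}(f)/2$, which holds whenever $c' > 2C \log 25$ plus a small additive constant (using $\mathrm{Var}(f) = \mu_p(f)(1-\mu_p(f)) = \Omega(1)$). Then $\|g\|_2^2 \le \mathrm{Var}(f)/2$, whence $\|f^{>D}\|_2^2 = \mathrm{Var}(f) - \|g\|_2^2 \ge \mathrm{Var}(f)/2$. Using the $p$-biased Parseval identity $p(1-p)\mathrm{I}[f] = \sum_T |T|\, \hat f(T)^2 \ge D \|f^{>D}\|_2^2$, this yields $\mathrm{I}[f] \ge CK \cdot \mathrm{Var}(f)/(2 p(1-p)) > K/p$ provided $C > 2/\mathrm{Var}(f)$, the desired contradiction. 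I expect the principal obstacle to lie not in the hypercontractive argument above, which is essentially a routine deployment of Theorem \ref{thm:Hypercontractivity}, but rather in the monotone equivalence invoked in the first step; once that equivalence is in hand, the remainder is Fourier bookkeeping.
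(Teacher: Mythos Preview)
Your overall architecture (contrapositive, invoke the monotone equivalence of Lemma~\ref{rem} to bound generalised influences, then bound the low-degree Fourier weight and push mass above level $D$) matches the paper. However, there is a real gap in the hypercontractive step.

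You apply Theorem~\ref{thm:Hypercontractivity} to $g=f^{\le D}-\mu_p(f)$, asserting that $g$ has $(\beta/\|g\|_2^2)$-small generalised influences. But Definition~\ref{def:generalised influences} requires the bound for \emph{all} $S\subseteq[n]$, in particular $S=\emptyset$, where $\mathrm{I}_\emptyset(g)=\|g\|_2^2$. Thus your claimed globalness parameter $\beta/\|g\|_2^2$ satisfies the hypothesis only if $\|g\|_2^2\le\beta$ --- which is exactly (a stronger form of) what you are trying to prove. If instead $\|g\|_2^2>\beta$, the legitimate globalness parameter is $\alpha=1$, and the theorem yields only $\|\mathrm{T}_{1/5}g\|_4\le\|g\|_2$, from which your chain of inequalities gives the triviality $\|g\|_2^2\le 25^{2D}\|g\|_2^2$. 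So the bound $\|g\|_2^2\le 25^{2D}\beta$ is not established.

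The paper circumvents this by applying hypercontractivity not to $g$ but to each discrete derivative $g_i=f_{i\to1}-f_{i\to0}$ (see Lemma~\ref{lem:normtruncate}). The crucial point is that $\mathrm{I}_S(g_i^{\le r-1})=\mathrm{I}_{S\cup\{i\}}(f^{\le r})$, so the $S=\emptyset$ condition on $g_i$ becomes the $S=\{i\}$ condition on $f$, which \emph{is} covered by the nonempty-$S$ hypothesis. Summing the resulting bounds over $i$ produces
\[
\|f^{\le r}\|_2^2-\mu_p(f)^2\;\le\;5^{r-1}\delta^{1/3}\sigma^2\,\mathrm{I}[f],
\]
with a factor of $\mathrm{I}[f]$ on the right; the assumed bound $\mathrm{I}[f]\le K/p$ is then used to close the argument. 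Contrary to your final remark, the hypercontractive step is not a routine deployment of Theorem~\ref{thm:Hypercontractivity}; the passage to derivatives is the substantive idea you are missing.
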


For general functions we prove a similar result, where the conclusion
$\mu_{p}\left(f_{J\to1}\right)\ge\mu_{p}\left(f\right)+e^{-O\left(K\right)}$
is replaced with $\mathrm{I}_J\left(f\right)\ge e^{-O\left(K\right)}$.

\subsection{The sparse regime}

On the other hand, the \emph{sparse regime}
(where we allow any value of $\mu_p(f)$)
seemed out of reach of previous methods in the literature.
Here Russo \cite{russo1982approximate}, and independently Kahn and Kalai \cite{kahn2007thresholds}, gave a proof of the
$p$-biased isoperimetric inequality:
$p\mathrm{I}\left[f\right]\ge
\mu_p\left(f\right)\log_p\left(\mu_p\left(f\right)\right)$ for every
$f$. They also showed that equality holds only for the monotone
sub-cubes. Kahn and Kalai posed the problem of determining
the structure of monotone Boolean functions $f$ that they called
\emph{$d$-optimal}, meaning that $p\mathrm{I}\left[f\right]\le
d\mu_p\left(f\right)\log_p\left(\mu_p\left(f\right)\right)$,
i.e.\ functions with total influence within a certain
multiplicative factor of the minimal value guaranteed
by the isoperimetric inequality.
They conjectured in \cite[Conjecture 4.1(a)]{kahn2007thresholds}
that for any constant $C>0$
there are constants $K,\dD>0$ such that
if $f$ is $C\log\left(1/p\right)$-optimal
then there is a set $J$ of
$\le K \log \frac{1}{\mu_{p}\left(f\right)}$ coordinates such that
$\mu_{p}\left(f_{J\to1}\right)\ge (1+\dD)\mu_p(f)$.

The corresponding result with a similar conclusion was open
even for $C$-optimal functions! Our second theorem is a variant
of the Kahn--Kalai conjecture which applies to
$C\log\left(1/p\right)$-optimal functions
when $C$ is sufficiently small
(whereas the conjecture requires
an arbitrary constant $C$).
We compensate for our stronger hypothesis in the following result
by obtaining
a much stronger conclusion than that asked for by Kahn and Kalai;
for example, if $f$ is $\frac{\log\left(1/p\right)}{100C}$-optimal
then $\mu_{p}\left(f_{J\to1}\right)\ge \mu_p(f)^{0.01}$.
We will also show that our result
is sharp up to the constant factor $C$.

\begin{thm}
\label{thm:Variant of Kahn kalai}
Let $p\in\left(0,\frac{1}{2}\right]$, $K \ge 1$
and let $f$ be a Boolean function with
$p\mathrm{I}\left[f\right] < K\mu_{p}\left(f\right)$.
Then there is a set $J$ of $\le CK$ coordinates,
where $C$ is an absolute constant, such that
$\mu_{p}\left(f_{J\to1}\right)\ge e^{-CK}$.
\end{thm}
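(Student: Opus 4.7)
To prove Theorem \ref{thm:Variant of Kahn kalai}, my plan is to combine global hypercontractivity (Theorem \ref{thm:Hypercontractivity}) with an iterative boosting argument.

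\emph{Reduction.} We may assume $\mu_p(f) < e^{-CK}$; otherwise $J = \emptyset$ suffices. By the $p$-biased edge-isoperimetric inequality of Russo and Kahn--Kalai combined with the hypothesis $p\mathrm{I}[f] < K\mu_p(f)$, we have $\mu_p(f) \ge p^{O(K)}$, so we are in the small-$p$ regime and the total multiplicative boost needed to reach $e^{-CK}$ is at most $(1/p)^{O(K)}$.

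\emph{Single-step dichotomy.} The core of the argument is a boosting lemma: for any Boolean $g$ with $p\mathrm{I}[g]/\mu_p(g) \le K'$ and $\mu_p(g) < e^{-CK'}$ there is a short tuple $S$ of coordinates with $\mu_p(g_{S\to 1})/\mu_p(g) \ge (1/p)^{\Omega(1)}$. I would prove it via the following dichotomy driven by Theorem \ref{thm:Hypercontractivity}. Either (a) some generalised influence $\mathrm{I}_T(g)$ with $|T|$ bounded is at least $e^{-cK'}\mu_p(g)$, in which case the Fourier identity
\[
\mu_p(g_{S\to 1}) = \sum_{T'\subseteq S}\hat{g}(T')\left(\frac{1-p}{p}\right)^{|T'|/2},
\]
combined with the information on the Fourier spectrum supplied by the large generalised influence and a Cauchy--Schwarz-type argument, yields a short $S$ achieving the desired boost; or (b) $g$ has $\beta$-small generalised influences for $\beta = e^{-cK'}$, in which case Theorem \ref{thm:Hypercontractivity} gives $\|\mathrm{T}_{1/5}g\|_4 \le \beta^{1/4}\|g\|_2$, and by Fourier expansion $\|g^{\le d}\|_2^2 \le 25^d \beta^{1/2}\mu_p(g) < \mu_p(g)/2$ for $d = 4K'$ and $c$ sufficiently large. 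This contradicts the lower bound $\|g^{\le d}\|_2^2 \ge \mu_p(g)/2$ obtained from Markov's inequality applied to $\sum_S|S|\hat{g}(S)^2$ using the hypothesis $p\mathrm{I}[g] \le K'\mu_p(g)$.

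\emph{Iteration and obstacles.} Iterating this lemma $O(K)$ times, each step adding $O(1)$ coordinates, produces the desired $J$ with $|J| \le CK$ and $\mu_p(f_{J\to 1}) \ge e^{-CK}$. The main obstacle is that restricting coordinates to $1$ may shrink $\mu_p(g)$ faster than $\mathrm{I}[g]$, so the ratio $p\mathrm{I}[g]/\mu_p(g)$ can degrade across iterations and threaten the lemma's hypothesis; I would handle this through an amortised invariant such as tracking $p\mathrm{I}[f_{J\to 1}]/\mu_p(f_{J\to 1}) - c|J|$ along the greedy path and showing it grows only slowly. A secondary subtlety is that $f$ need not be monotone, which may require either monotonising $f$ (e.g.\ via its upward shift $f^*$, which has the same $\mu_p$ and no larger total influence) before running the iteration, or verifying that Case~(a) produces a $1$-restriction boost for general Boolean $f$ regardless of the signs of the relevant Fourier coefficients.
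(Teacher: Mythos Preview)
Your Case~(b) is essentially the paper's entire argument, but you have wrapped it in an unnecessary iterative shell, and the iteration introduces the very difficulties you flag as obstacles. The paper simply proves the contrapositive in one shot: assume $\mu_p(f_{J\to 1}) \le e^{-CK}$ for \emph{every} $J$ of size $\le CK$. This is precisely the statement that $f$ is $(2K,\delta)$-global with $\mu_p(f)\le\delta$ for $\delta=e^{-CK}$; via Lemma~\ref{lem: gen influence control for global functions} (which uses only that $f$ is $\{0,1\}$-valued, no monotonicity) this bounds all generalised influences of the truncation $f^{\le 2K}$ by $8^{2K}\delta$. Now your Case~(b) reasoning (more precisely, Corollary~\ref{lem:normsense}) gives $\|f^{\le 2K}\|_2^2 \le \mu_p(f)/2$, so $\|f^{>2K}\|_2^2 \ge \mu_p(f)/2$, and the Fourier formula for influence yields $p\mathrm{I}[f] \ge K\mu_p(f)$. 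No iteration, no tracking of $p\mathrm{I}/\mu_p$ along a path, no monotonisation.

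Your Case~(a), by contrast, has a genuine gap and is the step the paper's contrapositive avoids. A large generalised influence $\mathrm{I}_T(g)=\sigma^{-2|T|}\sum_{E\supseteq T}\hat g(E)^2$ says nothing directly about the individual coefficients $\hat g(T')$ with $T'\subseteq S$ that appear in your restriction formula for $\mu_p(g_{S\to 1})$; the mass could sit on high-degree sets $E\supseteq T$ whose contributions cancel in that sum. (Section~\ref{sec:equiv} does establish an equivalence between the two globalness notions, but only in the easy direction you need for the contrapositive; the direction you need for Case~(a) is the content of the theorem itself.) There is also a smaller technical slip in Case~(b): Theorem~\ref{thm:Hypercontractivity} requires $\beta$-small generalised influences for \emph{all} $S$, not just bounded $S$; the paper handles this by applying hypercontractivity to the truncation $f^{\le r}$, for which $\mathrm{I}_S$ vanishes when $|S|>r$ (see Lemma~\ref{lem:applying_hypercontractivity}).
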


\section{Sharp thresholds} \label{sec:kahn kalai}

The results of Friedgut and Bourgain mentioned above
also had the striking consequence that any `global'
Boolean function has a sharp threshold, which was
a breakthrough in the understanding of this phenomenon,
as it superceded many results for specific functions.

The sharp threshold phenomenon concerns the behaviour
of $\mu_p(f_n)$ for $p$ around the critical probability,
defined as follows.
Consider any sequence $f_n\colon\left\{0,1\right\}^n \to \{0,1\}$
of monotone Boolean functions. For $t \in [0,1]$
let $p_n(t) = \inf \{ p: \mu_p(f_n) \ge t\}$.
In particular, $p^c_n := p_n(1/2)$ is
commonly known as the `critical probability'
(which we think of as small in this paper).
A classical theorem of
Bollob\'{a}s and Thomason \cite{bollobas1987threshold}
shows that for any $\eps>0$ there is $C>0$
such that $p_n(1-\eps) \le C p_n(\eps)$.
This motivates the following definition:
we say that the sequence $(f_n)$
has a \emph{coarse threshold}
if for each $\eps>0$ the length of the interval
$[p_n(\eps),p_n(1-\eps)]$ is $\Theta(p^c_n)$,
otherwise we say that it has a \emph{sharp threshold}.

The classical approach for understanding sharp thresholds is based
on the Margulis--Russo formula
$\frac{d\mu_{p}\left(f\right)}{dp}=\mathrm{I}_{\mu_{p}}\left(f\right)$,
see \cite{margulis1977} and \cite{russo1982approximate}.
Here we note that if $f$ has a coarse threshold, then by the Mean
Value Theorem there is a constant $\epsilon>0$, some $p$ with $\mu_p(f)\in(\epsilon,1-\epsilon)$
and $p\mathrm{I}_{\mu_{p}}\left(f\right) = \Tt(1)$,
so one can apply various results mentioned in Section \ref{subsec:isoinf}.
Thus Bourgain's Theorem implies that
there is a set $J$ of $O\left(K\right)$ coordinates
such that $\mu_{p'}\left(f_{J\to1}\right)
\ge\mu_{p'}\left(f\right)+e^{-O\left(K^{2}\right)}$.
While this approach is useful for studying the behaviour of $f$
around the critical probability, it rarely gives any information
regarding the location of the critical probability.
Indeed, many significant papers are devoted to locating
the critical probability of specific interesting functions,
see e.g.\ the breakthroughs of Johansson, Kahn and Vu
\cite{johansson2008factors} and Montgomery
\cite{montgomery2018trees}.

A general result was conjectured
by Kahn and Kalai for the class of Boolean functions of the form
$f_n\colon \{0,1\}^{\binom{[n]}{2}}\to \left\{0,1\right\}$,
whose input is a graph $G$ and whose output is 1
if $G$ contains a certain fixed graph $H$.
For such functions there is a natural
`expectation heuristic' $p^E_n$ for the critical probability,
namely the least value of $p$ such that the expected number of copies
of any subgraph of $H$ in $G\left(n,p\right)$ is at least $1/2$.
Markov's inequality implies $p^c_n \ge p^E_n$,
and the hope of the Kahn--Kalai Conjecture is that there
is a corresponding upper bound up to some multiplicative factor.
They conjectured in \cite[Conjecture 2.1]{kahn2007thresholds}
that $p^c_n = O\left(p^E_n \log n\right)$, but this is widely open,
even if $\log n$ is replaced by $n^{o(1)}$.

The proposed strategy of Kahn and Kalai to this conjecture
via isoperimetric stability is as follows.
\begin{itemize}
\item{Prove a lower bound on $\mu_{p^E_n}\left(f_n\right)$.} 
\item{Show (e.g.\ via Russo's lemma)
that if $\left|\left[p_E,p_c\right]\right|$ is too large,
then the $p$-biased total influence at some point
in the interval $\left[p_E,p_c\right]$ must be relatively small.}
\item{Prove an edge-isoperimetric stability result that
      rules out the latter possibility.}
 \end{itemize}

Theorem \ref{thm:Variant of Kahn kalai}
makes progress on the third ingredient.
Combining it with Russo's Lemma, we obtain
the following result that can be used
to bound the critical probability.
Let $f$ be a monotone Boolean function.
We say that $f$ is \emph{$M$-global} in an interval $I$
if for each set $J$ of size $\le M$ and each $p\in I$ we have
$\mu_p\left(f_{J\to1}\right)\le \mu_p\left(f\right)^{0.01}$.

\begin{thm}\label{thm:Sharp threshold result}
  There exists an absolute constant $C$ such that the following
  holds for any monotone Boolean function $f$
  with critical probability $p_c$ and $p\le p_c$.
  Suppose for some $M>0$ that $f$ is $M$-global
  in the interval  $\left[p,p_c\right]$
  and that $\mu_{p}\left(f\right)\ge  e^{-M/C}$.
  Then $p_c\le M^C p$.
\end{thm}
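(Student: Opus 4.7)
The plan is to combine Theorem~\ref{thm:Variant of Kahn kalai} with the Margulis--Russo identity $\tfrac{d}{dp'}\mu_{p'}(f) = \mathrm{I}[f]$ to derive a differential inequality for $\mu_{p'}(f)$ on $[p, p_c]$ and then integrate. Since $f$ is monotone, the hypothesis $\mu_p(f) \ge e^{-M/C}$ propagates, via $p' \ge p$, to $\mu_{p'}(f) \ge e^{-M/C}$ for every $p' \in [p, p_c]$; equivalently, with $u(p') := \log(1/\mu_{p'}(f))$, one has $u(p') \le M/C$ throughout. This bound is the crucial input ensuring that Theorem~\ref{thm:Variant of Kahn kalai} can be applied at each such $p'$ with a useful parameter $K$, while still respecting the $M$-globality cap on set sizes.

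First I would fix $p' \in [p, p_c]$ and set $K := c_1 u(p')$ for a small absolute constant $c_1$ to be chosen below. Letting $C_0$ be the absolute constant of Theorem~\ref{thm:Variant of Kahn kalai}, choosing $c_1 \le 1/(100 C_0)$ ensures $e^{-C_0 K} > \mu_{p'}(f)^{0.01}$, and choosing $C \ge c_1 C_0$ ensures $C_0 K \le c_1 C_0 M/C \le M$. Both choices together imply that every set $J$ with $|J| \le C_0 K$ satisfies, by $M$-globality,
\[
\mu_{p'}(f_{J\to 1}) \;\le\; \mu_{p'}(f)^{0.01} \;<\; e^{-C_0 K}.
\]
The contrapositive of Theorem~\ref{thm:Variant of Kahn kalai}, applied whenever $K \ge 1$, then delivers the pointwise lower bound
\[
p' \mathrm{I}[f] \;\ge\; K \mu_{p'}(f) \;=\; c_1\, \mu_{p'}(f)\, u(p').
\]

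Combining this with Margulis--Russo gives the differential inequality $u'(p') \le -c_1 u(p')/p'$, valid throughout the sub-interval of $[p, p_c]$ on which $u(p') \ge c_2 := 1/c_1$. Integrating $(\log u)' \le -c_1/p'$ from $p$ up to the largest $p^* \in [p, p_c]$ at which $u(p^*) \ge c_2$, and using $u(p) \le M/C$, yields
\[
\frac{p^*}{p} \;\le\; \left(\frac{u(p)}{c_2}\right)^{1/c_1} \;\le\; \left(\frac{M}{c_2 C}\right)^{1/c_1} \;\le\; M^{1/c_1}.
\]
On the remaining tail $[p^*, p_c]$ the measure $\mu_{p'}(f)$ lies between the constants $e^{-c_2}$ and $1/2$, so the classical Bollob\'as--Thomason threshold-width bound contributes only a further absolute constant factor, yielding $p_c \le M^{1/c_1 + O(1)}\, p$. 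Taking $C$ sufficiently large (exceeding $1/c_1$ and absorbing the additive $O(1)$ in the exponent) finishes the proof.

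The main obstacle I would anticipate is the bookkeeping of constants: one must choose $c_1$ and $C$ so that $K = c_1 u(p')$ simultaneously satisfies $K \ge 1$ and $C_0 K \le M$ uniformly across the range where the differential inequality is invoked. This is exactly the role of the hypothesis $\mu_p(f) \ge e^{-M/C}$, which keeps $u(p')$ in the ``Goldilocks'' window comparable to $M$ throughout $[p, p_c]$; the final handover near $p_c$ to the Bollob\'as--Thomason regime is then harmless because $\mu_{p'}(f)$ is already bounded away from $0$ by an absolute constant there.
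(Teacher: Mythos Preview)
Your proposal is correct and follows essentially the same route as the paper: apply the contrapositive of Theorem~\ref{thm:Variant of Kahn kalai} at each $p'\in[p,p_c]$ with $K$ proportional to $u(p')=\log(1/\mu_{p'}(f))$, combine with Margulis--Russo to obtain $(\log u)'\le -c/p'$, and integrate. The paper integrates this inequality directly on all of $[p,p_c]$ to obtain $\mu_q(f)\ge \mu_p(f)^{(p/q)^{1/C}}$ and then reads off the bound on $p_c/p$ from $\mu_{p_c}(f)\le 1/2$, rather than splitting off a tail where $K<1$ and appealing to Bollob\'as--Thomason; your handling of that endpoint is arguably a touch more careful, but the argument is the same.
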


To see the utility of Theorem \ref{thm:Sharp threshold result},
imagine that one wants to bound the critical probability as
$p^c_n \le p$, but instead of showing $\mu_p(f_n)\ge \frac{1}{2}$
one can only obtain a weaker lower bound
$\mu_p\left(f\right)\ge e^{-M/C}$, where $f$ is $M$-global;
then one can still bound the critical probability
as $p^c_n \le M^{O(1)} p$.

\section{Noise sensitivity} \label{sec:noise-intro}

Studying the effect of `noise' on a Boolean function
is a fundamental paradigm in various contexts,
including hypercontractivity (as in Section \ref{sec:hyp-intro})
and Gaussian isoperimetry
(via the invariance principle, see Section \ref{sec:inv}).
Roughly speaking, a function $f$ is `noise sensitive'
if $f(x)$ and $f(y)$ are approximately independent
for a random input $x$ and random small perturbation $y$ of $x$;
an equivalent formulation (which we adopt below)
is that the `noise stability'
of $f$ is small (compared to $\mu_p\left(f\right)$).
Formally, we use the following definition.

\begin{defn}

The noise stability $\mathrm{Stab}_{\rho}(f)$
of $f \in L^2(\{0,1\}^n,\mu_p)$ is defined by
\[
\mathrm{Stab}_{\rho}\left(f\right)=\left\langle f,\mathrm{T}_{\rho}f\right\rangle =\mathbb{E}_{\xb\sim\mu_{p}}\left[f\left(\xb\right)\mathrm{T}_{\rho}f\left(\xb\right)\right].
\]
A sequence $f_n$ of Boolean functions is said to be noise sensitive if
for each fixed $\rho$ we have $\mathrm{Stab}_{\rho}\left(f_n\right)=\mu_p\left(f_n\right)^2+o\left(\mu_p\left(f_n\right)\right).$
\end{defn}

Note that everything depends on $p$, but this will be clear from the context,
so we suppress $p$ from the notation $\mathrm{Stab}_{\rho}$.
Kahn, Kalai, and Linial \cite{kahn1988influence}
(see also \cite[Section 9]{o2014analysis})
showed that sparse subsets of the uniform cube
are noise sensitive, where we recall that the sequence
$(f_n)$ is \emph{sparse} if $\mu_{p}\left(f_n\right)=o\left(1\right)$ and
\emph{dense} if $\mu_{p}\left(f_n\right)=\Theta\left(1\right)$.

The relationship between noise and influence
in the cube under the uniform measure was further studied
by Benjamini, Kalai, and Schramm \cite{benjamini1999percolation}  (with applications to percolation), who gave
a complete characterisation:
a sequence $(f_n)$ of monotone dense Boolean functions
is noise sensitive if and only if the
sum of the squares of the influences of $f_n$ is $o\left(1\right)$.
Schramm and Steif \cite{schramm2010quantitative} proved that
any dense Boolean function on $n$ variables that can be computed by an
algorithm that reads $o\left(n\right)$ of the input bits is noise
sensitive. Their result had the striking application that the set of
exceptional times in dynamical critical site percolation on the
triangular lattice, in which an infinite cluster exists, is of Hausdorff
dimension in the interval  $\left[\frac{1}{6},\frac{31}{36}\right]$.
Ever since, noise sensitivity was considered in many other contexts
(see e.g.\ the recent results and open problems of Lubetzky--Steif \cite{Lubetzky2015strong} and Benjamini \cite{benjamini2019noise}).


In contrast to the uniform setting, in the $p$-biased setting
for small $p$ it is no longer true that sparse sets
are noise sensitive (e.g.\ consider dictators).
Our main contribution to the theory of noise sensitivity is showing
that `global' sparse sets are noise sensitive. Formally,
we say that a sequence $f_n$ of sparse Boolean functions
is \emph{weakly global} if for any $\eps, C > 0$ there is $n_0>0$
so that $\mu_p\left(\left(f_n\right)_{J\to 1}\right) < \eps$
for all $n>n_0$ and $J$ of size at most $C$.

\begin{thm}
  \label{thm:Noise sensitivity}
  Any weakly global sequence of Boolean functions is noise sensitive.
\end{thm}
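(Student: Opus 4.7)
The plan is to decompose the noise stability Fourier-level by level and show that, for weakly global sparse $f_n$, each positive-degree level contributes only $o(\mu_p(f_n))$. Writing $M_k := \|(f_n)_{=k}\|_2^2$, orthogonality of levels under $\mathrm{T}_\rho$ gives
\[
\mathrm{Stab}_\rho(f_n) - \mu_p(f_n)^2 = \sum_{k \ge 1} \rho^{k} M_k.
\]
For any target $\delta > 0$ and fixed $\rho\in(0,1)$, I would first fix $d = d(\delta,\rho)$ so that $\rho^{d+1} < \delta/2$; then by Parseval the tail $\sum_{k > d} \rho^k M_k \le \rho^{d+1}\,\|f_n\|_2^2 = \rho^{d+1}\mu_p(f_n) < \tfrac{\delta}{2}\mu_p(f_n)$. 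The proof thus reduces to showing $M_k = o(\mu_p(f_n))$ for every fixed $k \ge 1$.

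For such a $k$, I would combine H\"older's inequality with Theorem~\ref{thm:Hypercontractivity} applied to $h := (f_n)_{=k}$. Since $h$ lies in the degree-$k$ subspace, $M_k = \langle f_n, h\rangle$, and H\"older with conjugate exponents $4/3$ and $4$ gives
\[
M_k \le \|f_n\|_{4/3}\,\|h\|_4 = \mu_p(f_n)^{3/4}\,\|h\|_4,
\]
using $\|f_n\|_{4/3}^{4/3} = \mathbb{E}[f_n]$ as $f_n$ is Boolean. Since $h$ is homogeneous of degree $k$ we have $\mathrm{T}_{1/5}h = 5^{-k}h$, so Theorem~\ref{thm:Hypercontractivity} yields $\|h\|_4 \le 5^{k}\beta^{1/4}\|h\|_2$ for any $\beta$ with $\mathrm{I}_S(h)\le \beta\|h\|_2^2$ for all $S$. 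The Fourier formula $\mathrm{I}_S(g) = (p(1-p))^{-|S|}\sum_{T\supseteq S}\hat g(T)^2$ shows $\mathrm{I}_S(h)\le \mathrm{I}_S(f_n)$, so one may take $\beta = \max_{|S|\le k}\mathrm{I}_S(f_n)/M_k$. Combining and rearranging,
\[
M_k \le 5^{4k/3}\bigl(\max_{|S|\le k}\mathrm{I}_S(f_n)\bigr)^{1/3}\mu_p(f_n).
\]

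The crucial step is therefore to show that $\mathrm{I}_S(f_n)$ can be made arbitrarily small uniformly for $|S|\le k$, as $n\to\infty$. From Definition~\ref{def:generalised influences} and Cauchy--Schwarz, $\mathrm{I}_S(f_n) \le 2^{|S|}\sum_{x \in \{0,1\}^S}\mu_p((f_n)_{S\to x})$ (using $(f_n)_{S\to x}^2 = (f_n)_{S\to x}$ as $f_n$ is Boolean). For any $x$ of support $S' \subseteq S$ with $S' \ne \emptyset$, Bayes' rule and the weakly global hypothesis (with $C = k$ and $n$ large enough in terms of any prescribed $\eps > 0$) give
\[
\mu_p((f_n)_{S\to x}) = \frac{\Pr[f_n = 1,\;\xb_S = x]}{\Pr[\xb_S = x]} \le \frac{\mu_p((f_n)_{S'\to 1})\cdot p^{|S'|}}{p^{|S'|}(1-p)^{|S\setminus S'|}} \le \frac{\eps}{(1-p)^{|S|}};
\]
for $x = \mathbf{0}$, sparseness gives $\mu_p((f_n)_{S\to \mathbf{0}}) \le \mu_p(f_n)/(1-p)^{|S|} \le \eps/(1-p)^{|S|}$ once $n$ is large. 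Summing, $\mathrm{I}_S(f_n)\le c_{k,p}\eps$ for all $|S|\le k$, so $M_k \le c'_{k,p}\eps^{1/3}\mu_p(f_n)$. Choosing $\eps$ small enough that $d\cdot c'_{d,p}\eps^{1/3} < \delta/2$ and then $n$ large, we get $\sum_{k=1}^d \rho^k M_k < \tfrac{\delta}{2}\mu_p(f_n)$; combined with the tail estimate this yields $\mathrm{Stab}_\rho(f_n) - \mu_p(f_n)^2 < \delta\mu_p(f_n)$, and since $\delta > 0$ was arbitrary this is noise sensitivity.

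The main obstacle is the bound on $\mathrm{I}_S(f_n)$: weakly global only controls restrictions to the all-ones vector, whereas $\mathrm{I}_S$ involves restrictions to \emph{every} $x\in\{0,1\}^S$. The all-zeros restriction must be absorbed by sparseness separately, and the mixed restrictions need the Bayes-rule reduction above. Once this is in hand, the rest is a routine H\"older-with-hypercontractivity calculation at each Fourier level.
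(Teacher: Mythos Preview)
Your proof is correct and follows essentially the same route as the paper. The paper also splits the stability into a low-degree part and a tail, bounds the tail trivially by Parseval, and controls the low-degree Fourier weight via H\"older combined with global hypercontractivity (this is precisely Lemma~\ref{lem:nt} plus Lemma~\ref{lem:applying_hypercontractivity}, packaged as Theorem~\ref{lem:normsense0} and Corollary~\ref{lem:normsense}); the bound on generalised influences from weak globalness and sparseness that you derive via the Bayes step is the content of Lemma~\ref{lem: gen influence control for global functions}. The only cosmetic difference is that the paper works with $f^{\le r}$ in one shot rather than level by level, and your constants $c_{k,p}$ should be noted to be bounded independently of $p$ (for $p\le 1/2$ one has $(1-p)^{-|S|}\le 2^{|S|}$), so that the argument is uniform as $p$ may vary with $n$.
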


We will deduce the following sharp threshold result,
which will underpin our combinatorial applications
discussed in the next section.

\begin{restatable}{thm}{qrsharpthreshold}
\label{theorem: quasirandom sharp threshold theorem}
For any $\aA >0$ there is $C>0$ such that
for any $\eps,p,q \in (0,1/2)$ with $q\ge (1+\aA)p$,
writing $r=C\log \eps^{-1}$ and $\dD=10^{-3r-1}\eps^3$,
any monotone $(r,\delta)$-global Boolean function $f$
with $\mu_p(f) \le \delta$
satisfies $\mu_q(f) \ge \mu_p(f) / \eps $.
\end{restatable}

\section{Hypergraph Tur\'an numbers}\label{sec:turan-intro}

A longstanding and challenging direction of research
in Extremal Combinatorics, initiated by Tur\'an in the 1940's,
is that of determining the maximum size of a $k$-graph
($k$-uniform hypergraph) $\mc{H} \sub \tbinom{[n]}{k}$
on $n$ vertices not containing some fixed $k$-graph $F$;
this is the Tur\'an number, denoted $\ex(n,F)$.
Tur\'an numbers of graphs (the case $k=2$)
are quite well-understood (if $F$ is not bipartite),
but there are very few results even for specific hypergraphs,
let alone general results for families of hypergraphs
(see the survey \cite{keevash2011hypergraph}).
Here we prove a number of general results on
Tur\'an numbers for the family of bounded degree
expanded hypergraphs (to be defined below),
thus solving several open problems.
Our proofs build upon our new sharp threshold theorems
and the Junta Method
of Keller and Lifshitz \cite{keller2017junta}
(which greatly extended the applications
of an approach initiated by Dinur and Friedgut
\cite{dinur2009intersecting}).
A striking feature of our results is their applicability
across an essentially optimal range of uniformities and sizes,
which previously seemed entirely out of reach.

\subsection{Cross matchings}

Before introducing the general setting of expanded hypergraphs,
we first consider an important case, which is in itself a source
of many significant problems, namely the problem of finding matchings.
In both theory and application, a wide range of significant questions
can be recast as existence questions for matchings
(see e.g.\ the books \cite{Lovasz,Schrijver}
and the survey \cite{keevash2018icm}).

Perhaps the most well-known open question concerning matchings,
due to Erd\H{o}s \cite{erdHos1965problem}, asks
how large a family ${\cal F} \subset \tbinom {[n]}{k}$
can be if it does not contain an $s$-matching,
i.e.\ sets $\{A_1,\ldots, A_s\}$ with $A_i \cap A_j = \es $
for all distinct $i,j\in [s]$.
Two natural families of such ${\cal F}$
are stars ${\cal S}_{n,k,s-1} :=
\big \{ A \in \tbinom {[n]}{k}: A \cap [s-1] \neq \es  \big \}$
and cliques ${\cal C}_{k,s-1} := \tbinom {[ks-1]}{k}$.
Erd\H{o}s conjectured that one of these families is always extremal.

\begin{conjecture}[Erd\H{o}s Matching Conjecture]
	\label{conj: EMC}
	Let $n \geq ks$ and suppose that ${\cal F} \subset
	\tbinom {[n]}{k}$ does not contain an $s$-matching.
	Then $|{\cal F}| \leq \max \big \{ |{\cal S}_{n,k,s-1}|,
	|{\cal C}_{k,s-1}| \big \}$.
\end{conjecture}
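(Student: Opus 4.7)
The plan is to establish the conjecture via the Junta Method combined with the sharp threshold theorem for global functions (Theorem~\ref{theorem: quasirandom sharp threshold theorem}), following the general strategy of Dinur--Friedgut and Keller--Lifshitz as enhanced by the new analytic tools of this paper. The first step is to recast the problem in the $p$-biased setting: an $s$-matching-free family $\mathcal{F} \subseteq \binom{[n]}{k}$ corresponds to a monotone Boolean function on the $k$-th slice of $\{0,1\}^n$, which one can lift to a downset on $\{0,1\}^{[n]}$. I would work at a critical scale $p \asymp 1/s$, chosen so that the measures of the star $\mathcal{S}_{n,k,s-1}$ and the clique $\mathcal{C}_{k,s-1}$ are of the same order as the target bound.

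Second, I would invoke a Junta Approximation Theorem for $s$-matching-free families, to be proved in Part~IV of the paper. It should assert that for some bounded $r = r(s,k,\eps)$, every near-extremal $s$-matching-free family $\mathcal{F}$ admits a decomposition $\mathcal{F} = \mathcal{J} \cup \mathcal{E}$, where $\mathcal{J}$ is an $s$-matching-free junta depending only on a set $C \subseteq [n]$ of size $r$, and $\mathcal{E}$ is a \emph{global} remainder in the sense required by Theorem~\ref{theorem: quasirandom sharp threshold theorem}. The analytic engine for this step is Theorem~\ref{thm:Hypercontractivity}: iterating it on the low-degree part of $\mathcal{F}$ shows that any failure to be junta-approximated forces large generalised influences to concentrate on a bounded set of coordinates, and that bounded set becomes the support of $\mathcal{J}$.

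Third, I would apply Theorem~\ref{theorem: quasirandom sharp threshold theorem} to eliminate $\mathcal{E}$. Since $\mathcal{E}$ is global, enlarging $p$ to some $q = (1+\alpha)p$ inflates its measure by an arbitrarily large factor, so the random family sampled at density $q$ contains an $s$-matching with overwhelming probability; this contradicts $s$-matching-freeness of the upset generated by $\mathcal{F}$, forcing $\mathcal{E}$ to be of negligible measure. With $\mathcal{E}$ gone, the problem reduces to classifying $s$-matching-free juntas on at most $r$ coordinates. A short combinatorial argument then shows that the extremal such juntas are either (a) contained in a star $\mathcal{S}_{n,k,s-1}$, by forcing every member to meet a fixed $(s-1)$-element pivot set, or (b) contained in a clique $\mathcal{C}_{k,s-1}$, by confining every member to a fixed $(ks-1)$-vertex set; comparing sizes yields the conjectured bound.

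The main obstacle I expect is controlling the regime where $n$ is close to $ks$. The sharp threshold machinery is most effective when $n \gg k$; but when $n = O(ks)$ the star and the clique are comparable, and one cannot afford even a $(1+o(1))$ slack in the junta approximation. Bridging this gap likely requires a stability-enhanced version of Theorem~\ref{theorem: quasirandom sharp threshold theorem} with explicit rather than asymptotic constants, together with an absolute-constant-free Junta Approximation, or else an ad hoc combinatorial argument for the small-$n$ regime that can be grafted onto the analytic framework. A secondary difficulty is tracking the dependence on $s$ and $k$ uniformly through the hypercontractivity and sharp threshold estimates, since the blow-up factors in the parameters of Theorems~\ref{thm:Hypercontractivity} and~\ref{theorem: quasirandom sharp threshold theorem} must stay below the star/clique gap.
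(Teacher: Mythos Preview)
The statement you are attempting to prove is labelled a \emph{conjecture} in the paper, and the paper explicitly says ``This conjecture remains open''. There is no proof in the paper to compare against. What the paper does prove is Theorem~\ref{thm: cross EMC bound}, a cross version of the star case of the conjecture, valid only in the regime $n \ge Cks$ for some absolute constant $C$; this is the optimal order of magnitude for the star to be extremal, but it is far from the full range $n \ge ks$ demanded by Conjecture~\ref{conj: EMC}.

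Your plan, even if all the vague steps could be made precise, is intrinsically limited to the same regime $n \ge Cks$. The junta approximation (Theorem~\ref{thm:junta}) and the sharp threshold machinery (Theorem~\ref{theorem: quasirandom sharp threshold theorem}) both carry constants that force $k \le n/Cs$; the hypercontractivity-based bounds lose absolute constant factors at every step, and these compound into the requirement that $n/ks$ be large. You correctly identify this as ``the main obstacle'', but then suggest it might be handled by sharper constants or an ad hoc patch near $n = ks$. That is not realistic: the clique $\mathcal{C}_{k,s-1}$ dominates the star precisely when $n$ is close to $ks$, and in that regime the extremal family is \emph{not} approximable by a bounded junta at all --- it depends on $ks-1$ coordinates. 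No amount of constant-tightening in the analytic framework will produce a junta approximation for the clique. The gap between $n \ge Cks$ and $n \ge ks$ is the entire substance of the open problem, and your proposal offers no mechanism to cross it. (There are also smaller issues: the relevant biased scale is $p = k/n$, not $p \asymp 1/s$; and the ``inflate $p$ to force a matching in the upset'' step in your third paragraph does not work as stated, since the upset of an $s$-matching-free $k$-uniform family is not itself $s$-matching-free.)
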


This conjecture remains open, despite an extensive literature,
of which we will mention a few highlights.
The case $s = 2$ is the classical Erd\H{o}s--Ko--Rado theorem \cite{erdos1961intersection}.
Erd\H{o}s and Gallai \cite{erdHos1959paths} confirmed the conjecture for $k=2$.
The case $k=3$ was proven by
\/Luczak and Mieczkowska \cite{luczak2014erdHos} for large $s$
and by Frankl \cite{frankl2012maximum} for all $s$.
Bollob\'as, Daykin and Erd\H{o}s \cite{bollobas1976sets}
proved the conjecture provided $n = \Omega (k^3s)$,
which was reduced to $n = \Omega (k^2s)$ by Huang, Loh and Sudakov
 \cite{huang2012size} and finally to $n = \Omega (ks)$
by Frankl \cite{frankl2013improved} (in fact to $n\gtrsim 2ks$,
recently improved by Frankl and Kupavskii \cite{franklkupavskii2018}
to $n\geq 5ks/3$ for large $s$), which is the optimal order of magnitude
for the extremal family to be a star rather than a clique
-- or even to just contain $s$ disjoint $k$-sets.

Our first result in this context is a cross version of that of Frankl,
which proves (a strengthened form of)
a conjecture of Huang, Loh and Sudakov \cite{huang2012size}.
Here we say that families $\mc{F}_1,\dots,\mc{F}_s$
cross contains a hypergraph  $\{A_1,\ldots, A_s\}$
(e.g.\ an $s$-matching) if $A_i \in \mc{F}_i$ for each $i \in [s]$.

\begin{thm} \label{thm: cross EMC bound}
There is a constant $C>0$ so that
if $n,s,k_1,\ldots, k_s \in {\mathbb N}$ with $k_i \le n/s$
and ${\cal F}_i \subset \tbinom {[n]}{k_i}$ with
$|{\cal F}_i| \geq |{\cal S}_{n,k_i,s-1}|$ for all $i\in [s]$,
either ${\cal F}_1,\ldots, {\cal F}_s$
cross contain an $s$-matching, or
there is $J \subset [n]$ with $|J| = s-1$
such that each ${\cal F}_i = {\cal S}_{n,k_i,J}
:= \{A \in \tbinom {[n]}{k_i}: A \cap J \neq \es \}$.
\end{thm}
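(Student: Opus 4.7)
My plan is to prove Theorem~\ref{thm: cross EMC bound} by combining the quasirandom sharp threshold result (Theorem~\ref{theorem: quasirandom sharp threshold theorem}) with the Junta Method of Keller and Lifshitz. Throughout I argue by contradiction: assume $\mathcal{F}_1,\dots,\mathcal{F}_s$ do not cross contain an $s$-matching, and deduce that each $\mathcal{F}_i = \mathcal{S}_{n,k_i,J}$ for one common $(s-1)$-set $J$.

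First, for each $i$ I would pass to the $p$-biased cube by replacing $\mathcal{F}_i$ with its upward closure $f_i \colon \{0,1\}^n \to \{0,1\}$, and set $p_i \asymp k_i/n$. The hypothesis $|\mathcal{F}_i| \ge |\mathcal{S}_{n,k_i,s-1}|$ translates, via the standard slice-versus-product comparison, to a lower bound $\mu_{p_i}(f_i) \gtrsim 1 - (1 - p_i)^{s-1}$, i.e.\ of the same order as the $p$-biased measure of a star of core size $s-1$. For each $i$ I then apply a dichotomy: either $f_i$ is $(r,\delta)$-global at scale $p_i$ for $r = O(\log \eps^{-1})$ and $\delta$ as in Theorem~\ref{theorem: quasirandom sharp threshold theorem}, or there is a set $J_i$ of size at most $r$ with $\mu_{p_i}((f_i)_{J_i \to 1})$ noticeably larger than $\mu_{p_i}(f_i)$.

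In the global case, Theorem~\ref{theorem: quasirandom sharp threshold theorem} iteratively boosts $\mu_p(f_i)$ to $1 - o_s(1)$ at a scale $q$ with $sq < 1$. Sampling independent $q$-biased sets $\mathbf{X}_1,\dots,\mathbf{X}_s$ and using Chernoff concentration on their sizes, with positive probability each $\mathbf{X}_i$ lies in $f_i^{-1}(1)$ and their total size is less than $n$; a greedy disjointification then yields subsets $A_i \subseteq \mathbf{X}_i$ of size $k_i$ that are pairwise disjoint and still in $\mathcal{F}_i$, producing a forbidden cross $s$-matching. Hence the local alternative must hold for every $i$, giving a small ``junta core'' $J_i$ capturing most of $\mathcal{F}_i$. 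The Junta Method then refines this into an exact junta structure: there is a family $\mathcal{J}_i$ of bounded-size seeds such that, modulo a negligible remainder, $\mathcal{F}_i$ equals the set of $k_i$-sets containing some seed in $\mathcal{J}_i$. The remainder is eliminated by applying the same sharp-threshold dichotomy to the residual family, which cannot itself support a cross matching without contradicting the contradiction hypothesis. The density bound $|\mathcal{F}_i| \ge |\mathcal{S}_{n,k_i,s-1}|$ together with $k_i \le n/s$ then forces $\mathcal{J}_i$ to be generated by singletons forming an $(s-1)$-set $J_i$, so $\mathcal{F}_i \subseteq \mathcal{S}_{n,k_i,J_i}$.

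The final step aligns the centres. If, say, $J_1 \ne J_2$, choose $v \in J_1 \setminus J_2$ and $w \in J_2 \setminus J_1$, and greedily build pairwise disjoint $A_\ell \in \mathcal{F}_\ell$ with $A_1 \ni v$ and $A_1 \cap J_2 = \varnothing$, $A_2 \ni w$ and $A_2 \cap (J_1 \cup A_1) = \varnothing$, and $A_\ell$ containing a chosen element of $J_\ell$ while avoiding $A_1 \cup \dots \cup A_{\ell-1}$ for $\ell \ge 3$; the slack $\sum_\ell k_\ell \le n$ makes each greedy choice possible, producing the forbidden cross $s$-matching. Thus $J_1 = \cdots = J_s =: J$, and then $\mathcal{F}_i \subseteq \mathcal{S}_{n,k_i,J}$ together with $|\mathcal{F}_i| \ge |\mathcal{S}_{n,k_i,s-1}| = |\mathcal{S}_{n,k_i,J}|$ forces equality. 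The main obstacle I expect is the bootstrap from approximate to exact junta structure in Step 3: even a negligible ``error family'' outside the star, which the sharp threshold dichotomy cannot a priori exclude, could in principle enable cross-matchings that sabotage the alignment argument. Handling this via iterated applications of Theorem~\ref{theorem: quasirandom sharp threshold theorem} to residual families is the technical heart of the Junta Method in this cross setting.
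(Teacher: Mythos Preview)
Your proposal has the right spirit---use sharp thresholds to force a junta structure, then bootstrap to the exact result---but there are two genuine gaps that the paper handles differently.

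\textbf{The ``global'' branch does not work as stated.} Theorem~\ref{theorem: quasirandom sharp threshold theorem} only applies while $\mu_p(f_i)\le\delta$ for a fixed small constant $\delta$; it cannot push the measure all the way to $1-o_s(1)$. More seriously, the disjointification step is invalid: knowing that each independent $q$-biased set $\mathbf{X}_i$ contains \emph{some} $A_i\in\mathcal{F}_i$, with $\sum|\mathbf{X}_i|<n$, in no way lets you choose the $A_i$ pairwise disjoint---the $\mathbf{X}_i$ themselves overlap, and the witnessing $A_i$ could all be forced to contain a common small set. The paper's engine for producing cross matchings is a genuine extremal lemma (Lemma~\ref{lem:matchlarge}/\ref{lem:matchlargelimit}, a refinement of the Huang--Loh--Sudakov bound) that triggers already when each $\mu_{p_i}(\mathcal{F}_i)\gtrsim sp_i$. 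The sharp threshold result is used only inside Lemma~\ref{lem:matchuncap} to boost a few \emph{uncapturable} families up to that modest threshold, not to make anything nearly complete. The stability statement (Theorem~\ref{thm:HLSapprox}) is then proved via a maximal system-of-distinct-representatives argument on the high-degree sets $J_i$ (Lemma~\ref{lem:preHLS}), which simultaneously handles the alignment that you treat as a separate step.

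\textbf{The bootstrap from approximate to exact is the missing idea.} Iterating the global/local dichotomy on the residual family $(\mathcal{F}_s)^{\varnothing}_J$ does not kill it: that family can have measure far below $\delta$ and still be nonempty, and once the measure is $o(k/n)$ the sharp threshold theorem gives no useful information. The paper's mechanism (Lemma~\ref{lem:bootstrap}) is different in kind: after stability gives a common $J$ with each $|(\mathcal{F}_i)^{\varnothing}_J|\le\varepsilon\tbinom{n-1}{k_i-1}$, one orders $J=(j_1,\dots,j_{s-1})$ so that $\sum_{i<s}\bigl(1-\mu((\mathcal{F}_i)^{j_i}_J)\bigr)\le\varepsilon_s$, passes $(\mathcal{F}_s)^{\varnothing}_J$ to a very high uniformity via Kruskal--Katona, and then a random injection of the matching gives a contradiction unless $\varepsilon_s=0$. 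This combinatorial step (Kruskal--Katona plus a union bound over a random embedding) is what actually closes the gap, and nothing in your outline substitutes for it.
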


\begin{rem} \label{rem:crossEMC}
Theorem \ref{thm: cross EMC bound}
in the case that all $k_i=k$
was proved by Huang, Loh and Sudakov  \cite{huang2012size}
for $n = \Omega (k^2s)$ and recently by
Frankl and Kupavskii \cite{franklkupavskii2019}
for $n = \Omega (ks \log s)$;
our result applies to $n = \Omega (ks)$,
which is the optimal order of magnitude.
Moreover, we obtain a strong stability result
(see Theorem \ref{thm:HLSapprox} below)
which gives structural information even if we only assume
that the size of each family is within a constant factor
of that of a star:\ either there is a cross matching
or some family correlates strongly with a star.
Besides having independent interest,
this stability result will play a key role
in the proof of our general Tur\'an results.
\end{rem}

\subsection{Expanded hypergraphs}

As mentioned above, there are very few general results
on Tur\'an numbers for a family of hypergraphs.
One family for which there has been substantial progress
is that of expanded graphs (see the survey \cite{mubayi2016survey}).
Given an $r$-graph $G$ and $k \geq r$, the $k$-expansion
$G^+ = G^+(k)$ is the $k$-uniform hypergraph
obtained from $G$ by adding $k-r$ new vertices to each edge,
i.e.\ $G^+$ has edge set $\{e \cup S_e: e\in E(G)\}$
where $|S_e| = k-r$, $S_e \cap V(G) = \es$
and $S_e \cap S_{e'} = \es $ for all distinct $e, e' \in E(G)$.
In particular, a $k$-graph $s$-matching
is the $k$-expansion of a graph $s$-matching.

When $G$ is a graph (the case $r=2$),
in the non-degenerate case when $k$ is less than
the chromatic number $\chi(G)$ the Tur\'an numbers
$\ex(n,G^+(k))$ are well-understood
(see \cite[Section 2]{mubayi2016survey}),
so the main focus for ongoing research
is the degenerate case $k \ge \chi(G)$.
Here Frankl and F\"uredi \cite{frankl1987exact}
introduced the following important parameter
and corresponding construction that seems to often
determine the asymptotics of the Tur\'an number.
For any $r$-graph $G$,
we call $S \subset V(G^+)$ a \emph{crosscut}
if $|E \cap S| = 1$ for all $E \in G^+$.
The crosscut $\cc {G}$ of $G$ is the
size of the minimal such set, i.e.\
	\begin{equation*}
		\cc {G} :=
		\min \big \{|S|: S \subset V(G^+) \mbox { with }
		|E \cap S| = 1 \mbox { for all } E \in G^+ \big \}.
	\end{equation*}
It is easy to see that $\cc {G}$ exists for $k\geq r+1$
and is independent of $k$. Clearly,
 \[ {\cal S}_{n,k,\cc {G}-1}^{(1)}
 :=\big \{ A \in \tbinom {[n]}{k}:
 |A\cap [\cc  {G}-1]| = 1\big \} \]
is $G^+$-free. Moreover, this simple construction
determines the asymptotics
of $\ex(n,G^+(k))$ for $n>n_0(k,G)$ for several graphs $G$,
including paths  \cite{furedi2014exact, kostochka2015turan},
cycles \cite{furedi2014hypergraph, kostochka2015turan}
and trees \cite{furedi2014linear, kostochka2017turan}.
Given this phenomenon, according to Mubayi and Verstra\"ete
\cite{mubayi2016survey}, one of the major open problems
on expansions is to decide when the Tur\'an number
is asymptotically determined by the crosscut construction.
Our next result resolves this problem for all bounded degree
$r$-graphs (so in particular for graphs) in a range
of parameters that is optimal up to constant factors.
Moreover, we also obtain a strong structural
approximation for any family that is close to extremal
(see Theorem \ref{thm:junta} below).

\begin{thm}\label{thm: G+ approx}
For any $r, \DD \ge 2$ and $\eps>0$ there is $C>0$
so that the following holds for any
$r$-graph $G$ with $s$ edges,
maximum degree $\DD (G) \leq \DD $
and $\cc {G} \geq 2$.
For any $k,n \in {\mathbb N}$ with
$C \leq k \leq n/Cs$ we have
$\ex(n,G^+(k)) = (1 \pm \eps )
|{\cal S}^{(1)}_{n,k,\cc {G}-1}|$.
\end{thm}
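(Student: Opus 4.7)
The lower bound is immediate: the family $\mathcal{S}^{(1)}_{n,k,\sigma(G)-1}$ is $G^+$-free, since any copy of $G^+$ contained in it would have $[\sigma(G)-1]$ meeting each of its edges in exactly one vertex, witnessing a crosscut of size strictly less than $\sigma(G)$ and contradicting minimality. The task is therefore to prove the matching upper bound $\ex(n,G^+(k)) \le (1+\eps) |\mathcal{S}^{(1)}_{n,k,\sigma(G)-1}|$, and the plan is to combine the Junta Method with the sharp threshold result Theorem \ref{theorem: quasirandom sharp threshold theorem}.

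Let $\mathcal{F} \subseteq \binom{[n]}{k}$ be $G^+$-free and pass to the $p$-biased setting at $p = \Theta(k/n)$, identifying $\mathcal{F}$ with its up-closure so that $\mu_p(\mathcal{F})$ is comparable (up to a factor depending only on $r$ and $\Delta$) to $|\mathcal{F}|/\binom{n}{k}$, and similarly for the star. The key step is a multiplicative junta approximation driven by the sharp threshold theorem: either (a) some restriction $\mathcal{F}|_{J \to y}$ with $|J| = O(1)$ has significantly larger $p$-biased measure than $\mathcal{F}$, in which case we iterate, or (b) $\mathcal{F}$ is $(r_0,\delta_0)$-global with $r_0 = O_{\eps,r,\Delta}(1)$ and $\delta_0$ of order $\eps \cdot \mu_p(\mathcal{S}^{(1)}_{n,k,\sigma(G)-1})$. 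In case (b), Theorem \ref{theorem: quasirandom sharp threshold theorem} iteratively inflates $\mu_q(\mathcal{F})$ by a factor of $1/\eps$ per multiplicative increment of $p$, eventually reaching $q = \Theta(sk/n)$ with $\mu_q(\mathcal{F})$ bounded away from $0$; at this scale a supersaturation argument based on the cross-matching theorem (Theorem \ref{thm: cross EMC bound}), applied along the expansion structure of $G^+$, forces $\mathcal{F}$ to cross-contain every edge of a copy of $G^+$, contradicting $G^+$-freeness. Iterating case (a) must therefore terminate, leaving $\mathcal{F}$ approximated to within the star-relative error by a $G^+$-free junta $\mathcal{J}$ depending on a coordinate set $J$ of size $O_{\eps,r,\Delta}(1)$.

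The main obstacle is then the structural analysis of $G^+$-free juntas, where the hypotheses $\sigma(G) \ge 2$, $\Delta(G) \le \Delta$ and $|E(G)| = s$ enter essentially. Writing $\mathcal{B} \subseteq 2^J$ for the trace of $\mathcal{J}$ on $J$, each element $B \in \mathcal{B}$ can be completed to an edge of $\mathcal{J}$ using any $k - |B|$ fresh vertices from $[n] \setminus J$. The crosscut $\sigma(G)$ is designed precisely for this situation: if $\mathcal{B}$ admits no set $T \subseteq J$ of size $\sigma(G)-1$ meeting every heavy trace, then one can greedily select $s$ traces of $\mathcal{B}$ whose intersections with $J$ would form a crosscut of size less than $\sigma(G)$, and expand these into an embedded copy of $G^+$ using fresh vertices (which is possible since $n \gg s|J|$, with the bounded degree $\Delta(G)\le \Delta$ controlling the total fresh-vertex budget). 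This contradiction forces the existence of $T \subseteq [n]$ with $|T| = \sigma(G) - 1$ such that $\mathcal{J} \subseteq \mathcal{S}^{(1)}_{n,k,T}$, and the junta approximation then yields $|\mathcal{F}| \le (1+\eps)|\mathcal{S}^{(1)}_{n,k,\sigma(G)-1}|$. The range $C \le k \le n/Cs$ is exactly what is needed for $p = \Theta(k/n)$ and $q = \Theta(sk/n)$ to be separated by a constant factor while both lying in the regime $(0,1/2)$ of Theorem \ref{theorem: quasirandom sharp threshold theorem}, and $k \ge C$ ensures that the junta approximation step has sufficient slack.
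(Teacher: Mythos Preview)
Your overall strategy matches the paper's: use sharp thresholds to show that sufficiently global families must contain $G^+$, so that any $G^+$-free $\mathcal{F}$ is approximated by a junta, then bound $G^+$-free juntas. But the step where the real difficulty lies is glossed over. The critical gap is the claim that once $\mu_q(\mathcal{F})$ is bounded away from $0$ at $q = \Theta(sk/n)$, ``a supersaturation argument based on the cross-matching theorem'' produces a copy of $G^+$. Cross matchings let you find $s$ pairwise disjoint expansion sets \emph{once you have already placed the $r$-element edges of $G$} into $r$-sets each having a large link in $\mathcal{F}$; they give no mechanism for embedding $G$ itself. In the paper this is handled by a genuinely different two-stage procedure (Lemmas \ref{lem:embedlarge} and \ref{lem:embeduncap}): one first embeds $G$ in the \emph{fat $r$-shadow} $\{A \in \tbinom{[n]}{r}: \mu(\mathcal{F}^A_A) \text{ is large}\}$, proving this shadow is dense via Kruskal--Katona combined with the Conlon--Fox--Sudakov degenerate Tur\'an bound for $r$-partite $r$-graphs, and only then lifts each embedded $r$-edge to a $k$-edge of $\mathcal{F}$ via a cross matching in the links (Lemma \ref{lem:matchuncap}). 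The sharp threshold result enters inside this lifting step, not as a direct route from ``large $q$-biased measure'' to ``contains $G^+$''. Without the fat-shadow analysis there is no way to locate the copy of $G$, and your argument does not go through.

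Your structural analysis of the junta is also off. The sentence about selecting traces ``whose intersections with $J$ would form a crosscut of size less than $\sigma(G)$'' conflates properties of the junta with properties of $G$; a crosscut is a feature of $G^+$, not of $\mathcal{J}$. And the conclusion you want is $\mathcal{J} \subseteq \mathcal{S}_{n,k,T}$, not $\mathcal{S}^{(1)}_{n,k,T}$, since edges of $\mathcal{J}$ may meet $T$ in more than one point (one then uses $|\mathcal{S}_{n,k,T}| \le (1+\eps)|\mathcal{S}^{(1)}_{n,k,T}|$ for $k \ge C$). The paper's argument is more direct: take $J$ to be the set of high-degree vertices of $\mathcal{F}$; if $|J| \ge \sigma(G)$ then the links $\mathcal{F}^{j}_J$ for $\sigma(G)$ vertices $j\in J$ are each large, and by Lemma \ref{lem:embedlarge} they cross contain the exclusive links of a minimal crosscut of $G^+$, contradicting $G^+$-freeness. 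This forces $|J|\le \sigma(G)-1$, and then $\mathcal{F}\setminus \mathcal{S}_{n,k,J}$ is global, hence (by the embedding machinery above) small.
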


\begin{rem}
Some lower bound on $k$ is necessary to obtain the conclusion
in Theorem \ref{thm: G+ approx}. Indeed, we have already mentioned
that the non-degenerate case $k \le \chi(G)$ when $G$ is a graph
exhibits different behaviour
(a complete partite $k$-graph shows that $\ex(n,G^+) = \OO(n/k)^k$),
and moreover, examples in \cite{mubayi2016survey} show that some
lower bound on $k$ may be necessary even if $G$ is bipartite
(e.g.\ if $G=K_{9,9}$ then consider the $3$-graph of triangles
in a suitably dense random graph made $G$-free by edge deletions).
The upper bound on $k$ in our result is also necessary
up to the constant factor by space considerations, as even the
complete $k$-graph $\tbinom{[n]}{k}$ can only contain $G^+(k)$
if $n \ge |V(G^+)| = |V(G)| + (k-2)s$.
With the exception of Frankl's matching theorem \cite{frankl2013improved},
Theorem \ref{thm: G+ approx} appears to be the only known Tur\'an result
in which both the uniformity $k$ and the size $s$
can vary over such a wide range.
\end{rem}

Next we consider conditions under which we can refine
the asymptotic result of Theorem \ref{thm: G+ approx}
and determine the Tur\'an number $\ex(n,G^+)$ exactly.
One complication here is that crosscuts may be beaten by stars
${\cal S}_{n,k,\tau(G)-1}$, where
\begin{equation*}
		\tau (G) := \min \big \{|S|: |S\cap e|
		\geq 1 \mbox { for all } e \in E(G) \big \}
\end{equation*}
is the transversal number of $G$.
Clearly $\tau(G) \le \cc{G}$. For fixed $s$,
crosscuts cannot be beaten by smaller stars,
but this may not hold when $s$ grows with $n$,
as then edges with more than one vertex
in the base of the star are significant.
Another complication is that lower order correction
terms are necessary for certain $G$, e.g.\ for
$k$-expanded paths $P_\ell^+(k)$ of length $\ell$
for $n>n_0(k,\ell)$ we have
$\ex(n,P_3^+(k)) = \tbinom{n-1}{k-1} = |{\cal S}_{n,k,1}|$,
as predicted by the crosscut/star construction, but
$\ex(n,P_4^+(k)) = \tbinom{n-1}{k-1} + \tbinom{n-3}{k-2}$,
as we can add all sets containing some fixed pair of vertices.
This is analogous to the familiar situation in extremal graph
theory where we only expect exact results for graphs that
are critical with respect to the key parameter
of the extremal construction. Accordingly, we introduce
the following analogous concept of criticality for
expanded hypergraphs with respect to crosscuts and stars:
we say that $G$ is \emph{critical}
if it has an edge $e$ such that
\[ \cc{G \sm e} = \tau(G \sm e) < \tau(G) = \cc{G}. \]
We obtain the following general exact result for Tur\'an numbers.

\begin{thm} \label{thm:critical}
For any $r,\DD \ge 2$ there is $C >0$ such that
for any critical $r$-graph $G$ with $s$ edges, maximum degree
$\DD(G) \le \DD$ and $C \leq k \leq n/Cs$ we have
$\ex(n,G^+(k)) = |{\cal S}_{n,k,\cc{G}-1}|$.
\end{thm}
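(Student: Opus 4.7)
Write $\tau := \cc{G} = \tau(G)$, which coincide by criticality. The lower bound $\ex(n,G^+(k)) \ge |{\cal S}_{n,k,\tau-1}|$ is immediate once we observe that ${\cal S}_{n,k,[\tau-1]}$ is $G^+(k)$-free: if it contained a copy of $G^+$ then $[\tau-1]$ would hit every edge of that copy and hence be a transversal of $G^+$, contradicting the identity $\tau(G^+) = \tau(G) = \tau$ (each expansion vertex of $G^+$ lies in a unique edge, so a transversal of $G^+$ may be replaced by a transversal of the same or smaller size using only base vertices). For the matching upper bound I would let $\mc{F}$ be an extremal $G^+(k)$-free family, so $|\mc{F}| \ge |{\cal S}_{n,k,\tau-1}|$, and invoke the junta stability statement that accompanies Theorem~\ref{thm: G+ approx} (namely Theorem~\ref{thm:junta}) to obtain a set $J \subseteq [n]$ with $|J| = \tau - 1$ and
\[
|\mc{F} \triangle {\cal S}_{n,k,J}| \le \eta\,|{\cal S}_{n,k,\tau-1}|
\]
for any prescribed small $\eta$. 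The plan is then to sharpen this approximate containment to $\mc{F} \subseteq {\cal S}_{n,k,J}$; combined with maximality this forces $\mc{F} = {\cal S}_{n,k,J}$ and the stated formula.

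The bootstrapping step is where criticality enters. Let $e \in E(G)$ be a critical edge, so $\tau(G \sm e) < \tau$, and fix a crosscut $T \subseteq V(G \sm e)$ with $|T| = \cc{G \sm e} \le |J|$; using the bounded max degree $\DD(G) \le \DD$ together with $|V(e)| = r$, we may arrange $T \cap V(e) = \es$ by a local exchange (replacing at most $O(r\DD)$ vertices of $T$, with room to do so since $|T| \le |J|$). Suppose for contradiction that some $A \in \mc{F}$ is disjoint from $J$. I would embed $V(G)$ into $[n]$ by sending $V(e)$ injectively into $A$, $T$ injectively into $J$, and the remaining base vertices into $[n] \sm (A \cup J)$, and then greedily build a copy of $G^+(k)$ inside $\mc{F}$: take $A$ itself as the $k$-expansion of $e$, and for each remaining $f \in E(G \sm e)$ choose $B_f \in \mc{F}$ containing the prescribed image of $V(f)$ together with $k - r$ fresh private expansion vertices disjoint from all previous choices. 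Because the image of $V(f)$ contains the $T$-vertex covering $f$, which now lies in $J$, each candidate $B_f$ automatically belongs to ${\cal S}_{n,k,J}$; by stability at most an $\eta$-fraction of candidates fail to belong to $\mc{F}$, so provided $\eta$ is small enough in terms of $r$, $\DD$ and $s$, a valid $B_f$ exists at every step. This produces a $G^+(k)$ inside $\mc{F}$, the desired contradiction, forcing $\mc{F} \subseteq {\cal S}_{n,k,J}$ and completing the argument.

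The principal obstacle is that Theorem~\ref{thm:critical} requires the constant $C$ to depend only on $r$ and $\DD$, whereas my outlined stability budget $\eta$ appears to shrink with $s$. Overcoming this will require an $s$-uniform form of the junta approximation, essentially a structure theorem saying that any near-extremal family is almost \emph{exactly} the star outside a junta of $O_{r,\DD}(1)$ coordinates; the greedy embedding can then be run locally, with only $O_{r,\DD}(1)$ forbidden vertices encountered at each step, independently of $s$. A secondary but routine issue is that Theorem~\ref{thm: G+ approx} is phrased against ${\cal S}^{(1)}_{n,k,\tau-1}$ whereas Theorem~\ref{thm:critical} involves the slightly larger ${\cal S}_{n,k,\tau-1}$; the discrepancy of $O(\binom{n}{k-2})$ per vertex of $J$ is negligible in the range $k \le n/Cs$, and any element of $\mc{F}$ meeting $J$ in two or more vertices already sits inside ${\cal S}_{n,k,J}$ and therefore cannot obstruct the bootstrapping.
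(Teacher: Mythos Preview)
Your lower bound and the overall architecture (junta approximation followed by a bootstrapping embedding) match the paper's strategy. The gap you identify, however, is more severe than an $s$-dependence, and your greedy step fails for a reason that no ``$s$-uniform junta approximation'' alone can repair. The claim ``at most an $\eta$-fraction of candidates fail to belong to $\mc{F}$'' is incorrect: what stability controls is the \emph{absolute} number $|\s_{n,k,J}\sm\f|$ of bad sets, not their fraction among candidates for a fixed $B_f$. For each $f\in G\sm e$, the pool of candidate $k$-sets containing the prescribed $r$ image vertices has size of order $\tbinom{n}{k-r}$, whereas even the refined junta approximation (Theorem~\ref{thm:juntarefined}) only gives $|\s_{n,k,J}\sm\f|\le\eps\tbinom{n-1}{k-1}$ for a constant $\eps$. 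Since $\tbinom{n}{k-r}/\tbinom{n-1}{k-1}\approx (k/n)^{r-1}$ can be as small as $(C/n)^{r-1}$, you would need $\eps$ to decay with $n$, not merely with $s$.

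The paper's resolution (Theorem~\ref{thm:crit+} via Lemma~\ref{lem:crit}) does not pick a specific $A\in\f\sm\s_{n,k,J}$ at this stage. Instead it works with the link families $\f_i=\f^{j_i}_J$ for $j_i\in J$, which are nearly complete (because $|\f|\ge|\s_{n,k,J}|$ forces $\sum_i(1-\mu(\f_i))$ to be small), together with $\f_\sS:=\f^\es_J$. Criticality decomposes $G$ via a minimum crosscut into exclusive links $H_1,\dots,H_{\sS-1}$ plus the single critical edge $H_\sS$, and one shows that if $\mu(\f_\sS)\ge\eps^d k/n$ then $\f_1,\dots,\f_\sS$ cross contain $H_1^+,\dots,H_\sS^+$, a contradiction. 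The core of this step (Lemma~\ref{lem:bootstrap}) is a random-injection union bound $\mb{P}(\exists i:\phi(e_i)\notin\f_i)\le\sum_i(1-\mu(\f_i))<1$, which is uniform in $s,k,n$; the critical edge is placed separately using Fairness or fat shadow estimates on $\f_\sS$. This upgrades the error from $\eps\tbinom{n-1}{k-1}$ to $\eps^d\tbinom{n-1}{k-1}$ for every $\eps\in(0,\bB)$; since $J$ is fixed once and for all by the refined junta approximation, letting $\eps\to 0$ forces $\f^\es_J=\es$, which is exactly Theorem~\ref{thm:critical}. Your greedy embedding does reappear in the paper, but only in the stability refinement for $k\le\sqrt{n}$, after the bootstrap has already driven $|\s_{n,k,J}\sm\f|$ down to $O(\tbinom{n-r}{k-r})$.

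Two minor remarks. The local exchange is unnecessary: if a minimum crosscut $T$ of $(G\sm e)^+$ met $e$ it would be a transversal of $G$ of size $\tau(G)-1$, contradicting criticality. And the $\s^{(1)}$ versus $\s$ issue is a non-issue, since Theorem~\ref{thm:junta} is already stated relative to $\s_{n,k,J}$.
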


This result applies to many graphs considered
in the previous literature, such as paths of odd length.
Paths of even length are not critical,
but satisfy a generalised criticality property:
deleting one edge does not reduce the transversal number,
but deleting two edges (whether disjoint or intersecting)
does reduce the crosscut number.
Thus we have the following natural construction
for excluding any expanded path $P_\ell^+$ of length $\ell$.
Let $\f^*_{n,k,\ell} = \s_{n,k,J}$
with $|J|=\cc{P_\ell}-1$ if $\ell$ is odd,
or if $\ell$ is even obtain $\f^*_{n,k,\ell}$ from $\s_{n,k,J}$
by adding $\{A \in \tbinom{[n]}{k}: T \sub A\}$
for some $T \in \tbinom{[n] \sm J}{2}$.
Clearly $\f^*_{n,k,\ell}$ is $P_\ell^+$-free.
F\"uredi, Jiang and Seiver \cite{furedi2014exact} showed
that $\ex(n,P_\ell^+) = |\f^*_{n,k,\ell}|$ provided $n \gg n_0(k,s)$,
and conjectured that this holds provided $n \geq Cks$.
We prove this conjecture.

\begin{cor}\label{cor: expanded paths}	
There is $C>0$ so that if $n,k,\ell\in {\mathbb N}$ and
$C \leq k \leq n/C\ell$ then $\ex(n,P_\ell^+) = |\f^*_{n,k,\ell}|$.
\end{cor}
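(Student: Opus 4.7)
The proof splits on the parity of $\ell$. For odd $\ell$ we intend to apply Theorem \ref{thm:critical} as a black box after verifying that $P_\ell$ is critical: deleting a pendant edge from $P_\ell$ yields (up to an isolated vertex) a copy of $P_{\ell-1}$, for which one checks $\cc{P_{\ell-1}} = \tau(P_{\ell-1}) = (\ell-1)/2$, which is strictly smaller than $\tau(P_\ell) = \cc{P_\ell} = (\ell+1)/2$. Since $\DD(P_\ell) \le 2$ and $s = \ell$ edges, Theorem \ref{thm:critical} (with $r = \DD = 2$) applied in the range $C \le k \le n/C\ell$ gives $\ex(n, P_\ell^+) = |\s_{n,k,\cc{P_\ell}-1}| = |\f^*_{n,k,\ell}|$.

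For even $\ell$ the graph $P_\ell$ fails to be critical---deleting any single edge preserves the crosscut $\ell/2$---but, as noted in the excerpt, it has the weaker property that appropriate pairs of edge-deletions strictly reduce the crosscut. The plan is a refined analogue of the proof of Theorem \ref{thm:critical}. Given a $P_\ell^+$-free family $\mc{F}$ with $|\mc{F}| \ge |\f^*_{n,k,\ell}|$, we first apply Theorem \ref{thm: G+ approx} together with the junta approximation result underlying it (cf.\ Theorem \ref{thm:junta}) to produce a set $J \sub [n]$ with $|J| = \ell/2 - 1 = \cc{P_\ell} - 1$ such that $\mc{F}$ is extremely close to the star $\s_{n,k,J}$. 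Writing $\mc{A} := \s_{n,k,J} \sm \mc{F}$ and $\mc{B} := \mc{F} \sm \s_{n,k,J}$ (so every $B \in \mc{B}$ avoids $J$), the extremality hypothesis reduces to $|\mc{B}| - |\mc{A}| \ge \tbinom{n-|J|-2}{k-2}$, and the goal is to show this is an equality with $\mc{A} = \es$ and $\mc{B}$ consisting exactly of the $k$-sets in $\tbinom{[n] \sm J}{k}$ that contain some fixed pair $T$.

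The decisive step will be a bootstrap argument that exploits two-edge-criticality. Whenever $B_1, B_2 \in \mc{B}$ do not share a common pair, we embed $P_\ell^+$ by placing $B_1, B_2$ as the two distinguished edges whose removal drops $\cc{P_\ell}$ by $2$, and using $\ell - 2$ sets from $\mc{F} \cap \s_{n,k,J}$ to realise the remaining two subpaths with vertices of $J$ serving as the internal shared vertices. Obstructions to such embeddings must be charged to missing sets in $\mc{A}$, and a shadow/double-counting argument (in the spirit of Kruskal--Katona) converts this into the inequality $|\mc{B}| \le |\mc{A}| + \tbinom{n-|J|-2}{k-2}$, with equality forcing $\mc{B}$ to be a `pair-star'. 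The main obstacle will be this final quantitative accounting: unlike Theorem \ref{thm:critical}, where single-edge deletion suffices and the symmetric-difference combinatorics is essentially linear, the even-$\ell$ case is bilinear in $\mc{B}$ and demands careful control of pairwise overlaps, as well as a delicate treatment of how $J$-centred path embeddings must be routed around $\mc{A}$ when the two distinguished sets intersect only partially.
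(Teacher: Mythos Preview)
Your treatment of the odd-$\ell$ case is correct and matches the paper: odd paths are critical in the sense of Theorem~\ref{thm:critical}, and the result follows directly.

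For even $\ell$, however, your proposal diverges from the paper and contains both a factual slip and a genuine gap. First, the slip: removing two suitable edges from $P_\ell$ (say the two pendant edges) drops the crosscut from $\ell/2$ to $\ell/2 - 1$, i.e.\ by~$1$, not by~$2$ as you write. This matters because with $|J| = \ell/2 - 1$ you have exactly enough junta vertices to serve as crosscut for $\ell - 2$ edges of the path, leaving \emph{two} edges to be supplied by $\mc{B}$; the arithmetic works, but only with the correct drop of~$1$.

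More seriously, the paper does not carry out the ad-hoc bilinear accounting you sketch. Instead it introduces a generalised notion of $(a_1,a_2)$-criticality (Section on the F\"uredi--Jiang--Seiver conjecture, Setup~\ref{set:ab}), observes that even paths are $(2,2)$-critical, and proves a general exact result (Theorem~\ref{thm:ab}) for all such graphs. The engine is Lemma~\ref{lem:ab}, a bootstrapping statement phrased in terms of \emph{capturability} rather than raw size: if the link families $\f_i = \f^{j_i}_J$ are nearly complete and the outside family $\f^\es_J$ is suitably uncapturable, one finds $P_\ell^+$. This in turn relies on the enhanced upgrading lemmas and the refined cross-matching result (Lemma~\ref{lem:matchuncap+}), which ultimately call on the sharp threshold machinery of Part~\ref{part:sharp}. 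Your proposed Kruskal--Katona/double-counting route would need to certify that $\mc{B}$ is covered by a single pair, and the obstacle you flag---controlling pairs $B_1,B_2$ that intersect in one vertex, or are disjoint but fail to extend to a full path through $J$ because of holes in $\mc{A}$---is precisely where the paper invokes capturability and global hypercontractivity rather than elementary shadow bounds. There is no indication that a purely combinatorial argument of the kind you outline suffices over the full range $k \le n/C\ell$; the paper's heavier analytic input appears to be essential.
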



\subsection{The Junta Method}

In recent years, the Analysis of Boolean functions
has found significant application in Extremal Combinatorics,
via the connection provided by the Margulis-Russo formula
between the sharp threshold phenomenon
and influences of Boolean functions.
This approach was initiated by Dinur and Friedgut
\cite{dinur2009intersecting}, who applied a theorem of
Friedgut \cite{friedgut1998boolean} on Boolean functions
of small influence to prove that large uniform intersecting families
can be approximated by juntas,
i.e.\ families that depend only on a few coordinates.
This connection has since played a key role
in intersection theorems for a variety of settings,
including graphs \cite{ellis2012triangle},
permutations \cite{ellis2011intersecting}
and sets \cite{ellis2016stability, Ellis2017}.

The approach of Dinur and Friedgut was substantially
generalised by Keller and Lifshitz \cite{keller2017junta}
to apply to a variety of Tur\'an problems on expanded hypergraphs.
At a very high level, their Junta Method is a version
of the Stability Method in Extremal Combinatorics,
in that it consists of two steps:
an approximate step that determines the rough structure
of families that are close to optimal,
and an exact step that refines the structure
and determines the optimal construction.
Their approximate step consisted of showing that
any $G^+$-free family is approximately contained
in a $G^+$-free junta. This is also true in our approach,
but the crucial difference is that they required
the number $s$ of edges in $G$ to be fixed,
whereas we allow it to grow as a function of $n$.
Friedgut's theorem can no longer be applied in this setting,
as we require a threshold result for
Boolean functions $f:\{0,1\}^n \to \{0,1\}$
according to the $p$-biased measure $\mu_p$
in the sparse regime where both $p$ and $\mu_p(f)$
may be functions of $n$ that approach zero.
Our new sharp threshold theorems, as in subsection \ref{sec:kahn kalai}, provide the needed
improvement on the analytic side which, when combined with a number of additional combinatorial ideas,
allow us to obtain the following junta approximation theorem.

\begin{restatable}{thm}{junta}
\label{thm:junta}
For any $r,\DD \ge 2$ and $\eps>0$ there are $c,C>0$
so that, given an $r$-graph $G$ with $s$ edges
and maximum degree $\DD (G) \leq \DD $,
for any $G^+$-free ${\cal F} \sub \tbinom {[n]}{k}$
with $C \leq k \leq \tfrac {n}{Cs}$,
there is $J \sub V(G)$ with $|J| \leq \cc {G}-1$
and $|{\cal F} \sm {\cal S}_{n,k,J}|
\leq \eps |{\cal S}_{n,k,\cc {G}-1}|$.
\end{restatable}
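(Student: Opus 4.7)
The plan is to follow the framework of the Junta Method, with the sharp threshold Theorem \ref{theorem: quasirandom sharp threshold theorem} replacing Friedgut's theorem as the analytic engine; this replacement is essential in the sparse regime, where $p = \Theta(k/n)$ and $\mu_p(f)$ may both tend to zero.

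Step 1 (Reduction to the biased cube). Set $p \asymp k/n$ and associate to $\f$ its upward closure $\widetilde{\f} = \{B \sub [n] : B \supseteq A \text{ for some } A \in \f\}$, regarded as a monotone Boolean function $f \colon \{0,1\}^n \to \{0,1\}$ under $\mu_p$. Standard sampling between the slice and the biased cube gives $\mu_p(f) \gtrsim |\f|/\binom{n}{k}$, and the $G^+$-freeness of $\f$ transfers to an upper bound $\mu_q(f) = o(1)$ at some $q = (1+\aA)p$, because the hypothesis $n \ge Csk$, together with the bounded uniformity $r$ and bounded degree $\DD$ of $G$, ensures that a random $q$-biased set almost surely contains a copy of $G^+$ via a greedy embedding.

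Step 2 (Iterative junta extraction). Run the standard peeling: as long as there exists $J \sub [n]$ with $|J| \le r$ and $\mu_p(f_{J \to \mathbf{1}}) > \dD$, with $r = C\log \eps^{-1}$ and $\dD = 10^{-3r-1}\eps^3$ chosen to match the hypothesis of Theorem \ref{theorem: quasirandom sharp threshold theorem}, mark $J$ as \emph{popular} and subtract the corresponding star from the support of $f$. The process terminates after $O_\eps(1)$ steps with a bounded family $\mc{J}$ of popular sets and a residual function $f_{\mathrm{res}}$ that is $(r,\dD)$-global in exactly the sense required by the sharp-threshold theorem.

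Step 3 (Killing the residue). Apply Theorem \ref{theorem: quasirandom sharp threshold theorem} to $f_{\mathrm{res}}$ at the pair $(p,q)$. If $\mu_p(f_{\mathrm{res}})$ were not already tiny, the theorem would inflate $\mu_q(f_{\mathrm{res}})$ by a factor $1/\eps$; iterating this doubling $O(\log \eps^{-1})$ times would push the measure above the threshold at which a $\mu_q$-random set contains $G^+$, contradicting the upper bound from Step 1. Hence $\mu_p(f_{\mathrm{res}}) \le \eps \cdot \mu_p(\s^{(1)}_{n,k,\cc{G}-1})$, which gives the desired bound on the residue.

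Step 4 (Classifying the junta skeleton). The popular $J \in \mc{J}$ each contribute a star $\s_{n,k,J}$, and since $\f$ is $G^+$-free, these stars cannot cross-contain any copy of $G^+$; in particular, their intersection pattern is constrained by the cross-matching obstruction. Applying the cross EMC stability result Theorem \ref{thm: cross EMC bound} (and the strengthened correlation version announced in Remark \ref{rem:crossEMC}), the popular $J$'s are forced to share a common core $J^*$ with $|J^*| \le \cc{G}-1$. The star $\s_{n,k,J^*}$ then approximates $\f$ with the required $\eps$-loss.

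The hard step is Step 4: translating the global $G^+$-freeness into a cross-matching constraint on the discrete family $\mc{J}$, and doing so quantitatively so that the total approximation error remains $\eps$ rather than compounding with $|\mc{J}|$ or with the number of peeling iterations. This is precisely the role played by the cross-matching stability Theorem \ref{thm: cross EMC bound} in the architecture of the paper, which is why that result is developed before Theorem \ref{thm:junta}.
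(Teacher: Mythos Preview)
Your Step 1 contains a genuine conceptual gap that breaks the whole scheme. You claim that $G^+$-freeness of $\f$ forces $\mu_q(\widetilde\f)=o(1)$ because ``a random $q$-biased set almost surely contains a copy of $G^+$''. But $\widetilde\f$ is a family of \emph{single} subsets of $[n]$, and $\mu_q(\widetilde\f)$ being large says only that a random set is likely to contain \emph{one} edge of $\f$. The statement ``$\f$ contains $G^+$'' means there exist $s$ sets in $\f$ with a prescribed intersection pattern; there is no mechanism by which a bound on $\mu_q(\widetilde\f)$ alone certifies or excludes such a configuration. Indeed, nothing stops a $G^+$-free $\f$ from having $\mu_q(\widetilde\f)$ bounded away from $0$ and $1$, so Step~3 has no contradiction to reach. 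The sharp threshold theorem cannot be applied to $\widetilde\f$ itself in this way.

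The paper's argument is structurally quite different. It defines $J$ in one shot as the set of vertices of high degree in $\f$ (threshold $\bB=e^{-k/C_1}+C_1sk/n$); there is no iterative peeling and no appeal to Theorem~\ref{thm: cross EMC bound}. The bound $|J|\le\cc G-1$ comes from an embedding lemma (Lemma~\ref{lem:embedlarge}): if $|J|\ge\cc G$, the links $\f^{j}_J$ for $j$ ranging over a crosscut are each dense enough to cross-contain the links of $G^+$, contradicting $G^+$-freeness. The bound on the remainder $\f^{\es}_J$ comes from a second embedding lemma (Lemma~\ref{lem:embeduncap}): a sufficiently \emph{uncapturable} family contains $G^+$, so $\f^{\es}_J$ must be capturable, hence small. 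The sharp threshold Theorem~\ref{theorem: quasirandom sharp threshold theorem} is used, but two layers down: it drives Lemma~\ref{lem:matchuncap} on cross matchings in uncapturable families, which is the ``lifting'' step inside Lemma~\ref{lem:embeduncap}. The embedding itself proceeds via fat shadows: one first finds a copy of $G$ in the fat $r$-shadow of $\f^{\es}_J$, then lifts each shadow edge to a full edge of $\f$ by a cross-matching argument.

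Your Step~4 also has the dependency backwards: Theorem~\ref{thm: cross EMC bound} is proved \emph{after} Theorem~\ref{thm:junta} in the paper and is not invoked in its proof. The structural control on $J$ comes directly from the crosscut/embedding argument, not from any cross-matching stability on a family of extracted juntas.
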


We note that Theorem \ref{thm: G+ approx}
is immediate from Theorem \ref{thm:junta},
as for $k \geq C \gg \eps^{-1}$ we have
\[ \ex(n,G^+) \geq |{\cal S}_{n,k,\cc {G}-1}^{(1)}|
\geq (1-\eps )|{\cal S}_{n,k,\cc {G}-1}|.\]

\newpage

\part{Hypercontractivity of global functions} \label{part:hyp}

This part concerns our theory of global hypercontractivity,
which underpins all the results of this paper.
We start in Section \ref{sec:hyp}
by proving Theorem \ref{thm:Hypercontractivity},
which is the form of our result that suffices
for our subsequent applications.
In the remainder of the part
(which could be omitted by a reader
primarily interested in these applications)
we investigate the theory more deeply,
as this has independent interest and further applications.
Section \ref{sec:genhyp} generalises our hypercontractivity result
in two directions: we consider general norms and general product spaces.
We conclude this part in Section \ref{sec:inv}
by proving our $p$-biased version of the
Invariance Principle and remarking on some of its applications
(we omit the details of these for the sake of brevity).

\subsection*{Notation} \label{sec:not}

Here we summarise some notation
and basic properties of Fourier analysis on the cube.
We fix $p \in (0,1)$ and suppress it in much of our notation,
i.e.\ we consider $\{0,1\}^n$ to be equipped with the $p$-biased
measure $\mu_p$, unless otherwise stated. We let $\sigma = \sqrt {p(1-p)}$
(the standard deviation of a $p$-biased bit). For each $i\in [n]$
we define $\chi_{i}\colon\left\{ 0,1\right\} ^{n}\to {\mathbb R}$
by $\chi_{i}\left(x\right)=\frac {x_i-p}{\sigma }$
(so $\chi_i$ has mean $0$ and variance $1$).
We use the orthonormal Fourier basis $\left\{ \chi_{S}\right\}_{S \subset [n]}$
of $L^{2}\left(\left\{ 0,1\right\} ^{n},\mu_{p}\right)$,
where each $\chi_{S}:=\prod_{i\in S}\chi_{i}$.
Any $f: \{0,1\}^n \to {\mathbb R}$ has a unique expression
$f=\sum _{S\sub [n]}\hat{f}(S)\chi _S$
where $\{\hat f(S)\}_{S\subset [n]}$ are the
\emph{$p$-biased Fourier coefficients of }$f$.
Orthonormality gives the Plancherel identity
$\bgen{f,g} = \sum_{S \sub [n]} \hat{f}(S) \hat{g}(S)$.
In particular, we have the Parseval identity
$\mb{E}[f^2] = \|f\|_2^2 = \bgen{f,f}
= \sum_{S \sub [n]} \hat{f}(S)^2$.
For $\mc{F} \sub \{0,1\}^n$ we  define the $\mc{F}$-truncation
$f^{\mc{F}} = \sum_{S \in \mc{F}} \hat{f}(S)\chi _S$.
Our truncations will always be according to some degree threshold $r$,
for which we write $f^{\le r} = \sum_{|S| \le r} \hat{f}(S)\chi _S$.

For $i \in [n]$, the $i$-\emph{derivative} $f_i$
and $i$-\emph{influence} $\mathrm{I}_i(f)$ of $f$ are
\begin{align*}
f_i & =\mathrm{D}_{i}\left[f\right] =\sigma \big (f_{i\to1}-f_{i\to0}\big )
= \sum_{S: i\in S}\hat{f}\left(S\right)\chi_{S\backslash\left\{ i\right\} },
\text{ and} \\
\mathrm{I}_i(f) & = \| f_{i \to 1} - f_{i \to 0} \|_2^2
= \sS^{-2} \mb{E}[f_i^2]
= \tfrac{1}{p(1-p)} \sum_{S: i\in S} \hat{f}(S)^2.
\end{align*}
The \emph{influence} of $f$ is
\begin{align} \label{eq:inf}
\mathrm{I}(f) = \sum_i \mathrm{I}_i(f)
= (p(1-p))^{-1} \sum_S |S| \hat{f}(S)^2.
\end{align}
In general, for $S \sub [n]$, the $S$-\emph{derivative} of $f$ is
obtained from $f$ by sequentially applying $\mathrm{D}_i$ for each $i \in S$, i.e.\
\[
  \mathrm{D}_S(f)
= \sigma ^{|S|} \sum_{x\in\left\{ 0,1\right\} ^{S}}\
 (-1)^{\left|S\right|-\left|x\right|}{f}_{S\to x}
 = \sum _{T: S \subset T} \hat f (T) \chi _{T\setminus S}.
\]
The \emph{$S$-influence} of $f$
(as in  Definition \ref{def:generalised influences}) is
\begin{align}
\label{equation: gen influence Fourier expression}
\mathrm{I}_{S}(f)= \sigma ^{-2|S|}\| \mathrm{D}_S\left(f\right) \|_2^2
= \sigma ^{-2|S|}\sum _{E: S \subset E} \hat f (E)^2.
\end{align}
Recalling that a function $f$ has $\alpha$-small generalised influences
if $\mathrm{I}_{S}(f)\le\alpha \mb{E}[f^2]$ for all $S\subset [n]$,
we see that this is equivalent to
$\mb{E}[\mathrm{D}_S\left(f\right)^2] \le \alpha \sigma^{2|S|} \mb{E}[f^2]$
for all $S \subset [n]$.

\section{Hypercontractivity and generalised influences}
\label{sec:hyp}

In this section we prove our hypercontractive inequality
(Theorem \ref{thm:Hypercontractivity}), which is the fundamental result
that underpins all of the results in this paper.

The idea of the proof is to reduce hypercontractivity in $\mu_p$
to hypercontractivity in $\mu_{1/2}$ via the `replacement method'
(the idea of Lindeberg's proof of the Central Limit Theorem, and of
the proof of Mossel, O'Donnell and Oleszkiewicz
\cite{mossel2010noise} of the invariance principle).
Throughout this section we fix $f:\{0,1\}^n \to \mb{R}$
and express $f$ in
the $p$-biased Fourier basis as $\sum_S \hat{f}(S) \chi^p_S$,
where $\chi^p_S = \prod_{i \in S} \chi^p_i$
and $\chi^p_i(x) = \tfrac{x_i-p}{\sS}$
(the same notation as above, except that we introduce the
superscript $p$ to distinguish the $p$-biased and uniform settings).

For $0 \le t \le n$ we define
$f_t = \sum_S \hat{f}(S) \chi_S^t$,  where
\[\chi^t_S = \prod\limits_{i\in S\cap [t]}{\chi^{1/2}_i(x)}
 \prod\limits_{i\in S\setminus [t]}{\chi^{p}_i(x)}
\in L^2(\{0,1\}^{[t]},\mu_{1/2}) \times
 L^2(\{0,1\}^{[n] \sm [t]},\mu_p). \]
Thus $f_t$ interpolates from $f_0=f \in L^2(\{0,1\}^n,\mu_p)$
to $f_n = \sum_S \hat{f}(S) \chi^{1/2}_S
\in L^2(\{0,1\}^n,\mu_{1/2})$.
As $\{ \chi^t_S: S \sub [n]\}$ is an orthonormal basis
we have $\|f_t\|_2 = \|f\|_2$ for all $t$.

We also define noise operators $\mathrm{T}^t_{\rho',\rho}$
on $L^2(\{0,1\}^{[t]},\mu_{1/2}) \times
 L^2(\{0,1\}^{[n] \sm [t]},\mu_p)$
by $\mathrm{T}^t_{\rho',\rho}(g)(\xb) =
\mb{E}_{\yb \sim N_{\rho',\rho}(\xb)}[f(\yb)]$,
where to sample $\yb$ from $N_{\rho',\rho}(\xb)$,
for $i \le t$ we let $y_i=x_i$ with probability $\rho'$
or otherwise we resample $y_i$ from $\mu_{1/2}$,
and for $i>t$ we let $y_i=x_i$ with probability $\rho$
or otherwise we resample $y_i$ from $\mu_p$.
Thus $\mathrm{T}^t_{\rho',\rho}$ interpolates from
$\mathrm{T}^0_{\rho',\rho}=\mathrm{T}_{\rho}$ (for $\mu_p$) to
$\mathrm{T}^n_{\rho',\rho}=\mathrm{T}_{\rho'}$ (for $\mu_{1/2}$).

We record the following estimate for $4$-norms
of $p$-biased characters:
\[ \lambda := \mb{E}[ (\chi^p_i)^4 ]
= \sS^{-4}( p (1-p)^4 + (1-p) p^4 )
= \sS^{-2}( (1-p)^3 + p^3 ) \le \sS^{-2}. \]

The core of our argument by replacement is the
following lemma which controls the evolution of
$\mb{E}[(\mathrm{T}^t_{2\rho,\rho} f_t)^4]
= \norm{\mathrm{T}^t_{2\rho,\rho} f_t}_4^4$
for $0 \le t \le n$.

\begin{lem} \label{lem:Replacement step}
$\mb{E}[(\mathrm{T}^{t-1}_{2\rho,\rho} f_{t-1})^4]
\le \mb{E}[(\mathrm{T}^t_{2\rho,\rho} f_t)^4]
+ 3\lambda\rho^4 \mb{E}[(\mathrm{T}^t_{2\rho,\rho} ((\mathrm{D}_t f)_t) )^4]$.
\end{lem}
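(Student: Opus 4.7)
The plan is to isolate the $t$-th coordinate by Fourier decomposition, apply the noise operator explicitly (using that the characters are eigenvectors), expand the fourth moments via the binomial theorem, and then control a residual cubic cross-term that arises from the bias $p \ne 1/2$.

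First I would write $f = g + h \chi^p_t$, where $g = \sum_{S \not\ni t} \hat f(S) \chi^p_S$ and $h = \sum_{S \ni t} \hat f(S)\chi^p_{S \setminus t}$ depend only on $x_{-t}$. Since $f_{t-1}$ and $f_t$ differ only in the character used on coordinate $t$, one has $f_{t-1} = g + h\chi^p_t$ and $f_t = g + h\chi^{1/2}_t$ in the appropriate mixed measures, with $h = (\mathrm{D}_t f)_t$. Since $\chi^p_t$ and $\chi^{1/2}_t$ are eigenvectors of the coordinate-$t$ noise (with eigenvalues $\rho$ and $2\rho$) and $g, h$ are independent of $x_t$, writing $G = \tilde{\mathrm{T}}g$ and $H = \tilde{\mathrm{T}}h$ (where $\tilde{\mathrm{T}}$ is the action of the noise on coordinates $\ne t$) gives
\[ \mathrm{T}^{t-1}_{2\rho,\rho} f_{t-1} = G + \rho H \chi^p_t, \qquad \mathrm{T}^t_{2\rho,\rho} f_t = G + 2\rho H \chi^{1/2}_t, \]
and $\mathrm{T}^t_{2\rho,\rho}((\mathrm{D}_t f)_t) = H$.

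Next I would expand both fourth moments by the binomial theorem, using the independence of $x_t$ from $(G,H)$ together with the moments $\mathbb{E}[(\chi^{1/2}_t)^k]\in\{0,1\}$ (zero for odd $k$) and $\mathbb{E}[(\chi^p_t)^2]=1$, $\mathbb{E}[(\chi^p_t)^3] = \sigma^{-1}(1-2p)$, $\mathbb{E}[(\chi^p_t)^4] = \lambda$. A direct computation yields
\[ \mathbb{E}[(\mathrm{T}^{t-1}_{2\rho,\rho} f_{t-1})^4] - \mathbb{E}[(\mathrm{T}^t_{2\rho,\rho} f_t)^4] = -18\rho^2\mathbb{E}[G^2H^2]+4\sigma^{-1}(1-2p)\rho^3\mathbb{E}[GH^3]+(\lambda-16)\rho^4\mathbb{E}[H^4]. \]
The negative quadratic slack $-18\rho^2\mathbb{E}[G^2H^2]$ arises precisely because the noise rate on the uniform side is $2\rho$ rather than $\rho$; with matched rates the quadratic terms would cancel, leaving no margin to absorb the cubic cross-term.

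The hard part is controlling the cubic term, which is identically zero in the uniform case. The key algebraic identity is $\sigma^{-2}(1-2p)^2 = \lambda - 1$ (equivalent to $\lambda = \sigma^{-2} - 3$), so the cubic coefficient has absolute value $4\rho^3\sqrt{\lambda - 1}|\mathbb{E}[GH^3]|$. Combining Cauchy--Schwarz $|\mathbb{E}[GH^3]|\le\sqrt{\mathbb{E}[G^2H^2]\mathbb{E}[H^4]}$ with the elementary inequality $2\sqrt{AB} \le A + B$ applied to $A = 18\rho^2\mathbb{E}[G^2H^2]$ and $B = \tfrac{2(\lambda-1)}{9}\rho^4\mathbb{E}[H^4]$ gives
\[ 4\rho^3\sqrt{\lambda-1}\sqrt{\mathbb{E}[G^2H^2]\mathbb{E}[H^4]} \le 18\rho^2\mathbb{E}[G^2H^2] + \tfrac{2(\lambda-1)}{9}\rho^4\mathbb{E}[H^4]. \]
Substituting, the difference is bounded by $\left(\tfrac{2(\lambda-1)}{9} + \lambda - 16\right)\rho^4\mathbb{E}[H^4] = \tfrac{11\lambda - 146}{9}\rho^4\mathbb{E}[H^4] \le 3\lambda\rho^4\mathbb{E}[H^4]$, which is the claim.
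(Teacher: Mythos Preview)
Your proof is correct and follows essentially the same approach as the paper: isolate coordinate $t$, expand both fourth moments via the binomial theorem, and absorb the skewness-induced cubic term using Cauchy--Schwarz and AM--GM against the quadratic slack coming from the $2\rho$ versus $\rho$ discrepancy. The only cosmetic differences are that the paper conditions pointwise on $x_{-t}$ (so your $G,H$ become constants $e,d$) and bounds $|\mathbb{E}[(\chi^p_t)^3]|\le\sqrt{\lambda}$ rather than using your exact identity $\sigma^{-1}|1-2p|=\sqrt{\lambda-1}$; your computation of the exact difference and the resulting constant $\tfrac{11\lambda-146}{9}$ is in fact slightly sharper, but the argument is structurally identical.
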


\begin{proof}
We write
\begin{align*}
f_t & = \chi^{1/2}_t g + h \ \ \text{ and } \ \
 f_{t-1} =  \chi^p_t g + h, \ \ \text{ where } \\
g & = (\mathrm{D}_t f)_t
= \sum_{S: t \in S} \hat{f}(S) \chi^t_{S \sm \{t\}}
= \sum_{S: t \in S} \hat{f}(S) \chi^{t-1}_{S \sm \{t\}}
= (\mathrm{D}_t f)_{t-1}, \ \ \text{ and } \\
h & = \mb{E}_{x_t \sim \mu_{1/2}} f_t
= \sum_{S: t \notin S} \hat{f}(S) \chi^t_S
= \sum_{S: t \notin S} \hat{f}(S) \chi^{t-1}_S
= \mb{E}_{x_t \sim \mu_p} f_{t-1}.
\end{align*}
We also write
\begin{align*}
\mathrm{T}^t_{2\rho,\rho} f_t & = 2\rho \chi^{1/2}_t d + e \ \ \text{ and } \ \
\mathrm{T}^{t-1}_{2\rho,\rho} f_{t-1} = \rho \chi^p_t d + e, \ \ \text{ where } \\
d & = \mathrm{T}^t_{2\rho,\rho} g = \mathrm{T}^{t-1}_{2\rho,\rho} g
\ \ \text{ and } \ \
e = \mathrm{T}^t_{2\rho,\rho} h = \mathrm{T}^{t-1}_{2\rho,\rho} h.
\end{align*}
We can calculate the expectations in the statement of the lemma
by conditioning on all coordinates other than $x_t$, i.e.\
$\mb{E}_{\xb}[ \cdot ] = \mb{E}_{\xb'}[ \mb{E}_{x_t}[ \cdot \mid \xb'] ]$
where $\xb'$ is obtained from $\xb=(x_1,\dots,x_n)$ by removing $x_t$.
It therefore suffices to establish the required inequality
for each fixed $\xb'$ with expectations over the choice of $x_t$;
thus we can treat $d$ and $e$ as constants, and it suffices to show
\begin{equation} \label{eq:ets}
\mb{E}_{x_t}[ (\rho d\chi^p_t + e)^4 ]
\le \mb{E}_{x_t}[ (2\rho d\chi^{1/2}_t + e)^4 ]
+ 3\lambda\rho^4 d^4.
\end{equation}
As $\chi^p_t$ has mean $0$, we can expand the left hand side
of \eqref{eq:ets} as
\[ (\rho d)^4 \mb{E}[(\chi^p_t)^4]
 + 4 e (\rho d)^3  \mb{E}[(\chi^p_t)^3]
 + 6 e^2 (\rho d)^2  \mb{E}[(\chi^p_t)^2] + e^4
 \le 3 \lL (d\rho)^4 + 8 (de\rho)^2 + e^4, \]
where we bound the second term
using Cauchy-Schwarz then AM-GM by
\[ 4 \cdot \mb{E}[(d\rho\chi^p_t)^4]^{1/2}
\cdot \mb{E}[(de\rho\chi^p_t)^2]^{1/2}
\le 2 \left( \mb{E}[(d\rho\chi^p_t)^4]
+  \mb{E}[(de\rho\chi^p_t)^2] \right)
= 2( \lL (d\rho)^4 + (de\rho)^2 ). \]
Similarly, as $\mb{E}[\chi^{1/2}_t]
= \mb{E}[(\chi^{1/2}_t)^3] = 0$,
we can expand the first term
on the right hand side of \eqref{eq:ets} as
\[ (2\rho d)^4 \mb{E}[(\chi^{1/2}_t)^4]
 + 6 e^2 (2\rho d)^2  \mb{E}[(\chi^{1/2}_t)^2] + e^4
= (2\rho d)^4 + 6(2\rho de)^2 + e^3
\ge 8 (de\rho)^2 + e^4. \]
The lemma follows.
\end{proof}

Now we apply the previous lemma inductively to prove the following estimate.

\begin{lem} \label{hypinduct}
$\| \mathrm{T}^i_{2\rho,\rho} f_i \|_4^4  \le \sum_{S \sub [n] \sm [i]}
 (3\lL\rho^4)^{|S|} \| \mathrm{T}^n_{2\rho,\rho} ((\mathrm{D}_S f)_n) \|_4^4$
 for all $0 \le i \le n$.
\end{lem}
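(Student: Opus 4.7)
The plan is to prove Lemma \ref{hypinduct} by downward induction on $i$, from $i=n$ down to $i=0$, using Lemma \ref{lem:Replacement step} as the engine of the induction.

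For the base case $i=n$, only the term $S=\emptyset$ appears on the right-hand side (since $[n]\setminus[n]=\emptyset$), and it reads $\|\mathrm{T}^n_{2\rho,\rho}f_n\|_4^4$, so the inequality holds with equality.

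For the inductive step, suppose the bound has been established at level $i$. I would apply Lemma \ref{lem:Replacement step} with $t=i$ to obtain
\[
\|\mathrm{T}^{i-1}_{2\rho,\rho}f_{i-1}\|_4^4 \;\le\; \|\mathrm{T}^i_{2\rho,\rho}f_i\|_4^4 + 3\lL\rho^4\,\|\mathrm{T}^i_{2\rho,\rho}\bigl((\mathrm{D}_i f)_i\bigr)\|_4^4.
\]
The first term on the right is handled by the inductive hypothesis applied to $f$. The key observation is that the inductive hypothesis also applies with $f$ replaced by $\mathrm{D}_i f$ (nothing in the statement uses any specific property of $f$), which gives
\[
\|\mathrm{T}^i_{2\rho,\rho}\bigl((\mathrm{D}_i f)_i\bigr)\|_4^4 \;\le\; \sum_{S\subset[n]\setminus[i]} (3\lL\rho^4)^{|S|}\,\|\mathrm{T}^n_{2\rho,\rho}\bigl((\mathrm{D}_S\mathrm{D}_i f)_n\bigr)\|_4^4.
\]
Since the derivative operators commute and $\mathrm{D}_S\mathrm{D}_i=\mathrm{D}_{S\cup\{i\}}$ for $i\notin S$, the sets $S\cup\{i\}$ appearing here are exactly the subsets of $[n]\setminus[i-1]$ that contain $i$, while the subsets $S\subset[n]\setminus[i]$ are exactly the subsets of $[n]\setminus[i-1]$ that do not contain $i$. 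After multiplying the second estimate by $3\lL\rho^4$ (which supplies the extra factor accounting for the added coordinate $i$), the two sums combine to
\[
\sum_{S\subset[n]\setminus[i-1]}(3\lL\rho^4)^{|S|}\,\|\mathrm{T}^n_{2\rho,\rho}\bigl((\mathrm{D}_S f)_n\bigr)\|_4^4,
\]
which is exactly the claim at level $i-1$.

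There is no real obstacle here — the argument is a clean bookkeeping induction, with all analytic content concentrated in Lemma \ref{lem:Replacement step}. The only point that requires a moment of care is verifying that the inductive hypothesis may be invoked with $f$ replaced by $\mathrm{D}_i f$ and that the resulting reindexing of sums by $S\mapsto S\cup\{i\}$ exactly fills in the missing terms to produce the sum over all subsets of $[n]\setminus[i-1]$.
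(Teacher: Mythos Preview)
Your proof is correct and essentially identical to the paper's: both argue by induction on $n-i$ (equivalently, downward induction on $i$), invoke Lemma~\ref{lem:Replacement step} for one replacement step, apply the induction hypothesis simultaneously to $f$ and to $\mathrm{D}_t f$, and then reassemble the two sums via the bijection $S\mapsto S\cup\{t\}$. The only cosmetic difference is that the paper sets $t=i+1$ to pass from level $i+1$ to level $i$, whereas you set $t=i$ to pass from level $i$ to level $i-1$.
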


\begin{proof}
We prove the inequality by induction on $n-i$
simultaneously for all functions $f$.
If $n=i$ then equality holds trivially.
Now suppose that $i<n$.
By Lemma \ref{lem:Replacement step} with $t=i+1$,
and the induction hypothesis applied to $f$ and $\mathrm{D}_t f$
with $i$ replaced by $t$, we have
\begin{align*}
\| \mathrm{T}^i_{2\rho,\rho} f_i \|_4^4
& \le  \| \mathrm{T}^t_{2\rho,\rho} f_t \|_4^4
+ 3\lL\rho^4 \| \mathrm{T}^t_{2\rho,\rho} ((\mathrm{D}_t f)_t) \|_4^4 \\
& \le \sum_{S \sub [n] \sm [t]}
 (3\lL\rho^4)^{|S|} \| \mathrm{T}^n_{2\rho,\rho} ((\mathrm{D}_S f)_n) \|_4^4
+ 3\lL\rho^4 \sum_{S \sub [n] \sm [t]}
 (3\lL\rho^4)^{|S|} \| \mathrm{T}^n_{2\rho,\rho} ((\mathrm{D}_S \mathrm{D}_t f)_n) \|_4^4 \\
& = \sum_{S \sub [n] \sm [i]}
 (3\lL\rho^4)^{|S|} \| \mathrm{T}^n_{2\rho,\rho} ((\mathrm{D}_S f)_n) \|_4^4. & \qedhere
\end{align*}
\end{proof}

In particular, recalling that
$\mathrm{T}^0_{2\rho,\rho}=\mathrm{T}_{\rho}$ on $\mu_p$ and
$\mathrm{T}^n_{2\rho,\rho}=\mathrm{T}_{2\rho}$ on $\mu_{1/2}$,
the case $i=0$ of Lemma \ref{hypinduct} is as follows.

\begin{prop}
\label{prop:Reason for hypercontractivity}
$\| \mathrm{T}_{\rho} f \|_4^4  \le \sum_{S \sub [n]}
 (3\lL\rho^4)^{|S|} \| \mathrm{T}_{2\rho} ((\mathrm{D}_S f)_n) \|_4^4$.
\end{prop}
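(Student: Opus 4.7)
The plan is to deduce this proposition as the specialization $i = 0$ of Lemma \ref{hypinduct}, which has already been set up for exactly this purpose. The entire Lindeberg-style replacement machinery has been packaged so that the inductive statement is indexed by the number of coordinates already switched from $\mu_p$ to $\mu_{1/2}$. Setting $i=0$ corresponds to having done no replacements, and setting the right-hand side index to $n$ (which is how the RHS of Lemma \ref{hypinduct} is phrased) corresponds to having done all of them; the proposition then asserts exactly this endpoint-to-endpoint inequality.

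Concretely, I will write down Lemma \ref{hypinduct} at $i=0$ and unpack the two endpoint identifications. On the left, at the level $t=0$ no replacement has occurred, so by definition $f_0 = f$ and $\mathrm{T}^0_{2\rho,\rho}$ is precisely the noise operator that, for every coordinate, keeps $x_i$ with probability $\rho$ and otherwise resamples from $\mu_p$; that is the $p$-biased noise operator $\mathrm{T}_\rho$ on $L^2(\{0,1\}^n,\mu_p)$. Thus $\mathrm{T}^0_{2\rho,\rho} f_0 = \mathrm{T}_\rho f$, matching the left-hand side of the proposition. On the right, at the level $t=n$ every coordinate has been switched to $\mu_{1/2}$, so $\mathrm{T}^n_{2\rho,\rho}$ keeps $x_i$ with probability $2\rho$ and otherwise resamples from $\mu_{1/2}$, which is the uniform-measure noise operator $\mathrm{T}_{2\rho}$ on $L^2(\{0,1\}^n,\mu_{1/2})$. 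The functions $(\mathrm{D}_S f)_n$ are then genuinely elements of the uniform-measure space, and the binomial sum over $S \subset [n]$ is inherited verbatim from Lemma \ref{hypinduct}.

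The main obstacle has already been cleared earlier in the section: the single replacement estimate of Lemma \ref{lem:Replacement step} controls the cost of turning one $p$-biased coordinate into a uniform one in terms of the corresponding coordinate derivative, and Lemma \ref{hypinduct} iterates this estimate by induction on $n-i$, with the factor $(3\lL\rho^4)^{|S|}$ accumulating one factor of $3\lL\rho^4$ per replaced coordinate that contributed a derivative. At the present step there is essentially nothing more to do beyond reading off the $i=0$ instance and substituting the endpoint identifications above; no further moment computations or inductive work are needed.
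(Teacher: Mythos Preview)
Your proposal is correct and matches the paper's own argument exactly: the proposition is stated in the paper as ``the case $i=0$ of Lemma \ref{hypinduct}'', using the endpoint identifications $\mathrm{T}^0_{2\rho,\rho}=\mathrm{T}_{\rho}$ on $\mu_p$ and $\mathrm{T}^n_{2\rho,\rho}=\mathrm{T}_{2\rho}$ on $\mu_{1/2}$, precisely as you describe.
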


The $4$-norms on the right hand side of
Proposition \ref{prop:Reason for hypercontractivity}
are with respect to the uniform measure $\mu_{1/2}$,
where we can apply standard hypercontractivity
(the `Beckner-Bonami Lemma') for $\rho \le 1/2\sqrt{3}$
to obtain $\| \mathrm{T}_{2\rho} ((\mathrm{D}_S f)_n) \|_4^4
\le \| (\mathrm{D}_S f)_n \|_2^4 = \| \mathrm{D}_S f \|_2^4 = \sS^{4|S|} \mathrm{I}_S(f)^2$.
Recalling that $\lL \le \sS^{-2}$,
we deduce the following bound for $\| \mathrm{T}_{\rho} f \|_4^4$
in terms of the generalised influences of $f$.

\begin{thm} \label{thm:hypref}
If $\rho\le 1/\sqrt{12}$ then
$\norm{\mathrm{T}_{\rho} f}_4^4  \le \sum_{S \sub [n]}
(3\lL\rho^4)^{|S|} \| \mathrm{D}_S f \|_2^4
\le \sum_{S \sub [n]} (3\sS^2\rho^4)^{|S|} \mathrm{I}_S(f)^2$.
\end{thm}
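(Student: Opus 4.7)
The plan is to chain together Proposition \ref{prop:Reason for hypercontractivity} (which already reduces the $p$-biased $4$-norm of $\mathrm{T}_\rho f$ to uniform $4$-norms of truncations) with the classical Bonami--Beckner $(4,2)$-hypercontractivity on $\mu_{1/2}$, and then translate the bounds back to generalised influences.

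First, I would start from the inequality
\[
\|\mathrm{T}_\rho f\|_4^4 \le \sum_{S \subset [n]} (3\lambda \rho^4)^{|S|}\,\|\mathrm{T}_{2\rho}((\mathrm{D}_S f)_n)\|_4^4
\]
given by Proposition \ref{prop:Reason for hypercontractivity}. The right-hand $4$-norms live in $L^4(\{0,1\}^n,\mu_{1/2})$, so the standard Beckner--Bonami inequality $\|\mathrm{T}_\eta g\|_4 \le \|g\|_2$ (valid on the uniform cube for $\eta \le 1/\sqrt{3}$) applies to each summand with $\eta = 2\rho$, provided $2\rho \le 1/\sqrt{3}$, i.e.\ $\rho \le 1/\sqrt{12}$. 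This is precisely the hypothesis of the theorem.

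Next, I would use that the map $f \mapsto f_n$ is a change between two orthonormal Fourier bases (the $p$-biased and the uniform ones), so it preserves $L^2$ norms; hence $\|(\mathrm{D}_S f)_n\|_2 = \|\mathrm{D}_S f\|_2$. Combining with the previous step gives
\[
\|\mathrm{T}_{2\rho}((\mathrm{D}_S f)_n)\|_4^4 \le \|(\mathrm{D}_S f)_n\|_2^4 = \|\mathrm{D}_S f\|_2^4,
\]
which yields the first inequality in the theorem. For the second inequality, I would invoke the Fourier identity recorded in equation \eqref{equation: gen influence Fourier expression}, namely $\mathrm{I}_S(f) = \sigma^{-2|S|}\|\mathrm{D}_S f\|_2^2$, so that $\|\mathrm{D}_S f\|_2^4 = \sigma^{4|S|}\mathrm{I}_S(f)^2$. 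Finally, substituting this into the first inequality and using the bound $\lambda \le \sigma^{-2}$ recorded just before Lemma \ref{lem:Replacement step} gives $(3\lambda \rho^4)^{|S|} \sigma^{4|S|} \le (3\sigma^2 \rho^4)^{|S|}$, completing the second inequality.

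There is no genuine obstacle in this step: the heavy lifting (the replacement-method expansion) has been packaged into Proposition \ref{prop:Reason for hypercontractivity}, and what remains is a direct appeal to uniform hypercontractivity together with bookkeeping via Parseval. The only nontrivial point is tracking the constant $\rho \le 1/\sqrt{12}$ arising from the factor of $2$ in the noise parameter $2\rho$ (reflecting that the uniform $L^4$ bound demands $\rho \le 1/\sqrt{3}$).
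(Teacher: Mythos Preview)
Your proposal is correct and matches the paper's argument essentially line for line: the paper also invokes Proposition \ref{prop:Reason for hypercontractivity}, applies Beckner--Bonami on $\mu_{1/2}$ with noise rate $2\rho \le 1/\sqrt{3}$, uses $\|(\mathrm{D}_S f)_n\|_2 = \|\mathrm{D}_S f\|_2 = \sigma^{2|S|}\mathrm{I}_S(f)$, and finishes with $\lambda \le \sigma^{-2}$.
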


Now we deduce our hypercontractivity inequality.
It is convenient to prove a
slightly stronger statement, which implies
Theorem \ref{thm:Hypercontractivity} using
$\|\mathrm{D}_S f\|_2^2 = \sS^{2|S|} \mathrm{I}_S(f)
\le \lL^{-|S|} \mathrm{I}_S(f)$ and
$\|\mathrm{T}_{1/5} f\|_4 \le
\|\mathrm{T}_{1/\sqrt{24}}f\|_{4}$
(any $\mathrm{T}_\rho$ is a contraction
in $L^p$ for any $p \ge 1$).

\begin{thm} \label{thm:hyp+}
Let $f\in L^{2}\left(\left\{ 0,1\right\} ^{n},\mu_{p}\right)$
with all $\|\mathrm{D}_S f \|_{2}^2 \le \bB \lL^{-|S|} \mb{E}[f^2]$. Then
$\|\mathrm{T}_{1/\sqrt{24}}f\|_{4} \le \bB^{1/4} \|f\|_{2}$.
\end{thm}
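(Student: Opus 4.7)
Plan:

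My plan is to derive Theorem \ref{thm:hyp+} directly from Theorem \ref{thm:hypref}. Since $1/\sqrt{24} < 1/\sqrt{12}$, Theorem \ref{thm:hypref} applies at $\rho = 1/\sqrt{24}$ and gives
\[
\|\mathrm{T}_{1/\sqrt{24}} f\|_4^4 \;\le\; \sum_{S \sub [n]} (3\lambda\rho^4)^{|S|} \|\mathrm{D}_S f\|_2^4 \;=\; \sum_{S} (\lambda/192)^{|S|} \|\mathrm{D}_S f\|_2^4.
\]
The shape of the hypothesis $\|\mathrm{D}_S f\|_2^2 \le \bB \lambda^{-|S|} \mb{E}[f^2]$ is designed precisely to cancel the factor $\lambda^{|S|}$ on the right. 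Splitting $\|\mathrm{D}_S f\|_2^4 = \|\mathrm{D}_S f\|_2^2 \cdot \|\mathrm{D}_S f\|_2^2$ and applying the hypothesis to one of the two copies yields
\[
\|\mathrm{T}_{1/\sqrt{24}} f\|_4^4 \;\le\; \bB\,\mb{E}[f^2] \sum_{S} (3\rho^4)^{|S|} \|\mathrm{D}_S f\|_2^2,
\]
in which the $\lambda$-dependence (and hence any explicit $p$-dependence) has disappeared.

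To finish the proof, it suffices to control the residual sum by $\mb{E}[f^2]$. Using $\|\mathrm{D}_S f\|_2^2 = \sum_{T \supset S} \hat{f}(T)^2$ and swapping the order of summation gives the clean Parseval-style identity
\[
\sum_{S} (3\rho^4)^{|S|} \|\mathrm{D}_S f\|_2^2 \;=\; \sum_{T} \hat{f}(T)^2 (1 + 3\rho^4)^{|T|},
\]
so the task reduces to controlling the growth factor $(1+3\rho^4)^{|T|}$ against the Fourier weight $\hat{f}(T)^2$. The main obstacle is that naively this factor exceeds $1$, so the Parseval sum is a priori larger than $\mb{E}[f^2]$; absorbing this is where I expect the delicate work to lie. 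I expect the resolution to exploit the slack between $\rho = 1/\sqrt{24}$ and the threshold $1/\sqrt{12}$ permitted by Theorem \ref{thm:hypref} — for instance, by going back to Proposition \ref{prop:Reason for hypercontractivity} and using the fact that the uniform noise rate $2\rho = 1/\sqrt{6}$ is strictly below the $1/\sqrt{3}$ Beckner--Bonami threshold to obtain a strictly sharper bound on $\|\mathrm{T}_{2\rho}((\mathrm{D}_S f)_n)\|_4^4$, or equivalently by building the slack directly into the replacement step of Lemma \ref{lem:Replacement step}. The key bookkeeping task is to verify that these refined constants telescope to give the clean bound $\bB^{1/4}\|f\|_2$ without a dimension-dependent residue.
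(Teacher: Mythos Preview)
Your plan is correct and is essentially the paper's proof. Your instinct that the slack below the Beckner--Bonami threshold is what closes the gap is exactly right: writing $\mathrm{T}_{2\rho} = \mathrm{T}_{1/\sqrt{3}}\,\mathrm{T}_{1/\sqrt{2}}$ at $\rho = 1/\sqrt{24}$ gives $\|\mathrm{T}_{2\rho}((\mathrm{D}_S f)_n)\|_4^4 \le \|\mathrm{T}_{1/\sqrt{2}}(\mathrm{D}_S f)_n\|_2^4$, and the extra $\mathrm{T}_{1/\sqrt{2}}$ supplies precisely the $2^{-|T|}$ weight that cancels the $2^{|T|}$ from counting subsets $S \subset T$. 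The paper packages the same idea slightly more cleanly: rather than reopening Proposition~\ref{prop:Reason for hypercontractivity}, it applies Theorem~\ref{thm:hypref} directly to $g = \mathrm{T}_{1/\sqrt{2}} f$ at the full rate $\rho = 1/\sqrt{12}$, so that $\|\mathrm{D}_S g\|_2^2 = \sum_{T \supset S} 2^{-|T|}\hat f(T)^2$ already carries the needed decay; bounding one copy of $\|\mathrm{D}_S g\|_2^2$ by $\|\mathrm{D}_S f\|_2^2 \le \beta\lambda^{-|S|}\mb{E}[f^2]$ and swapping sums then gives $\beta\|f\|_2^4$ on the nose.
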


\begin{proof}
By Theorem \ref{thm:hypref} applied to $\mathrm{T}_{1/\sqrt{2}} f$
with $\rho=1/\sqrt{12}$ we have
\[\norm{\mathrm{T}_{1/\sqrt{24}} f}_4^4  \le \sum_{S \sub [n]}
(3\lL\rho^4)^{|S|} \| \mathrm{D}_S \mathrm{T}_{1/\sqrt{2}} f \|_2^4.\]
As $\| \mathrm{D}_S \mathrm{T}_{1/\sqrt{2}} f \|_2^2
= \sum_{E: S \sub E} 2^{-|E|} \hat{f}(E)^2
\le \sum_{E: S \sub E} \hat{f}(E)^2
= \| \mathrm{D}_S f \|_2^2 \le \bB \lL^{-|S|} \mb{E}[f^2]$
we deduce
\begin{align*}
\norm{\mathrm{T}_{1/\sqrt{24}} f}_4^4  \le \sum_{S \sub [n]}
\sum_{E: S \sub E} \bB \mb{E}[f^2] 2^{-|E|} \hat{f}(E)^2
= \bB \mb{E}[f^2] \sum_E \hat{f}(E)^2 = \bB \|f\|_2^4.
& \qedhere
\end{align*}
\end{proof}

\subsection*{Hypercontractivity in practice} \label{sec:practice}
We will mostly use the following application of the hypercontractivity
theorem.
\begin{lem} \label{lem:applying_hypercontractivity}
  Let $f$ be a function of degree $r$. Suppose that
  $\mathrm{I}_S(f)\le \delta$ for all $|S|\le r$. Then \[
    \|f\|_4\le
    5^{\frac{3r}{4}}\delta^{\frac{1}{4}}\left\| f \right\|_2^{0.5}.
    \]
\end{lem}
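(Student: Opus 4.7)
The plan is to apply Theorem~\ref{thm:Hypercontractivity} to an ``amplified'' version of $f$ that undoes the noise operator. Since $f$ has degree $\le r$, the formal inverse $T_{1/5}^{-1}$ is well-defined on $f$; set
\[
g \;:=\; T_{1/5}^{-1} f \;=\; \sum_{|T|\le r} 5^{|T|}\,\hat f(T)\,\chi_T,
\]
which is again a polynomial of degree $\le r$ and satisfies $T_{1/5}g = f$. Hypercontractivity applied to $g$ will therefore yield a bound on $\|f\|_4 = \|T_{1/5}g\|_4$.

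I next read off the hypercontractivity parameters of $g$ from those of $f$. Using $\hat g(T) = 5^{|T|}\hat f(T)$ for $|T|\le r$, the Fourier formula \eqref{equation: gen influence Fourier expression} gives, for every $S\subseteq[n]$,
\[
I_S(g) \;=\; \sigma^{-2|S|}\!\!\sum_{\substack{T\supseteq S\\|T|\le r}} 25^{|T|}\,\hat f(T)^2
\;\le\; 25^{r}\,I_S(f) \;\le\; 25^{r}\delta,
\qquad
\|g\|_2^2 \;=\; \sum_{|T|\le r} 25^{|T|}\hat f(T)^2 \;\le\; 25^{r}\|f\|_2^2 .
\]
Choosing $\beta := 25^{r}\delta/\|g\|_2^2$ makes the hypothesis $I_S(g)\le\beta\|g\|_2^2$ of Theorem~\ref{thm:Hypercontractivity} hold for every $S$, and the theorem applied to $g$ then yields
\[
\|f\|_4 \;=\; \|T_{1/5}g\|_4 \;\le\; \beta^{1/4}\|g\|_2 \;=\;(25^{r}\delta)^{1/4}\,\|g\|_2^{1/2}.
\]
Substituting $\|g\|_2 \le 5^{r}\|f\|_2$ produces a bound of the desired shape $5^{O(r)}\,\delta^{1/4}\|f\|_2^{1/2}$.

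The main obstacle is to sharpen the constant to the announced $5^{3r/4}$. The inefficiency in the above argument is the crude estimate $I_S(g)\le 25^{r}\delta$, which ignores that the same $25^{|T|}$ amplification also inflates $\|g\|_2^2$. This can be remedied by invoking the more refined Theorem~\ref{thm:hypref} directly on $f$ at a well-chosen auxiliary noise rate $\rho\le 1/\sqrt{12}$, combined with the Fourier identity
\[
\sum_{S} (3\sigma^2\rho^4)^{|S|}\,I_S(f) \;=\; \sum_T \hat f(T)^2\,(1+3\rho^4)^{|T|} \;\le\; (1+3\rho^4)^{r}\|f\|_2^2,
\]
yielding $\|T_\rho f\|_4^4\le \delta\,(1+3\rho^4)^{r}\|f\|_2^2$. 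Undoing the noise on the degree-$\le r$ side then produces a factor $\rho^{-r}\bigl(1+3\rho^4\bigr)^{r/4}$, and the precise constant $5^{3r/4}$ emerges from the optimal balance between these two competing factors.
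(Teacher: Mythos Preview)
Your main argument---defining $g=T_{1/5}^{-1}f=\sum_{|T|\le r}5^{|T|}\hat f(T)\chi_T$ and applying Theorem~\ref{thm:Hypercontractivity} with $\beta=25^{r}\delta/\|g\|_2^2$---is exactly the paper's proof, with your $g$ being the paper's $h$. You correctly compute that this gives
\[
\|f\|_4 \le (25^{r}\delta)^{1/4}\|g\|_2^{1/2}\le 5^{r/2}\cdot 5^{r/2}\,\delta^{1/4}\|f\|_2^{1/2}=5^{r}\,\delta^{1/4}\|f\|_2^{1/2}.
\]
The paper's final line also uses $\|h\|_2\le 5^{r}\|f\|_2$ and then writes $5^{3r/4}$, but that step is a minor arithmetic slip: the bound it actually proves is $5^{r}$, not $5^{3r/4}$. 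So your first two paragraphs already reproduce the paper's argument in full, and the discrepancy in the constant is not yours to fix.

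Your proposed sharpening via Theorem~\ref{thm:hypref} does not achieve $5^{3r/4}$ either. After correcting the displayed identity (Theorem~\ref{thm:hypref} has $I_S(f)^2$, so one first uses $I_S(f)^2\le\delta\,I_S(f)$ before your summation identity), the bound you obtain for $\|f\|_4$ is $\rho^{-r}(1+3\rho^4)^{r/4}\,\delta^{1/4}\|f\|_2^{1/2}$. The base $\rho^{-1}(1+3\rho^4)^{1/4}$ is strictly decreasing on $(0,1/\sqrt{12}]$ (its fourth power has derivative $-4\rho^{-5}$), so the optimum is at the endpoint $\rho=1/\sqrt{12}$, giving $\sqrt{12}\,(49/48)^{1/4}\approx 3.48$, whereas $5^{3/4}\approx 3.34$. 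Thus no choice of $\rho$ recovers the stated constant. This is immaterial for the paper, since every subsequent use of the lemma only needs the shape $C^{r}\delta^{1/4}\|f\|_2^{1/2}$.
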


The proof uses the following lemma, which is immediate from the Fourier expression in \eqref{equation: gen influence Fourier expression}.

\begin{lem} \label{obs}
$\mathrm{I}_S(f^{\le r}) \le \mathrm{I}_S(f)$
for all $S \sub \left[n\right]$ and $\mathrm{I}_S(f^{\le r})=0$ if $\left|S\right|>r$.
\end{lem}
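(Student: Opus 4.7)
The plan is to derive both claims directly from the Fourier formula \eqref{equation: gen influence Fourier expression}, which writes
\[
\mathrm{I}_S(g) \;=\; \sigma^{-2|S|}\sum_{E:\,S\subset E}\hat{g}(E)^2
\]
for any $g\in L^2(\{0,1\}^n,\mu_p)$. The key observation is that the $\mc{F}$-truncation operation, specialised to the low-degree projector $g\mapsto g^{\le r}$, acts on Fourier coefficients in the most transparent way possible: $\widehat{f^{\le r}}(E) = \hat{f}(E)$ for $|E|\le r$ and $\widehat{f^{\le r}}(E)=0$ otherwise. So the right-hand side of the Fourier formula, applied to $f^{\le r}$, is just the restriction of the right-hand side for $f$ to a subset of Fourier levels.

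Concretely, first I will write
\[
\mathrm{I}_S(f^{\le r}) \;=\; \sigma^{-2|S|}\sum_{E:\,S\subset E}\widehat{f^{\le r}}(E)^2 \;=\; \sigma^{-2|S|}\sum_{\substack{E:\,S\subset E\\|E|\le r}}\hat{f}(E)^2.
\]
Dropping the constraint $|E|\le r$ only adds non-negative terms, so this quantity is bounded above by $\sigma^{-2|S|}\sum_{E:\,S\subset E}\hat{f}(E)^2 = \mathrm{I}_S(f)$, giving the first claim.

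For the second claim, if $|S|>r$ then every $E\supset S$ satisfies $|E|\ge|S|>r$, so the constraint $|E|\le r$ in the sum above is never met and the sum is empty; hence $\mathrm{I}_S(f^{\le r})=0$. There is no real obstacle here beyond invoking the displayed Fourier identity and the defining property of the truncation; the entire argument is one application of \eqref{equation: gen influence Fourier expression} followed by a trivial set-theoretic observation about the index set.
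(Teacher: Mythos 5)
Your proof is correct and is exactly the argument the paper intends: the paper declares the lemma ``immediate from the Fourier expression'' \eqref{equation: gen influence Fourier expression}, and you have simply spelled out that $\mathrm{I}_S(f^{\le r})$ is the subsum of $\mathrm{I}_S(f)$ over sets $E \supset S$ with $|E|\le r$, which is at most $\mathrm{I}_S(f)$ and empty when $|S|>r$. Nothing further is needed.
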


\begin{proof}[Proof of Lemma \ref{lem:applying_hypercontractivity}]
Write $f = \mathrm{T}_{1/5}(h)$, where
$h = \sum_{|T| \le r} 5^{|T|} \hat{f}(T) \chi_T$.
We will bound the 4-norm of $f$ by applying Theorem
\ref{thm:Hypercontractivity} to $h$,
so we need to bound the generalised influences of $h$.

By Lemma \ref{obs}, for $S \sub [n]$ we have
$\mathrm{I}_S(h)=0$ if $|S|>r$. For $|S|\le r$,  we have
\[
 \mathrm{I}_S(h) = \sS^{-2|S|} \sum_{T: S \sub T, |T| \le r}
 5^{2|T|} \hat{f}(T)^2
\le 5^{2r} \mathrm{I}_S(f) \le 5^{2r} \delta
= \alpha \|h\|_2^2,
\]
where
$\alpha = 5^{2r} \delta /\|h\|_2^2$.
By Theorem \ref{thm:Hypercontractivity}, we have
\[
  \|f \|_4 = \|\mathrm{T}_{1/5} h \|_{4}
\le \alpha^{\frac{1}{4}} \| h \|_{2}
= 5^{r/2} \delta^{\frac{1}{4}}  \sqrt{ \|h\|_2}
\le 5^{\frac{3r}{4}} \delta^{\frac{1}{4}}\sqrt{\|f\|_2}.
\]
In the final inequality we used $\|h\|_2\leq 5^r \|f\|_2$,
which follows from Parseval.
\end{proof}

\section{General hypercontractivity} \label{sec:genhyp}

In this section we generalise Theorem \ref{thm:Hypercontractivity} in two
different directions. One direction is showing hypercontractivity
from general $q$-norms to the $2$-norm (rather than merely
treating the case $q=4$); the other is replacing the cube
by general product spaces.

\subsection{Hypercontractivity with general norms}
\label{subsec:multilinear}

We start by describing a more convenient general setting
in which we replace characters on the cube
by arbitrary random variables.
To motivate this setting, we remark that one can extend
the proof of Theorem \ref{thm:hypref} to any
random variable of the form
\begin{equation} \label{eq:fRV}
f  = \sum_{S\subset [n]} a_S\prod_{i\in S} \mathbf{Z}_i,
\end{equation}
where $\mathbf{Z}_1,\ldots,\mathbf{Z}_n$
are independent real-valued random variables having
expectation $0$, variance $1$ and $4$th moment at most
$\sigma^{-2}$. To motivate the analogous setting for general $q>2$,
we note that the characters $\chi_i^p$ satisfy
\[
  \mathbb{E}[|\chi_i^p|^q]\le
  \|\chi_i^p\|_\infty^{q-2}\|\chi_i^p\|_2^2 = \sigma^{2-q}.
  \]
This suggests replacing the $4$th moment condition by
$\|\mathbf{Z}_i\|_q^q\le \sigma^{2-q}$.
Given $f$ as in \eqref{eq:fRV}, we define
the (generalised) derivatives by substituting
the random variables $Z_i$ for the characters $\chi_i^p$
in our earlier Fourier formulas, i.e.\
\[
  \mathrm{D}_i[f]=\sum_{S:\,i\in S}a_S\prod_{j\in S\setminus \{i\}}
  \mathbf{Z}_i
  \quad \text{and} \quad
  \mathrm{D}_T(f)=\sum_{S:\, T\subset
    S}a_S\prod_{j\in S\backslash T}\mathbf{Z}_i,
\]
Similarly, we adopt analogous definitions of the
generalised influences and noise operator, i.e.\
 \[
  \mathrm{I}_S[f]=
  \|\frac{1}{\sigma}\mathrm{D}_S[f]\|_2^2
  \quad \text{and} \quad
  \mathrm{T}_\rho[f]=\sum_S \rho^{|S|}a_S\prod_{i\in S}{\mathbf{Z}_i}.
\]

We prove the following hypercontractive inequality.
\begin{thm}\label{thm:qth moment}
  Let $q \ge 2$ and $\mathbf{Z}_1,\ldots,\mathbf{Z}_n$
  be independent real-valued  random variables satisfying
  \[
    \mathbb{E}[\mathbf{Z}_i]=0, \quad  \mathbb{E}[\mathbf{Z}_i^2]=1,
    \quad \text{and} \quad \mathbb{E} [\left| \mathbf{Z}_i \right|^q] \le \sigma^{2-q}.
  \]
  Let $f=\sum_{S\subset [n]}  a_S\prod_{i\in S}\mathbf{Z}_i$ and
  $\rho< \frac{1}{2q^{1.5}} $. Then
  \[
    \| \mathrm{T}_{\rho}f \|_q^q \le \sum_{S\subset [n]}
    \sigma^{(2-q)|S|}\| \mathrm{D}_S(f) \|_2^{q}.
    \]
  \end{thm}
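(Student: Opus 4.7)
The plan is to adapt the replacement method used to prove Theorem \ref{thm:hypref} (the $q=4$ case) to arbitrary $q \ge 2$: swap each $\mathbf{Z}_i$ one coordinate at a time for a Rademacher $\mathbf{R}_i\in\{\pm 1\}$, while simultaneously inflating the noise rate on the already-replaced coordinates from $\rho$ to a larger value $\rho' = \kappa_q \rho$ that Bonami--Beckner hypercontractivity at exponent $q$ can handle.

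Introduce the interpolated function $f_t$ obtained by substituting $\mathbf{Z}_i\to \mathbf{R}_i$ in the expansion of $f$ for $i\le t$, and the mixed noise operator $\mathrm{T}^{t}_{\rho',\rho}$ that applies rate $\rho'$ to the first $t$ coordinates and $\rho$ to the rest. The technical heart of the proof is a one-step replacement inequality
\[
\bigl\| \mathrm{T}^{t-1}_{\rho',\rho} f_{t-1} \bigr\|_q^q
\le \bigl\| \mathrm{T}^{t}_{\rho',\rho} f_t \bigr\|_q^q
+ C_q\, \rho^q\, \sigma^{2-q}\,
\bigl\| \mathrm{T}^{t}_{\rho',\rho} (\mathrm{D}_t f)_t \bigr\|_q^q
\]
for a constant $C_q$ depending only on $q$. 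To prove it, condition on all coordinates other than $t$: both sides become one-variable expectations, and writing $\mathrm{T}^{t-1}_{\rho',\rho} f_{t-1} = \rho\alpha\mathbf{Z}_t + \beta$ and $\mathrm{T}^{t}_{\rho',\rho} f_t = \rho'\alpha\mathbf{R}_t + \beta$ with $\alpha,\beta$ fixed, the claim reduces to the scalar estimate
\[
\mathbb{E}\bigl[\,|\rho\alpha \mathbf{Z}_t + \beta|^q\,\bigr]
\le \mathbb{E}\bigl[\,|\rho'\alpha \mathbf{R}_t + \beta|^q\,\bigr]
+ C_q\,\rho^q\,\sigma^{2-q}\,|\alpha|^q.
\]
For integer $q$ this follows by binomial expansion: the $0$th and $2$nd moments of $\mathbf{Z}_t$ and $\mathbf{R}_t$ agree; the top-order term on the left is bounded by $(\rho|\alpha|)^q \sigma^{2-q}$, yielding the stated error; the intermediate even moments on the right, carrying coefficients $(\rho')^k\ge \rho^k$, dominate their left-hand counterparts after H\"older interpolation between $\|\mathbf{Z}_t\|_2$ and $\|\mathbf{Z}_t\|_q$; and the odd cross terms on the left are absorbed by Cauchy--Schwarz and AM--GM exactly as in the proof of Lemma \ref{lem:Replacement step}. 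For non-integer $q$, use the Taylor expansion of $x\mapsto |\beta+x|^q$ about $0$ with explicit second-order remainder control, which after integration produces an error of the same shape.

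Iterating the one-step inequality $n$ times yields
\[
\|\mathrm{T}_\rho f\|_q^q
\le \sum_{S\subset [n]}
\bigl(C_q \rho^q \sigma^{2-q}\bigr)^{|S|}
\bigl\|\mathrm{T}_{\rho'}\bigl((\mathrm{D}_S f)_n\bigr)\bigr\|_q^q,
\]
where $(\mathrm{D}_S f)_n$ is the multilinear Rademacher polynomial obtained from $\mathrm{D}_S f$ by swapping every remaining $\mathbf{Z}_i$ for $\mathbf{R}_i$. For $\rho'\le 1/\sqrt{q-1}$ the classical Bonami--Beckner inequality gives $\|\mathrm{T}_{\rho'} g\|_q\le \|g\|_2$ for every Rademacher polynomial $g$, and orthonormality of the Rademacher product basis yields $\|(\mathrm{D}_S f)_n\|_2 = \|\mathrm{D}_S f\|_2$. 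Choosing $\rho'=\kappa_q\rho$ with $\kappa_q$ arranged so that $\kappa_q\rho\le 1/\sqrt{q-1}$ whenever $\rho<1/(2q^{3/2})$, and verifying that the same constraint forces $C_q\rho^q\le 1$, the factor $(C_q\rho^q)^{|S|}$ can be dropped to leave the bound $\sum_S \sigma^{(2-q)|S|}\|\mathrm{D}_S f\|_2^q$ in the statement.

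The main obstacle is the scalar replacement inequality for non-integer $q$: the binomial expansion that made the $q=4$ case transparent is no longer available, so one must establish the scalar bound via a careful Taylor estimate for $|\beta+x|^q$ whose remainder survives integration against a random variable satisfying only a bounded $q$th-moment hypothesis. The size of the constant $C_q$ directly governs the permissible range of $\rho$, and the condition $\rho < 1/(2q^{3/2})$ is tailored to absorb a $C_q$ that grows at worst polynomially in $q$.
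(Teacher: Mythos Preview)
Your overall architecture---interpolation $f_t$, mixed noise $\mathrm{T}^t_{\rho',\rho}$, one-coordinate replacement inequality, iteration, and a final appeal to Bonami--Beckner on the Rademacher side---matches the paper exactly, including the specific choice $\rho'=2q\rho$ (so that $2q\rho<1/\sqrt{q-1}$ precisely when $\rho<1/(2q^{3/2})$). The only substantive divergence is in how you establish the scalar one-variable inequality.

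You split into integer versus non-integer $q$ and propose binomial expansion in the first case and a Taylor expansion with remainder in the second. The paper instead gives a single calculus argument valid for all $q>2$ (Lemma~\ref{lem:n=1}): normalise $e=1$, set $f(d)=\mathbb{E}|1+\rho d\mathbf{Z}|^q$ and $g(d)=\mathbb{E}|1+d\chi^{1/2}|^q+\sigma^{2-q}d^q$, note $f(0)=g(0)$ and $f'(0)=g'(0)=0$, and then prove $f''\le g''$ pointwise. The second-derivative comparison is done by splitting $f''=q(q-1)\rho^2\mathbb{E}[|1+\rho d\mathbf{Z}|^{q-2}\mathbf{Z}^2]$ over the event $E_1=\{|1+\rho d\mathbf{Z}|\le |d\mathbf{Z}|\}$ and its complement: on $E_1$ one bounds by $q(q-1)\rho^2 d^{q-2}\mathbb{E}|\mathbf{Z}|^q\le q(q-1)d^{q-2}\sigma^{2-q}$, while on $E_2$ the constraint forces $|\rho d\mathbf{Z}|\le 1/(2q-1)$, so the contribution is at most $q(q-1)\rho^2 e(1+\tfrac{1}{2q-1})^{q-2}\le q(q-1)/2$; together these match the lower bound $g''\ge q(q-1)/2+q(q-1)d^{q-2}\sigma^{2-q}$.

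This approach sidesteps your integer/non-integer dichotomy entirely. Your binomial route is in principle workable for even integers but becomes awkward for odd $q$ (where $|\cdot|^q\neq(\cdot)^q$), and your ``Taylor with remainder'' sketch for general $q$ is the step that needs the most work---as you yourself flag. The paper's event-splitting trick is what makes the general $q$ go through cleanly: it is exactly the idea your proposal is missing.
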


  Theorem \ref{thm:qth moment} is a qualitative generalisation
  of Theorem \ref{thm:hypref} (with smaller $\rho$, which we do not
  attempt to optimise). The following generalised variant of
  Theorem \ref{thm:Hypercontractivity} follows by repeating
  the proof in Section \ref{sec:hyp}.
  \begin{thm}
    Let $q>2$, let $f=\sum_{S \sub [n]}a_S\prod_{i\in S}\mathbf{Z_i}$ let
    $\delta>0$, and let $\rho \le (2q)^{-1.5}$. Suppose
    that $\mathrm{I}_S[f] \le \beta \|f\|_2^2$ for all $S\subset
    [n]$. Then
    \[
      \| \mathrm{T}_{\rho} [f] \|_q \le \beta^{\frac{q-2}{2q}} \|f\|_2.
      \]
    \end{thm}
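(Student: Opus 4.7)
The plan is to mirror, at the level of general $q$-norms and arbitrary independent random variables, the passage from Theorem \ref{thm:hypref} to Theorem \ref{thm:hyp+} in Section \ref{sec:hyp}. Applying Theorem \ref{thm:qth moment} directly to $f$ would give $\|\mathrm{T}_\rho f\|_q^q\le\sum_S \sigma^{(2-q)|S|}\|\mathrm{D}_S f\|_2^q$, and plugging in only $\|\mathrm{D}_S f\|_2^2\le \sigma^{2|S|}\beta\|f\|_2^2$ blows this bound up in $|S|$ (since $\sigma$ may be large); the fix is to feed Theorem \ref{thm:qth moment} not $f$ itself but a slightly pre-noised $g=\mathrm{T}_\lambda f$, whose $S$-derivatives decay geometrically.

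First I would choose $\lambda=1/\sqrt{2}$ and apply Theorem \ref{thm:qth moment} to $g$ with internal noise rate $\rho\sqrt{2}$. The standing assumption $\rho\le(2q)^{-1.5}$ is tailored so that $\rho\sqrt{2}\le 1/(2q^{1.5})$, which is exactly what Theorem \ref{thm:qth moment} requires; since $\mathrm{T}_{\rho\sqrt{2}}g=\mathrm{T}_\rho f$ this yields
\[
\|\mathrm{T}_\rho f\|_q^q\le\sum_{S\subset[n]}\sigma^{(2-q)|S|}\|\mathrm{D}_S g\|_2^q.
\]
Next I would split $\|\mathrm{D}_S g\|_2^q=\|\mathrm{D}_S g\|_2^{q-2}\cdot\|\mathrm{D}_S g\|_2^2$ and bound the first factor using $\|\mathrm{D}_S g\|_2^2=\sum_{E\supset S}\lambda^{2|E|}a_E^2\le\lambda^{2|S|}\|\mathrm{D}_S f\|_2^2\le\lambda^{2|S|}\sigma^{2|S|}\beta\|f\|_2^2$, in which the first inequality uses $\lambda^{|E|}\le\lambda^{|S|}$ for $E\supset S$ and the last uses the influence hypothesis. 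This produces
\[
\|\mathrm{T}_\rho f\|_q^q\le \beta^{(q-2)/2}\|f\|_2^{q-2}\sum_{S\subset[n]}\lambda^{(q-2)|S|}\|\mathrm{D}_S g\|_2^2.
\]
Finally I would expand $\|\mathrm{D}_S g\|_2^2=\sum_{E\supset S}\lambda^{2|E|}a_E^2$, swap the order of summation, and collapse the inner geometric sum $\sum_{S\subset E}\lambda^{(q-2)|S|}=(1+\lambda^{q-2})^{|E|}$, so that the right-hand side becomes $\sum_E(\lambda^2+\lambda^q)^{|E|}a_E^2$. For $\lambda=1/\sqrt{2}$ and any $q\ge 2$ we have $\lambda^2+\lambda^q\le\tfrac12+\tfrac12=1$, so this is at most $\|f\|_2^2$, giving $\|\mathrm{T}_\rho f\|_q^q\le \beta^{(q-2)/2}\|f\|_2^q$; taking $q$-th roots yields the claim.

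The only real difficulty is arranging a single parameter $\lambda$ that simultaneously (i) shrinks each $\|\mathrm{D}_S g\|_2^2$ enough for the outer sum to be geometric, (ii) keeps $\sum_E(\lambda^2+\lambda^q)^{|E|}a_E^2$ bounded by $\|f\|_2^2$ uniformly in the degree, and (iii) leaves enough room in the noise-rate threshold of Theorem \ref{thm:qth moment} to deliver the advertised rate $\rho$ without incurring any $q$-dependent loss on $\beta$. The symmetric choice $\lambda=1/\sqrt{2}$ handles all three constraints uniformly in $q$; optimising $\lambda$ as a function of $q$ could presumably improve the noise-rate constant, but would not change the exponent $(q-2)/(2q)$ on $\beta$.
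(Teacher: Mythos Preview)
Your proof is correct and is precisely the argument the paper has in mind when it says the theorem ``follows by repeating the proof in Section \ref{sec:hyp}'': pre-noise by $\mathrm{T}_{1/\sqrt{2}}$, apply Theorem \ref{thm:qth moment} at the adjusted rate $\rho\sqrt{2}\le 1/(2q^{1.5})$, split $\|\mathrm{D}_S g\|_2^q=\|\mathrm{D}_S g\|_2^{q-2}\cdot\|\mathrm{D}_S g\|_2^2$, use the influence hypothesis on the first factor to cancel the $\sigma^{(2-q)|S|}$, and collapse the remaining sum via $\lambda^2+\lambda^q\le 1$. Your bookkeeping of the $\lambda^{(q-2)|S|}$ factor and the swap of summation is in fact a slightly cleaner presentation than the $q=4$ case in the paper (which simply discards the analogous factor using $(3\rho^4)^{|S|}\le 1$), but the two are the same argument.
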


We now begin with the ingredients of the
proof of Theorem \ref{thm:qth moment},
following that of Theorem \ref{thm:hypref}.
For $0 \le t \le n$ let
\[ f_t = \sum_S a_S \chi_S^t, \ \ \text{ where }
\chi^t_S = \prod\limits_{i\in S\cap [t]}{\chi^{1/2}_i}
  \prod\limits_{i\in S\setminus [t]}{\mathbf{Z}_i}. \]
Here, just as in Section \ref{sec:hyp},
the function $f_t$ interpolates from the original function
$f_0=f$ to $f_n = \sum_S a_S \chi^{1/2}_S
\in L^2(\{0,1\}^n,\mu_{1/2})$.
As $\{ \chi^t_S: S \sub [n]\}$ are orthonormal
we have $\|f_t\|_2 = \|f\|_2$ for all $t$.

As before, we define the noise operators
$\mathrm{T}^{t}_{\rho',\rho}$ on a function
$f = \sum_S a_S\chi_S^t$ by
\[ \mathrm{T}^t[f]
=\sum_S\rho'^{|S \cap [t]|}\rho^{|S\setminus [t]|}a_S\chi_S^t.\]
Thus $\mathrm{T}^t_{\rho',\rho}$ interpolates from
$\mathrm{T}^0_{\rho',\rho}=\mathrm{T}_{\rho}$ (for the original function) to
$\mathrm{T}^n_{\rho',\rho}=\mathrm{T}_{\rho'}$ (for $\mu_{1/2}$).

Our goal will now be to adjust Lemma \ref{lem:Replacement step}
to the general setting, which is similar in spirit to the 4-norm case,
although somewhat trickier. It turns out that the case $n=1$ poses the
main new difficulties, so we start with this in the next lemma.

\begin{lem} \label{lem:n=1}
Let $q>2$ and
$\mathbf{Z}$ be a random variable satisfying $\mathbb{E}[\mathbf{Z}]=0,
\mathbb{E}[\mathbf{Z}^2]=1, \mathbb{E}[|\mathbf{Z}|^q]\le \sigma^{2-q}.$
Let $e,d\in \mathbb{R}$ and $\rho\in(0,\frac{1}{2q})$.
Then $\|e + \rho d\mathbf{Z}\|_q^q \le \|e + d
\chi^{\frac{1}{2}}\|_q^q+\sigma^{2-q}d^q$.
\end{lem}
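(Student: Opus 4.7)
My approach is to reduce this one-dimensional comparison to a careful second-order Taylor expansion of $\phi(x):=|x|^q$ around $x=e$, exploiting that $\mathbf{Z}$ and $\chi^{1/2}$ have matching first two moments. Since $q>2$, $\phi$ is $C^2$ with $\phi''(x)=q(q-1)|x|^{q-2}\ge 0$. Define $F(u):=\mathbb{E}\phi(e+u\mathbf{Z})$ and $G(u):=\tfrac12\phi(e+u)+\tfrac12\phi(e-u)=\mathbb{E}\phi(e+u\chi^{1/2})$. Differentiating under the expectation and using $\mathbb{E}\mathbf{Z}=0$, $\mathbb{E}\mathbf{Z}^2=1$ gives
\[
F(0)=G(0)=|e|^q,\qquad F'(0)=G'(0)=0,\qquad F''(0)=G''(0)=q(q-1)|e|^{q-2},
\]
so $F$ and $G$ agree through second order at the origin. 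The target inequality $F(\rho d)\le G(d)+\sigma^{2-q}d^q$ therefore must come from the fact that $\rho^2\ll 1$ makes the quadratic contribution on the $\mathbf{Z}$ side strictly smaller than on the $\chi^{1/2}$ side, while the excess tail mass of $\mathbf{Z}$ is paid for by $\mathbb{E}|\mathbf{Z}|^q\le\sigma^{2-q}$.

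Concretely, I would split into two regimes. In the local regime $|d|\le|e|$, I use Taylor's theorem with Lagrange remainder in the form
\[
|e+y|^q \le |e|^q + q|e|^{q-1}\mathrm{sgn}(e)\,y + \tfrac{q(q-1)}{2}|e|^{q-2}y^2 + R(y),
\]
with $R(y)\le C_q\bigl(|e|^{q-2}y^2\mathbf{1}_{|y|\le|e|}+|y|^q\mathbf{1}_{|y|>|e|}\bigr)$. Plugging in $y=\rho d\mathbf{Z}$ and taking expectations, the linear term vanishes and the quadratic contributes $\tfrac{q(q-1)}{2}|e|^{q-2}\rho^2 d^2$; the tail part of $R$ is handled by the interpolation $\mathbb{E}|\mathbf{Z}|^s\le\sigma^{2-s}$ for $2\le s\le q$ (a direct H\"older consequence of $\mathbb{E}\mathbf{Z}^2=1$ and $\mathbb{E}|\mathbf{Z}|^q\le\sigma^{2-q}$). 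On the other side, Taylor with integral remainder and $\phi''\ge 0$ yield $G(d)\ge |e|^q+\tfrac{q(q-1)}{2}|e|^{q-2}d^2\cdot\kappa$ with a positive constant $\kappa$ in this regime. Since $\rho^2<(2q)^{-2}$, the gap $(\kappa-\rho^2)\cdot\tfrac{q(q-1)}{2}|e|^{q-2}d^2$ is positive and absorbs the $\sigma^{2-q}d^q$ tail contribution. In the global regime $|d|>|e|$, the crude estimate $|e+\rho d\mathbf{Z}|^q\le 2^{q-1}\bigl(|e|^q+\rho^q d^q|\mathbf{Z}|^q\bigr)$ gives $F(\rho d)\le 2^{q-1}|e|^q+2^{q-1}\rho^q d^q\sigma^{2-q}$, while $G(d)\ge \tfrac12(|d|+|e|)^q\ge |d|^q/2$ dominates both terms because $2^{q-1}\rho^q\le 1$ when $\rho\le 1/(2q)$.

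The main technical obstacle is controlling constants for large $q$: a naive $(|e|+|y|)^{q-2}\le 2^{q-2}(|e|^{q-2}+|y|^{q-2})$ loses an exponential-in-$q$ factor that the smallness $\rho^2<(2q)^{-2}$ cannot absorb. The resolution, implicit above, is to split the Taylor remainder according to whether $|y|\le|e|$ or $|y|>|e|$: for $|y|\le|e|$ use $(|e|+|y|)^{q-2}\le(2|e|)^{q-2}$ to keep a clean $|e|^{q-2}$ paired with the cheap $y^2$; for $|y|>|e|$ use $(|e|+|y|)^{q-2}\le(2|y|)^{q-2}$, promoting $y^2$ to the full $|y|^q$ which is then paid for by $\mathbb{E}|\mathbf{Z}|^q\le\sigma^{2-q}$. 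This localised splitting is exactly what lets the linear-in-$q$ hypothesis $\rho<1/(2q)$ match with the single additive slack $\sigma^{2-q}d^q$ in the conclusion.
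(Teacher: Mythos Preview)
Your proposal has a genuine gap: the localised split at $|y|=|e|$ does \emph{not} avoid the exponential-in-$q$ loss you identify. On the event $|y|\le|e|$ (with, say, $e=1$) you bound $(|e|+|y|)^{q-2}\le(2|e|)^{q-2}=2^{q-2}|e|^{q-2}$, so the quadratic contribution to $F(\rho d)$ is of order $2^{q-2}\cdot\tfrac{q(q-1)}{2}\rho^2 d^2$. Your lower bound on $G(d)$ in the same regime is at best $\kappa\cdot\tfrac{q(q-1)}{2}d^2$ with $\kappa$ bounded (one can check $\kappa\ge 1/2$). Matching these forces $2^{q-2}\rho^2\le\kappa$, i.e.\ $\rho\lesssim 2^{-q/2}$, which is far stronger than the hypothesis $\rho<1/(2q)$. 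The same issue breaks your global regime: for $|e|<|d|<2|e|$ the bound $G(d)\ge|d|^q/2$ does not dominate $2^{q-1}|e|^q$.

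The paper's argument is structured differently and sidesteps this entirely. Rather than Taylor-expanding from $d=0$, it normalises $e=1$, regards both sides as functions $f(d)=\|1+\rho d\mathbf{Z}\|_q^q$ and $g(d)=\|1+d\chi^{1/2}\|_q^q+\sigma^{2-q}|d|^q$, and shows $f''\le g''$ pointwise in $d$. The key is to split $f''(d)=q(q-1)\rho^2\,\mathbb{E}\bigl[|1+\rho d\mathbf{Z}|^{q-2}\mathbf{Z}^2\bigr]$ according to the \emph{$\rho$-dependent} event $E_1=\{|1+\rho d\mathbf{Z}|\le|d\mathbf{Z}|\}$. On $E_1$ one bounds $|1+\rho d\mathbf{Z}|^{q-2}\mathbf{Z}^2\le|d|^{q-2}|\mathbf{Z}|^q$, which is paid for by $\mathbb{E}|\mathbf{Z}|^q\le\sigma^{2-q}$. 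On the complement $E_2$, the defining inequality rearranges to $|\rho d\mathbf{Z}|\le\rho/(1-\rho)\le 1/(2q-1)$, so $|1+\rho d\mathbf{Z}|^{q-2}\le\bigl(1+\tfrac{1}{2q-1}\bigr)^{q-2}\le e$ --- an $O(1)$ bound, not $2^{q-2}$. This is precisely the missing idea: the threshold must scale like $1/q$ (via $\rho$), not like $|e|$, so that the $(1+\cdot)^{q-2}$ factor stays bounded.
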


\begin{proof}
  If $e = 0$ then the lemma is trivial.
  Therefore we may rescale and assume that $e=1$.
  It will be convenient to consider both sides of the inequality
  as functions of $d$: we write
  \[ f(d) =  \|1+\rho d\mathbf{Z}\|_q^q
   \quad \text{and} \quad
  g(d) = \|1 + d  \chi^{\frac{1}{2}}\|_q^q+\sigma^{2-q}d.
    \]
  As $f(0)=g(0)$, it suffices to show that $f'(0)=g'(0)$
  and $f''\le g''$ everywhere.

  Let us compute the derivatives. We note that the function
  $x\mapsto |x^q|$ has derivative $q|x|^{q-1}\mathrm{sign}(x)$,
  which is in turn continuously differentiable for $q>2$. Thus
  \begin{align*}
f' & =
\mathbb{E}[q\left|1+\rho d\mathbf{Z}\right|^{q-1}\mathrm{sign}(1+\rho d\mathbf{Z})\rho \mathbf{Z}]=
 \rho q\mathbb{E}[|1+\rho d\mathbf{Z}|^{q-1}\mathrm{sign}(1+\rho d\mathbf{Z}) \mathbf{Z}] \ \ \text{ and } \\
 f'' & = (q-1)q\rho^2 \mathbb{E}[|1+\rho d\mathbf{Z}|^{q-2} \mathbf{Z}^2].
\end{align*}
    Differentiating $g$ we obtain
  \begin{align*}
g' & =q\mathbb{E}\Big[\left|1 + d
  \chi^{\frac{1}{2}}\right|^{q-1}\mathrm{sign}(1+d\chi^{\frac{1}{2}})\chi^{\frac{1}{2}}\Big]+q\sigma^{2-q}d^{q-1}
\ \ \text{ and } \\
g'' & = q(q-1) \mathbb{E}\Big[\left| 1+d\chi^{\frac{1}{2}} \right|
      ^{q-2}\left(\chi^{\frac{1}{2}}\right) ^2 \Big]
      +q(q-1)d^{q-2}\sigma^{2-q} \ge
      q(q-1)/2 + q(q-1)d^{q-2}\sigma^{2-q}.
\end{align*}
Thus $g'(0)=f'(0)=0$ and it remains to show $f''\le g''$ everywhere.
Our strategy for bounding $f''$ is to decompose the expectation over
two complementary events $E_1$ and $E_2$,
where $E_1$ is the event that
$|1+ \rho d \mathbf{Z}| \le |d \mathbf{Z}|$
(and $E_2$ is its complementary event).
We write $f''=f''_1 +f''_2$, where each
\[ f''_i = (q-1)q\rho^2 \mathbb{E}[|1+\rho d\mathbf{Z}|^{q-2} \mathbf{Z}^2
       \mathbf{1}_{E_i}].        \]
First we note the bound
\[
f''_1 \le q(q-1) \rho^2 d^{q-2}\mathbb{E}[|\mathbf{Z}|^q]
\le q(q-1) d^{q-2}\sigma^{2-q}.
\]
Given the above lower bound on $g''$,
it remains to show $f''_2\le q(q-1)/2$.
On the event $E_2$ we have
\[
  |d \mathbf{Z}| \le |1+ \rho d \mathbf{Z}|\le 1 +|\rho d \mathbf{Z}|.
\]
Rearranging, we obtain $|\rho d \mathbf{Z}|(\rho^{-1}-1) \le 1.$
Since $\rho^{-1}\geq 2q$, we get
\[
  1+ |\rho d \mathbf{Z}| \le
 1 + \frac{1}{2q-1}.
  \]
    Using $\mathbb{E}[\mathbf{Z}^2]=1$ this yields
    \[
      f''_2
      \le q(q-1)\rho^2 \Big (1+\frac{1}{2q-1}\Big )^{q-2}\le
      e\rho^2q(q-1)\le q(q-1)/2
      .
    \]
    Hence $f''=f''_1 + f''_2 \le g''$ for any value of $d$.
    This completes the proof of the lemma.
    \end{proof}

We are now ready to show the replacement step.

\begin{lem} \label{lem:q-Replacement step}
$\mb{E}[(\mathrm{T}^{t-1}_{2q\rho,\rho} f_{t-1})^q]
\le \mb{E}[(\mathrm{T}^t_{2q\rho,\rho} f_t)^q]
+ \sigma^{2-q} \mb{E}[(\mathrm{T}^t_{2q\rho,\rho} ((\mathrm{D}_t f)_t) )^q]$.
\end{lem}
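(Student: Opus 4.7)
The plan is to reduce to the one-variable statement of Lemma \ref{lem:n=1} by conditioning on all coordinates other than $t$, exactly as in the $q=4$ proof of Lemma \ref{lem:Replacement step}.

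First I would decompose $f_t$ and $f_{t-1}$ along coordinate $t$. Splitting the sum over $S$ by whether $t \in S$ and using the identities $\chi_S^t = \chi_t^{1/2}\, \chi_{S \setminus \{t\}}^{t-1}$ (when $t \in S$) and $\chi_S^t = \chi_S^{t-1}$ (when $t \notin S$), I can write
\[ f_t = \chi_t^{1/2} g + h, \qquad f_{t-1} = \mathbf{Z}_t g + h, \]
where $g = (\mathrm{D}_t f)_{t-1} = (\mathrm{D}_t f)_t$ and $h$ are both functions of the coordinates in $[n] \setminus \{t\}$ only.

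Next I would apply the noise operators. Since neither $g$ nor $h$ involves coordinate $t$, and since $\mathrm{T}^t_{2q\rho,\rho}$ and $\mathrm{T}^{t-1}_{2q\rho,\rho}$ act identically on every other coordinate (both apply rate $2q\rho$ on $[t-1]$ and rate $\rho$ on $[n] \setminus [t]$), setting $d := \mathrm{T}^t_{2q\rho,\rho} g$ and $e := \mathrm{T}^t_{2q\rho,\rho} h$ (each of which equals the corresponding $\mathrm{T}^{t-1}$ image) gives
\[ \mathrm{T}^t_{2q\rho,\rho} f_t = 2q\rho\, \chi_t^{1/2} d + e, \qquad \mathrm{T}^{t-1}_{2q\rho,\rho} f_{t-1} = \rho\, \mathbf{Z}_t d + e. \]
Conditioning on the assignment $\xb'$ to all coordinates other than $t$ turns $d = d(\xb')$ and $e = e(\xb')$ into real constants and reduces the claim pointwise (in $\xb'$) to the one-variable inequality
\[ \mathbb{E}_{\mathbf{Z}_t}\bigl[|\rho d\, \mathbf{Z}_t + e|^q\bigr] \le \mathbb{E}_{\chi_t^{1/2}}\bigl[|2q\rho d\, \chi_t^{1/2} + e|^q\bigr] + \sigma^{2-q} |d|^q. \]

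This last inequality is precisely the content of Lemma \ref{lem:n=1} after rescaling: applying that lemma with $d$ replaced by $2q\rho d$ and its internal $\rho$-parameter chosen as $1/(2q)$ produces the same bound with error term $\sigma^{2-q}(2q\rho d)^q$, which is dominated by $\sigma^{2-q}|d|^q$ under the hypothesis $\rho < 1/(2q^{1.5})$ of Theorem \ref{thm:qth moment} (since then $(2q\rho)^q \le 1$). Integrating over $\xb'$ yields the lemma. The main genuine difficulty is already concentrated in Lemma \ref{lem:n=1}: for general $q$ one cannot expand a $q$-th power as a finite sum and control the cross terms by Cauchy--Schwarz (as was done in the $q=4$ case), so one must instead resort to a calculus-based comparison of second derivatives in $d$; the present replacement step is then the essentially mechanical conditioning argument that transports that one-dimensional estimate to the full product space.
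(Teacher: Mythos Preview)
Your proposal is correct and follows essentially the same route as the paper: decompose $f_t$ and $f_{t-1}$ along coordinate $t$, apply the noise operators to obtain $2q\rho\,\chi_t^{1/2}d+e$ and $\rho\,\mathbf{Z}_t d+e$, condition on all other coordinates, and invoke Lemma~\ref{lem:n=1}. You are in fact slightly more precise than the paper's terse ``with $2qd$ in place of $d$'': the correct substitution is $d_L=2q\rho d$, $\rho_L=1/(2q)$, after which the error term $\sigma^{2-q}(2q\rho d)^q$ is bounded by $\sigma^{2-q}|d|^q$ using $2q\rho<1$ (and the proof of Lemma~\ref{lem:n=1} only uses $\rho_L\le 1/(2q)$, so the non-strict choice is fine).
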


\begin{proof}
We write
\begin{align*}
f_t & = \chi^{1/2}_t g + h \ \ \text{ and } \ \
 f_{t-1} =  \chi^p_t g + h, \ \ \text{ where } \\
g & = (\mathrm{D}_t f)_t
= \sum_{S: t \in S} \hat{f}(S) \chi^t_{S \sm \{t\}}
= \sum_{S: t \in S} \hat{f}(S) \chi^{t-1}_{S \sm \{t\}}
= (\mathrm{D}_t f)_{t-1}, \ \ \text{ and } \\
h & = \mb{E}_{x_t \sim \mu_{1/2}} f_t
= \sum_{S: t \notin S} \hat{f}(S) \chi^t_S
= \sum_{S: t \notin S} \hat{f}(S) \chi^{t-1}_S
= \mb{E}_{\mathbf{Z}_t} f_{t-1}.
\end{align*}
We also write
\begin{align*}
\mathrm{T}^t_{2q\rho,\rho} f_t & = 2q\rho \chi^{1/2}_t d + e \ \ \text{ and } \ \
\mathrm{T}^{t-1}_{2q\rho,\rho} f_{t-1} = \rho \mathbf{Z}_t d + e, \ \ \text{ where } \\
d & = \mathrm{T}^t_{2q\rho,\rho} g = \mathrm{T}^{t-1}_{2q\rho,\rho} g
\ \ \text{ and } \ \
e = \mathrm{T}^t_{2q\rho,\rho} h = \mathrm{T}^{t-1}_{2q\rho,\rho} h.
\end{align*}
As before, we can calculate the expectations in the statement of the lemma
by conditioning on all coordinates other than $\mathbf{Z}_t$ and
$\chi_t^{\frac{1}{2}}$, so the lemma follows from Lemma
\ref{lem:n=1}, with $2qd$ in place of $d$.
\end{proof}

From now on, everything is similar to Section \ref{sec:hyp}.
We may apply the previous lemma inductively to obtain.

\begin{lem} \label{qhypinduct}
$\| \mathrm{T}^i_{2q\rho,\rho} f_i \|_q^q  \le \sum_{S \sub [n] \sm [i]}
\sigma^{(2-q)|S|}\| \mathrm{T}^n_{2 q\rho,\rho} ((\mathrm{D}_S f)_n) \|_q^q$
 for all $0 \le i \le n$.
\end{lem}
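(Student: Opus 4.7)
The plan is to mimic the proof of Lemma \ref{hypinduct} verbatim, with the single-step coefficient $3\lambda\rho^4$ replaced by $\sigma^{2-q}$ from the general replacement lemma (Lemma \ref{lem:q-Replacement step}). The argument is by downward induction on $i$ (equivalently, induction on $n-i$), applied simultaneously to all functions $f$ of the form in \eqref{eq:fRV}, so that the hypothesis may be invoked both for $f$ and for $\mathrm{D}_t f$.

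The base case $i=n$ is trivial: the right-hand side contains only the term $S=\emptyset$, and this term equals $\|\mathrm{T}^n_{2q\rho,\rho} f_n\|_q^q$. For the inductive step, assume the conclusion for $i+1$, set $t=i+1$, and apply Lemma \ref{lem:q-Replacement step} to obtain
\[
\|\mathrm{T}^i_{2q\rho,\rho} f_i\|_q^q \;\le\; \|\mathrm{T}^t_{2q\rho,\rho} f_t\|_q^q \;+\; \sigma^{2-q}\,\|\mathrm{T}^t_{2q\rho,\rho}((\mathrm{D}_t f)_t)\|_q^q.
\]
Then apply the induction hypothesis to the first term (with the function $f$) and to the second term (with the function $\mathrm{D}_t f$, noting that the operations $(\cdot)_t$ and $\mathrm{D}_t$ commute in the relevant sense, and $\mathrm{D}_S\mathrm{D}_t = \mathrm{D}_{S\cup\{t\}}$).

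It remains to reindex the resulting double sum. The first piece contributes all $S\subset [n]\setminus [t]$ with weight $\sigma^{(2-q)|S|}$, i.e.\ those $S'\subset[n]\setminus[i]$ with $t\notin S'$. The second piece, after combining $\sigma^{2-q}$ with $\sigma^{(2-q)|S|}$ to form $\sigma^{(2-q)(|S|+1)}$ and writing $S'=S\cup\{t\}$, contributes precisely those $S'\subset[n]\setminus[i]$ with $t\in S'$, with weight $\sigma^{(2-q)|S'|}$. These two families partition $\{S': S'\subset[n]\setminus[i]\}$, so the sums combine to give exactly the claimed bound.

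I do not anticipate any real obstacle, since all the analytic work has already been concentrated into Lemma \ref{lem:q-Replacement step}. The only point that requires care is the bookkeeping in the last paragraph: one must verify that the coefficient $\sigma^{2-q}$ produced by the replacement step matches the weight jump $\sigma^{(2-q)|S|}\mapsto\sigma^{(2-q)(|S|+1)}$ needed to incorporate the element $t$ into $S$, which it does by construction.
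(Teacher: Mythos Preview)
Your proposal is correct and follows exactly the approach the paper intends: the paper simply states that ``we may apply the previous lemma inductively to obtain'' Lemma \ref{qhypinduct}, and your argument is a faithful elaboration of the induction in the proof of Lemma \ref{hypinduct}, with the coefficient $3\lambda\rho^4$ replaced by $\sigma^{2-q}$ as supplied by Lemma \ref{lem:q-Replacement step}.
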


In particular, recalling that
$\mathrm{T}^0_{2 q \rho,\rho}=\mathrm{T}_{\rho}$ on the original function and
$\mathrm{T}^n_{2 q \rho,\rho}=\mathrm{T}_{2 q \rho}$ on $\mu_{1/2}$,
the case $i=0$ of Lemma \ref{qhypinduct} is as follows.

\begin{prop}
\label{prop:q-Reason for hypercontractivity}
$\| \mathrm{T}_{\rho} f \|_q^q  \le \sum_{S \sub [n]}
 \sigma^{(2-q)|S|} \| \mathrm{T}_{2 q \rho} ((\mathrm{D}_S f)_n) \|_q^q$.
\end{prop}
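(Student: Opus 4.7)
The plan is to iterate Lemma \ref{lem:q-Replacement step} along the interpolating chain $f_0, f_1, \dots, f_n$, in complete parallel with how Lemma \ref{hypinduct} was deduced from Lemma \ref{lem:Replacement step} in the $q=4$ case. Concretely, I will first establish Lemma \ref{qhypinduct} by induction on $n-i$, simultaneously over all functions $f$, and then specialise the result to $i=0$.

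For the base case $i=n$, the right-hand side collapses to the single term $S=\emptyset$ (since $[n]\sm[n] = \emptyset$), and because $\mathrm{D}_\emptyset f = f$ this is equality. For the inductive step at $i<n$, I set $t = i+1$ and apply Lemma \ref{lem:q-Replacement step} to obtain
\[
\|\mathrm{T}^{i}_{2q\rho,\rho} f_i\|_q^q \le \|\mathrm{T}^{t}_{2q\rho,\rho} f_t\|_q^q + \sigma^{2-q}\|\mathrm{T}^{t}_{2q\rho,\rho}((\mathrm{D}_t f)_t)\|_q^q.
\]
I then apply the inductive hypothesis (valid because $n-t<n-i$) separately to $f$ and to $\mathrm{D}_t f$, expanding each of the two terms on the right as a sum over $S \sub [n]\sm[t]$ weighted by $\sigma^{(2-q)|S|}$. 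The contribution coming from $\mathrm{D}_t f$ carries an additional factor $\sigma^{2-q}$ and an additional derivative $\mathrm{D}_t$, so after re-indexing $S \mapsto S \cup \{t\}$ it supplies exactly the terms with $t \in S$; combined with the first sum it yields the desired sum over all $S \sub [n]\sm[i]$ with the correct weights. Here I use that the generalised derivatives $\mathrm{D}_j$ commute with each other and with $\mathrm{T}^t_{2q\rho,\rho}$, which is immediate from their monomial-style definitions.

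To finish, I instantiate Lemma \ref{qhypinduct} at $i=0$: then $[n]\sm[0] = [n]$, and the endpoint identifications $\mathrm{T}^{0}_{2q\rho,\rho} = \mathrm{T}_{\rho}$ on the original variables together with $\mathrm{T}^{n}_{2q\rho,\rho} = \mathrm{T}_{2q\rho}$ on the uniform cube $\mu_{1/2}$ translate the statement into the one in the proposition.

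There is essentially no obstacle remaining at this stage: all the analytic content has already been absorbed into Lemma \ref{lem:q-Replacement step}, which in turn rests on the one-variable inequality of Lemma \ref{lem:n=1}. The only points requiring mild care are the bookkeeping, namely verifying that $|S|$ applications of the replacement step produce exactly the factor $\sigma^{(2-q)|S|}$, and that re-indexing correctly merges the two sums from the inductive hypothesis into a single sum over $S \sub [n]\sm[i]$; both are entirely formal.
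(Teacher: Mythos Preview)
Your proposal is correct and follows essentially the same approach as the paper: the paper also obtains Lemma \ref{qhypinduct} by iterating Lemma \ref{lem:q-Replacement step} inductively (in direct parallel with the $q=4$ case), and then reads off the proposition as the case $i=0$ via the endpoint identifications $\mathrm{T}^0_{2q\rho,\rho}=\mathrm{T}_\rho$ and $\mathrm{T}^n_{2q\rho,\rho}=\mathrm{T}_{2q\rho}$.
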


The $q$-norms on the right hand side of
Proposition \ref{prop:q-Reason for hypercontractivity}
are with respect to the uniform measure $\mu_{1/2}$,
where we can apply standard hypercontractivity with noise rate $ \le 1/\sqrt{q-1}$
to obtain \[
  \| \mathrm{T}_{2 q \rho} ((\mathrm{D}_S f)_n) \|_q^q
\le \| (\mathrm{D}_S f)_n \|_2^q = \| \mathrm{D}_S f \|_2^q.
\]

This completes the proof of Theorem \ref{thm:qth moment}.

In the case where the $\mathbf{Z}_i$ have different $q$th
moments, the proof can be adjusted to give a better upper bound.
We write
\begin{equation} \label{eq:Zgen}
\mathbb{E}[\mathbf{Z}_i^q] = \sigma_i^{2-q}, \quad
\sigma_S=\prod_{i\in S}\sigma_i
\ \ \text{ and } \ \
\mathrm{I}_S[f]=\|\frac{1}{\sigma_S}\mathrm{D}_S[f]\|_2^2.
\end{equation}
The proof of Theorem \ref{thm:qth moment}
yields the following variant of Theorem \ref{thm:hypref}.

\begin{thm}\label{thm:qth moment with different norms}
  Let $q\ge 2$, let $\rho\le (2q)^{-1.5}$, and let $f =\sum
  a_S\prod_{i\in S}\mathbf{Z}_i$ with $Z_i$ as in \eqref{eq:Zgen}. Then
  \[
    \|\mathrm{T}_{\rho}f\|_q^q \le \sum
    _{S\subset [n]}\sigma_S^{2-q}\|\mathrm{D}_S[f]\|_2^q.
    \]
  \end{thm}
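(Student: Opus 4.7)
The plan is to repeat the argument of Theorem \ref{thm:qth moment} verbatim, tracking how the constant $\sigma^{2-q}$ arose at each step and replacing it by the coordinate-specific constant $\sigma_t^{2-q}$ each time we do a single-coordinate replacement. In effect, the uniform bound $\mathbb{E}[|\mathbf{Z}_i|^q] \le \sigma^{2-q}$ was only used in the one-variable Lemma \ref{lem:n=1}, so the whole proof is driven by a coordinate-indexed version of that lemma.

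First I would prove the following variant of Lemma \ref{lem:n=1}: if $\mathbf{Z}$ satisfies $\mathbb{E}[\mathbf{Z}]=0$, $\mathbb{E}[\mathbf{Z}^2]=1$, $\mathbb{E}[|\mathbf{Z}|^q] \le \sigma_i^{2-q}$, and $\rho \le 1/(2q)$, then
\[
\|e+\rho d\mathbf{Z}\|_q^q \le \|e+d\chi^{1/2}\|_q^q + \sigma_i^{2-q} d^q.
\]
The proof is identical to that of Lemma \ref{lem:n=1}: after rescaling so that $e=1$, we compare the two sides as functions of $d$, check equality of values and first derivatives at $0$, and bound the second derivative. The only place where the moment assumption enters is the bound
\[
f''_1 \le q(q-1)\rho^2 d^{q-2} \mathbb{E}[|\mathbf{Z}|^q] \le q(q-1) d^{q-2} \sigma_i^{2-q},
\]
which now produces $\sigma_i^{2-q}$ in place of $\sigma^{2-q}$. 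The estimate $f''_2 \le q(q-1)/2$ used $\rho \le 1/(2q)$ and $\mathbb{E}[\mathbf{Z}^2]=1$ only, so it carries over unchanged.

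Next I would run the replacement argument coordinate by coordinate. Define $f_t$ and $\mathrm{T}^t_{2q\rho,\rho}$ as in Section \ref{sec:genhyp}. Conditioning on all coordinates other than $\mathbf{Z}_t$ (respectively $\chi_t^{1/2}$), the decomposition $f_{t-1} = \mathbf{Z}_t g + h$ and $f_t = \chi_t^{1/2} g + h$ together with the one-variable inequality applied to $\mathbf{Z}_t$ (with $2qd$ in place of $d$) yields the coordinate-dependent replacement step
\[
\mathbb{E}[(\mathrm{T}^{t-1}_{2q\rho,\rho} f_{t-1})^q] \le \mathbb{E}[(\mathrm{T}^t_{2q\rho,\rho} f_t)^q] + \sigma_t^{2-q}\, \mathbb{E}[(\mathrm{T}^t_{2q\rho,\rho} ((\mathrm{D}_t f)_t))^q].
\]
Iterating this over $t=1,\dots,n$ as in Lemma \ref{qhypinduct}, the product $\prod_{t \in S} \sigma_t^{2-q} = \sigma_S^{2-q}$ appears in place of $\sigma^{(2-q)|S|}$, giving
\[
\|\mathrm{T}_\rho f\|_q^q \le \sum_{S \subset [n]} \sigma_S^{2-q} \|\mathrm{T}_{2q\rho}((\mathrm{D}_S f)_n)\|_q^q.
\]
Finally, since the norms on the right are with respect to $\mu_{1/2}^n$ and $2q\rho \le 1/\sqrt{q-1}$, standard Beckner--Bonami hypercontractivity in the uniform cube gives $\|\mathrm{T}_{2q\rho}((\mathrm{D}_S f)_n)\|_q \le \|(\mathrm{D}_S f)_n\|_2 = \|\mathrm{D}_S f\|_2$, which completes the proof.

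There is no real obstacle here beyond bookkeeping: the entire content of the theorem is already present in the proof of Theorem \ref{thm:qth moment}, and the only adjustment is to observe that when Lemma \ref{lem:n=1} is invoked at coordinate $t$, only the $q$th-moment bound of $\mathbf{Z}_t$ is used, so the ambient uniform constant $\sigma$ can be replaced by the local $\sigma_t$. The mildly delicate point is to keep the choice $\rho \le (2q)^{-1.5}$ (rather than $1/(2q^{1.5})$) so that $2q\rho \le 1/\sqrt{q-1}$ in order to legitimately apply the uniform-measure hypercontractive inequality at the end.
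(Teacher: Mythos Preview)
Your proposal is correct and matches the paper's own approach: the paper states that ``the proof of Theorem \ref{thm:qth moment} yields'' this variant, and your write-up spells out exactly that bookkeeping—tracking the per-coordinate moment bound $\sigma_t^{2-q}$ through Lemma \ref{lem:n=1}, the replacement step, and the induction. The only minor quibble is your final remark: the original condition $\rho < 1/(2q^{1.5})$ already gives $2q\rho < 1/\sqrt{q} \le 1/\sqrt{q-1}$, so the distinction you draw between $(2q)^{-1.5}$ and $1/(2q^{1.5})$ is not actually needed for the hypercontractivity step (both suffice), though this does not affect the argument.
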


  The following variant of Theorem \ref{thm:Hypercontractivity}
  follows from Theorem \ref{thm:qth moment with different norms}. The proof is
  similar to the one given in Section \ref{sec:hyp}, where Theorem
  \ref{thm:Hypercontractivity} is deduced from Theorem \ref{thm:hypref}.

  \begin{thm}\label{thm:Hypercontractivity with different norms}
Let $q>2$, $\beta>0$ and $\rho \le (2q)^{-1.5}$.
Suppose $f=\sum_{S \sub [n]}a_S\prod_{i\in S}\mathbf{Z}_i$
 with $Z_i$ as in \eqref{eq:Zgen}
has $\mathrm{I}_S[f] \le \beta \|f\|_2^2$
for all $S\subset    [n]$. Then
    \[
      \| \mathrm{T}_{\rho}f \|_q\le \beta^{\frac{q-2}{2q}}\|f\|_2.
      \]
    \end{thm}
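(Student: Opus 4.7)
The plan is to mimic the deduction of Theorem~\ref{thm:Hypercontractivity} from Theorem~\ref{thm:hypref} given in Section~\ref{sec:hyp}, with Theorem~\ref{thm:qth moment with different norms} playing the role of Theorem~\ref{thm:hypref}. The key trick there was to apply the inequality not to $f$ but to a mildly noised version $\mathrm{T}_{1/\sqrt{2}}f$, so that one can split the 4th power $\|\mathrm{D}_S\mathrm{T}_{1/\sqrt{2}}f\|_2^4$ into a ``small-influence'' factor and a ``Parseval'' factor; the same idea will work here for general $q$.

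Concretely, I would apply Theorem~\ref{thm:qth moment with different norms} to the function $g=\mathrm{T}_{1/\sqrt{2}}f$ at noise rate $\rho \le (2q)^{-1.5}$, obtaining
\[
\|\mathrm{T}_{\rho/\sqrt{2}}f\|_q^q = \|\mathrm{T}_\rho g\|_q^q \le \sum_{S\subset [n]} \sigma_S^{2-q}\,\|\mathrm{D}_S g\|_2^q.
\]
Then I would write $\|\mathrm{D}_S g\|_2^q = \|\mathrm{D}_S g\|_2^{q-2}\cdot \|\mathrm{D}_S g\|_2^2$. The first factor is bounded using that $\mathrm{T}_{1/\sqrt{2}}$ is a contraction on $L^2$ and then the hypothesis $\mathrm{I}_S[f]\le \beta\|f\|_2^2$, giving
\[
\|\mathrm{D}_S g\|_2^{q-2} \le \|\mathrm{D}_S f\|_2^{q-2} \le \bigl(\beta\sigma_S^{2}\|f\|_2^2\bigr)^{(q-2)/2} = \beta^{(q-2)/2}\sigma_S^{q-2}\|f\|_2^{q-2}.
\]
The second factor is evaluated exactly in the orthonormal basis: $\|\mathrm{D}_S g\|_2^2 = \sum_{E\supset S} 2^{-|E|} a_E^2$.

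Plugging both estimates in, the factors $\sigma_S^{2-q}$ and $\sigma_S^{q-2}$ cancel and one is left with
\[
\|\mathrm{T}_{\rho/\sqrt{2}}f\|_q^q \le \beta^{(q-2)/2}\|f\|_2^{q-2}\sum_{S\subset [n]}\sum_{E\supset S} 2^{-|E|} a_E^2 = \beta^{(q-2)/2}\|f\|_2^{q-2}\sum_E a_E^2 = \beta^{(q-2)/2}\|f\|_2^q,
\]
where the middle equality just swaps summation and uses $\sum_{S\subset E}1 = 2^{|E|}$. Taking $q$th roots gives the bound $\beta^{(q-2)/(2q)}\|f\|_2$ for $\|\mathrm{T}_{\rho/\sqrt{2}}f\|_q$, which after relabelling the noise parameter (absorbing the extra $1/\sqrt{2}$ into the admissible range, or observing that $\mathrm{T}_{\rho'}$ is a contraction whenever $\rho'\le \rho$) yields the stated inequality. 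There is no real obstacle here: everything is dictated by the 4-norm proof, and the only check is that the cancellation of $\sigma_S$ factors occurs precisely because the $(q-2)/2$-split matches the $\sigma_S^{q-2}$ appearing from the generalised-influence hypothesis, which is exactly the point of having introduced $\sigma_S$ into the definition of $\mathrm{I}_S$ in \eqref{eq:Zgen}.
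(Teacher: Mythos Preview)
Your proposal is correct and follows exactly the approach the paper indicates (it explicitly says the proof is the same as the deduction of Theorem~\ref{thm:Hypercontractivity} from Theorem~\ref{thm:hypref}). One small clarification on your closing remark about the noise parameter: the contraction observation $\|\mathrm{T}_{\rho'}f\|_q \le \|\mathrm{T}_{\rho}f\|_q$ for $\rho'\le\rho$ goes the wrong way (you have bounded the smaller quantity $\|\mathrm{T}_{\rho/\sqrt{2}}f\|_q$ and want the larger one). Your first option is the correct one: the underlying inequality (Theorem~\ref{thm:qth moment}, hence its variant) actually holds for the larger range $\rho < \tfrac{1}{2q^{1.5}}$, so applying it at $\rho_0 = \tfrac{1}{2q^{1.5}}$ to $\mathrm{T}_{1/\sqrt{2}}f$ yields the bound for $\|\mathrm{T}_{(2q)^{-1.5}}f\|_q$, and then contraction extends it downward to all $\rho \le (2q)^{-1.5}$.
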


    Finally, we state the following variant of Lemma
    \ref{lem:applying_hypercontractivity}, which is easy to deduce
    from Theorem \ref{thm:Hypercontractivity with different norms}.

    \begin{lem}\label{lem:q-applying_hypercontractivity}
     Let $q>2$ and $\delta>0$.
     Suppose $f=\sum_{S \sub [n]}a_S\prod_{i\in S}\mathbf{Z}_i$
   with $Z_i$ as in \eqref{eq:Zgen}
   has $\mathrm{I}_S[f] \le \delta $ for
    all $|S| \le r$. Then
    \[
      \| f \|_q \le (2q)^{1.5 r}\delta^{\frac{q-2}{2q}}\|f\|_2^{\frac{2}{q}}.
      \]
      \end{lem}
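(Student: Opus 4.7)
The plan is to mimic the proof of Lemma \ref{lem:applying_hypercontractivity}, substituting Theorem \ref{thm:Hypercontractivity with different norms} for Theorem \ref{thm:Hypercontractivity}. I will assume $f$ has degree at most $r$ (as in the earlier lemma this is an implicit hypothesis, needed so that the reconstruction below recovers $f$ exactly; in practice, per Section \ref{sec:practice}, the lemma is applied to low-degree truncations). Set $\rho = (2q)^{-1.5}$ and define
\[
h = \sum_{|T|\le r} \rho^{-|T|}\, a_T \prod_{i\in T} \mathbf{Z}_i,
\]
so that $f = \mathrm{T}_\rho h$.

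The first step is to bound the generalised influences of $h$. Since each derivative $\mathrm{D}_S$ involves only coefficients $a_T$ with $T \supset S$, the analog of Lemma \ref{obs} in this setting yields $\mathrm{I}_S[h] = 0$ for $|S| > r$, while for $|S| \le r$ the rescaling contributes a factor of at most $\rho^{-2r}$ per surviving Fourier coefficient, giving $\mathrm{I}_S[h] \le \rho^{-2r}\,\mathrm{I}_S[f] \le \rho^{-2r}\delta$. Setting $\beta = \rho^{-2r}\delta/\|h\|_2^2$, we therefore have $\mathrm{I}_S[h] \le \beta \|h\|_2^2$ for every $S \subset [n]$.

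Next I would apply Theorem \ref{thm:Hypercontractivity with different norms} to $h$ (this is where the hypothesis $\rho \le (2q)^{-1.5}$ is used) to obtain
\[
\|f\|_q = \|\mathrm{T}_\rho h\|_q \le \beta^{(q-2)/(2q)} \|h\|_2 = \rho^{-r(q-2)/q}\,\delta^{(q-2)/(2q)}\,\|h\|_2^{2/q}.
\]
Finally, orthonormality of the products $\prod_{i\in T}\mathbf{Z}_i$ (guaranteed by the mean-zero, variance-one hypotheses on the $\mathbf{Z}_i$) together with Parseval gives $\|h\|_2^2 = \sum_{|T|\le r} \rho^{-2|T|}\, a_T^2 \le \rho^{-2r}\,\|f\|_2^2$. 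Substituting and consolidating the exponents of $\rho$ via $\rho^{-r(q-2)/q}\cdot \rho^{-2r/q} = \rho^{-r} = (2q)^{1.5r}$ yields the claimed inequality.

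I do not anticipate any genuine obstacle: the argument is a direct translation of the $q=4$ case, and the only delicate point is the bookkeeping of exponents in the last substitution; the substantive analytic work has already been carried out in establishing Theorem \ref{thm:Hypercontractivity with different norms}.
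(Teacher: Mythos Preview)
Your proof is correct and follows exactly the approach the paper intends: the paper does not spell out a proof but merely states that the lemma ``is easy to deduce from Theorem \ref{thm:Hypercontractivity with different norms}'' in the same way Lemma \ref{lem:applying_hypercontractivity} was deduced from Theorem \ref{thm:Hypercontractivity}, and your write-up is precisely that deduction. Your observation that the hypothesis ``$f$ has degree at most $r$'' is implicitly needed (as it was explicit in Lemma \ref{lem:applying_hypercontractivity}) is accurate and consistent with how the lemma is actually invoked later in the paper.
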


\subsection{A hypercontractive inequality for product spaces}
Now we consider the setting of a general discrete
product space $(\Oo,\nu) = \prod_{t=1}^n (\Oo_t,\nu_t)$.
We assume
$p_t=\min_{\oO_t \in \Oo_t} \nu_t(\oO_t) \in (0,1/2)$
for each $t \in [n]$, and we write $p=\min_t{p_t}$.
We recall the projections $\mathrm{E}_J$ on $L^2(\Oo, \nu)$ defined
by $(\mathrm{E}_J f)(\oO) = \mb{E}_{\oO_J}[ f(\oO) \mid \oO_{\ov{J}} ]$,
the generalised Laplacians $\mathrm{L}_S$
defined by composing $\mathrm{L}_t$ for all $t \in S$,
where $\mathrm{L}_t f = f - \mathrm{E}_t f$,
and the generalised influences
$\mathrm{I}_S(f) = \mb{E}[\mathrm{L}_S(f)^2] \prod_{i \in S} \sS_i^{-2}$,
where $\sS_i^2=p_i(1-p_i)$.

We will require the theory of orthogonal decompositions
in product spaces, which we summarise following the
exposition in \cite[Section 8.3]{o2014analysis}.
For $f\in L^2(\Oo,\nu)$ and $J,S \sub [n]$
we write $f^{\sub J} = \mathrm{E}_{\ov{J}} f $ and define
$f^{=S} = \sum_{J \sub S} (-1)^{|S \sm J|} f^{\sub J}$
(inclusion-exclusion for
$f^{\sub J} = \sum_{S \sub J} f^{=S}$).
This decomposition is known as the Efron--Stein decomposition
\cite{efron1981jackknife}. The key properties of $f^{=S}$ are that it only depends
on coordinates in $S$ and it is orthogonal to any function
that depends only on some set of coordinates not containing $S$;
in particular, $f^{=S}$ and $f^{=S'}$ are orthogonal for $S \ne S'$.
We note that $f = f^{\sub [n]} = \sum_S f^{=S}$.
We have similar Plancherel / Parseval relations
as for Fourier decompositions, namely
$\bgen{f,g} =  \sum_S f^{=S} g^{=S}$,
so $\mb{E}[f^2] = \sum_S (f^{=S})^2$.

Our goal in this section is to prove an hypercontractive inequality
for the Efron--Stein decomposition in the spirit of Theorem
\ref{thm:hypref}. The noise operator is defined by
$\mathrm{T}_\rho[f]=\sum_{S\sub [n]}\rho^{|S|}f^{=S}$. It also has a
combinatorial interpretation, which is similar to the usual one on the
$p$-biased setting. Given $x\in\Omega$, a sample $\mathbf{y}\sim
N_\rho(x)$ is chosen by
independently setting $y_i$ to $x_i$ with probability $\rho$ and
resampling it from $(\Omega_i,\nu_i)$ with probability $1-\rho$.
In the general product space setting there are no good analogs to
$\mathrm{D}_i[f]$ and $\mathrm{D}_S(f)$, and we
instead work with the Laplacians,
which have similar Fourier formulas:
$\mathrm{L}_i[f]=\sum_{S:\,i\in S}f^{=S}$, and $\mathrm{L}_T[f]=\sum
_{S:\,T\subset S}f^{=S}$. In the special case where $\Omega_i=\{0,1\}$
we have $\|\mathrm{L}_S[f]\|_2 =\|\mathrm{D}_S[f]\|_2$. It will be
convenient to write
$\sigma_S=\prod_{i\in S} \sigma_i$.

The main result of this section is the following theorem.
\begin{thm}\label{thm:es}
Let $f\in L^2(\Omega, \nu )$, let $q>2$ be an even integer, and let $\rho \le
\frac{1}{8q^{1.5}}$. Then
  \[
    \| \mathrm{T}_{\rho}f \|^q_q  \le
    \sum_{S\sub [n]} \sigma_S^{2-q}
    \| \mathrm{L}_S[f] \|_2^q.
    \]
  \end{thm}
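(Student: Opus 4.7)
The plan is to adapt the replacement method underlying Theorems~\ref{thm:hypref} and~\ref{thm:qth moment} to the Efron--Stein setting on a general discrete product space. The overall strategy is to swap the original coordinate measures $\nu_t$ to the uniform measure $\mu_{1/2}$ on $\{0,1\}$ one coordinate at a time, pay a controlled error at each step, and then invoke classical $(q,2)$-hypercontractivity on $(\{0,1\}^n,\mu_{1/2})$ at the end.

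Concretely, for each $0\le t\le n$ I would introduce the mixed product space $\Omega^{(t)} = \{0,1\}^{[t]}\times\prod_{i>t}\Omega_i$ equipped with the product of $\mu_{1/2}$ on the first $t$ coordinates and $\nu_i$ on the rest, together with a noise operator $\mathrm{T}^t_{\rho',\rho}$ that applies rate $\rho'$ to the uniform coordinates and rate $\rho$ to the others. This operator interpolates between $\mathrm{T}_\rho$ on $(\Omega,\nu)$ at $t=0$ and $\mathrm{T}_{\rho'}$ on $(\{0,1\}^n,\mu_{1/2})$ at $t=n$, where $\rho'$ is a fixed multiple of $\rho$ whose value is forced by the one-step replacement inequality.

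The heart of the argument is a one-coordinate replacement inequality analogous to Lemma~\ref{lem:Replacement step}:
\[
  \|\mathrm{T}^{t-1}_{\rho',\rho}f_{t-1}\|_q^q \;\le\; \|\mathrm{T}^t_{\rho',\rho}f_t\|_q^q + \sigma_t^{2-q}\,\|\mathrm{T}^t_{\rho',\rho}(\mathrm{L}_t f)_t\|_q^q,
\]
where $f_t$ denotes the natural transport of $f$ to $\Omega^{(t)}$. Conditioning on all coordinates other than $t$ reduces this to the scalar statement
\[
  \|e + \rho g\|_q^q \;\le\; \|e + c\chi_t^{1/2}\|_q^q + \sigma_t^{2-q}\|g\|_2^q,
\]
where $e$ is a constant, $g$ is an arbitrary mean-zero function on $(\Omega_t,\nu_t)$, and $c$ is a fixed multiple of $\|g\|_2$. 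I would prove this by the derivative-in-scale method of Lemma~\ref{lem:n=1}: view both sides as smooth functions of the scale of $g$ (smoothness being automatic since $q$ is an even integer), check matching values and first derivatives at zero, and verify that the second derivative on the right dominates the left pointwise, splitting the left-hand integrand across the event $\{|\rho g|\le|e|\}$ and its complement. The critical moment input is the bound $\|g\|_q^q \le \sigma_t^{2-q}\|g\|_2^q$ for every mean-zero $g$ on $(\Omega_t,\nu_t)$. This follows from the pointwise Cauchy--Schwarz estimate $|g(\omega_t)|\le\sqrt{(1-\nu_t(\omega_t))/\nu_t(\omega_t)}\,\|g\|_2$ combined with the elementary inequality $\sqrt{(1-p_t)/p_t}\le 1/\sqrt{p_t(1-p_t)}=\sigma_t^{-1}$, which gives $\|g\|_\infty \le \sigma_t^{-1}\|g\|_2$ and hence $\|g\|_q^q \le \|g\|_\infty^{q-2}\|g\|_2^2 \le \sigma_t^{2-q}\|g\|_2^q$.

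Iterating the one-step replacement over $t=1,\ldots,n$ exactly as in Lemma~\ref{qhypinduct} then yields
\[
  \|\mathrm{T}_\rho f\|_q^q \;\le\; \sum_{S\subset[n]}\sigma_S^{2-q}\,\|\mathrm{T}_{\rho'}(\mathrm{L}_S f)_n\|_q^q ,
\]
and the assumption $\rho\le 1/(8q^{1.5})$ ensures $\rho'\le 1/\sqrt{q-1}$, so classical $(q,2)$-hypercontractivity on $(\{0,1\}^n,\mu_{1/2})$ contracts each summand to $\|\mathrm{L}_S f\|_2^q$, completing the proof. The main obstacle is setting up the transport $f \mapsto f_t$ cleanly: unlike in the Fourier setting, the mean-zero subspace of $L^2(\Omega_t,\nu_t)$ can be arbitrarily high-dimensional, whereas that of $L^2(\{0,1\},\mu_{1/2})$ is one-dimensional, so there is no canonical pointwise identification of $f_{t-1}$ and $f_t$. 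My plan is to sidestep this by never committing to such an identification: one only tracks the $L^2$ quantities that enter the inequality, applies the one-step replacement conditionally on the other coordinates, and appeals to the scalar statement above, which only sees $\|g\|_2$. The evenness of $q$ and the atom-cap moment bound then close the argument.
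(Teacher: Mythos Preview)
Your plan has a genuine gap that you yourself flag but do not resolve: the transport $f\mapsto f_t$ cannot be set up so that the induction of Lemma~\ref{qhypinduct} closes. The scalar step is fine --- your moment bound $\|g\|_\infty\le\sigma_t^{-1}\|g\|_2$ is correct, and the conditional inequality is then exactly Lemma~\ref{lem:n=1} applied with $\mathbf{Z}=g/\|g\|_2$. The problem is the error term it produces: at step $t$ the error is $\sigma_t^{2-q}\,\mathbb{E}_{x_{-t}}\bigl[\|\tilde g(x_{-t},\cdot)\|_{L^2(\Omega_t)}^q\bigr]$, where $\tilde g$ is the noised $t$-Laplacian. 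In the $p$-biased cube this quantity equals $\sigma_t^{2-q}\|\mathrm{T}^t(\mathrm{D}_t f)_t\|_q^q$ with $\mathrm{D}_t f$ \emph{independent of the $t$-th coordinate}, so one may re-run the replacement on $\mathrm{D}_t f$ and the recursion telescopes via $\mathrm{D}_S\mathrm{D}_t=\mathrm{D}_{S\cup\{t\}}$. In the Efron--Stein setting the conditional $L^2$ norm $x_{-t}\mapsto\|\mathrm{L}_t f(x_{-t},\cdot)\|_{L^2(\Omega_t)}$ is a \emph{nonlinear} functional of $f$; its Efron--Stein components are unrelated to $\mathrm{L}_{S\cup\{t\}}f$, and the iterated errors do not sum to $\sum_S\sigma_S^{2-q}\|\mathrm{L}_S f\|_2^q$. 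Put differently, any map that collapses the multi-dimensional mean-zero subspace of $L^2(\Omega_t)$ onto $\mathbb{R}\chi^{1/2}_t$ is either nonlinear (and then fails to commute with $\mathrm{L}_{t'}$ for $t'\ne t$, destroying the Efron--Stein structure at the next step) or is a linear projection (and then the one-step inequality fails whenever $g$ lies in its kernel). ``Tracking only $L^2$ quantities'' does not escape this, because the quantity you must track after one step is already the wrong one.

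The paper avoids coordinate-by-coordinate replacement altogether. It proves in a single stroke (Proposition~\ref{prop:reduction}) that $\|g\|_q\le\|\tilde g\|_q$ for $\tilde g=\sum_S\|g^{=S}\|_2\,\chi_S$, where the $\chi_i$ are $p_i/4$-biased characters, by expanding both $q$-th powers over $q$-tuples $(S_1,\ldots,S_q)$ and comparing term by term: tuples in which some coordinate is covered exactly once vanish on both sides by orthogonality (Lemma~\ref{lem:upper bound on f=S}), and the remaining terms are handled by a generalized H\"older bound on product spaces (Lemma~\ref{lem:tough upper bound}). Once $f$ is thus encoded on a biased cube, the already-established multilinear result (Theorem~\ref{thm:qth moment with different norms}) finishes. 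This term-by-term comparison, not a replacement induction, is where the evenness of $q$ is actually used.
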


 The idea of the proof is as follows.
 We encode our function $f\in L^2(\Omega,\nu)$
 as a function $\tilde{f}:=\sum_S\|f^{=S}\|_2\chi_S$
 for appropriate $\chi_S=\prod_{i\in S}\chi_i$
 (in fact, these will be biased characters on the cube).
 We then bound $\| \mathrm{T}_\rho f\|_q $
 by $\| \mathrm{T}_\rho \tilde{f} \|_q$
 and use Theorem \ref{thm:Hypercontractivity with
   different norms} to bound the latter norm.

The main technical component of the theorem
is the following proposition.

\begin{prop} \label{prop:reduction}
  Let $g\in L^2(\Omega, \nu)$ let $\chi_S=\prod_{i\in S}\chi_i$, where
  $\chi_i$ are independent random variables having expectation $0$,
  variance $1$, and
  satisfying $\mathbb{E}[\chi_S^j]\ge \sigma_S^{2-j}$ for each integer
  $j\in \left(2, q \right]$. Let $\tilde{g}=\sum_{S\subset [n]} \|g^{=S}\|_2 \chi_S$.
  Then
  \[
    \|g\|_q\leq \|\tilde{g}\|_q.
    \]
  \end{prop}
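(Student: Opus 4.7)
Since $q$ is even, $\|g\|_q^q=\mathbb{E}[g^q]$ and similarly for $\tilde g$; I expand both via the Efron--Stein decomposition $g=\sum_S g^{=S}$, giving
\[
\mathbb{E}[g^q]=\sum_{\vec S}\mathbb{E}\Big[\textstyle\prod_j g^{=S_j}\Big],\qquad
\mathbb{E}[\tilde g^q]=\sum_{\vec S}\prod_j\|g^{=S_j}\|_2\cdot \mathbb{E}\Big[\textstyle\prod_j \chi_{S_j}\Big].
\]
Writing $m_i(\vec S):=|\{j:i\in S_j\}|$, the first step is to observe that both sums are supported on \emph{admissible} tuples where every $m_i\ne 1$: on the $\chi$-side by independence and $\mathbb{E}[\chi_i]=0$, on the $g$-side by conditioning on $x_{[n]\setminus\{i\}}$ and using $\mathbb{E}_{x_i}[g^{=S}]=0$ for $i\in S$ to annihilate the unique factor $g^{=S_{j_0}}$ containing a coordinate with $m_i=1$. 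For admissible $\vec S$ the hypothesis and independence give $\mathbb{E}[\prod_j\chi_{S_j}]=\prod_i\mathbb{E}[\chi_i^{m_i}]\ge \prod_{i:m_i\ge 2}\sigma_i^{2-m_i}$, so the desired inequality reduces to the per-tuple bound
\[
\Big|\mathbb{E}\Big[\textstyle\prod_j g^{=S_j}\Big]\Big|\le \prod_j\|g^{=S_j}\|_2\cdot \prod_{i:m_i\ge 2}\sigma_i^{2-m_i}\qquad(\ast)
\]
for each admissible $\vec S$.

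To prove $(\ast)$ I would use two ingredients. First, the sharp $L^\infty$-bound $\|f^{=S}\|_\infty\le \sigma_S^{-1}\|f^{=S}\|_2$, obtained by fixing a tensor-product orthonormal basis $\{\psi_{i,k}\}$ of $L^2(\Omega_i,\nu_i)$ with $\psi_{i,0}\equiv 1$, applying Cauchy--Schwarz pointwise to the basis expansion of $f^{=S}$, and invoking the reproducing-kernel identity $\sum_{k\ge 1}\psi_{i,k}(\omega)^2=1/\nu_i(\omega)-1\le (1-p_i)/p_i$ on each coordinate in $S$; this interpolates to $\|f^{=S}\|_r\le \sigma_S^{(2-r)/r}\|f^{=S}\|_2$ for all $r\ge 2$. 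Second, expanding each $g^{=S_j}=\sum_{\gamma_j}\hat g(\gamma_j)\Psi_{\gamma_j}$, the expectation in $(\ast)$ factorises coordinate-wise as $\sum_{\vec\gamma}\prod_j\hat g(\gamma_j)\prod_i\mathbb{E}[\prod_{j\in J_i}\psi_{i,\gamma_j(i)}]$ with $J_i=\{j:i\in S_j\}$. On coordinates with $m_i\ge 3$, applying H\"older with $\|\psi_{i,k}\|_{m_i}\le \sigma_i^{(2-m_i)/m_i}$ bounds the inner factor by $\sigma_i^{2-m_i}$; on coordinates with $m_i=2$, orthonormality collapses the sum along the diagonal $\gamma_{j_1}(i)=\gamma_{j_2}(i)$. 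A final Cauchy--Schwarz on the outer $\vec\gamma$-sum, combined with Parseval $\sum_{\gamma_j}\hat g(\gamma_j)^2=\|g^{=S_j}\|_2^2$, produces the $\|g^{=S_j}\|_2$ factors in $(\ast)$.

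The main obstacle lies in handling mixed $m_i$-profiles cleanly. A single uniform Cauchy--Schwarz applied across the whole $\vec\gamma$-sum is too lossy at coordinates with $m_i=2$, since the sum of squares $\sum_{k_1,k_2\ne 0}\mathbb{E}[\psi_{i,k_1}\psi_{i,k_2}]^2=|\Omega_i|-1$ can far exceed the target factor $\sigma_i^{0}=1$. The resolution is to first perform the diagonal contractions at $m_i=2$ coordinates, which effectively shrinks the outer sum to one over tuples that have been ``identified'' on those coordinates, before invoking Cauchy--Schwarz together with the H\"older-type bounds on the $m_i\ge 3$ contributions; signs take care of themselves since the RHS of $(\ast)$ is non-negative and we need only a one-sided comparison of signed quantities.
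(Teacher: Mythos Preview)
Your overall strategy---expand both $q$th moments over tuples $\vec S=(S_1,\ldots,S_q)$, observe that non-admissible tuples vanish on both sides, and then compare term by term via the per-tuple bound $(\ast)$---is exactly the paper's approach, and $(\ast)$ is precisely the paper's Lemma~\ref{lem:tough upper bound}. The gap is in your proposed proof of $(\ast)$ via Fourier expansion.

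The problem is at coordinates $i$ with $m_i\ge 3$. Once you bound $\bigl|\mathbb{E}[\prod_{j\in J_i}\psi_{i,\gamma_j(i)}]\bigr|$ by the constant $\sigma_i^{2-m_i}$, the indices $\gamma_j(i)$ for $j\in J_i$ become completely unconstrained in the remaining sum. Even after the diagonal contractions at $m_i=2$ coordinates, you are left with
\[
\prod_{i:\,m_i\ge 3}\sigma_i^{2-m_i}\cdot \sideset{}{'}\sum_{\vec\gamma}\ \prod_j|\hat g(\gamma_j)|,
\]
where the primed sum imposes equality constraints only at $m_i=2$ coordinates. No Cauchy--Schwarz will reduce this to $\prod_j\|g^{=S_j}\|_2$: already in the toy case $n=1$, $S_1=\cdots=S_q=\{1\}$ (so $m_1=q\ge 3$ and there are no constraints at all) the sum equals $\|\hat g\|_{\ell^1}^{\,q}$, which can exceed $\|\hat g\|_{\ell^2}^{\,q}=\|g^{=\{1\}}\|_2^q$ by a factor as large as $(|\Omega_1|-1)^{q/2}$. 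The ``first contract, then Cauchy--Schwarz'' order you propose does not help, because the obstruction lives entirely at the $m_i\ge 3$ coordinates, independently of what happens at $m_i=2$.

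The paper proves $(\ast)$ by induction on $n$ rather than by expanding in a basis. For $n=1$ it applies the generalised H\"older inequality \emph{in function space} to reduce to the case of equal functions, and then shows $\|f\|_q^q\le\sigma^{2-q}\|f\|_2^q$ via a simple measure-rescaling trick. The inductive step conditions on $x_1,\ldots,x_{n-1}$, applies the $n=1$ case in the remaining coordinate, and then invokes the inductive hypothesis on the functions $x\mapsto \|(f_i)_{[n-1]\to x}\|_2$. Working directly with $L^p$ norms rather than Fourier coefficients is what makes this go through: H\"older automatically delivers the $\|f_i\|_2$ factors without ever producing an $\ell^1$-type sum of coefficients.
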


  Below, we fix $\chi_S$ as in the proposition, and
  let $\tilde{\circ}$ denote the operator mapping a function
  $g\in L^2(\Omega, \nu)$ to the function
  $\sum_{S\subset[n]}g^{=S}\chi_S$.

To prove the proposition, we will expand out
$\|g\|_q^q$ and $\|\tilde{g}\|_q^q$ according to their definitions
and compare similar terms: namely, we show that a term of the form
$\mathbb{E}[\prod_{i=1}^q g^{=S_i}]$ is bounded by the corresponding term
in $\|\tilde{g}\|_q^q$, i.e. $\prod^q_{i=1}\|g^{=S_i}\|_2\mathbb{E}[\prod_{i=1}^q\chi_{S_i}]$. We now establish such a bound.

We begin with identifying cases in which both terms are equal to $0$,
and for that we use the orthogonality of the decomposition
$\{g^{=S}\}_{S\subset [n]}$. Afterwards, we only rely on the fact
that $g^{=S}$ depends only on the coordinates in $S$.

\begin{lem}\label{lem:upper bound on f=S}
  Let $q$ be some integer,  let $g\in L^2(\Omega, \nu)$, and let
  $S_1,\ldots, S_q \subset [n]$ be some sets.
  Suppose that some $j\in [n]$ belongs to exactly one of the sets $S_1,\ldots, S_q$. Then
  \[
    \mathbb{E}\left[\prod_{i=1}^q g^{=S_i}\right]=0 \quad \text{and} \quad
    \mathbb{E}\left[\prod_{i=1}^q\chi_{S_i}\right]=0.
    \]
  \end{lem}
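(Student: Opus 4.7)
The plan is to handle the two expectations separately using the two characterising features stated in the hypothesis: the Efron--Stein summands $g^{=S}$ are ``independent'' across coordinates in the sense that averaging over any coordinate inside $S$ kills the summand, while the $\chi_i$ are genuinely independent random variables with mean $0$.

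Without loss of generality assume $j \in S_1$ and $j \notin S_i$ for $i \ge 2$. For the first identity, I will condition on all coordinates other than $x_j$. The key property of the Efron--Stein decomposition, as recalled in the paragraph preceding the lemma, is that $g^{=S_1}$ is orthogonal to every function depending only on coordinates in a set not containing $S_1$; applied to the indicator of any fixing of the coordinates in $[n] \setminus \{j\}$, this gives $\mathbb{E}_{x_j \sim \nu_j}[g^{=S_1}(x) \mid x_{[n] \setminus \{j\}}] = 0$ pointwise. Since the factors $g^{=S_i}$ for $i \ge 2$ depend only on the coordinates in $S_i \subset [n] \setminus \{j\}$, they are constant under this conditional expectation, so
\[
\mathbb{E}\Big[\prod_{i=1}^q g^{=S_i}\Big]
= \mathbb{E}\Big[\Big(\prod_{i=2}^q g^{=S_i}\Big)\,\mathbb{E}_{x_j}\!\big[g^{=S_1} \mid x_{[n]\setminus\{j\}}\big]\Big] = 0.
\]

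For the second identity, write $\prod_{i=1}^q \chi_{S_i} = \prod_{k\in [n]} \chi_k^{m_k}$, where $m_k = |\{i : k \in S_i\}|$. By hypothesis $m_j = 1$, and since the $\chi_k$ are independent, the expectation factorises:
\[
\mathbb{E}\Big[\prod_{i=1}^q \chi_{S_i}\Big] = \prod_{k\in [n]} \mathbb{E}[\chi_k^{m_k}] = \mathbb{E}[\chi_j]\cdot \prod_{k\neq j} \mathbb{E}[\chi_k^{m_k}] = 0,
\]
since $\mathbb{E}[\chi_j] = 0$. No step here is delicate; the only thing worth being careful about is the first identity, where one must invoke the pointwise (conditional) vanishing of $g^{=S_1}$ rather than just its orthogonality to other $g^{=T}$, since the product $\prod_{i\ge 2} g^{=S_i}$ need not itself be an Efron--Stein summand.
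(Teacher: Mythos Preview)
Your proof is correct and follows essentially the same approach as the paper: for the $\chi$ identity you use independence and $\mathbb{E}[\chi_j]=0$, and for the Efron--Stein identity you use that $\prod_{i\ge 2} g^{=S_i}$ depends only on coordinates in $[n]\setminus\{j\}$ together with the vanishing of $g^{=S_1}$ against such functions.

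One minor remark: your closing caution is unnecessary. The orthogonality property recorded in the paper is that $g^{=S_1}$ is orthogonal to \emph{any} function depending only on a coordinate set not containing $S_1$, not merely to other Efron--Stein components $g^{=T}$. Since $\prod_{i\ge 2} g^{=S_i}$ is such a function (it depends only on $\bigcup_{i\ge 2} S_i \subset [n]\setminus\{j\}$, which does not contain $S_1$), the paper applies this orthogonality directly to obtain $\langle g^{=S_1}, \prod_{i\ge 2} g^{=S_i}\rangle = 0$, without passing through the pointwise conditional vanishing. Your detour through $\mathbb{E}_{x_j}[g^{=S_1}\mid x_{[n]\setminus\{j\}}]=0$ is valid but not required.
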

  \begin{proof}
    Assume without loss of generality that $j\in S_1$.
    The second equality $\mathbb{E}\left[\prod_{i=1}^q\chi_{S_i}\right]=0$
    follows by taking expectation over $\chi_j$,
    using the independence between the random variables $\chi_i$.
    For the first equality, observe that the function
    $\prod^{q}_{i=2}g^{=S_i}$ depends only on coordinates in $S_2\cup
    \cdots, S_q \subset [n]\setminus \{j\}$. Hence the properties of
    the Efron--Stein decomposition imply
    \begin{align*}
      0 = \left\langle g^{=S_1}, \prod_{i=2}^q g^{=S_i}\right\rangle
      =\mathbb{E}\left[\prod_{i=1}^q g^{=S_i}\right]. & \qedhere
    \end{align*}
    \end{proof}

    Thus we only need to consider terms corresponding to $S_1,\ldots,S_q$
    in which each coordinate appears in at least two sets.
    To facilitate our inductive proof we work with general functions
    $f_i$ that depend only on coordinates of $S_i$
    (rather than only with the functions of the form $g^{=S_i}$).
    \begin{lem}\label{lem:tough upper bound}
      Let $f_1,\ldots, f_q\in L^2(\Omega, \nu)$ be functions that
      depend on sets $S_1,\ldots, S_q$ respectively. Let $T_i$ for
      $i=3,\ldots, q$ be the set of coordinates covered by the sets
      $S_1,\ldots, S_q$ exactly $i$ times. Then
      \[
        \left|\mathbb{E}\left[\prod_{i=1}^q f_i \right]\right|\le
        \prod_{i=1}^q \|f_i\|_2
        \cdot
        \prod_{j=3}^q\sigma_{T_j}^{2-j}.
        \]
      \end{lem}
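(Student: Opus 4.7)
}
The plan is to prove the inequality by induction on $n^{\star} := |\bigcup_{i=1}^{q} S_i|$, the number of coordinates that appear in any of the $S_i$. The base case $n^\star = 0$ is immediate: every $f_i$ is a constant, every $T_j$ is empty, and the claim reduces to $|\prod_i f_i| \le \prod_i |f_i|$.

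For the inductive step, I pick any coordinate $t \in \bigcup_i S_i$, set $d := d(t) = |\{i : t \in S_i\}| \ge 1$, and relabel so that $t \in S_1,\ldots,S_d$. The plan is to condition on $x_{-t} := (x_s)_{s \ne t}$ and handle the inner expectation $\mathbb{E}_{x_t}[\prod_{i \le d} f_i]$ differently in three cases. If $d = 1$, push $\mathbb{E}_{x_t}$ inside: the resulting function $\tilde f_1(x_{-t}) = \mathbb{E}_{x_t}[f_1]$ depends only on $S_1 \setminus \{t\}$ and satisfies $\|\tilde f_1\|_2 \le \|f_1\|_2$ by Jensen. If $d = 2$, apply Cauchy--Schwarz over $x_t$ to obtain $|\mathbb{E}_{x_t}[f_1 f_2]| \le h_1 h_2$, where each $h_i(x_{-t}) := \sqrt{\mathbb{E}_{x_t}[f_i^2]}$ depends only on $S_i \setminus \{t\}$ and satisfies $\|h_i\|_2 = \|f_i\|_2$. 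If $d \ge 3$, apply H\"older's inequality over $x_t$ with all exponents equal to $d$, giving $|\mathbb{E}_{x_t}[\prod_{i \le d} f_i]| \le \prod_{i \le d} \|f_i(\cdot,x_{-t})\|_{L^d(\nu_t)}$, and then use the single-coordinate bound $\|g\|_d \le \sigma_t^{(2-d)/d}\|g\|_2$ on each factor to pull out an overall factor of $\sigma_t^{2-d}$.

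In every case, integrating $x_{-t}$ reduces the problem to bounding $\mathbb{E}[\prod_i g_i]$ for functions $g_i$ depending on $S_i \setminus \{t\}$ (namely $g_i = \tilde f_i$ or $h_i$ for $i \le d$, and $g_i = |f_i|$ for $i > d$), each satisfying $\|g_i\|_2 \le \|f_i\|_2$. I would then apply the inductive hypothesis, using $|\bigcup_i (S_i \setminus \{t\})| = n^\star - 1$. The crucial bookkeeping point is that passing from $S_i$ to $S_i \setminus \{t\}$ drops $t$'s multiplicity from $d$ to $0$: $T_d$ loses the element $t$ while every other $T_j$ is unchanged. For $d \le 2$ this does not alter $\prod_{j \ge 3}\sigma_{T_j}^{2-j}$ at all; for $d \ge 3$ it multiplies this factor by exactly $\sigma_t^{d-2}$, which precisely cancels the $\sigma_t^{2-d}$ extracted in the H\"older step.

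The main obstacle is the H\"older step for $d \ge 3$, specifically the single-coordinate estimate $\|g\|_d \le \sigma_t^{(2-d)/d}\|g\|_2$ for $g\colon \Omega_t \to \mathbb{R}$. I would derive this from $\|g\|_d^d \le \|g\|_\infty^{d-2}\|g\|_2^2$ combined with $\|g\|_\infty \le \|g\|_2/\sqrt{p_t} \le \|g\|_2/\sigma_t$, where the first bound uses that $p_t$ is the smallest atomic mass of $\nu_t$ and the second uses $\sigma_t^2 = p_t(1-p_t) \le p_t$. The choice of H\"older exponent is forced: exponent $d$ is the unique value that both distributes the cost equally among the $d$ functions sharing the coordinate and produces the correct $\sigma_t^{2-d}$ discount corresponding to $t \in T_d$.
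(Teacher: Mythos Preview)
Your proposal is correct and follows essentially the same approach as the paper's proof. Both arguments induct on the number of relevant coordinates, peel off one coordinate $t$ by conditioning, use H\"older (with exponent equal to the multiplicity $d$ of $t$) together with the single-coordinate bound $\|g\|_{L^d(\nu_t)}^d \le \sigma_t^{2-d}\|g\|_{L^2(\nu_t)}^d$, and then apply the inductive hypothesis to the section functions $x_{-t}\mapsto \|f_i(\cdot,x_{-t})\|_{L^2(\nu_t)}$. The only differences are organizational: the paper packages all three of your cases $d=1,2,\ge 3$ into a single ``$n=1$'' lemma (eliminating constant functions first and applying generalized H\"older uniformly), and it derives the single-coordinate bound via a measure rescaling $\tilde\nu = \nu/p_t$ rather than via your interpolation $\|g\|_d^d \le \|g\|_\infty^{d-2}\|g\|_2^2$ combined with $\|g\|_\infty \le \|g\|_2/\sqrt{p_t}$ --- but these two derivations are equivalent.
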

      \begin{proof}
        The proof is by induction on $n$, simultaneously for all functions.
        We start with the case $n=1$, which we prove
        by reducing to the case that all $f_i$ are equal.
        \subsection*{The case $n=1$.}
        Here each $f_i$ either depends on a single input
        or is constant and depends only on the empty set.
        We may assume that none of the $f_i$'s is constant,
        as otherwise we may eliminate it from the inequality
        by dividing by $|f_i|$.
        By the generalised H\"{o}lder inequality we have
        \[
         \left|\mathbb{E}\left[\prod_{i=1}^q f_i \right]\right| \le \prod_{i=1}^q\| f_i \|_q. \label{holder}
         \]
        Hence the case $n=1$ of the lemma will follow once
        we prove it assuming all the $f_i$ are equal.
        \subsection*{The $n=1$ case with equal $f_i$'s}
        We show that if $(\Omega, \nu)$ is a discrete probability
        space in which any atom has probability at least
        $p$, then  $\|f\|_q^q \le
        \|f\|_2^q\sigma^{2-q}$, where $\sigma=\sqrt{p(1-p)}$.

        While the inequality $\|f\|_2\le
        \|f\|_q$ holds in any probability space,
        the reverse inequality holds in any measure space
        where each atom has measure at least $1$.
        Accordingly, we consider the measure $\tilde{\nu}$
        on $\Omega$ defined by $\tilde{\nu}(x)=\nu(x) p^{-1}$. Then
        \[
          \|f\|_{q,\nu}^q=
        p\|f\|_{q,\tilde{\nu}}^q \le
        p\|f\|^q_{2, \tilde{\nu}} =
        p^{1-\frac{q}{2}}\|f\|_{2,\nu}^q \le
        \sigma^{2-q} \|f\|_{2,\nu}^{q}.
      \]
      This completes the proof of the $n=1$ case.
       \subsection*{The inductive step}
       Let $f_1,\ldots,f_q \in L^2(\Omega, \nu)$ be functions. Let
       $\mathbf{x} \sim \prod_{i=1}^{n-1} (\Omega _i,\nu_i)$. By the
       $n=1$ case we have:
       \[
        \left|\mathbb{E}\left[\prod_{i=1}^q f_i\right]\right|
        =\left|\mathbb{E}_{\mathbf{x}}
        \left[\mathbb{E}
        \left[\prod_{i=1}^q (f_i)_{[n-1]\to \mathbf{x}}
        \right]\right]\right|
        \le
        \mathbb{E}_{\mathbf{x}}
        \left[\prod_{i=1}^q\| (f_i)_{[n-1]\to
          \mathbf{x}}\|_2\sigma_n^{j}
          \right],
       \]
       where $j$ is $2-i$ if $n\in T_i$ for $i\geq 3$,
       and otherwise $0$.
       The lemma now follows by applying the inductive
       hypothesis on the functions
       $\mathbf{x}\rightarrow \|(f_i)_{[n-1]\to \mathbf{x}}\|$
       and using
       $\left\| \left\|(f_i)_{[n-1]\to \mathbf{x}} \right\|_2
       \right\|_{2,\mathbf{x}}= \|f_i\|_2$.
        \end{proof}

    \begin{proof}[Proof of Proposition \ref{prop:reduction}]
     We wish to upper bound
     \[
       \mathbb{E}[g^q]=\sum_{S_1,\ldots,S_q}\mathbb{E}\left[\prod_{i=1}^q
       g^{=S_i}\right]
     \]
     by
     \[
       \sum_{S_1,\ldots,S_q}\mathbb{E}\left[\prod_{i=1}^q
       \chi_{S_i}\right]\prod_{i=1}^q \|g^{=S_i}\|_2.
     \]
     We upper bound each term participating in the expansion of $g^q$
     by the corresponding term in $\tilde{g}^q$.
     In the case the sets $S_i$ cover some element exactly once,
     Lemma \ref{lem:upper bound on f=S} implies that both terms are
     $0$. Otherwise, the sets $S_i$ cover each element either $0$
     times or at least $2$ times; let $T_i$ be the set of elements of $S_1,\ldots,S_q$
     appearing in exactly $i$ of the sets (as in Lemma \ref{lem:tough upper bound}).
     By the  assumption of the proposition, we have $\mathbb{E}\left[\prod_{i=1}^q
     \chi_{S_i}\right]\ge \prod_{i=3}^q\sigma_{T_i}^{2-|T_i|}$.
     The proof is concluded by combining this with the upper bound
     on $\mathbb{E}\left[\prod_{i=1}^q g^{={S_i}}\right]$
     following from Lemma \ref{lem:tough upper bound} with $f_i = g^{=S_i}$.
   \end{proof}

   \begin{proof}[Proof of Theorem \ref{thm:es}]
 Let $\sigma_i'=\sqrt{p_i/4(1-p_i/4)}$.
 We choose $\chi_i$ to be the
 $\frac{p_i}{4}$-biased character,
 $\chi_i = \frac{x_i-p_i/4}{\sigma_i'}$.
 Clearly $\chi_i$ has mean $0$ and variance
 $1$, and a direct computation shows that
 $\mathbb{E}\left[\chi_i^j\right]
 \geq (\sigma_i)^{2-j}$ for all integer $j> 2$,
 hence all of the conditions of Proposition
 \ref{prop:reduction} hold.

 Denote $\sigma'_S=\prod_{i\in S}\sigma'_i$ and set $h=T_{\frac{1}{4}}f$, $g=\mathrm{T}_{\frac{1}{2q^{1.5}}} h$.
By Proposition \ref{prop:reduction} and Theorem
\ref{thm:qth moment with different norms} we have

\[
\| \mathrm{T}_{\frac{1}{8q^{1.5}  }   } f \|_q^q = \| g \|_q^q
\le \|\tilde{g} \|_q^q \le \sum_S (\sigma'_S)^{2-q} \|\mathrm{D}_S[\tilde{h}]\|_2.
\]
We note that by Parseval, the $2$-norm of $\tilde{h}$ and its derivatives are
equal to the $2$-norm of $h$ and its Laplacians,
and thus the last sum is equal to
\[
\sum_{S}(\sigma'_S)^{2-q} \| \mathrm{L}_S[h]\|_2^q \le
\sum_{S}(\sigma_S)^{2-q} \| \mathrm{L}_S[f]\|_2^q.
\]
In the last inequality we used $\sigma'_S\geq 2^{-|S|} \sigma_S$ and
$\| \mathrm{L}_S[h]\|^{q}\leq 2^{-q|S|} \| \mathrm{L}_S[f]\|_2^q$
(which follows from Parseval).
This completes the proof of the theorem.
\end{proof}

\section{An invariance principle for global functions}
\label{sec:inv}

Invariance (also known as Universality) is a fundamental paradigm
in Probability, describing the phenomenon that many random processes
converge to a specific distribution
that is the same for many different instances of the process.
The prototypical example is the Berry-Esseen Theorem,
giving a quantitative version of the Central Limit Theorem
(see e.g.\ \cite[Section 11.5]{o2014analysis}). More sophisticated instances of
the phenomenon that have been particularly influential on recent research
in several areas of Mathematics include the universality of
Wigner's semicircle law for random matrices (see \cite{mehta2004book})
and of Schramm--Loewner evolution (SLE)
e.g.\ in critical percolation (see \cite{smirnov2006survey}).

In the context of the cube, the Invariance Principle is a powerful tool
developed by Mossel, O'Donnell and Oleszkiewicz \cite{mossel2010noise}
while proving their `Majority is Stablest' Theorem,
which can be viewed as an isoperimetric theorem for the noise operator.
Roughly speaking, the result (in a more general form due to
Mossel \cite{mossel2010gaussian}) is that `majority functions'
(characteristic functions of Hamming balls) minimise noise sensitivity
among functions that are `far from being dictators'.
The Invariance Principle converts many problems on the cube
to equivalent problems in Gaussian Space;
in particular, `Majority is Stablest' is converted
into an isoperimetric problem in Gaussian Space
which was solved by a classical theorem
of Borell \cite{borell1985geometric}
(half-spaces are isoperimetric).

In this section we will establish an invariance principle
for global functions that has several applications
analogous to those of the classical invariance principle,
such as the following variant of `majority is stablest'.
We define the \emph{$p$-biased $\alpha$-Hamming ball} on $\{0,1\}^n$
as the function $H_{\alpha}$ whose value is $1$ on an input $x$
if and only if $x$ has at least $t$ coordinates equal to $1$,
and $t$ is chosen so that $\mu_p(H_{\alpha})$ is as close to $\alpha$ as possible.
\begin{cor}\label{majority is stablest}
  For each $\epsilon>0$, there exists
  $\delta>0$, such that the following holds. Let $\rho\in (\epsilon,
  1-\epsilon)$, let $n>\delta^{-1}$, and let
  $f, g\in L^2(\{0,1\}^n,\mu_p)$. Suppose that $\mathrm{I}_S[f]\le
  \delta$ and that $\mathrm{I}_S[g]\le \delta$
  for each set $S$ of at most $\delta^{-1}$ coordinates. Then
  \[
    \left\langle \mathrm{T}_\rho{f},g \right\rangle \le \left
      \langle \mathrm{T}_\rho H_{\mu_p(f)},
      H_{\mu_p(g)}\right\rangle +\epsilon.
    \]
  \end{cor}
We omit the proof of this result as it is very similar
to that in \cite{mossel2010gaussian}.
For the sake of brevity we also omit discussion
of other applications of our invariance principle,
including a sharp threshold for almost monotone Boolean functions,
which is analogous to results of Lifshitz \cite{lifshitz2018hypergraph}.

In the basic form (see \cite[Section 11.6]{o2014analysis})
of the Invariance Principle,
we consider a multilinear real-valued polynomial $f$ of degree $\le k$
and wish to compare $f(\xb)$ to $f(\yb)$,
where $\xb$ and $\yb$ are random vectors
each having independent coordinates,
according a smooth (to third order) test function $\phi$.
(Comparison of the cumulative distributions requires $\phi$ to be
a step function, but this can be handled by smooth approximation.)
The version of \cite[Remark 11.66]{o2014analysis}
 shows that if the coordinates $x_i$
have mean $0$, variance $1$ and are suitably hypercontractive
(satisfy $\|a + \rho b x_i\|_3 \le \|a + bx_i\|_2$
for any $a,b \in \mb{R}$), and similarly for $y_i$, then
\begin{equation} \label{eq:rem}
\big| \mb{E}[\phi(f(\xb))] - \mb{E}[\phi(f(\yb))] \big|
\le \tfrac{1}{3} \|\phi'''\|_\infty \rho^{-3k}
\sum_{i \in [n]} \mathrm{I}_i(f)^{3/2}.
\end{equation}

The hypercontractivity assumption applies e.g.\
if the coordinates are standard Gaussians or $p$-biased bits
(renormalised to have mean $0$ and variance $1$) with $p$ bounded away
from $0$ or $1$, but if $p=o(1)$ then we need $\rho=o(1)$,
in which case their theorem becomes ineffective.
We will apply our hypercontractivity inequality to obtain
an invariance principle that is effective for small
probabilities and functions with small generalised influences.
We adopt the following setup.

\begin{setup} \label{set:inv}
Let $\sigma_1,\ldots,\sigma_n>0$, let
$\mathbf{X}=(\mathbf{X}_1,\dots, \mathbf{X}_n)$ and
$\mathbf{Y}=(\mathbf{Y}_1,\ldots, \mathbf{Y}_n)$ be random vectors
with independent coordinates, where each
$X_i$ and $Y_i$ are real-valued random variable with mean $0$, variance $1$,
and satisfy $\|X_i\|_3^3 \le \sigma_i^{-1}$ and $\|Y_i\|_3^3\le \sigma_i^{-1}$.
Let $f \in \mb{R}[v]$ be a multilinear polynomial
of degree $d$ in $n$ variables $v=(v_1,\dots,v_n)$.
Let $\phi:\mb{R} \to \mb{R}$ be smooth.
\end{setup}

For $S \sub [n]$ we write $\hat{f}(S)$ for the
coefficient in $f$ of $v_S = \prod_{i \in S} v_i$.
We write $W_{S}(f)= \sum_{J: S \sub J} \hat{f}(J)^2$
and similarly to Section \ref{subsec:multilinear} we define the
generalised influences by
$\mathrm{I}_S(f) = W_{S}(f) \prod_{i \in S} \sS_{i}^{-2}$.

We write $\mathrm{T}_\rho[f]=\sum_{S\sub [n]}\rho^{|S|}\hat{f}(S)v_S$.

Now we state our invariance principle,
which compares $f(\mathbf{X})$ to $f(\mathbf{Y})$.

\begin{thm}\label{thm:inv}
Under Setup \ref{set:inv}, if $\mathrm{I}_S[f]\le \epsilon$ for each
nonempty set $S$, then
\[
  \left| \mb{E}[\phi(f(\mathbf{X}))] - \mb{E}[\phi(f(\mathbf{Y}))]
  \right|
  \le 2^{5d} \|\phi'''\|_\infty W_{\emptyset}(f) \sqrt{\epsilon}.
\]
\end{thm}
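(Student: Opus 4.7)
I follow the Lindeberg replacement strategy used in the classical invariance principle of Mossel, O'Donnell, and Oleszkiewicz, with the crucial change that third--moment estimates on one--coordinate restrictions of $f$ are controlled by our global hypercontractive inequality (Lemma \ref{lem:q-applying_hypercontractivity}) rather than by the standard hypercontractive inequality. This substitution is exactly what keeps the bound effective when some of the $\sigma_i$ are small.

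\emph{Step 1 (hybrid telescoping).} Introduce the hybrid vectors $\mathbf{Z}^{(i)}=(Y_1,\dots,Y_i,X_{i+1},\dots,X_n)$, so $\mathbf{Z}^{(0)}=\mathbf{X}$ and $\mathbf{Z}^{(n)}=\mathbf{Y}$, and write
\[
\mb{E}[\phi(f(\mathbf{X}))]-\mb{E}[\phi(f(\mathbf{Y}))]=\sum_{i=1}^{n}\Big(\mb{E}[\phi(f(\mathbf{Z}^{(i-1)}))]-\mb{E}[\phi(f(\mathbf{Z}^{(i)}))]\Big),
\]
reducing the task to bounding each one--coordinate swap.

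\emph{Step 2 (Taylor expansion with matching moments).} Since $f$ is multilinear, decompose $f=A_i+v_i B_i$ where $B_i=\mathrm{D}_i f$. Evaluating the coefficients at $(Y_1,\dots,Y_{i-1},X_{i+1},\dots,X_n)$ and writing $A,B$ for the resulting random variables, $f(\mathbf{Z}^{(i-1)})=A+X_iB$ and $f(\mathbf{Z}^{(i)})=A+Y_iB$. Taylor--expanding $\phi$ about $A$ to second order, the zeroth, first, and second order terms cancel in expectation because $X_i,Y_i$ are independent of $(A,B)$ and share their first two moments. The third--order remainder, combined with the hypothesis $\|X_i\|_3^3,\|Y_i\|_3^3\le\sigma_i^{-1}$, gives
\[
\big|\mb{E}[\phi(A+X_iB)]-\mb{E}[\phi(A+Y_iB)]\big|\le\tfrac{1}{3}\|\phi'''\|_\infty\sigma_i^{-1}\|B_i\|_3^3.
\]

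\emph{Step 3 (applying global hypercontractivity).} The polynomial $B_i=\mathrm{D}_i f$ has degree at most $d-1$, and a direct Fourier computation using the definitions in Setup \ref{set:inv} shows that for every $T\subset[n]\setminus\{i\}$,
\[
\mathrm{I}_T(B_i)=W_{T\cup\{i\}}(f)\prod_{j\in T}\sigma_j^{-2}=\sigma_i^{2}\,\mathrm{I}_{T\cup\{i\}}(f)\le\sigma_i^{2}\varepsilon,
\]
where the bound uses the hypothesis of the theorem (including the case $T=\emptyset$, which gives $\|B_i\|_2^2\le\sigma_i^2\varepsilon$, validating the full hypothesis of Lemma \ref{lem:q-applying_hypercontractivity}). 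Regarding $B_i$ as a polynomial in the independent random variables $(Y_j)_{j<i},(X_j)_{j>i}$ --- each of which satisfies the required third--moment bound --- Lemma \ref{lem:q-applying_hypercontractivity} with $q=3$, $r=d-1$, and $\delta=\sigma_i^{2}\varepsilon$ yields
\[
\|B_i\|_3\le 6^{1.5(d-1)}\bigl(\sigma_i^{2}\varepsilon\bigr)^{1/6}\|B_i\|_2^{2/3},\qquad\text{hence}\qquad \|B_i\|_3^{3}\le C^{d}\,\sigma_i\sqrt{\varepsilon}\,\|B_i\|_2^{2}
\]
for an absolute constant $C$. The factor $\sigma_i$ is precisely what is needed to cancel the $\sigma_i^{-1}$ inherited from Step 2.

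\emph{Step 4 (summation).} Combining Steps 2--3 gives $\bigl|\mb{E}[\phi(f(\mathbf{Z}^{(i-1)}))]-\mb{E}[\phi(f(\mathbf{Z}^{(i)}))]\bigr|\le C^{d}\|\phi'''\|_\infty\sqrt{\varepsilon}\,\|B_i\|_2^{2}$. Summing in $i$ and using
\[
\sum_{i=1}^{n}\|B_i\|_2^{2}=\sum_{i=1}^{n}W_{\{i\}}(f)=\sum_{S}|S|\hat{f}(S)^{2}\le d\,W_{\emptyset}(f),
\]
absorbing the resulting $dC^{d}$ into $2^{5d}$, and combining with the triangle inequality over the telescope completes the proof.

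\emph{Main obstacle.} The only delicate point is Step 3. Without global hypercontractivity, any bound of $\|B_i\|_3$ by $\|B_i\|_2$ carries a multiplicative factor that blows up as some $\sigma_i\to 0$, and then fails to cancel the $\sigma_i^{-1}$ from the third--moment condition; this is exactly the obstruction that renders a naive $p$-biased version of the MOO invariance principle ineffective when $p$ is small. Lemma \ref{lem:q-applying_hypercontractivity} is tailored so that the $\sigma_i$-scaling is correct, because the $\sigma_i^{-2}$ factors attached to coordinates in $S$ in the definition of $\mathrm{I}_S$ exactly counterbalance the $3$rd-moment growth of the underlying random variables.
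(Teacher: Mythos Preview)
Your proof is correct and follows essentially the same route as the paper: Lindeberg telescoping, third-order Taylor expansion exploiting matching first and second moments, then control of $\|\mathrm{D}_i f\|_3^3$ via Lemma~\ref{lem:q-applying_hypercontractivity} with $q=3$ and $\delta=\sigma_i^2\varepsilon$, and finally summation using $\sum_i\|\mathrm{D}_i f\|_2^2\le d\,W_\emptyset(f)$. The only cosmetic difference is that you invoke the lemma with $r=d-1$ (the degree of $\mathrm{D}_i f$) while the paper uses $r=d$, which is harmless for the final constant.
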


The term $W_{\emptyset}(f)$ can be replaced by either
$\mathbb{E}[f(\mathbf{X})^2]$ or
$\mathbb{E}[f(\mathbf{Y})^2]$ as they are all equal.

\skipi

Theorem \ref{thm:inv} can be informally interpreted as saying that if
a multilinear, low degree polynomial $f$ is global, then the distribution
of $f(\mathbf{X})$ does not really depend on the distribution of $\mathbf{X}$
except for the mean and variance of each coordinate.

In particular, it implies that plugging in the $p$-biased characters into
$f$ results in a fairly similar distribution to the one resulting from plugging in
the uniform characters into $f$. A posteriori, this may be seen as an intuitive explanation
for Theorem \ref{thm:Hypercontractivity}, as the standard hypercontractivity
theorem holds in the uniform cube.

\skipi
Next, we set up some notations and preliminary observations for the proof of Theorem
\ref{thm:inv}.
Throughout we fix
$\mathbf{X}$, $\mathbf{Y}$, $f$, and $\phi$ as in Setup \ref{set:inv}. We write
$\mathbf{X}_S=\prod_{i\in S}\mathbf{X}_i$, and similarly for $\mathbf{Y}$.
Recall that $f = \sum_S \hat{f}(S) v_S$ is a (formal) multilinear polynomial in $\mb{R}[v]$ of degree $d$.
Note that $f(\mathbf{X}) = \sum_S \hat{f}(S)\mathbf{X}_S$
has $\mb{E}[f(\mathbf{X})^2] = \sum_S \hat{f}(S)^2$,
as $\mb{E}\mathbf{X}_S^2 = 1$
and $\mb{E}[\mathbf{X}_S \mathbf{X}_T] = 0$ for $S \ne T$.
The random variable $f(\mathbf{X})$ has the orthogonal decomposition
$f = \sum_S f^{=S}$ with each $f^{=S} = \hat{f}(S)\mathbf{X}_S$.
Further note that $\mathrm{L}_S f(\mathbf{X}) = \sum_{J: S \sub J} \hat{f}(J)\mathbf{X}_J$
so we have the identities
\[
  \mathrm{I}_S(f) \prod_{i \in S} \sS_i^2 =
  \mb{E}[(\mathrm{L}_S f(\mathbf{X}))^2]=
  \mb{E}[(\mathrm{L}_S f(\mathbf{Y}))^2]=
  \sum_{J: S \sub J} \hat{f}(J)^2 = W^{S^\ua}(f).
  \]

We apply the replacement method
as in Section \ref{sec:hyp} (and as in the proof
of the original invariance principle by
Mossel, O'Donnell and Oleszkiewicz \cite{mossel2010noise}).
For $0 \le t \le n$, define
$\mathbf{Z}^{:t} = (\mathbf{Z}^{:t}_1,\dots,\mathbf{Z}^{:t}_n)
= (\mathbf{Y}_1,...,\mathbf{Y}_t,\mathbf{X}_{t+1},...,\mathbf{X}_n)$,
and note that $f(\mathbf{Z}^{:t})$ has the orthogonal decomposition
$f(\mathbf{Z}^{:t}) = \sum_S f(\mathbf{Z}^{:t})^{=S}$ with
\[
  f(\mathbf{Z}^{:t})^{=S} = \hat{f}(S) \mathbf{Z}_S = \hat{f}(S)
  \mathbf{Y}_{S\cap [t]} \mathbf{X}_{S\setminus [t]}.
  \]

\begin{proof}[Proof of Theorem \ref{thm:inv}]
We adapt the exposition in \cite[Section 11.6]{o2014analysis}.
As $\mathbf{Z}^{:0}=\mathbf{X}$ and $\mathbf{Z}^{:n}=\mathbf{Y}$ we have
by telescoping and the triangle inequality
\[
  |\mb{E}[\phi(f(\mathbf{X}))] - \mb{E}[\phi(f(\mathbf{Y}))] |
\le \sum_{t=1}^n
| \mb{E}[\phi(f(\mathbf{Z}^{:t-1}))] - \mb{E}[\phi(f(\mathbf{Z}^{:t}))] |.
\]
Consider any $t \in [n]$ and write
\[ f(\mathbf{Z}^{:t-1}) = U_t + \DD_t \mathbf{Y}_t
\ \ \text{ and } \ \
f(\mathbf{Z}^{:t}) = U_t + \DD_t \mathbf{X}_t,
\ \ \text{ where } \]
\[ U_t = \mathrm{E}_t f(\mathbf{Z}^{:t-1}) = \mathrm{E}_t f(\mathbf{Z}^{:t})
\ \ \text{ and } \ \
\DD_t = \mathrm{D}_t f(\mathbf{Z}^{:t-1}) = \mathrm{D}_t f(\mathbf{Z}^{:t}).
\]
Both of the functions $U_t$ and $\DD_t$ are independent of the random
variables $X_t$ and $Y_t$.

By Taylor's Theorem,
\begin{align*}
\phi(f(\mathbf{Z}^{:t-1})) & = \phi(U_t) + \phi'(U_t) \DD_t \mathbf{Y}_t
+ \tfrac{1}{2} \phi''(U_t) (\DD_t \mathbf{Y}_t)^2
+ \tfrac{1}{6} \phi'''(A) (\DD_t \mathbf{Y}_t)^3,
\ \ \text{ and }  \\
\phi(f(\mathbf{Z}^{:t})) & = \phi(U_t) + \phi'(U_t) \DD_t \mathbf{X}_t
+ \tfrac{1}{2} \phi''(U_t) (\DD_t \mathbf{X}_t)^2
+ \tfrac{1}{6} \phi'''(A') (\DD_t \mathbf{X}_t)^3,
\end{align*}
for some random variables $A$ and $A'$.
As $\mathbf{X}_t$ and $\mathbf{Y}_t$ have mean $0$ and variance $1$ we have
$0 = \mb{E}[\phi'(U_t) \DD_t \mathbf{Y}_t]
= \mb{E}[\phi'(U_t) \DD_t \mathbf{X}_t]$ and
$ \mb{E}[ \phi''(U_t) (\DD_t)^2 ]
= \mb{E}[ \phi''(U_t) (\DD_t \mathbf{Y}_t)^2 ]
= \mb{E}[ \phi''(U_t) (\DD_t \mathbf{X}_t)^2 ]$, so
\[
  | \mb{E}[\phi(f(\mathbf{Z}^{:t-1}))] - \mb{E}[\phi(f(\mathbf{Z}^{:t}))] |
\le \tfrac{1}{6} \|\phi'''\|_\infty
( \mb{E}[|\DD_t \mathbf{X}_t|^3] + \mb{E}[|\DD_t \mathbf{Z}_t|^3] )
\le \tfrac{1}{3} \|\phi'''\|_\infty\sigma_t^{-1} \|\DD_t \|_3^3.
\]

The function $\DD_t$ is the function $\mathrm{D}_t[f]$ applied on random
variables satisfying the hypothesis of Lemma \ref{lem:q-applying_hypercontractivity}. Moreover, $\mathrm{I}_S[\mathrm{D}_{t}[f]]$ is
either 0 when $t\in S$, or $\sigma_t^{2}\mathrm{I}_{S\cup\{t\}}[f]$ when $t\notin
S$, in which case $\mathrm{I}_S[f]\le \sigma_t^{2}\epsilon$. Hence, by
Lemma \ref{lem:q-applying_hypercontractivity} (with $q=3$), we obtain
\[
  \|\DD_t \|_3^3\le
6^{4.5 d}\sigma_t \sqrt{\epsilon}
\|\DD_t \|_2^2=
6^{4.5 d}\sigma_t \sqrt{\epsilon} \cdot \sum_{S\ni t}\hat{f}(S)^2.
\]

Hence,
\[
  \sum_{t=0}^n\tfrac{1}{3} \|\phi'''\|_\infty\sigma_t^{-1} \|\DD_t
  \|_3^3
  \le 6^{4.5 d}\sqrt{\epsilon} \tfrac{1}{3} \|\phi'''\|_\infty
  \sum_{S}|S|\hat{f}(S)^2
  \le 6^{4.5 d}\sqrt{\epsilon}\tfrac{d}{3} \|\phi'''\|_\infty
  W_{\emptyset}(f).
  \]
  This completes the proof of the theorem since
  $6^{4.5d}\frac{d}{3}\le 2^{12 d}$.
\end{proof}

\newpage

\part{Sharp thresholds} \label{part:sharp}

In this part we apply the hypercontractivity results
of the previous part to obtain new results
in the theory of sharp thresholds.
To prepare for the analysis, we start
in Section \ref{sec:equiv} by establishing
the equivalence between the two notions of globalness
introduced earlier,
namely control of generalised influences
and insensitivity of the measure
under restriction to a small set of coordinates.
Section \ref{sec:noise+sharp} concerns the total influence
of global functions, and includes the proofs of
our stability results for the isoperimetric inequality
(Theorems \ref{thm:Bourgain+} and \ref{thm:Variant of Kahn kalai})
and our first sharp threshold result
(Theorem \ref{thm:Sharp threshold result}).
In Section \ref{sec:noise} we prove our result
on noise sensitivity and apply this to deduce
an alternative sharp threshold result.

\section{Characterising global functions} \label{sec:equiv}

Above we have introduced two notions of what it means for
a Boolean function $f$ to be global.
The first globalness condition,
which appears e.g.\ in Theorem \ref{thm:Bourgain+},
is that the measure of $f$ is not sensitive
to restrictions to small sets of coordinates.
The second condition is a bound on generalised
influences $I_S(f)$ for small sets $S$.
In this section we show that we can move
freely between these notions for two classes
of Boolean functions:
namely sparse ones and monotone ones.

Throughout we assume $p \le 1/2$, which does not
involve any loss in generality in our main results;
indeed, if $p>1/2$ we can consider
the dual $f^*(x) = 1-f(1-x)$ of any Boolean function $f$,
for which $\mu_{1-p}(f^*) = 1-\mu_p(f)$
and $\mathrm{I}_{\mu_{1-p}}(f^*) = \mathrm{I}_{\mu_p}(f)$.

We start by formalising our first notion of globalness.

\begin{defn} \label{def:global}
 We say that a Boolean function $f$
 is \emph{$\left(r,\delta\right)$-global} if
 $\mu_p\left(f_{J\to 1}\right)\le \mu_p\left(f\right)+\delta$
 for each set $J$ of size at most $r$.
\end{defn}

We remark that Definition \ref{def:global}
is a rather weak notion of globalness,
so it is quite surprising that it suffices
for Theorems \ref{thm:Variant of Kahn kalai}
and \ref{thm:Noise sensitivity},
where one might have expected to need
the stricter notion that
$\mu_p(f_{J\to 1})$ is close to $\mu_p(f)$.

The following lemma shows that if a sparse Boolean function
is global in the sense of Definition \ref{def:global}
then it has small generalised influences.

\begin{lem}
\label{lem: gen influence control for global functions}
Suppose that $f: \{0,1\}^n \to \{0,1\}$
is an  $\left(r,\delta\right)$-global Boolean function
with $\mu_p(f) \le \delta$.
Then $\mathrm{I}_{S}\left(f^{\le r}\right) \le
\mathrm{I}_{S}\left(f\right) \le 8^{r} \delta$
for all $S\sub [n]$ with $|S| \le r$.
\end{lem}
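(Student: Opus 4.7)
The inequality $\mathrm{I}_{S}(f^{\le r}) \le \mathrm{I}_{S}(f)$ is immediate from Lemma \ref{obs}, so I only need to prove $\mathrm{I}_{S}(f) \le 8^{r}\delta$ whenever $|S| \le r$. Writing $g = \sum_{x \in \{0,1\}^{S}}(-1)^{|S|-|x|} f_{S\to x}$, so that $\mathrm{I}_{S}(f) = \mathbb{E}_{\mu_p}[g^{2}]$, I would first apply Cauchy--Schwarz pointwise, using $f^{2}=f$ (since $f$ is Boolean), to obtain
\[
g(y)^{2} \le 2^{|S|}\sum_{x\in\{0,1\}^{S}} f_{S\to x}(y),
\]
and hence $\mathrm{I}_{S}(f) \le 2^{|S|}\sum_{x}\mu_{p}(f_{S\to x})$.

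The heart of the argument is to bound $\mu_{p}(f_{S\to x})$ for an arbitrary restriction $x$, whereas the $(r,\delta)$-globality hypothesis only controls the upward restrictions $f_{J\to 1}$. For each $x$ I would set $T=\{i\in S:x_{i}=1\}$ and $U=S\setminus T$, so that $f_{S\to x} = (f_{T\to 1})_{U\to 0}$. Restricting the $U$-coordinates to $0$ corresponds to conditioning on an event of $\mu_{p}$-probability $(1-p)^{|U|}$, so
\[
\mu_{p}(f_{S\to x}) \le (1-p)^{-|U|}\mu_{p}(f_{T\to 1}) \le 2^{|U|+1}\delta,
\]
using $(r,\delta)$-globality (legitimate as $|T|\le|S|\le r$), the bound $\mu_{p}(f)\le\delta$, and $(1-p)^{-1}\le 2$.

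Summing over $x\in\{0,1\}^{S}$, grouped by $T\sub S$, gives
\[
\sum_{x} \mu_{p}(f_{S\to x}) \le 2\delta\sum_{T\sub S} 2^{|S|-|T|} = 2\delta\cdot 3^{|S|},
\]
so that $\mathrm{I}_{S}(f) \le 2^{|S|+1}\cdot 3^{|S|}\delta = 2\cdot 6^{|S|}\delta \le 8^{r}\delta$ for $|S|\ge 3$. The small cases $|S|\in\{0,1,2\}$ admit direct verification; for instance, when $|S|=1$ the identity $(f_{i\to 1}-f_{i\to 0})^{2} = f_{i\to 1}+f_{i\to 0} - 2 f_{i\to 1}f_{i\to 0}$ yields $\mathrm{I}_{\{i\}}(f) \le \mu_{p}(f_{i\to 1}) + \mu_{p}(f_{i\to 0}) \le 4\delta \le 8\delta$.

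The principal obstacle is the mismatch between the one-sided hypothesis (which controls only the restrictions $f_{T\to 1}$) and the symmetric nature of $\mathrm{I}_{S}(f)$, which involves all $2^{|S|}$ restrictions $f_{S\to x}$. The resolution is the elementary conditioning observation that setting a coordinate to $0$ costs only a factor of $(1-p)^{-1}\le 2$ in $\mu_{p}$-measure; this converts control of the upward restrictions into control of every restriction, at an acceptable price of $2^{|S|}$. Everything else is bookkeeping and a routine Cauchy--Schwarz that accounts for the signed combination defining $g$.
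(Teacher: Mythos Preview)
Your proof is correct and follows essentially the same strategy as the paper: both arguments reduce to bounding each $\mu_p(f_{S\to x})$ by comparing it to $\mu_p(f_{T\to 1})$ (with $T=\{i:x_i=1\}$) via the conditioning factor $(1-p)^{-|S\setminus T|}\le 2^{|S\setminus T|}$. The only difference is in how the $2^{|S|}$ terms are combined: the paper uses the $L^2$ triangle inequality $\sqrt{\mathrm{I}_S(f)}\le\sum_x\sqrt{\mu_p(f_{S\to x})}$ together with the uniform bound $\mu_p(f_{S\to x})\le 2^r\delta$, which gives $\mathrm{I}_S(f)\le 4^{|S|}\cdot 2^r\delta\le 8^r\delta$ in one stroke for every $|S|\le r$; you instead use pointwise Cauchy--Schwarz, obtaining $2\cdot 6^{|S|}\delta$, which then forces separate checks for $|S|\le 2$. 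Both routes are valid, but the triangle-inequality version is slightly cleaner in that it avoids the case analysis.
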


\begin{proof}
The first inequality is from Lemma \ref{obs}. Next, we estimate
\begin{align} \label{eq:s}
\sqrt{\mathrm{I}_{S}\left(f\right)}  =
\left \| \sum_{x \in \{0,1\}^S}\left(-1\right)^{\left|S\right | -
  |x|}f_{S\to x} \right \|_2 \le \sum_{x\in \{0,1\}^S}\left\|f_{S\to
  x}\right \|
  _2=\sum_{x\in\{0,1\}^S}\sqrt{\mu_p(f_{S\to x})}.
\end{align}
Next we fix $x \in \{0,1\}^S$ and claim that
$\mu_p(f_{S\to x}) \le 2^r\delta$.
By substituting this bound in \eqref{eq:s}
we see that this suffices to complete the proof.
Let $T$ be the set of all $i\in S$ such that $x_i=1$.
Since $f$ is nonnegative, we have
$\mu_p(f_{T\to 1})\ge \left(1-p\right)^{\left|S\backslash
    T\right|}\mu_p(f_{S\to x})$.
As $f$ is $\left(r,\delta\right)$-global
and $\mu_p(f) \le \delta$, we have
$\mu_p\left(f_{T\to1}\right)\le 2\delta$,
so $\mu_p(f_{S\to x})\le (1-p) ^{|T|-r}2\delta \le 2^r\delta$,
where for the last inequality we can assume $T \ne \es$,
as $\mu_p\left(f_{T\to 1}\right) = \mu_p(f) \leq\delta \le 2^r\delta$.
This completes the proof.
\end{proof}

Next we show an analogue of the previous lemma
replacing the assumption that $f$ is sparse
by the assumption that $f$ is monotone.

\begin{lem} \label{rem}
Let $f\colon\{0,1\}^n\to\{0,1\}$
be a monotone Boolean $\left(r,\delta\right)$-global function.
Then $\mathrm{I}_S\left(f\right) \le 8^{r}\delta$
for every nonempty $S$ of size at most $r$.
\end{lem}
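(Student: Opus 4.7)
My starting point is the identity $\mathrm{I}_S(f) = \mathbb{E}[g_S^2]$ with $g_S = \sum_{x \in \{0,1\}^S}(-1)^{|S|-|x|}f_{S\to x}$, which is already derived inside the proof of Lemma \ref{lem: gen influence control for global functions}. The triangle inequality used there, $\sqrt{\mathrm{I}_S(f)}\le\sum_x\sqrt{\mu_p(f_{S\to x})}$, is hopeless without sparsity, so the plan is to exploit monotonicity via a pointwise bound on $g_S$ that extracts the cancellation, and then control the resulting $p$-biased pivotality probability using the globalness hypothesis.

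I first condition on the outcome $x' \in \{0,1\}^{[n]\setminus S}$ of the coordinates outside $S$. The restriction $f(\cdot,x')\colon\{0,1\}^S\to\{0,1\}$ is monotone Boolean, so $g_S(x')$ is a signed sum of $2^{|S|}$ values in $\{0,1\}$ and trivially $|g_S(x')|\le 2^{|S|}$. Moreover, if $f(\cdot,x')$ is constant on $\{0,1\}^S$ then all signs cancel and $g_S(x')=0$; by monotonicity this is equivalent to $f_{S\to 1}(x')=f_{S\to 0}(x')$, and since that difference is $\{0,1\}$-valued we obtain the pointwise bound
\[
g_S(x')^2 \;\le\; 4^{|S|}\bigl(f_{S\to 1}(x') - f_{S\to 0}(x')\bigr),
\]
and hence $\mathrm{I}_S(f) \le 4^{|S|}\bigl(\mu_p(f_{S\to 1}) - \mu_p(f_{S\to 0})\bigr)$. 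This is the monotone replacement for the sparsity-based bound used in Lemma \ref{lem: gen influence control for global functions}, and it is the only non-routine ingredient.

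It remains to show that $D := \mu_p(f_{S\to 1}) - \mu_p(f_{S\to 0}) \le 2^{|S|}\delta$. Globalness gives $\mu_p(f_{S\to 1}) - \mu_p(f) \le \delta$ directly. For the matching lower bound on $\mu_p(f_{S\to 0})$, I would write $\mu_p(f) = \sum_{y\in\{0,1\}^S}\Pr[y]\,\mu_p(f_{S\to y})$ and use monotonicity ($\mu_p(f_{S\to 0}) \le \mu_p(f_{S\to y}) \le \mu_p(f_{S\to 1})$ for every $y$) to get $\mu_p(f) - \mu_p(f_{S\to 0}) \le (1-(1-p)^{|S|})\,D$. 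Adding the two inequalities yields $D(1-p)^{|S|}\le \delta$, so $D \le \delta/(1-p)^{|S|}\le 2^{|S|}\delta$ since $p\le 1/2$. Combining with the previous paragraph gives $\mathrm{I}_S(f) \le 4^{|S|}\cdot 2^{|S|}\delta = 8^{|S|}\delta \le 8^r\delta$, as required.
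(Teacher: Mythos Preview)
Your proof is correct, and it takes a genuinely different route from the paper's. The paper argues by induction on $|S|$: it first establishes a separate lemma (Lemma \ref{restrict}) showing that globalness is inherited, with controlled parameter loss, by both restrictions $f_{i\to 1}$ and $f_{i\to 0}$, and then applies the triangle inequality $\sqrt{\mathrm{I}_{S\cup\{i\}}(f)} \le \sqrt{\mathrm{I}_S(f_{i\to 1})} + \sqrt{\mathrm{I}_S(f_{i\to 0})}$ together with the induction hypothesis.

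Your argument is direct and avoids induction entirely. The crucial move is the pointwise bound $g_S^2 \le 4^{|S|}(f_{S\to 1}-f_{S\to 0})$, which isolates precisely the cancellation that monotonicity buys: the entire alternating sum collapses unless the restriction is non-constant on $\{0,1\}^S$. After that, only the single gap $D=\mu_p(f_{S\to 1})-\mu_p(f_{S\to 0})$ needs to be controlled, and your averaging computation $D(1-p)^{|S|}\le\delta$ is clean. This is shorter and conceptually sharper than the paper's induction. The one thing the paper's approach produces that yours does not is Lemma \ref{restrict} itself (globalness under restriction), which is used elsewhere; but for the statement at hand your argument is preferable.
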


The proof is based on the following lemma
showing that globalness is inherited
(with weaker parameters) under restriction of a coordinate.

\begin{lem} \label{restrict}
    Suppose that $f$ is a monotone
    $\left(r,\delta\right)$-global function. Then for each  $i$:
    \begin{enumerate}
    \item{$f_{i\to 1}$ is $\left(r-1,\delta\right)$-global,} 
    \item{$\mu_p\left(f_{i\to 0}\right) \ge
        \mu_p\left(f\right)-\frac{p\delta}{1-p}$,}
    \item{$f_{i\to 0}$ is $\left( r-1, \frac{\delta}{1-p}\right)$-global}.
     \end{enumerate}
   \end{lem}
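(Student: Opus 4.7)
The plan is to exploit two elementary facts throughout: the conditional decomposition $\mu_p(g) = p\,\mu_p(g_{i\to 1}) + (1-p)\,\mu_p(g_{i\to 0})$, applied to $g=f$ and to $g=f_{J\to 1}$ for sets $J\sub[n]\sm\{i\}$; and the trivial identity $(f_{i\to c})_{J\to 1} = f_{(J\cup\{i\})\to 1,\, i\to c}$ for $i\notin J$ and $c\in\{0,1\}$. Together with monotonicity of $f$, these reduce all three parts to short computations, so no real obstacle is foreseen. The lemma is essentially a packaging of these manipulations, set up for the induction on $r$ that is used to prove Lemma \ref{rem}.

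For part (1), take $J\sub[n]\sm\{i\}$ with $|J|\le r-1$. Then $(f_{i\to 1})_{J\to 1} = f_{(J\cup\{i\})\to 1}$, and since $|J\cup\{i\}|\le r$, the $(r,\delta)$-globalness of $f$ gives $\mu_p((f_{i\to 1})_{J\to 1}) \le \mu_p(f) + \delta$. Monotonicity of $f$ gives $\mu_p(f)\le \mu_p(f_{i\to 1})$, so $\mu_p((f_{i\to 1})_{J\to 1})\le \mu_p(f_{i\to 1}) + \delta$, as required.

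For part (2), apply $(r,\delta)$-globalness of $f$ to the singleton $\{i\}$ to obtain $\mu_p(f_{i\to 1}) \le \mu_p(f) + \delta$. Rearranging $\mu_p(f) = p\,\mu_p(f_{i\to 1}) + (1-p)\,\mu_p(f_{i\to 0})$ and substituting,
\[
\mu_p(f_{i\to 0})
= \frac{\mu_p(f) - p\,\mu_p(f_{i\to 1})}{1-p}
\ge \frac{\mu_p(f) - p(\mu_p(f)+\delta)}{1-p}
= \mu_p(f) - \frac{p\delta}{1-p}.
\]

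For part (3), take $J\sub[n]\sm\{i\}$ with $|J|\le r-1$. Subtracting the decomposition of $\mu_p(f)$ from that of $\mu_p(f_{J\to 1})$ yields
\[
(1-p)\bigl[\mu_p((f_{i\to 0})_{J\to 1}) - \mu_p(f_{i\to 0})\bigr]
= \bigl[\mu_p(f_{J\to 1}) - \mu_p(f)\bigr]
 - p\bigl[\mu_p(f_{(J\cup\{i\})\to 1}) - \mu_p(f_{i\to 1})\bigr].
\]
The first bracket on the right is at most $\delta$ by $(r,\delta)$-globalness of $f$ (using $|J|\le r$), and the second is nonnegative by monotonicity applied after restricting coordinate $i$ to $1$. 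Dividing by $1-p$ gives $\mu_p((f_{i\to 0})_{J\to 1}) \le \mu_p(f_{i\to 0}) + \tfrac{\delta}{1-p}$, which is the desired globalness bound. The asymmetry between the two restrictions---$\delta$ is preserved for $f_{i\to 1}$ but only degrades to $\delta/(1-p)$ for $f_{i\to 0}$---reflects the fact that monotonicity biases $f$ upward at coordinate $i$, which is precisely why conditioning on $i\to 0$ is where all the arithmetic loss is absorbed.
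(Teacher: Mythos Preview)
Your proof is correct and parts (1) and (2) are identical to the paper's. For part (3) you take a slightly different but equally valid route: the paper bounds $\mu_p((f_{i\to 0})_{J\to 1}) \le \mu_p(f_{(J\cup\{i\})\to 1})$ directly from monotonicity and then invokes $(r,\delta)$-globalness together with part (2), whereas you subtract the two conditional decompositions and bound the resulting brackets separately. Both arguments use the same ingredients (monotonicity, globalness, and the decomposition $\mu_p(g)=p\mu_p(g_{i\to1})+(1-p)\mu_p(g_{i\to0})$) and are of comparable length; the paper's version is marginally more direct, while yours makes the arithmetic identity explicit.
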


    \begin{proof}
 To see (1), note that for any $J$ with $|J| \le r-1$ we have
 $\mu_p((f_{i \to 1})_{J \to 1}) = \mu_p(f_{J \cup \{i\} \to 1})
 \le \mu_p(f) + \delta \le \mu_p(f_{i \to 1}) + \delta$,
 where the last inequality holds as $f$ is monotone.
 Statement (2) follows from the upper bound $\mu_p\left(f_{i\to
        1}\right)\le \mu_p\left(f\right)+\delta$
      and    $\mu_p\left(f_{i\to
          0}\right)=\frac{\mu_p\left(f\right)-p\mu_p\left(f_{i\to1}\right)}{\left(1-p\right)}.$

   For (3), we note that by monotonicity
   $\mu_p\left(\left(f_{i\to
          0}\right)_{S\to 1}\right)
          \le \mu_p\left(f_{\{i\}\cup S\to
        1}\right).$
        As $f$ is $\left(r,\delta\right)$-global,
    \[ \mu_p\left(f_{S\cup\{i\}\to 1}\right)
    \le \mu_p\left(f\right)+\delta
    \le \mu_p\left(f_{i\to 0}\right)+\delta+\frac{p\delta}{1-p}=\mu_p\left(f_{i\to 0}\right)+\frac{\delta}{1-p}, \]
    using (2). Hence, $f_{i\to 0}$ is
    $\left(r,\frac{\delta}{1-p}\right)$-global.
  \end{proof}

\begin{proof}[Proof of Lemma \ref{rem}]
We argue by induction on $r$. In the case where  $r=1$,
Lemma \ref{restrict} and monotonicity of $f$
imply (using $p \le 1/2$)
 \[\mathrm{I}_{i}\left(f\right)=\mu_p\left(f_{i\to 1}\right) -
   \mu_p\left(f_{i\to 0}\right)\le \delta
   +\frac{p\delta}{1-p}\le 2\delta. \]
Now we bound $\mathrm{I}_{S\cup\left\{i\right\}}\left(f\right)$
for $r>1$ and $S$ of size $r-1$ with $i \notin S$.

Note that $\mathrm{D}_{S\cup\{i\}}\left(f\right)=\mathrm{D}_{S}
\left[\mathrm{D}_i (f)\right]$. By the triangle inequality,
we have
 \[
   \sqrt{\mathrm{I}_{S\cup\left\{i\right\}} \left(f\right)} =
   \sigma^{-r}\|\mathrm{D}_{S\cup\left\{i\right\}}(f)\|_2 =
   \sigma^{1-r}\|\mathrm{D}_{S}(f_{i\to 1})-\mathrm{D}_S(f_{i\to 0})\|_2
   \le
   \sqrt{ \mathrm{I}_S\left(
       f_{i\to1} \right) } +
   \sqrt{ \mathrm{I}_S{\left(f_{i\to
       0} \right)}}.
\]
By the induction hypothesis and Lemma \ref{restrict}
 the right hand side is at most
\[
  \sqrt{8^{r-1}\delta}
  +\sqrt{8^{r-1}2\delta}
  \le \sqrt{8^r\delta}.
\]
Taking squares, we obtain  $\mathrm{I}_{S\cup\left\{i\right\}}\left(f\right)\le
8^r\delta.$
\end{proof}

We conclude this section by showing the converse direction of
the equivalence between our two notions of globalness,
i.e.\ that if the generalised influences of a function $f$
are small then $f$ is global in the sense of its measure
being insensitive to restrictions to small sets.
(We will not use the lemma in the sequel
but include the proof for completeness.)

\begin{lem}
  Let $f\colon\{0,1\}^n\to \{0,1\}$ be a Boolean function and let $r>0$. Suppose
  that $\mathrm{I}_S[f] \le \delta$ for each nonempty set $S$ of at most
  $r$ coordinates. Then $f$ is $\left(r,4^{r}\delta\right)$-global.
\end{lem}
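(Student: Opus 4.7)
The plan is a telescoping argument over the coordinates of $J$, in which the key gain comes from the Boolean structure of $f$ (and hence of every restriction of $f$), while the main technical content is a Fourier bound for the first-order influence of an iterated restriction.

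Enumerate $J=\{i_1,\ldots,i_r\}$, put $J_k=\{i_1,\ldots,i_k\}$, and let $g_k=f_{J_{k-1}\to 1}$, which is still Boolean-valued. Conditioning on $x_{i_k}$ at each step gives the telescoping identity
\[
\mu_p(f_{J\to 1})-\mu_p(f)=\sum_{k=1}^r(1-p)\Bigl(\mu_p((g_k)_{i_k\to 1})-\mu_p((g_k)_{i_k\to 0})\Bigr).
\]
Because each $g_k$ is Boolean, the difference $u_k:=(g_k)_{i_k\to 1}-(g_k)_{i_k\to 0}$ takes values in $\{-1,0,1\}$, so $|u_k|=u_k^2$ and
\[
\bigl|\mu_p((g_k)_{i_k\to 1})-\mu_p((g_k)_{i_k\to 0})\bigr|\le \mathbb{E}[u_k^2]=\mathrm{I}_{i_k}(g_k).
\]
This linear-in-influence bound, as opposed to the $\sqrt{\mathrm{I}_{i_k}(g_k)}$ produced by a naive Cauchy--Schwarz on arbitrary functions, is precisely what will eventually give a bound linear in $\delta$ instead of $\sqrt{\delta}$.

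The main obstacle is to estimate $\mathrm{I}_{i_k}(g_k)$ by $4^{k-1}\delta$, uniformly in $p$. (The apparently natural bound $\mathrm{I}_{i_k}(g_k)\le \mathrm{I}_{i_k}(f)/p^{k-1}$ via Markov blows up for small $p$.) Instead, I would compute the $p$-biased Fourier coefficients of the restriction directly: for $T\subseteq[n]\setminus J_{k-1}$,
\[
\widehat{g_k}(T)=\sum_{U\subseteq J_{k-1}}\hat f(T\cup U)\Bigl(\sqrt{\tfrac{1-p}{p}}\Bigr)^{|U|},
\]
since $\chi_i(1)=\sqrt{(1-p)/p}$ for $i\in J_{k-1}$ and $\chi_i$ averages to zero on the free coordinates. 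Then $\mathrm{I}_{i_k}(g_k)=\sigma^{-2}\sum_{T\ni i_k,\;T\cap J_{k-1}=\varnothing}\widehat{g_k}(T)^2$. Applying Cauchy--Schwarz on the inner sum over $U$ (a factor of $2^{k-1}$), swapping the order of summation, and using
\[
\sum_{\substack{T\ni i_k\\T\cap J_{k-1}=\varnothing}}\hat f(T\cup U)^2\le \sigma^{2(|U|+1)}\mathrm{I}_{U\cup\{i_k\}}(f)\le \sigma^{2(|U|+1)}\delta,
\]
which is valid because $|U\cup\{i_k\}|\le k\le r$, I would obtain (after the clean cancellation $\sigma^2\cdot(1-p)/p=(1-p)^2\le 1$)
\[
\mathrm{I}_{i_k}(g_k)\le 2^{k-1}\delta\bigl(1+(1-p)^2\bigr)^{k-1}\le 4^{k-1}\delta.
\]

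Summing the telescope gives
\[
|\mu_p(f_{J\to 1})-\mu_p(f)|\le (1-p)\sum_{k=1}^r 4^{k-1}\delta \le \tfrac{4^r-1}{3}\delta<4^r\delta,
\]
as required. The hard point is really the middle step: it is the interplay between the generalised-influence hypothesis, the Fourier expansion of a restriction, and the $p$-independent cancellation $(1-p)/p\cdot \sigma^2=(1-p)^2$ that keeps the bound free of $p^{-1}$ factors.
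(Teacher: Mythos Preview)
Your proof is correct. The telescoping identity, the Boolean bound $|\mathbb{E}[u_k]|\le \mathbb{E}[u_k^2]=\mathrm{I}_{i_k}(g_k)$, the Fourier formula for $\widehat{g_k}(T)$, and the Cauchy--Schwarz estimate all check out, and the cancellation $\lambda^2\sigma^2=(1-p)^2\le 1$ is exactly what keeps the bound $p$-free. One small remark: you enumerate $J$ as $\{i_1,\dots,i_r\}$, but the definition of $(r,\delta')$-globalness requires control for all $J$ with $|J|\le r$; of course your argument with $|J|$ in place of $r$ gives an even smaller bound, so this is cosmetic.

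Your route differs from the paper's. The paper argues by induction on $r$: for the base case it uses the same Boolean trick $\mu_p(f_{i\to1})-\mu_p(f_{i\to0})\le \mathrm{I}_i[f]$, and for the induction step it shows that $\mathrm{I}_S[f_{i\to1}]\le 4\delta$ for every nonempty $S$ with $|S|\le r-1$, via the triangle inequality on $\mathrm{D}_S[f_{i\to1}]=\sigma^{-1}\mathrm{D}_{S\cup\{i\}}[f]+\mathrm{D}_S[f_{i\to0}]$ together with $\|\mathrm{D}_S[f_{i\to0}]\|_2^2\le (1-p)^{-1}\|\mathrm{D}_S[f]\|_2^2$. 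Iterating this is what produces the $4^{k-1}$ growth that you instead obtain in one shot from the explicit Fourier expansion of the restriction and Cauchy--Schwarz. The paper's approach has the advantage of isolating a reusable intermediate statement (all generalised influences of a one-coordinate restriction are at most quadrupled), while yours is a clean direct computation that avoids induction and makes the $p$-independence transparent through the identity $\sigma^2\cdot\tfrac{1-p}{p}=(1-p)^2$. Both arrive at the same constant $4^r$.
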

\begin{proof}
To facilitate a proof by induction on $r$
we prove the slightly  stronger statement that
$f$ is $(r,\sum_{i=1}^{r}4^{i-1}\delta)$-global.
Suppose first that $r=1$. Our
goal is to show that if $\mathrm{I}_i[f]<\delta$, then
$\mu_p(f_{i\to1})-\mu_p(f_{i\to 0})<\delta$, and indeed,
\[
  \mu_p(f_{i\to1})-\mu_p(f_{i\to 0})
  \le \Pr[f_{i\to 1}\ne f_{i\to
    0}]= \|f_{i\to 1}-f_{i\to 0}\|_2^2
    =\|\mathrm{D}_i[f]\|_2^2=\mathrm{I}_{i}[f] < \delta.
\]
Now suppose that $r>1$ and that the lemma holds with $r-1$ in place of
$r$. The lemma will follow once we show that for all $i$ and all
nonempty sets
$S$ of size at most $r-1$, we have $\mathrm{I}_{S}[f_{i\to 1}]\le 4\delta$.
Indeed, the induction hypothesis and the $n=1$ case
will imply that for each set $S$ of size at most $r$ and each $i\in S$
we have $\mu_p(f_{S\to 1}) \le \mu_p(f_{i\to 1}) +
\sum_{j=1}^{r-1}4^{j-1}\cdot 4\delta \le \mu_p(f) +
\sum_{j=1}^{r}4^{j-1}\delta$.

We now turn to showing the desired upper bound on the generalised
influences of  $f_{i\to 1}$. Let $S$ be a set of size at
most $r-1$. Recall that $\mathrm{I}_S[f_{i\to 1}]=\|
\mathrm{D}_{S}[f_{i\to 1}] \|_2^2$. We may assume that $i\notin S$ for
otherwise the generalised influence $\mathrm{I}_S[f_{i\to 1}]$ is
0. We make two observations. Firstly, we have
\[
  \mathrm{D}_{S\cup \{i\}}[f]=\mathrm{D}_{S}[f_{i\to 1}]
  -\mathrm{D}_S[f_{i\to 0}].
\]
Secondly, conditioning on the ouput of the coordinate $i$ we have
\[
  \|\mathrm{D}_S[f]\|_2^2=p\| \mathrm{D}_S[f_{i\to
    1}]\|_2^2 + (1-p)\|\mathrm{D}_S[f_{i\to 0}]\|_2^2,
\]
which implies
$\|\mathrm{D}_S[f_{i\to 0}]\|_2 \le \sqrt{2}\|\mathrm{D}_S[f]\|_2$. We
may now apply the triangle inequality on the first
observation and use the second observation to obtain
\[
  \sqrt{\mathrm{I}_S[f]} = \| \mathrm{D}_S[f_{i\to 1}]\|_2 \le
  \|\mathrm{D}_{S\cup \{ i\}}[f]\|_2 + \|\mathrm{D}_{S}[f_{i\to
    0}]\|_2\le \sqrt{\delta} + \sqrt{2} \|\mathrm{D}_S[f]\|_2\le 2\sqrt{\delta}.
\]

Taking squares, we obtain the desired upper bound on the generalised
influences of $f_{i\to 1}$.
\end{proof}

\section{Total influence of global functions}
\label{sec:noise+sharp}

In this section we show that our hypercontractive inequality
(Theorem \ref{thm:Hypercontractivity})
implies our stability results for the isoperimetric inequality, namely
Theorems \ref{thm:Bourgain+} and \ref{thm:Variant of Kahn kalai}. We also deduce our first sharp threshold result,  Theorem
\ref{thm:Sharp threshold result}.

\subsection{The spectrum of sparse global sets}\label{spectrum}

The key step in the proofs of Theorems \ref{thm:Variant of Kahn
  kalai} and \ref{thm:Noise sensitivity} is to show that the Fourier
spectrum of global sparse subsets of the $p$-biased cube
is concentrated on the high
degrees. We recall first a proof that in the uniform cube
(i.e.\ cube with uniform measure), \emph{all} sparse sets
have this behaviour (not just the global ones).
Our proof is based on ideas from Talagrand
\cite{talagrand1994approximate} and Bourgain and Kalai
\cite{BourgainKalai19}.

\begin{thm} \label{thm:warm-up}
Let $f$ be a Boolean function on the uniform cube, and let $r>0$. Then
\[
  \left\|f^{\le r} \right\|_2^2\le 3^r \mu_{1/2}\left(f\right)^{1.5}.
  \]
\end{thm}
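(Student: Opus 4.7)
The plan is to combine three standard facts on the uniform cube: the self-adjointness of the low-degree projection, H\"older's inequality, and the Bonami $(4,2)$-hypercontractivity for low-degree functions. Write $g = f^{\le r}$ and $\alpha = \mu_{1/2}(f)$. Since $f$ is Boolean we have $\|f\|_q^q = \alpha$ for every $q \ge 1$, and in particular $\|f\|_{4/3} = \alpha^{3/4}$. Because $g$ is the orthogonal projection of $f$ onto the degree-$\le r$ subspace, Parseval gives
\[
\|g\|_2^2 = \sum_{|S|\le r}\hat f(S)^2 = \langle f^{\le r}, f\rangle = \langle g, f\rangle.
\]

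Next I would apply H\"older's inequality with exponents $4$ and $4/3$ to obtain $\langle g,f\rangle \le \|g\|_4 \,\|f\|_{4/3} = \alpha^{3/4}\,\|g\|_4$. To control $\|g\|_4$ I would invoke the classical Beckner--Bonami inequality in its ``low-degree'' form: for any function $g$ of degree $\le r$ on the uniform cube, $\|g\|_4 \le 3^{r/2}\,\|g\|_2$. One quick way to see this is to write $g = T_{1/\sqrt 3}\,h$ with $h = \sum_{|S|\le r} 3^{|S|/2}\hat f(S)\chi_S$, so that standard $(4,2)$-hypercontractivity gives $\|g\|_4 \le \|h\|_2 \le 3^{r/2}\|g\|_2$, the last step being Parseval together with $|S|\le r$.

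Putting the two bounds together,
\[
\|g\|_2^2 \;=\; \langle g,f\rangle \;\le\; \alpha^{3/4}\,\|g\|_4 \;\le\; 3^{r/2}\,\alpha^{3/4}\,\|g\|_2,
\]
so after dividing by $\|g\|_2$ (we may assume it is nonzero) and squaring we get $\|f^{\le r}\|_2^2 \le 3^r \mu_{1/2}(f)^{3/2}$, as required.

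There is no real obstacle here: the only moving part is the Bonami low-degree inequality $\|g\|_4 \le 3^{r/2}\|g\|_2$, which is classical on the uniform cube, and the constant $3^r$ matches the one claimed in the theorem. The interest of the statement is not the proof itself but the fact that on the \emph{uniform} cube no globalness hypothesis is needed, in contrast to the $p$-biased analogue that the subsequent sections will address using Theorem~\ref{thm:Hypercontractivity}.
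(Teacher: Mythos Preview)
Your proof is correct and follows exactly the same route as the paper: the paper also writes $\|f^{\le r}\|_2^2 = \langle f^{\le r}, f\rangle \le \|f^{\le r}\|_4\,\|f\|_{4/3}$, uses that $f$ is Boolean to get $\|f\|_{4/3} = \mu_{1/2}(f)^{3/4}$, and bounds $\|f^{\le r}\|_4 \le \sqrt{3}^{\,r}\|f^{\le r}\|_2$ via the same $g = T_{1/\sqrt 3}h$ trick. Even the way you derive the Bonami low-degree bound matches the paper's Lemma~\ref{lem:upper bound on 4th moment} verbatim.
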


  The idea of the proof is to  bound $\left\|f^{\le r}
  \right\|_2^2=\left\langle f^{\le r},f\right\rangle$ via H\"older
  by $\left\|f^{\le r}\right\|_4 \left\|f\right\|_{4/3}$,
  bound the $4$-norm via hypercontractivity
  and express the $4/3$-norm in terms of the measure of $f$
  using the assumption that $f$ is Boolean.
  For future reference, we decompose the argument into two lemmas,
  the first of which applies also to the $p$-biased settting
  and the second of which requires hypercontractivity,
  and so is specific to the uniform setting.
  Theorem \ref{thm:warm-up} follows immediately from
  Lemmas \ref{lem:nt} and \ref{lem:upper bound on 4th moment} below.

In the following lemma we consider $\{-1,0,1\}$-valued functions
so that it can be applied to either a Boolean function
or its discrete derivative.

  \begin{lem}\label{lem:nt}
Let $f\colon\power{n}\to\{0,1,-1\}$, let $\mc{F}$ be a family of
subsets of $\left[n\right]$,
and let $g(x) = f^{\mc{F}} = \sum_{S\in\mc{F}}{\hat{f}(S)\chi_S(x)}$.
Then $\|g\|_2^2 \le \|g\|_4 \|f\|_2^{1.5}$, where the norms
can be taken with respect to an arbitrary $p$-biased measure.
\end{lem}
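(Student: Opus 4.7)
The plan is to turn $\|g\|_2^2$ into an inner product via Plancherel, use H\"older's inequality to split this into a $4$-norm of $g$ and a $4/3$-norm of $f$, and then use the ternary range of $f$ to evaluate the $4/3$-norm in terms of $\|f\|_2$. Nothing fancier is needed; crucially this step avoids any appeal to hypercontractivity, so it works for every $p$-biased measure.

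First I would observe that $g$ is precisely the orthogonal projection of $f$ onto the subspace $\operatorname{span}\{\chi_S : S \in \mc{F}\}$ of $L^2(\{0,1\}^n,\mu_p)$. Hence by Parseval/Plancherel with respect to the $p$-biased Fourier basis,
\[
\|g\|_2^2 \;=\; \sum_{S \in \mc{F}} \hat f(S)^2 \;=\; \langle g, f \rangle.
\]
Next I would apply H\"older's inequality with exponents $4$ and $4/3$:
\[
\langle g,f\rangle \;\le\; \|g\|_4 \, \|f\|_{4/3}.
\]

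Finally, since $f$ takes values in $\{-1,0,1\}$, we have the pointwise identity $|f|^{4/3} = f^2$ (equal to $1$ on $\operatorname{supp}(f)$ and $0$ elsewhere), which gives
\[
\|f\|_{4/3}^{4/3} \;=\; \mathbb{E}\bigl[|f|^{4/3}\bigr] \;=\; \mathbb{E}[f^2] \;=\; \|f\|_2^2,
\]
and hence $\|f\|_{4/3} = \|f\|_2^{3/2}$. Combining the three steps yields $\|g\|_2^2 \le \|g\|_4 \|f\|_2^{3/2}$, as required.

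There is no real obstacle here; the only thing to be careful about is that the identity $|f|^{4/3} = f^2$ really does use that $f$ is $\{-1,0,1\}$-valued (and would fail for a general real-valued function). This is also what allows the lemma to be formulated for both Boolean functions and their discrete derivatives, which is exactly the flexibility needed when the statement is later applied to a derivative of a Boolean function rather than to the function itself.
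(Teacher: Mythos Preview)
Your proposal is correct and follows exactly the same approach as the paper: apply Plancherel to write $\|g\|_2^2 = \langle f,g\rangle$, bound by H\"older with exponents $4$ and $4/3$, and then use that $f$ is $\{-1,0,1\}$-valued to get $\|f\|_{4/3} = \|f\|_2^{3/2}$.
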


\begin{proof}
By Plancherel and H{\"o}lder's inequality,
$\mb{E}[g^2] = \inner{f}{g}
\le \norm{f}_{4/3}\norm{g}_4$,
where $\norm{f}_{4/3} = \mb{E}[f^2]^{3/4}=\|f\|_2^{1.5}$
as $f$ is $\{-1,0,1\}$-valued.
\end{proof}

  Applying Lemma \ref{lem:nt} with $g=f^{\le r}$, we obtain a lower bound on
  the 4-norm of $g$. We now upper bound it by appealing to the
  Hypercontractivity Theorem.
  \begin{lem}\label{lem:upper bound on 4th moment}
    Let $g$ be a function of degree $r$ on the uniform cube. Then
    $\left\|g\right\|_4 \le \sqrt{3}^{r}\left\|g\right\|_2$.
  \end{lem}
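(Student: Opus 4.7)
The plan is to deduce this from the classical $(4,2)$-hypercontractive inequality on the uniform cube, which states that $\|\mathrm{T}_{\rho} f\|_4 \le \|f\|_2$ for any $f\in L^2(\{0,1\}^n,\mu_{1/2})$ and any $\rho \le 1/\sqrt{3}$. This is the Bonami--Beckner--Gross inequality that underlies the whole discussion, and is what was used as a black box in the proof of Theorem \ref{thm:hypref}. Here we use it directly, in the reverse direction: rather than applying noise to tame the $4$-norm, we absorb an inverse power of $\rho$ into the Fourier expansion at the cost of a $\rho^{-r}$ factor.

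Concretely, I would fix $\rho = 1/\sqrt{3}$ and define the auxiliary function
\[
h \;=\; \sum_{|S|\le r} \rho^{-|S|}\, \hat g(S)\, \chi_S,
\]
so that $\mathrm{T}_{\rho} h = g$ by construction (using the standard fact that $\mathrm{T}_\rho \chi_S = \rho^{|S|}\chi_S$ on the uniform cube). Applying $(4,2)$-hypercontractivity to $h$ gives
\[
\|g\|_4 \;=\; \|\mathrm{T}_{\rho} h\|_4 \;\le\; \|h\|_2.
\]
By Parseval, $\|h\|_2^2 = \sum_{|S|\le r} \rho^{-2|S|}\hat g(S)^2 \le \rho^{-2r}\sum_S \hat g(S)^2 = \rho^{-2r}\|g\|_2^2$, using crucially that $g$ has degree $\le r$ so that $\rho^{-2|S|}\le \rho^{-2r}$ on the support of $\hat g$. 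Combining the two displays yields $\|g\|_4 \le \rho^{-r}\|g\|_2 = \sqrt{3}^{\,r}\|g\|_2$, as required.

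There is no real obstacle here; the only thing to be careful about is the direction of the hypercontractive inequality (from $2$-norm of $h$ to $4$-norm of $\mathrm{T}_\rho h$, not the other way around) and the fact that the $\rho^{-2|S|}$ blowup is bounded uniformly by $\rho^{-2r}$ precisely because $g$ has bounded degree. No assumption of Booleanness or genuine globalness is needed; this is just the standard consequence of uniform hypercontractivity for low-degree polynomials, which then feeds into the proof of Theorem \ref{thm:warm-up} through Lemma \ref{lem:nt}.
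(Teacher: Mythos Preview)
Your proof is correct and is essentially identical to the paper's own proof: the paper also defines $h=\sum_{|S|\le r}\sqrt{3}^{|S|}\hat{g}(S)\chi_S$ so that $\mathrm{T}_{1/\sqrt{3}}h=g$, applies the hypercontractivity theorem to get $\|g\|_4\le\|h\|_2$, and then uses Parseval to bound $\|h\|_2\le\sqrt{3}^r\|g\|_2$.
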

  \begin{proof}
    Let $h$ be the function, such that $\mathrm{T}_{1/\sqrt{3}}h=g$,
    i.e.\ $h=\sum_{|S|\le r}\sqrt{3}^{|S|}\hat{g}\left(
      S\right)\chi_S$. Then the Hypercontractivity
    Theorem implies that $\|g\|_4\le \|h\|_2$, and by Parseval
    $\|h\|_2\le \sqrt{3}^r\|g\|_2$.
    \end{proof}

We shall now adapt the proof of Theorem \ref{thm:warm-up} to global
functions on the $p$-biased cube. The only part in the above proof
that needs an adjustment is Lemma \ref{lem:upper bound on 4th
  moment}, and in fact
  we have already provided the required adjustment in Section
\ref{sec:hyp} in the form of Lemma \ref{lem:applying_hypercontractivity}.

\begin{thm} \label{lem:normsense0}
Let $r \ge 1$,
and let  $f\colon\power{n}\to\{0,1,-1\}$. Suppose that
$\mathrm{I}_S[f^{\le r}]\le \delta$ for each set $S$ of size at most $r$.
Then $\mb{E}[(f^{\le r})^2]
\le 5^r \delta^{\frac{1}{3}}  \mb{E}\left[f^2\right]$.
\end{thm}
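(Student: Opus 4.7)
The plan is to follow exactly the template of the proof of Theorem \ref{thm:warm-up}, but replacing the uniform-measure 4-norm bound (Lemma \ref{lem:upper bound on 4th moment}) by its $p$-biased analogue for functions with small generalised influences, namely Lemma \ref{lem:applying_hypercontractivity}. The assumption that $f$ is $\{-1,0,1\}$-valued enters only through Lemma \ref{lem:nt}, where it is used to identify the $(4/3)$-norm with a power of the $2$-norm; we do not need $f$ itself to be of low degree.

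Concretely, set $g = f^{\le r}$. First, I would invoke Lemma \ref{lem:nt} (with $\mathcal{F} = \{S : |S| \le r\}$ and with all norms taken in $L^2(\{0,1\}^n, \mu_p)$) to obtain
\[
\|g\|_2^2 \le \|g\|_4 \,\|f\|_2^{3/2}.
\]
Next, $g$ has degree at most $r$ and by hypothesis satisfies $\mathrm{I}_S(g) \le \delta$ for every $|S| \le r$, so Lemma \ref{lem:applying_hypercontractivity} applies and yields
\[
\|g\|_4 \le 5^{3r/4}\,\delta^{1/4}\,\|g\|_2^{1/2}.
\]
Substituting into the previous display gives $\|g\|_2^2 \le 5^{3r/4}\,\delta^{1/4}\,\|g\|_2^{1/2}\,\|f\|_2^{3/2}$, i.e.\ $\|g\|_2^{3/2} \le 5^{3r/4}\,\delta^{1/4}\,\|f\|_2^{3/2}$. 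Raising both sides to the $4/3$ power gives exactly $\mathbb{E}[(f^{\le r})^2] = \|g\|_2^2 \le 5^{r}\,\delta^{1/3}\,\mathbb{E}[f^2]$, as required.

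There is no real obstacle: the entire argument is a two-line combination of machinery already assembled in the paper. The one place where a little care is needed is to make sure Lemma \ref{lem:applying_hypercontractivity} can legitimately be applied to $g = f^{\le r}$ rather than to $f$ itself; this is automatic since $g$ is of degree $\le r$ by construction, and its generalised influences are precisely the quantities $\mathrm{I}_S(f^{\le r})$ that are assumed to be at most $\delta$. The only non-mechanical insight in the proof is the H\"older trick in Lemma \ref{lem:nt}, which was already isolated in the warm-up so that this $p$-biased version needs nothing new.
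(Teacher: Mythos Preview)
Your proposal is correct and essentially identical to the paper's own proof: set $g=f^{\le r}$, combine the H\"older bound $\|g\|_2^2\le\|g\|_4\|f\|_2^{3/2}$ from Lemma~\ref{lem:nt} with the hypercontractive bound $\|g\|_4\le 5^{3r/4}\delta^{1/4}\|g\|_2^{1/2}$ from Lemma~\ref{lem:applying_hypercontractivity}, and rearrange. The only cosmetic difference is the order in which you cite the two lemmas.
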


\begin{proof}
Applying Lemma \ref{lem:applying_hypercontractivity} with $g=f^{\le r}$, we
obtain the upper bound $\|g\|_4\le 5^{\frac{3r}{4}}\delta^{\frac{1}{4}}\|g\|_2^{0.5}$. Since
the function $f$ takes values only in the set $\{0,1,-1\}$,
we may apply Lemma \ref{lem:nt}. Combining it with the upper bound on
the 4-norm of $g$, we obtain
\[
\|g\|_2^2
\le \|g\|_4\|f\|_2^{1.5}\le 5^{\frac{3r}{4}}\delta^{\frac{1}{4}}
\|g\|_2^{0.5}\|f\|_2^{1.5}.
\]
Rearranging, and raising everything to the power $\frac{4}{3}$, we obtain
$\|g\|_2^2\le 5^{r}\delta^{\frac{1}{3}}\left\|f\right\|_2^2$.
\end{proof}

Let us say that $f$ is \emph{$\epsilon$-concentrated} above degree $r$ if
$\|f^{\le r}\|_2^2 \le \epsilon \|f\|_2^2$. The significance of
Theorem \ref{lem:normsense0} stems from the fact that it implies
the following result showing that for each $r,\epsilon>0$
there exists a $\delta>0$ such that any
sparse $(r,\delta)$-global function
is $\epsilon$-concentrated above degree $r$.

\begin{cor} \label{lem:normsense}
Let $r \ge 1$.
Suppose that  $f$ is an
$\left(r,\delta\right)$-global Boolean function
with $\mu_p\left( f\right)<\delta$.
Then $\mb{E}[(f^{\le r})^2]
\le 10^{r} \delta^{\frac{1}{3}}\mu_p(f)$.
\end{cor}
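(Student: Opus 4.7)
The plan is to deduce this from Theorem \ref{lem:normsense0} by supplying the hypothesis on generalised influences via Lemma \ref{lem: gen influence control for global functions}. In more detail, the corollary asks for an $\epsilon$-concentration bound for sparse globals, whereas Theorem \ref{lem:normsense0} already gives such a bound under a $\{-1,0,1\}$-valued assumption together with a uniform ceiling $I_S[f^{\le r}] \le \delta$ on the generalised influences of the truncation.

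First I would note that a Boolean function is $\{0,1\}$-valued, hence certainly $\{-1,0,1\}$-valued, so Theorem \ref{lem:normsense0} applies to $f$ without modification. The key input is then the generalised-influence bound. Since $f$ is $(r,\delta)$-global and $\mu_p(f) \le \delta$, Lemma \ref{lem: gen influence control for global functions} gives $I_S(f^{\le r}) \le I_S(f) \le 8^r \delta$ for every $S \subseteq [n]$ with $|S| \le r$.

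Now I would feed this into Theorem \ref{lem:normsense0} applied with the parameter $\delta$ there replaced by $8^r \delta$, obtaining
\[
\mathbb{E}\bigl[(f^{\le r})^2\bigr] \;\le\; 5^r (8^r \delta)^{1/3} \mathbb{E}[f^2] \;\le\; 5^r \cdot 2^r \cdot \delta^{1/3} \, \mathbb{E}[f^2].
\]
Finally I would use the Boolean identity $\mathbb{E}[f^2] = \mu_p(f)$ to rewrite the right-hand side as $10^r \delta^{1/3} \mu_p(f)$, which is the desired inequality.

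There is no real obstacle: the whole proof is a one-line chaining of the two prior results, and the constant $10^r = 5^r \cdot 2^r$ arises precisely from the $5^r$ in Theorem \ref{lem:normsense0} and the cube root of the $8^r$ produced by Lemma \ref{lem: gen influence control for global functions}. The only point worth double-checking is that the hypothesis $\mu_p(f) \le \delta$ of Lemma \ref{lem: gen influence control for global functions} is indeed satisfied (it is, by assumption $\mu_p(f) < \delta$), so no auxiliary argument is required.
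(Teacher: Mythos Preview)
Your proposal is correct and follows essentially the same approach as the paper: apply Lemma \ref{lem: gen influence control for global functions} to bound the generalised influences by $8^r\delta$, then invoke Theorem \ref{lem:normsense0} (noting $f$ is $\{0,1\}$-valued) and use $\|f\|_2^2=\mu_p(f)$. The constant tracking $5^r\cdot(8^r)^{1/3}=10^r$ is exactly as in the paper.
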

\begin{proof}
  By Lemma \ref{lem: gen influence
    control for global functions}, for each $S$ of size $r$ we have
  $ \mathrm{I}_S\left(f^{\le r}\right)\le\mathrm{I}_S\left(f\right)< 8^r \delta$.
  Then Theorem \ref{lem:normsense0} implies
  $\|f^{\le r}\|_2^2\le 10^r\delta^{\frac{1}{3}} \|f\|_2^2$,
  where since $f$ is Boolean
  we have $\|f\|_2^2 = \mu_p(f)$.
\end{proof}

\subsection{Isoperimetric stability}

We are now ready to prove our variant of the Kahn-Kalai Conjecture
and sharp form of Bourgain's Theorem, both of which can be thought
of as isoperimetric stability results. Both proofs closely follow
existing proofs and substitute our new hypercontractivity inequality
for the standard hypercontractivity theorem:
for the first we follow a proof of the isoperimetric inequality,
and for the second the proof of KKL given
by Bourgain and Kalai \cite{BourgainKalai19}
(their main idea is to apply the argument we gave in Theorem
\ref{thm:warm-up} for each of the derivatives of $f$).

\begin{proof}[Proof of Theorem \ref{thm:Variant of Kahn kalai}]
 We prove the contrapositive statement that for
 a sufficiently large absolute constant $C$, if $f$ is a Boolean function
 such that $\mu_p(f_{J\to 1})\le e^{-{CK}}$  for all $J$ of size at most $CK$, then $p\mathrm{I}[f]>K\mu_p(f)$.
 Let $f$ be such a function, and set $\delta=e^{-CK}$.
 Provided that $C>2$, $f$ is  $\left(2K,\delta\right)$-global, and has
 $p$-biased measure at most $\delta$.
 By Corollary \ref{lem:normsense}, we have
\[
\|f^{\le 2K}\|_2^2 \le 10^{2K}\delta^{\frac{1}{3}}\mu_p\left(f\right)\leq \mu_p\left(f\right)/2,
\]
provided that $C$ is sufficiently large. Hence,
\[
  \|f^{>2K}\|_2^2 = \|f\|_2^2-\|f^{\le 2K}\|_2^2
  \geq
  \mu_p\left(f\right)/2.
  \]
By \eqref{eq:inf} we obtain $p(1-p)\mathrm{I}[f]\ge
2K\|f^{>2K}\|_2^2$, so $p\mathrm{I}[f]> K\mu_p(f)$.
\end{proof}

Next we require the following lemma which bounds the norm of
a low degree truncation in terms of the total influence.
\begin{lem} \label{lem:normtruncate}
  Let $r \ge 0$. Suppose that for each nonempty
  set $S$ of size
  at most $r$, $\mathrm{I}_S\left(f^{\le r}\right)\le \delta.$ Then
  \[
    \|f^{\le r}\|_2^2\le \mu_p(f)^2 + 5^{r-1}\delta^{\frac{1}{3}}\sigma^2\mathrm{I}[f].
    \]
\end{lem}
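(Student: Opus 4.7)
The plan is to apply Theorem \ref{lem:normsense0} to each discrete derivative of $f$ and sum the resulting bounds. First I would split off the trivial Fourier coefficient: since $\chi_\emptyset = 1$, we have $\hat f(\emptyset) = \mu_p(f)$ and
\[
\|f^{\le r}\|_2^2 - \mu_p(f)^2 = \sum_{1 \le |S| \le r}\hat f(S)^2 \le \sum_{1 \le |S| \le r} |S|\hat f(S)^2 = \sum_{i \in [n]}\|(f^{\le r})_i\|_2^2,
\]
where $(f^{\le r})_i = \mathrm{D}_i(f^{\le r})$. So it suffices to show $\sum_i\|(f^{\le r})_i\|_2^2 \le 5^{r-1}\delta^{1/3}\sigma^2 \mathrm{I}[f]$.

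Next, for each $i$ I would introduce $g_i = f_{i\to 1} - f_{i\to 0} = \sigma^{-1} \mathrm{D}_i f$, which is $\{-1,0,1\}$-valued since $f$ is Boolean, and has $\|g_i\|_2^2 = \mathrm{I}_i(f)$. Since $\mathrm{D}_i$ commutes with truncation to degree $\le r$ (turning it into truncation to degree $\le r-1$), we have $(g_i)^{\le r-1} = \sigma^{-1}\mathrm{D}_i(f^{\le r})$, so $\|(f^{\le r})_i\|_2^2 = \sigma^2 \|(g_i)^{\le r-1}\|_2^2$. This puts us in position to apply Theorem \ref{lem:normsense0} to each $g_i$ at level $r-1$.

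The key computation is to relate generalised influences of $(g_i)^{\le r-1}$ to those of $f^{\le r}$. For $S$ with $i \in S$, $\mathrm{D}_i(f^{\le r})$ is independent of the $i$th coordinate, so $\mathrm{I}_S((g_i)^{\le r-1}) = 0$. For $S$ with $i \notin S$, using $\mathrm{D}_S \mathrm{D}_i = \mathrm{D}_{S \cup \{i\}}$ and unpacking $\mathrm{I}_S(h) = \sigma^{-2|S|}\|\mathrm{D}_S h\|_2^2$, one checks that
\[
\mathrm{I}_S\bigl((g_i)^{\le r-1}\bigr) = \mathrm{I}_{S\cup\{i\}}(f^{\le r}) \le \delta,
\]
by hypothesis (the set $S\cup\{i\}$ is nonempty and of size $\le r$). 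Theorem \ref{lem:normsense0} applied to $g_i$ (which is $\{-1,0,1\}$-valued) with degree $r-1$ then yields $\|(g_i)^{\le r-1}\|_2^2 \le 5^{r-1}\delta^{1/3}\|g_i\|_2^2$.

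To finish, I would sum this bound over $i$, obtaining
\[
\sum_i \|(f^{\le r})_i\|_2^2 = \sigma^2\sum_i\|(g_i)^{\le r-1}\|_2^2 \le 5^{r-1}\delta^{1/3}\sigma^2 \sum_i \mathrm{I}_i(f) = 5^{r-1}\delta^{1/3}\sigma^2\mathrm{I}[f],
\]
which combined with the first display gives the lemma. There is no real obstacle here; the only delicate step is keeping track of the $\sigma$ factors that arise from the normalisation conventions in the definition of the generalised influences and the derivative operator $\mathrm{D}_i$.
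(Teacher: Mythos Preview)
Your proof is correct and follows essentially the same approach as the paper: define $g_i = f_{i\to 1} - f_{i\to 0}$, apply Theorem \ref{lem:normsense0} to each $g_i$ at level $r-1$ using $\mathrm{I}_S((g_i)^{\le r-1}) = \mathrm{I}_{S\cup\{i\}}(f^{\le r})$, and sum over $i$ using $\sum_i \|g_i\|_2^2 = \mathrm{I}[f]$. Your bookkeeping of the $\sigma$-factors and the degree shift from $r$ to $r-1$ is accurate and, if anything, slightly cleaner than the paper's write-up.
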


\begin{proof}
Let $g_i := f_{i \to 1} - f_{i \to 0}$.
Then for each $S$ of size at most $r-1$, with $i\notin S$ we have
\[
  \mathrm{I}_S(g_{i}^{\le r})=\mathrm{I}_{S\cup\{i\}}(f^{\le r}) \le
  \delta,
\]
and for each $S$ containing $i$ we have $\mathrm{I}_S((g_i)^{\le r})=0$.
By Lemma \ref{lem:normsense0},
$\mb{E}[((g_i)^{\le r})^2]
\le 5^{r-1}\delta^{\frac{1}{3}}  \mb{E}[g_i^2]$.
The lemma now follows by summing over all $i$, using $\sum_i \mb{E}[g_i^2] = \mathrm{I}(f)$:
\begin{align*}
\| f^{\le r} \|_2^2
& = \sum_{|S| \le r} \hat{f}(S)^2
\le \hat{f}(\es)^2 + \sum_{|S| \le r} |S|\hat{f}(S)^2 \\
& = \mu_p(f)^2 + \sS^2 \sum_i \mb{E}[ ((g_i)^{\le r})^2 ]
\le \mu_p(f)^2 + 5^{r-1} \dD^{1/3} \sS^2 \mathrm{I}(f). & \qedhere
\end{align*}
\end{proof}

We now establish a variant of Bourgain's Theorem
for general Boolean functions, in which we replace
the conclusion on the measure of a restriction
by finding a large generalised influence.

\begin{thm}\label{thm:Bourgain++}
 Let $f\colon\{0,1\}^n\to\{0,1\}.$ Suppose that
$p\mathrm{I}[f] \le
  K \mu_p\left(f \right)(1-\mu_p(f))$. Then there exists
  an $S$ of size $2K$, such that $\mathrm{I}_S(f)\ge 5^{-8K}.$
\end{thm}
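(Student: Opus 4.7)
The plan is to argue by contradiction, paralleling the proof of Theorem \ref{thm:Variant of Kahn kalai} but extracting a large generalised influence rather than a restriction that boosts the measure. Assume for contradiction that every nonempty $S\sub[n]$ with $|S|\le 2K$ satisfies $\mathrm{I}_S(f) < \delta := 5^{-8K}$. By Lemma \ref{obs} the same bound transfers to $f^{\le 2K}$ in place of $f$, so Lemma \ref{lem:normtruncate} applies with $r = 2K$.

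Applying that lemma gives
\[
\|f^{\le 2K}\|_2^2 \le \mu_p(f)^2 + 5^{2K-1}\delta^{1/3}\sigma^2\mathrm{I}[f] = \mu_p(f)^2 + 5^{-2K/3-1}\sigma^2\mathrm{I}[f],
\]
the choice $\delta = 5^{-8K}$ being calibrated precisely so that the residual coefficient is small for $K \ge 1$. Since $f$ is Boolean, $\|f\|_2^2 = \mu_p(f)$, so Parseval yields the high-frequency mass bound
\[
\|f^{>2K}\|_2^2 \ge \mu_p(f)(1-\mu_p(f)) - 5^{-2K/3-1}\sigma^2\mathrm{I}[f].
\]
Next, the Fourier formula \eqref{eq:inf} for the total influence gives $\sigma^2\mathrm{I}[f] \ge (2K+1)\|f^{>2K}\|_2^2$, since every $S$ with $|S|>2K$ contributes at least $(2K+1)\hat f(S)^2$. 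Substituting and rearranging produces
\[
\sigma^2\mathrm{I}[f]\bigl(1+(2K+1)\cdot 5^{-2K/3-1}\bigr) \ge (2K+1)\mu_p(f)(1-\mu_p(f)).
\]
Since $p\le 1/2$ gives $\sigma^2 \le p$, after dividing out the (small) correction factor I obtain $p\mathrm{I}[f] > K\mu_p(f)(1-\mu_p(f))$, contradicting the hypothesis.

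The substantive content is concentrated in Lemma \ref{lem:normtruncate}, which packages the new hypercontractive bound on low-degree truncations; the rest is bookkeeping with Parseval and the Fourier formula for total influence. The main point requiring care is matching the constants $5^{-8K}$ and $2K$ so that the correction term from the first step is absorbed into the gap between $K$ and $2K+1$: a direct calculation shows $(2K+1)/\bigl(1+(2K+1)\cdot 5^{-2K/3-1}\bigr) > K$ for every $K\ge 1$, since the subtracted term decays exponentially in $K$ while the leading factor grows linearly. The only minor subtlety is that the argument naturally produces a witness set of size at most $2K$, which is harmless for the intended applications; if an exact-size statement is wanted one can pad such a set arbitrarily without affecting the subsequent use of the theorem.
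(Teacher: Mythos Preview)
Your proof is correct and essentially identical to the paper's: both argue by contradiction, invoke Lemma \ref{lem:normtruncate} with $r=2K$ and $\delta=5^{-8K}$ to control the low-degree mass, use the Fourier formula \eqref{eq:inf} to control the high-degree mass via the total influence, and combine through Parseval---the only difference is that you substitute one bound into the other before invoking the hypothesis, whereas the paper applies the hypothesis to each piece separately and then sums. One small caveat: your final remark about ``padding'' is not quite right, since enlarging $S$ need not preserve a lower bound on $\mathrm{I}_S(f)$; however, the paper's own proof likewise produces only a set of size at most $2K$, and that is all the downstream application (Lemma \ref{rem} in the proof of Theorem \ref{thm:Bourgain+}) requires.
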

\begin{proof}
Let $r = 2K$ and let
$\dD = 5^{-8K}$.
Suppose for contradiction that $\mathrm{I}_S(f)\le\delta$ for each
set $S$ of size at most $r$. By Lemma \ref{lem:normtruncate},
\[
  \| f^{\le r} \|_2^2- \mu_p(f)^2
\le 5^{r-1} \dD^{1/3} \sS^2 I(f)
< p\mathrm{I}[f]/2K\le \mu_p(f)(1-\mu_p(f))/2.
\]

On the other hand, by Parseval \[
  \| f - f^{\le r} \|_2^2
= \sum_{|S| \ge r} \hat{f}(S)^2
\le r^{-1} \sum_{|S| \ge r} |S| \hat{f}(S)^2
\le r^{-1} p(1-p) \mathrm{I}(f)\le
\mu_p(f)(1-\mu_p(f))/2.
\]
However, these bounds contradict the fact that
\begin{align*}
  \mu_p(f)(1-\mu_p(f))=\|f\|_2^2 -\mu_p(f)^2= \| f^{\le r} \|_2^2-
  \mu_p(f)^2 + \| f - f^{\le r} \|_2^2. & \qedhere
\end{align*}
 \end{proof}

\begin{proof}[Proof of Theorem \ref{thm:Bourgain+}]
The theorem follows immediately from Theorem \ref{thm:Bourgain++} and
Lemma \ref{rem}.
\end{proof}

\subsection{Sharpness examples}
We now give two examples showing
sharpness of the theorems in this section,
both based on the tribes function of Ben-Or
and Linial \cite{ben1990collective}.

\begin{example} \label{eg1}
We consider the anti-tribes function $f=f_{s,w}:\{0,1\}^n \to \{0,1\}$
defined by $s$ disjoint sets $T_1,\dots,T_s \sub [n]$ each of size $w$,
where $f(x) = \prod_{j=1}^s \max_{i \in T_j} x_i$,
i.e.\ $f(x)=1$ if for every $j$ we have $x_i=1$ for some $i \in T_j$,
otherwise $f(x)=0$. We have $\mu_p(f) = (1-(1-p)^w)^s$
and $\mathrm{I}[f] = \mu_p(f)' = sw(1-p)^{w-1} (1-(1-p)^w)^{s-1}$.
We choose $s,w$ with $s(1-p)^w = 1$
(ignoring the rounding to integers) so that
$\mu_p(f)=(1-s^{-1})^s$ is bounded away from $0$ and $1$,
and $K = (1-p)p\mathrm{I}[f] = pw(1-s^{-1})^{-1} \mu_p(f) = \Tt(pw)$.
Thus $\log s = w\log (1-p)^{-1} = \Tt(K)$.
However, for any $J \sub [n]$ with $|J| = t \le s$ we have
$\mu_p(f_{J\to1}) \le (1-s^{-1})^{s-t} \le 2^{t/s} \mu_p(f)$,
so to obtain a density bump of $e^{-o(K)}$
we need $t = e^{-o(K)} s = e^{\Oo(K)} \gg K$.
Thus Theorem \ref{thm:Bourgain+} is sharp.
\end{example}

\begin{example} \label{eg2}
Let $f(x) = f_{s,w}(x) \prod_{i \in T} x_i$
with $f_{s,w}$ as in Example \ref{eg1}
and $T \sub [n]$ a set of size $t$ disjoint from $\cup_j T_j$.
We have $\mu_p(f) = p^t (1-(1-p)^w)^s$ and
$I[f] = \mu_p(f)' = tp^{t-1} (1-(1-p)^w)^s
+ p^t sw(1-p)^{w-1} (1-(1-p)^w)^{s-1}$.
We fix $K>1$ and choose $s,w$ with $s(1-p)^w = K$, so that
$\mu_p(f) = p^t (1-K/s)^s = p^t e^{-\Tt(K)}$ for $s>2K$
and $p(1-p)I[f] = \mu_p(f) ( (1-p)t + pw K (1-K/s)^{-1} )
= \mu_p(f) \Tt(K) $ if $pw = \Tt(1)$ and $t=O(K)$.
For any $J \sub [n]$ with $|J|=t+u \le t+s$
we have  $\mu_p(f_{J\to1}) \le (1-K/s)^{s-u}
\le e^{-K(1-u/s)} \le e^{-K/2}$ unless $u > s/2 = \Tt(K)$.
Thus Theorem \ref{thm:Variant of Kahn kalai} is sharp.
\end{example}

\subsection{Sharp thresholds: the traditional approach}\label{subsec:ST2}
In this section we deduce Theorem \ref{thm:Sharp threshold result}
from our edge-isoperimetric stability results
and the Margulis--Russo Lemma.
Recall that a monotone Boolean function
is $M$-global in an interval if
$\mu_p\left(f_{J\to 1}\right)\le \mu_p\left(f\right)^{0.01}$
for each $p$ in the interval and set $J$ of size $M$.
We prove the following slightly stronger version of Theorem
\ref{thm:Sharp threshold result}.
\begin{thm} \label{thm:trad}
 There exists an absolute constant $C$ such that
 the following holds for any monotone Boolean function $f$
 that is $M$-global in some interval  $\left[p,q\right]$:
 if $q\le p_c$ and  $\mu_p\left(f\right)\ge e^{-M/C}$ then
  \begin{equation} \label{eq:trad}
    \mu_q\left(f \right) \ge \mu_p(f)^{\left(\frac{p}{q}\right)^{1/C}}.
  \end{equation}
 In particular, $q\le M^{C} p$.
\end{thm}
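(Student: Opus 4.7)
The approach is to derive a pointwise lower bound on $\mathrm{I}_r[f]$ for each $r \in [p,q]$ and integrate via the Margulis--Russo formula $\mu_r'(f) = \mathrm{I}_r(f)$. Concretely, the goal is to show that on $[p,q]$ one has
\[
(\log \mu_r(f))' \;\ge\; -\frac{\log \mu_r(f)}{C_1\, r}
\]
for a suitable absolute constant $C_1$, since integrating this inequality from $p$ to $q$ produces \eqref{eq:trad} directly with $C = C_1$.

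For the pointwise bound, fix $r \in [p,q]$, let $C_0$ denote the absolute constant of Theorem \ref{thm:Variant of Kahn kalai}, and set $K := \log(1/\mu_r(f))/(100\,C_0)$. I claim $r\mathrm{I}_r[f] \ge K\mu_r(f)$. By the contrapositive of Theorem \ref{thm:Variant of Kahn kalai}, it suffices to verify that $\mu_r(f_{J \to 1}) < e^{-C_0 K}$ for every $J \subset [n]$ with $|J| \le C_0 K$ (strict inequality is obtained by applying the theorem at $K - \varepsilon$ and letting $\varepsilon \downarrow 0$). Monotonicity $\mu_r(f) \ge \mu_p(f) \ge e^{-M/C}$ gives $\log(1/\mu_r(f)) \le M/C$, hence $C_0 K = \log(1/\mu_r(f))/100 \le M/(100C) \le M$, so $M$-globalness applies to any such $J$ and yields
\[
\mu_r(f_{J \to 1}) \;\le\; \mu_r(f)^{0.01} \;=\; e^{-\log(1/\mu_r(f))/100} \;=\; e^{-C_0 K},
\]
as required. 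Rewriting the resulting claim as $(\log \mu_r(f))' = \mathrm{I}_r(f)/\mu_r(f) \ge -\log \mu_r(f)/(100\,C_0\,r)$ gives the target differential inequality with $C_1 = 100\,C_0$.

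Setting $L(r) := -\log \mu_r(f) > 0$ converts this to $(\log L(r))' \le -1/(100\,C_0\,r)$; integrating from $p$ to $q$ yields $L(q) \le L(p)(p/q)^{1/(100C_0)}$, which is precisely \eqref{eq:trad} with $C = 100\,C_0$. For the ``in particular'' statement, $q \le p_c$ forces $\mu_q(f) \le 1/2$ and hence $L(q) \ge \log 2$; combining with the hypothesis $L(p) \le M/C$ gives $(q/p)^{1/(100 C_0)} \le M/(C \log 2)$, which rearranges to $q \le M^C p$ after enlarging $C$ if required.

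The principal delicate point is that Theorem \ref{thm:Variant of Kahn kalai} as stated requires $K \ge 1$, i.e.\ $\mu_r(f) \le e^{-100 C_0}$, which may fail when $r$ is close to $q$ and $\mu_r(f)$ approaches $1/2$. This is a cosmetic rather than substantive obstacle: one can either revisit the proof of Theorem \ref{thm:Variant of Kahn kalai} (the only place $K \ge 1$ is used is the step $10^{2K} e^{-C_0 K/3} \le 1/2$, which holds for any fixed $K > 0$ once $C_0$ is sufficiently large), or split the integration at the threshold $\mu_r(f) = e^{-100 C_0}$ and absorb the bounded-range contribution from the upper regime (where $L$ varies only by a constant factor) into the constant $C$.
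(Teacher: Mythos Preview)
Your proof is correct and follows essentially the same route as the paper: apply the contrapositive of Theorem~\ref{thm:Variant of Kahn kalai} at each $r\in[p,q]$ using $M$-globalness to obtain $r\,\mathrm{I}_r[f]\ge c\,\mu_r(f)\log(1/\mu_r(f))$, rewrite via Margulis--Russo as a differential inequality for $\log(-\log\mu_r(f))$, and integrate. Your treatment is in fact more careful than the paper's in two respects: you spell out explicitly why $M$-globalness with exponent $0.01$ matches the threshold $e^{-C_0K}$ in Theorem~\ref{thm:Variant of Kahn kalai}, and you flag and resolve the $K\ge 1$ hypothesis, which the paper's proof silently ignores.
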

\begin{proof}
 By Theorem \ref{thm:Variant of Kahn kalai}, since $f$ is $M$-global throughout the interval, there exists a constant
 $C$ such that  $\mathrm{I}_{x}\left[f\right]
 \ge \frac{\mu_{x}(f) \log(\frac{1}{\mu_{x}(f)})}{Cx}$
 for all $x$ in
 the interval $\left[p,q\right]$.
 By the Margulis-Russo lemma,
  \[
   \frac{d}{dx} \log \left(-\log
   (\mu_{x}\left(f\right))\right)
   =\frac{\mu_{x}(f)'}{\mu_{x}(f)\log(\mu_{x}\left(f\right))}
   =\frac{I_{x}[f]}{\mu_{x}(f)\log(\mu_{x}\left(f\right))}
   \le \frac{-1}{Cx}
 \]
 in all of the interval $\left[p,q\right]$. Hence,
\[
 \log\left(-\log
 (\mu_q(f))\right)\le \log(-\log(\mu_p(f))) - \frac{\log(\frac{q}{p})}{C}.
\]
The first part of the theorem follows by taking exponentials,
multiplying by $-1$ then taking exponentials again.
To see the final statement, note that $q \le p_c$
implies $\mu_q\left(f\right)\le\frac{1}{2}$.
We cannot have $q \ge M^c p$,
as then the right hand side in \eqref{eq:trad}
would be larger than $e^{-\frac{1}{C}} > 1/2$ for large $C$.
To obtain Theorem \ref{thm:Sharp threshold result}
we substitute $q=p_c$.
  \end{proof}

\section{Noise sensitivity and sharp thresholds} \label{sec:noise}

We start this section by showing that sparse global functions
are noise sensitive; Theorem \ref{thm:Noise sensitivity}
follows immediately from
Theorem \ref{thm:quantitative noise sensitivity}.

\begin{thm}\label{thm:quantitative noise sensitivity}
 Let $\rho\in\left(0,1\right)$, and let $\epsilon>0$. Let
 $r=\frac{\log(2/\epsilon)}{\log(1/\rho)}$,
 and let $\delta = 10^{-3r-1}\epsilon^3$.
 Suppose that $f$ is an
  $\left(r,\delta\right)$-global Boolean
  function with $\mu_p\left(f\right)<\delta$.  Then
  \[
    \mathrm{Stab}_\rho\left(f\right)\le \epsilon \mu_p\left(f\right).
    \]
  \end{thm}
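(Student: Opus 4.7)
The plan is to exploit the Fourier expansion
\[
\mathrm{Stab}_\rho(f)=\sum_{S\subset[n]}\rho^{|S|}\hat{f}(S)^2
=\sum_{|S|\le r}\rho^{|S|}\hat{f}(S)^2
+\sum_{|S|>r}\rho^{|S|}\hat{f}(S)^2,
\]
and bound the two pieces separately. The parameter $r=\log(2/\eps)/\log(1/\rho)$ is chosen precisely so that $\rho^r=\eps/2$, which handles the high-degree tail trivially.

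First I would treat the high-degree tail. For $|S|>r$ we have $\rho^{|S|}\le\rho^r=\eps/2$, so
\[
\sum_{|S|>r}\rho^{|S|}\hat{f}(S)^2\le\tfrac{\eps}{2}\sum_{S}\hat{f}(S)^2
=\tfrac{\eps}{2}\|f\|_2^2=\tfrac{\eps}{2}\mu_p(f),
\]
using that $f$ is Boolean. This step requires no globalness and no hypercontractivity.

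Next I would control the low-degree part, which is at most $\|f^{\le r}\|_2^2$. This is exactly where the globalness hypothesis is used: since $f$ is $(r,\delta)$-global and $\mu_p(f)<\delta$, Corollary \ref{lem:normsense} (whose proof is the main application of Theorem \ref{thm:Hypercontractivity} in this part of the paper) gives
\[
\|f^{\le r}\|_2^2\le 10^{r}\delta^{1/3}\mu_p(f).
\]
With the prescribed $\delta=10^{-3r-1}\eps^3$ we compute $10^{r}\delta^{1/3}=10^{-1/3}\eps<\eps/2$, so the low-degree part contributes at most $(\eps/2)\mu_p(f)$ as well.

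Combining the two bounds yields $\mathrm{Stab}_\rho(f)\le\eps\mu_p(f)$, as required. There is essentially no obstacle once Corollary \ref{lem:normsense} is in hand; the only point that deserves care is matching the numerical constants so that the two halves sum to at most $\eps\mu_p(f)$, which is exactly why the exponent $-3r-1$ and the factor $\eps^3$ appear in the definition of $\delta$ (the cube root in Corollary \ref{lem:normsense} has to be inverted). All the real work — hypercontractivity plus the Talagrand/Bourgain--Kalai style Hölder argument of Lemmas \ref{lem:nt} and \ref{lem:applying_hypercontractivity} — was already done in Section \ref{spectrum}, so this theorem is essentially a short deduction from Corollary \ref{lem:normsense}.
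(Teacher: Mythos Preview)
Your proof is correct and follows essentially the same approach as the paper: split $\mathrm{Stab}_\rho(f)$ into the low-degree part (bounded by $\|f^{\le r}\|_2^2$ and controlled via Corollary~\ref{lem:normsense}) and the high-degree tail (controlled by $\rho^r=\eps/2$), with the constants matched exactly as you describe. The paper's proof is just a more compressed version of yours.
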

\begin{proof}
We have
\begin{align*}
\left\langle \mathrm{T}_{\rho}f,f\right\rangle
& \le \sum_{\left|S\right|\le r}\hat{f}\left(S\right)^{2}
+ \rho^{r}\sum_{\left|S\right|>r}\hat{f}\left(S\right)^{2}
\le \mathbb{E}\left[\left(f^{\le r}\right)^{2}\right]
+ \frac{\eps}{2}\mu_{p}(f).
\end{align*}
The statement now follows from Corollary \ref{lem:normsense},
which gives $\mb{E}[(f^{\le r})^2]
\le 10^{r} \delta^{1/3} \mb{E}[f^2]
< \eps \mu_p(f)/2$.
\end{proof}

In the remainder of this section,
following \cite{lifshitz2018hypergraph},
we deduce sharp thresholds from noise sensitivity
via the following \emph{directed noise operator},
which is implicit in the work of
Ahlberg, Broman, Griffiths and Morris \cite{ahlberg2014noise}
and later studied in its own right by
Abdullah and Venkatasubramanian \cite{abdullah2015directed}.
\begin{defn}
Let $D\left(p,q\right)$
denote the unique distribution on pairs
$\left(\boldsymbol{x,y}\right)\in
\left\{ 0,1\right\} ^{n}\times\left\{ 0,1\right\} ^{n}$
such that $\xb\sim\mu_{p}$, $\yb\sim\mu_{q}$,
all $\xb_{i}\le\yb_{i}$ and
$\{ (\xb_{i},\yb_{i}) : i \in [n]\}$ are independent.
We define a linear operator
$\mathrm{T}^{p \to q}: L^2(\{0,1\}^n, \mu _p) \to  L^2(\{0,1\}^n, \mu _q)$ by
\[ \mathrm{T}^{p\to q}\left(f\right)\left(y\right)
=\mathbb{E}_{(\xb,\yb)\sim D\left(p,q\right)}
\left[f\left(\xb\right)|\,\yb=y\right]. \]
\end{defn}

The directed noise operator $\mathrm{T}^{p \to q}$
is a version of the noise operator
where bits can be flipped only from $0$ to $1$.
The associated notion of directed noise stability,
i.e.\ $\left\langle f,\art f\right\rangle _{\mu_{q}}$,
is intuitively a measure of how close
a Boolean function $f$ is to being monotone.
Indeed, for any $(\mathbf{x},\mathbf{y})$ with all $x_i \le y_i$
we have $f\left(\xb\right)f\left(\yb\right)\le f\left(\xb\right)$,
with equality if $f$ is monotone, so
\[ \left\langle f,\art f\right\rangle
= \mathbb{E}_{(\mathbf{x},\mathbf{y})\sim D\left(p,q\right)}
  \left[f\left(\xb\right)f\left(\yb\right)\right]
\le \mathbb{E}_{(\mathbf{x},\mathbf{y})
\sim D\left(p,q\right)}\left[f\left(\xb\right)\right]
=\mu_{p}\left(f\right),\]
with equality if $f$ is monotone\footnote{
The starting point for \cite{lifshitz2018hypergraph}
is the observation that this inequality is close
to an equality if $f$ is almost monotone.}.
We note that the adjoint operator
$\left(\mathrm{T}^{p\to q}\right)^{\star}:
L^2(\{0,1\}^n, \mu _q) \to  L^2(\{0,1\}^n, \mu _p)$
defined by $\bgen{\mathrm{T}^{p\to q}f,g}
= \bgen{f,\left(\mathrm{T}^{p\to q}\right)^{\star}g} $
 satisfies $\left(\mathrm{T}^{p\to q}\right)^{\star}
= \mathrm{T}^{q\to p}$, where
\[ \mathrm{T}^{q\to p}\left(g\right)\left(x\right)
=\mathbb{E}_{(\xb,\yb)\sim D\left(p,q\right)}
\left[g\left(\yb\right)|\,\xb=x\right]. \]
The following simple calculation relates these
operators to the noise operator.

\begin{lem} \label{calcrho}
Let $0<p<q<1$ and $\rho = \frac {p(1-q)}{q(1-p)}$. Then
$\left(\mathrm{T}^{p\to q}\right)^{\star}\mathrm{T}^{p\to q}
= \mathrm{T}_{\rho}$ on $L^2(\{0,1\}^n, \mu _p)$.
\end{lem}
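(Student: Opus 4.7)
The identity is a statement about Markov kernels, so the plan is to reduce to the one-coordinate case and then compute explicitly. First I would observe that the joint distribution $D(p,q)$ is a product measure over the coordinates $i\in[n]$, so all three operators in the claimed identity are tensor products of their one-coordinate analogues. Hence it suffices to establish the identity on $L^2(\{0,1\},\mu_p)$, i.e.\ for $n=1$. The statement of the excerpt has already identified $(\mathrm{T}^{p\to q})^{\star}$ with $\mathrm{T}^{q\to p}$, so the task reduces to showing that the composition $\mathrm{T}^{q\to p}\mathrm{T}^{p\to q}$ agrees with $\mathrm{T}_\rho$ as a single-bit Markov kernel.

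Next I would write down the two kernels. On one bit, the joint $D(p,q)$ has atoms $\Pr(x=1,y=1)=p$, $\Pr(x=0,y=1)=q-p$, $\Pr(x=0,y=0)=1-q$, and $\Pr(x=1,y=0)=0$, from which the forward and backward conditional distributions are immediate. Composing $\mathrm{T}^{p\to q}$ with $\mathrm{T}^{q\to p}$ amounts to sampling $y$ from $x$, then $x'$ from $y$; a short calculation then yields
\[ \Pr[x'=1\mid x=1]=\tfrac{p}{q}, \qquad \Pr[x'=1\mid x=0]=\tfrac{p(q-p)}{q(1-p)}. \]
On the other hand, from the definition of $\mathrm{T}_\rho$ one has
\[ \Pr[y=1\mid x=1]=\rho+(1-\rho)p, \qquad \Pr[y=1\mid x=0]=(1-\rho)p. \]

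Finally I would substitute $\rho=\tfrac{p(1-q)}{q(1-p)}$ into the $\mathrm{T}_\rho$ expressions and simplify: the first becomes $p/q$ after combining fractions, and the second becomes $p(q-p)/(q(1-p))$. These match the composition formulas above, so the two single-bit kernels coincide, which by the tensorisation observation completes the proof. No step should pose a real obstacle; the only thing to be slightly careful about is the direction of the adjoint (which the excerpt already unwinds) and the algebraic simplification of $\rho$, where it is cleanest to write $(1-\rho)p = p[q(1-p)-p(1-q)]/[q(1-p)] = p(q-p)/[q(1-p)]$ before comparing.
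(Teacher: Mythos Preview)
Your proposal is correct and follows essentially the same approach as the paper: both reduce (implicitly or explicitly) to the single-coordinate case by tensorisation and then verify equality of the two single-bit Markov kernels by direct computation of the transition probabilities. The only cosmetic difference is that the paper compares $\Pr(x'\neq x\mid x)$ in the two models whereas you compare $\Pr(x'=1\mid x)$, which is of course equivalent.
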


\begin{proof}
We need to show that the following distributions
on pairs of $p$-biased bits $(\mathbf{x},\mathbf{x'})$ are identical:
(a) let $\mathbf{x}$ be a $p$-biased bit, with probability $\rho$
let $\mathbf{x'}=\mathbf{x}$,
otherwise let $\mathbf{x'}$ be an independent $p$-biased bit,
(b) let $(\mathbf{x},\mathbf{y}) \sim D(p,q)$ and then
$(\mathbf{x'},\mathbf{y}) \sim D(p,q) \mid y$.
It suffices to show $\mb{P}(x \ne x')$
is the same in both distributions.
We condition on $x$. Consider $x=1$. In distribution (a)
we have $\mb{P}(\mathbf{x'}=0) = (1-\rho)(1-p)$.
In distribution (b) we have $\mb{P}(\mathbf{y}=1)=1$
and then $\mb{P}(\mathbf{x'}=0) = 1-p/q = (1-\rho)(1-p)$, as required.
Now consider $\mathbf{x}=0$. In distribution (a)
we have $\mb{P}(\mathbf{x'}=1) = (1-\rho)p$.
In distribution (b) we have $\mb{P}(\mathbf{y}=1)=\tfrac{q-p}{1-p}$
and then $\mb{P}(\mathbf{x'}=1 \mid \mathbf{y}=1) = p/q$, so
$\mb{P}(\mathbf{x'}=1) = \tfrac{p(q-p)}{q(1-p)} = (1-\rho)p$, as required.
\end{proof}

We now give an alternative way to deduce sharp threshold results,
using noise sensitivity, rather than the traditional approach
via total influence (as in the proof of Theorem \ref{thm:trad}).
Our alternative approach has the following additional
nice features, both of which have been found useful in
Extremal Combinatorics (see \cite{lifshitz2018hypergraph}).
\begin{enumerate}
\item To deduce a sharp threshold result in an interval
$\left[p,q\right]$ it is enough to show that $f$ is
global only according to the $p$-biased distribution.
This is a milder condition than the one in the traditional approach,
that requires globalness throughout the entire interval.
\item The monotonicity requirement may be relaxed
to ``almost monotonicity''.
\end{enumerate}

\begin{prop}
\label{prop:Noise sensetivity implies sharp threshold}
Let $f\colon\left\{ 0,1\right\} ^{n}\to\left\{ 0,1\right\} $
be a monotone Boolean function. Let $0<p<q<1$
and $\rho=\frac{p\left(1-q\right)}{q\left(1-p\right)}$.
Then $\mu_q(f) \ge \mu_p(f)^2 / \mathrm{Stab}_{\rho}\left(f\right)$.
\end{prop}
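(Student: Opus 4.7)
The plan is to bound $\mu_p(f)^2$ by applying Cauchy--Schwarz to the pairing $\langle f, \mathrm{T}^{p\to q}f\rangle_{\mu_q}$, using monotonicity to evaluate the pairing and Lemma \ref{calcrho} to convert the resulting $L^2(\mu_q)$-norm of $\mathrm{T}^{p\to q}f$ into the noise stability $\mathrm{Stab}_\rho(f)$.

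First I would unwind the definition of the directed noise operator to write
\[ \langle f,\mathrm{T}^{p\to q}f\rangle_{\mu_q} = \mathbb{E}_{(\mathbf{x},\mathbf{y})\sim D(p,q)}\left[f(\mathbf{x})f(\mathbf{y})\right]. \]
Since $D(p,q)$ is supported on pairs with $\mathbf{x}\le \mathbf{y}$ coordinatewise and $f$ is monotone and Boolean, we have $f(\mathbf{x})f(\mathbf{y})=f(\mathbf{x})$ pointwise (if $f(\mathbf{x})=1$ then $f(\mathbf{y})=1$; if $f(\mathbf{x})=0$ both sides vanish). Taking expectations yields $\langle f,\mathrm{T}^{p\to q}f\rangle_{\mu_q}=\mu_p(f)$, which is the identity already noted in the paragraph preceding the proposition.

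Next I apply Cauchy--Schwarz in $L^2(\{0,1\}^n,\mu_q)$ to obtain
\[ \mu_p(f)^2 = \langle f,\mathrm{T}^{p\to q}f\rangle_{\mu_q}^2 \le \|f\|_{\mu_q}^2 \cdot \|\mathrm{T}^{p\to q}f\|_{\mu_q}^2. \]
Here $\|f\|_{\mu_q}^2=\mu_q(f)$ because $f$ is $\{0,1\}$-valued. For the second factor, the adjoint identity together with Lemma \ref{calcrho} gives
\[ \|\mathrm{T}^{p\to q}f\|_{\mu_q}^2 = \langle f,(\mathrm{T}^{p\to q})^{\star}\mathrm{T}^{p\to q}f\rangle_{\mu_p} = \langle f,\mathrm{T}_\rho f\rangle_{\mu_p} = \mathrm{Stab}_\rho(f). \]
Rearranging the Cauchy--Schwarz inequality then yields the desired bound $\mu_q(f)\ge \mu_p(f)^2/\mathrm{Stab}_\rho(f)$.

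There is no real obstacle here: the whole proof is a one-line Cauchy--Schwarz once one has (i) the monotonicity-based identification of $\langle f,\mathrm{T}^{p\to q}f\rangle_{\mu_q}$ with $\mu_p(f)$ and (ii) the factorization $(\mathrm{T}^{p\to q})^{\star}\mathrm{T}^{p\to q}=\mathrm{T}_\rho$ supplied by Lemma \ref{calcrho}. The only care required is to keep track of which inner product lives in $L^2(\mu_p)$ versus $L^2(\mu_q)$ when invoking the adjoint relation.
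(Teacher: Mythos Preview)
Your proof is correct and matches the paper's approach exactly: apply Cauchy--Schwarz to $\langle f,\mathrm{T}^{p\to q}f\rangle_{\mu_q}=\mu_p(f)$, then use Lemma~\ref{calcrho} to rewrite $\|\mathrm{T}^{p\to q}f\|_{\mu_q}^2$ as $\mathrm{Stab}_\rho(f)$.
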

\begin{proof}
By Cauchy\textendash Schwarz and Lemma \ref{calcrho},
\begin{align*}
\mu_{p}\left(f\right)^2=\left\langle \art f,f\right\rangle _{\mu_{q}}^2\le \left\langle \art f,\art f\right\rangle _{\mu_{q}}\left\langle f,f\right\rangle _{\mu_{q}}
=\left\langle \mathrm{T}_{\rho}f,f\right\rangle _{\mu_{p}}\mu_{q}\left(f\right).
& \qedhere
\end{align*}
\end{proof}

The above proof works not only for monotone functions,
but also for functions where the first equality above
is replaced by approximate equality (which is a natural
notion for a function to be ``almost monotone'').
We conclude this part by recalling
the following sharp threshold theorem for global functions,
and noting that its proof
is immediate from Theorem \ref{thm:quantitative noise sensitivity}
and Proposition \ref{prop:Noise sensetivity implies sharp threshold}.


\qrsharpthreshold*

\newpage

\part{Pseudorandomness and junta approximation} \label{part:pr+junta}

The first main result proved in this part
will be our junta approximation theorem, Theorem \ref{thm:junta},
which we will now restate,
using the notation $\g(r,s,\DD)$ for the family
of all $r$-graphs $G$ with $s$ edges
and maximum degree $\DD (G) \leq \DD$.
We recall that $S \subset V(G^+)$ is a \emph{crosscut}
if $|E \cap S| = 1$ for all $E \in G^+$,
and $\cc {G}$ denotes the minimum size of a crosscut.

\begin{thm} \label{thm:junta'}
Let $G \in \g(r,s,\DD)$ and $C \gg r\DD\eps^{-1}$.
Then for any $G^+$-free $\f \sub \tbinom {[n]}{k}$
with $C \leq k \leq n/Cs$,
there is $J \sub V(G)$ with $|J| \leq \cc {G}-1$
and $|\f \sm \s_{n,k,J}| \le \eps |\s_{n,k,\cc {G}-1}|$.
\end{thm}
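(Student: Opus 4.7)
The plan is to follow the Junta Method framework initiated by Dinur--Friedgut and extended by Keller--Lifshitz, but with Theorem \ref{theorem: quasirandom sharp threshold theorem} replacing Friedgut's junta theorem so that the sparse regime (where both $k$ and $s$ grow with $n$) becomes accessible. The junta $J$ will consist of ``heavy'' vertices -- those appearing in a large fraction of the sets of $\f$ -- and the proof decomposes into three steps: (i) constructing $J$, (ii) showing $|J| \leq \cc{G}-1$, and (iii) showing $|\f \sm \s_{n,k,J}| \leq \eps|\s_{n,k,\cc{G}-1}|$.

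First I would construct $J$ by iterative peeling: start with $J_0 = \es$ and $\f_0 = \f$; while some vertex $v$ has degree at least $\beta \tbinom{n-1}{k-1}$ in $\f_i$, where $\beta = e^{-ck} + sk/cn$ for a small $c = c(\eps,r,\DD)$, set $J_{i+1} = J_i \cup \{v\}$ and $\f_{i+1} = \f_i \sm \s_{n,k,\{v\}}$. Call the final set $J$ and set $\g = \f \sm \s_{n,k,J}$. To bound $|J| \leq \cc{G}-1$, I would argue that $|J| \geq \cc{G}$ forces a copy of $G^+$ in $\f$. Greedily identify $\cc{G}$ elements of $J$ with a crosscut of $G^+$ and process the $s$ edges of $G$ in turn, each time selecting a set of $\f$ through the prescribed crosscut vertex and disjoint from the at most $sk$ previously chosen expansion vertices. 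Heaviness provides at least $\beta \tbinom{n-1}{k-1}$ candidate $k$-sets through each $v \in J$, while the forbidden $k$-sets number at most $sk \cdot \tbinom{n-2}{k-2}$, so the greedy step succeeds when $n \geq Cks$ and $C$ is large compared with $\beta^{-1}$.

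The heart of the argument is step (iii), where I would apply Theorem \ref{theorem: quasirandom sharp threshold theorem} to a suitable monotone proxy for $\g$. Set $p = k/n$ and let $f^* \colon \{0,1\}^n \to \{0,1\}$ be the indicator of the upward closure of $\g$, which is monotone. By construction, no vertex has degree $\geq \beta \tbinom{n-1}{k-1}$ in $\g$; combined with auxiliary peeling to handle heavy pairs, triples, \dots, up to heavy $r$-tuples (turning the singleton condition into a genuine restriction condition on $f^*$), this shows that $f^*$ is $(r,\delta)$-global in the sense of Definition \ref{def:global} for $r = C' \log \eps^{-1}$ and $\delta = 10^{-3r-1} \eps^3$, and $\mu_p(f^*)$ is of the same order as $|\g|/\tbinom{n}{k}$. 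Assume for contradiction $|\g| > \eps |\s_{n,k,\cc{G}-1}|$. Then $\mu_p(f^*)$ is lower bounded, and Theorem \ref{theorem: quasirandom sharp threshold theorem} yields $\mu_q(f^*) \geq \mu_p(f^*)/\eps$ at $q = (1+\alpha)p$. Sampling $O(s)$ independent $q$-biased subsets and applying the union bound produces, with positive probability, $s$ pairwise disjoint subsets of $[n]$, each containing a member of $\g$; using $\DD(G) \leq \DD$ to assign vertices of these $k$-sets to the edges of $G$ respecting its incidence structure assembles a $G^+$ inside $\g \sub \f$, contradicting $G^+$-freeness.

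The main obstacle is the first half of step (iii): translating the absence of heavy vertices in $\g$ into honest $(r,\delta)$-globalness of $f^*$. Restricting the upward closure to a small set $S$ can inflate the density by up to $p^{-|S|}$, so one must peel not only heavy singletons but also heavier ``sub-stars'' of all sizes $\leq r$, which requires a delicate iteration with $r$ nested thresholds $\beta_1, \dots, \beta_r$ and a careful accounting to keep $|J| \leq \cc{G}-1$ throughout. The second subtle point is the combinatorial extraction of a genuine $G^+$ copy from the density lower bound at $q$; one must simultaneously ensure disjointness of the expansion vertices across the $s$ edges and correct incidence structure between edges sharing a vertex in $G$, which is handled by exploiting the bounded degree $\DD$ and choosing the oversampling constant appropriately.
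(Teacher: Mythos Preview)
Your high-level outline (define $J$ as the set of heavy vertices, bound $|J|$ by $\cc{G}-1$, bound the remainder $\g=\f\sm\s_{n,k,J}$) matches the paper exactly, and you correctly identify Theorem~\ref{theorem: quasirandom sharp threshold theorem} as the key analytic input. But both embedding steps (ii) and (iii) share a genuine gap that your ``subtle points'' paragraph flags but does not resolve.

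The issue is that your arguments only produce \emph{matchings} of $k$-sets, not copies of $G^+$. In step (ii), your greedy picks one $k$-set per edge through its crosscut vertex while avoiding previously used expansion vertices; but edges of $G$ that share a non-crosscut vertex $v$ must have their $k$-sets \emph{intersect} in the image of $v$, and heaviness of a single crosscut vertex says nothing about the link of a pair or an $r$-tuple. In step (iii), sampling $O(s)$ independent $q$-biased sets and extracting one member of $\g$ from each again gives pairwise disjoint $k$-sets; the phrase ``assign vertices \dots\ respecting its incidence structure'' is not an argument, and oversampling does not help, since there is no reason any two of the sampled $k$-sets should intersect at all. Both steps are correct when $G$ is itself a matching, but fail already for a path of length~2.

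The paper's resolution is a two-stage ``embed in the fat shadow, then lift'' strategy. One first shows that the fat $r$-shadow $\pl^r_c\f_i$ (the $r$-sets whose link in $\f_i$ is still dense) is large---via the Fairness Proposition when $\mu(\f_i)$ is bounded below, or via a Kruskal--Katona/Tur\'an argument (Lemma~\ref{lem:fat}) when $\f_i$ is sparse but global---and then embeds the base graph $G$ into these fat shadows (Lemmas~\ref{lem:nearcomplete} and~\ref{lem:embedfat}); this is where the intersection structure of $G$ is secured. Only then does one lift each base edge to a full $k$-set by finding a cross matching in the restricted families $(\f_i)^{A_j}_A$, and it is in \emph{this} lifting step (Lemma~\ref{lem:matchuncap}) that the sharp threshold theorem enters, after upgrading uncapturability to globalness via Lemma~\ref{lem:upgrade1'}. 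So the sharp threshold is not applied directly to $\g$ as you propose, but to the restricted families after the shadow embedding has already fixed the incidence structure.
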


The set $J$ in Theorem \ref{thm:junta'}
will consist of all vertices of suitably large degree.
Thus ${\cal F}^\es_J := {\cal F} \sm {\cal S}_{n,k,J}$
does not have any vertices of large degree, which we think
of a pseudorandomness property, called `globalness',
due to its interpretation as globalness
of the corresponding characteristic Boolean function.

An important theme of this part, treated in its first section,
will be the interplay between two pseudorandomness notions:
globalness and another, called uncapturability.
We will see that globalness implies uncapturability,
and that uncapturability can be `upgraded'
to globalness by taking appropriate restrictions.

In the proof of Theorem \ref{thm:junta'} we will consider
separately the two steps of showing $|J| \leq \cc {G}-1$ and
$|{\cal F} \sm {\cal S}_{n,k,J}| \leq \eps |S_{n,k,\cc {G}-1}|$.
For both steps we consider a two step embedding strategy for $G^+$,
where in the first step we embed\footnote{
For simplicity in this overview we are only describing the embedding
strategy used to bound $|{\cal F}^\es_J|$; the strategy for bounding $|J|$
is similar, but adapted so that $J$ can play the role of a crosscut in $G$.} 
$G$ in the `fat shadow' of ${\cal F}$
(meaning that the image of every edge
has many extensions to an edge of ${\cal F}$)
and in the second step we `lift' edges
from the fat shadow to the original family.

This proof strategy is implemented at the end of the first section,
assuming results that will be proved in later sections.
The lifting step requires results on cross matchings
presented in Section \ref{sec:match},
which will also be used for the proof
of the Huang--Loh--Sudakov Conjecture in Section \ref{sec:hls}.
The analysis of fat shadows and the embedding steps
will be carried out in Section \ref{sec:shadow}.

After proving Theorem \ref{thm:junta},
in Section \ref{sec:refine} we prove
the following refined junta approximation result,
in which we improve the bound on $|{\cal F}^\es|$;
besides being of interest in its own right,
this bound is needed for the proofs of
our exact Tur\'an results in the next part.

\begin{thm} \label{thm:juntarefined}
Let $G \in \g(r,s,\DD)$,
$0 < C^{-1} \ll \dD \ll \eps \ll (r\DD)^{-1}$
and $C \leq k \leq n/Cs$.
Then for any $G^+$-free $\f \sub \tbinom {[n]}{k}$
with $|\f| > |{\cal S}_{n,k,\cc {G}-1}| - \dD \tbinom{n-1}{k-1}$
there is $J \in \tbinom{[n]}{\cc {G}-1}$ with
$|\f \sm {\cal S}_{n,k,J}| \leq \eps \tbinom{n-1}{k-1}$.
\end{thm}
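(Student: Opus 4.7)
The plan is to bootstrap Theorem \ref{thm:junta'} using the near-extremal hypothesis to promote the approximate junta of size at most $\cc{G}-1$ to one of exact size $\cc{G}-1$, and to sharpen the error bound from a multiple of $|\s_{n,k,\cc{G}-1}|$ to a multiple of $\binom{n-1}{k-1}$.

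First, I would apply Theorem \ref{thm:junta'} with an auxiliary parameter $\eta$ satisfying $\dD \ll \eta \ll \eps$, obtaining $J_0 \sub V(G)$ with $|J_0| \le \cc{G}-1$ and $|\f \sm \s_{n,k,J_0}| \le \eta\, |\s_{n,k,\cc{G}-1}|$. Using the hockey-stick identity $|\s_{n,k,t}| = \sum_{i=0}^{t-1}\binom{n-1-i}{k-1}$ combined with the estimate $\binom{n-1-i}{k-1}/\binom{n-1}{k-1} = 1 - O(1/C)$ (valid for $i \le \cc{G}-1 \le s$ and $k \le n/Cs$), combining the near-extremal hypothesis $|\f| > |\s_{n,k,\cc{G}-1}| - \dD\binom{n-1}{k-1}$ with $|\f \cap \s_{n,k,J_0}| \le |J_0|\binom{n-1}{k-1}$ yields
\[
|J_0| \ge (\cc{G}-1)\bigl(1 - \eta - O(1/C)\bigr) - \dD.
\]
When $\cc{G}$ is bounded in terms of $r$ and $\DD$, this immediately forces $|J_0| = \cc{G}-1$ for sufficiently small $\eta,\dD,1/C$.

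In the regime where $\cc{G}$ may be as large as $s$, I would promote $J_0$ to a candidate $J$ of size exactly $\cc{G}-1$, chosen to minimise the residual size $|\f \sm \s_{n,k,J}|$ (for instance by appending $\cc{G}-1-|J_0|$ vertices of maximum residual degree). To conclude $|\f \sm \s_{n,k,J}| \le \eps\binom{n-1}{k-1}$, I would argue by contradiction: were the residual family $\f^\es := \f \sm \s_{n,k,J}$ of size exceeding $\eps\binom{n-1}{k-1}$, then by construction every vertex of $[n] \sm J$ would have small degree in $\f^\es$, so $\f^\es$ would be global in the sense developed in Part \ref{part:pr+junta}. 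The fat-shadow / lifting embedding strategy outlined in the introduction to this part could then be invoked to produce a copy of $G^+$ inside $\f$, using $J$ as most of a crosscut and the global residual $\f^\es$ to supply the missing crosscut vertex, contradicting $G^+$-freeness.

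The principal obstacle is the $\cc{G}$-large regime. A black-box reduction simply rescaling $\eta$ to $\eps/\cc{G}$ in the initial application of Theorem \ref{thm:junta'} would force the constant $C$ there to depend on $s$, violating the parameter hierarchy $C^{-1} \ll \dD \ll \eps \ll (r\DD)^{-1}$, in which $C$ is permitted to depend only on $r,\DD,\eps,\dD$. The embedding contradiction sidesteps this by directly exploiting the strong near-extremal hypothesis, whose relative shortfall $\dD/\cc{G}$ becomes exceedingly small precisely when $\cc{G}$ is large.
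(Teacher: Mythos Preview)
Your high-level plan is right and mirrors the paper: apply the crude junta approximation with a weak parameter, use the near-extremal hypothesis to show that the junta $J_0$ already has size close to $\cc{G}-1$, and then run an embedding contradiction to finish. Your handling of the bounded-$\cc{G}$ case is also fine.

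The gap is in the large-$\cc{G}$ regime, precisely at the step where you ``promote $J_0$ to a candidate $J$ of size exactly $\cc{G}-1$ by appending $\cc{G}-1-|J_0|$ vertices of maximum residual degree'' and then try to embed $G^+$ using $J$ as $\cc{G}-1$ crosscut vertices together with the residual for one more. The set $J_0$ produced by Theorem~\ref{thm:junta'} (and its proof) is exactly the set of vertices of link-density at least $\bB := e^{-k/C_1}+C_1 sk/n$; every vertex outside $J_0$ has link-density strictly below $\bB$. So the $\sS_1 - 1$ vertices you append to reach size $\cc{G}-1$ have small links $\f^{j}_J$, and they cannot serve as crosscut vertices in the embedding: a crosscut vertex $j$ must accommodate the expanded link $(G^+)^x_x$ inside $\f^{j}_J$, and Lemma~\ref{lem:embedlarge} needs density $\ge \bB$ for that. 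Your contradiction therefore does not fire: $J$ supplies only $|J_0|$ usable crosscut vertices, leaving $\sS_1$ (not one) still to be found in the residual.

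The paper resolves this by not enlarging $J$ at all. It keeps $J$ as the high-degree set (so $|J|=\sS-\sS_1$, with $1\le \sS_1\le \tT\sS$ after the counting you did), sets $\f_i=\f^\es_J$ for $i\in[\sS_1]$ and $\f_i=\f^{j_i}_J$ for $i>\sS_1$, and invokes a new embedding lemma (Lemma~\ref{lem:embedmix}) that cross-embeds $H_1^+,\dots,H_\sS^+$ when the first $\sS_1$ families are merely $(C_1\sS_1,\eps\sS_1 k/n)$-uncapturable rather than large. That lemma is the real work of Section~\ref{sec:refine}: it needs the sharpened matching lemma (Lemma~\ref{lem:matchuncap+}), the enhanced upgrading from uncapturability to globalness (Lemmas~\ref{lem:upgrade2}, \ref{lem:upgrade3}), and a more delicate shadow-embedding step (Lemma~\ref{lem:embedfat}) that exploits $r$-partiteness of $G$ via Theorem~\ref{thm:CFS}. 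Once Lemma~\ref{lem:embedmix} is in hand, the contradiction gives $\mu(\f^\es_J)\le \eps\sS_1 k/n$, and combining with $|\f^\es_J|\ge |\f|-|\s_{n,k,J}|\ge (.9(\sS_1-1)-\dD)\binom{n-1}{k-1}$ forces $\sS_1=1$ a posteriori. So the final $J$ really does have size $\cc{G}-1$, but this is a conclusion, not something you can assume at the embedding stage.
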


Throughout the remainder of the paper
it will often be convenient to assume
that $G$ belongs to the subset $\g'(r,s,\DD)$
of $\g(r,s,\DD)$ consisting of its $r$-partite $r$-graphs.
There is no loss of generality in this assumption,
as $G^+(r\DD)$ is $r\DD$-partite for any $G \in \g(r,s,\DD)$.
To see this, consider a greedy algorithm in which we assign
vertices of $G$ sequentially to $r\DD$ parts, ensuring for
every edge that all of its vertices are in distinct parts.
Clearly this algorithm can be completed. Then the expansion vertices
can be assigned so that each edge of $G^+$ has one vertex in each part.

\section{Globalness and uncapturability}

This section introduces the key concepts that will underpin
this part of the paper. After introducing some basic definitions
that run throughout the paper in the first subsection,
we will define and analyse our two pseudorandomness notions
in the second subsection. We conclude in the third section
by proving our junta approximation theorem, assuming two embedding
lemmas that will be proved in Section \ref{sec:shadow}.

\subsection{Definitions}

Given $m,n \in {\mathbb N}$ with $m\leq n$
we let $[n] = \{1,2,\ldots, n\}$
and $[m,n] = \{m, m+1,\ldots, n\}$.
We write $\{0,1\}^X$ for the power set
(set of subsets) of a set $X$
(identifying sets with their characteristic 0/1 vectors)
and $\tbinom {X}{k} = X^{(k)}
= \big \{A \subset X: |A| = k \big \}$.
We call ${\cal F} \subset \{0,1\}^X$ a family
or a hypergraph on the vertex set $X$,
and the elements of ${\cal F}$ are called edges.
We say ${\cal F}$ is $k$-uniform
if ${\cal F} \subset \tbinom {X}{k}$;
we also call ${\cal F}$ a $k$-graph on $X$.

Given a family ${\cal F} \subset \{0,1\}^X$ and
$B \subset J \subset X$ we write $\f_{J}^{B}$ for the family
	\begin{equation*}
\f _J^B := \big \{ A\in \{0,1\}^{X\sm J} :
		A \cup B \in \f \big \} \subset \{0,1\}^{X\sm J}.
	\end{equation*}
Clearly ${\cal F}^{B}_J$ is $(k-|B|)$-uniform if ${\cal F}$ is $k$-uniform.
If either $B$ or $J$ has a single element $\{j\}$
then we will often suppress the bracket,
e.g.\ ${\cal F}^v_v = {\cal F}^{\{v\}}_{\{v\}}$.

We refer to ${\cal F}^v_v$ as the \emph{exclusive link} of $v$ in ${\cal F}$.
The \emph{inclusive link} of $v$ in ${\cal F}$
is ${\cal F}*v := \{ E \in {\cal F}: v \in E\}$.
The \emph{degree} of a vertex $v$ in ${\cal F}$
is $d_{\cal F}(v) = |{\cal F}^v_v| = |{\cal F}*v|$.
The minimum and maximum degrees of ${\cal F}$
are $\dD ({\cal F}) = \min _{v\in V({\cal F})} d_{\cal F}(v)$
and $\DD ({\cal F}) = \max _{v\in V({\cal F})} d_{\cal F}(v)$.

Let $\h_1,\dots,\h_s \subset \{0,1\}^V$.
We say that $\f_1,\dots,\f_s \sub \{0,1\}^X$
\emph{cross contain} $\h_1,\dots,\h_s$
if there is an injection $\phi:V \to X$ such that
$\phi(\h_i) \sub \f_i$ for all $i\in [s]$.
Here we write $\phi(\h_i) = \{ \phi(e): e \in \h_i \}$
with each $\phi(e) = \{ \phi(x): x \in e\}$.

We simply say that ${\cal F}_1,\ldots, {\cal F}_s$
cross contain ${\cal H}$ if they cross contain
any ordering of the edges of ${\cal H}$.
Thus a single hypergraph ${\cal F}$ contains ${\cal H}$
if ${\cal F}_1,\ldots, {\cal F}_s$ cross contain ${\cal H}$,
where ${\cal F}_i = {\cal F}$ for all $i\in [s]$.

Given an $r$-graph $G$ and $k \geq r$,
we recall that the $k$-expansion
$G^+ = G^+(k)$ is the $k$-uniform hypergraph
obtained from $G$ by adding $k-r$ new vertices to each edge,
i.e.\ $G^+$ has edge set $\{e \cup S_e: e\in E(G)\}$
where $|S_e| = k-r$, $S_e \cap V(G) = \es$
and $S_e \cap S_{e'} = \es $ for all distinct $e, e' \in E(G)$.

When embedding expanded hypergraphs in uniform families,
we may allow the uniformity of our families to vary,
defining cross containment of $G^+$ in the obvious way:
the edge of $G^+$ embedded in the family
$\f _i \subset\tbinom{\left[n\right]}{k_i}$
is obtained from an edge of $G$
by adding $k_i-r$ new vertices.

A family ${\cal F} \subset \{0,1\}^X$ is said to be \emph{monotone}
if given $F \in {\cal F}$ and $F \subset F'\subset X$ we also have $F' \in {\cal F}$.
Given $\f \subset \{0,1\}^X$ the \emph{up closure} of ${\cal F}$ is the monotone family
$\f^{\uparrow} = \{B \subset X: A \subset B \mbox { for some }A \in {\cal F}\} \subset \{0,1\}^X$.
The $\ell $-shadow of ${\cal F}$ is
$\pl^\ell({\cal F}) := \{F \in \tbinom {X}{\ell }:
 F \subset G \mbox { for some } G \in {\cal F}\}$.

Given $\f\subset \tbinom{X}{k}$ we will write $\mu(\f)=|\f|/\tbinom{|X|}{k}$.
Some of our results are more naturally stated with $|{\cal F}|$
and others with $\mu ({\cal F})$, so we will freely move between these settings.
Given $p\in [0,1]$ we will use $\mu_{p}$
to denote the $p$-biased measure on $\{0,1\}^n$,
where a set $\boldsymbol{A}\sim\mu_{p}$ is selected
by including each $i\in [n]$ independently with probability $p$.
We extend this notation to families $\f\subset \{0,1\}^n$
by $\mu_{p}\left(\f\right) :=\Pr_{\boldsymbol{A}\sim\mu_{p}}\left[\boldsymbol{A}\in\f\right]$.
We often identify a family $\mc{F}$ with its characteristic Boolean function $f:\{0,1\}^n \to \{0,1\}$
and apply the above terminology freely in either setting,
e.g.\ we call $f$ monotone if $\mc{F}$ is monotonem
and write $\mu _p(f)$ for the expectation of $f$ under $\mu _p$.

To pass between these measures we note the following simple properties
that will be henceforth used without further comment.
For any $\f \subset \{0,1\}^n$ and $J \sub [n]$,
we have the union bound estimate
\[ \mu_p(\f) \le \mu_p(\f_J^\es)
+ p \sum_{j \in J} \mu_p(\f^j_j)
\le \mu_p(\f_J^\es) + |J|p. \]
The same estimate holds replacing $\mu_p$
by uniform measures $\mu$
for $\f \sub \tbinom{[n]}{k}$ with $k=pn$,
remembering to use the correct normalisations:
we have $\mu(\f) = |\f| \tbinom{n}{k}^{-1}$
and $\mu(\f^j_j) = |\f^j_j| \tbinom{n-1}{k-1}^{-1}$.

In the other direction, we have the bounds
\begin{align*}
\mu_p(\f) & \ge (1-p)^{|J|}\mu_p(\f_J^\es)
\text{ for } \f \sub \{0,1\}^n, \text{ and } \\
\mu(\f) & \ge \tbinom{n}{k}^{-1}
\tbinom{n-|J|}{k} \mu(\f_J^\es)
\ge \big(1-\tfrac{|J|}{n-k}\big)^k \mu(\f_J^\es)
\text{ for } \f \sub  \tbinom{[n]}{k}.\\
\end{align*}

Throughout $a \ll b$ or $a^{-1} \gg b^{-1}$
will mean that the following statement holds
provided $a$ is sufficiently small
as a function of $b$.

\subsection{Pseudorandomness}

Here we define our two key notions of pseudorandomness
for set systems, namely uncapturability and globalness,
and explore some of their basic properties.

\begin{defn} \label{def:pr}
Let $\f \subset \{0,1\}^n$
and $\mu$ be a measure on $\{0,1\}^n$.

We say $\f$ is $(\mu,a,\eps)$-uncapturable
if $\mu(\f^\es_J) \ge \eps$ whenever
$J \subset [n]$ with $|J| \le a$.

We say $\f$ is $(\mu,a,\eps)$-global
if $\mu(\f^J_J) \le \eps$ whenever
$J \subset [n]$ with $|J| \le a$.

We say $\f$ is $(\mu,a,\eps)$-capturable
if it is not $(\mu,a,\eps)$-uncapturable,
or $(\mu,a,\eps)$-local if it is not $(\mu,a,\eps)$-global.
We omit $\mu$ from the notation if it is clear from the context,
i.e.\ if $\f \subset \tbinom{[n]}{k}$ with uniform measure
or $\f \subset \{0,1\}^n$ with $p$-biased measure $\mu_p$,
where $p$ is clear from the context.
\end{defn}

We now establish some basic properties
of these definitions. For each property
we state two lemmas that apply
when $\mu$ is uniform or $\mu=\mu_p$.
We only give proofs in the uniform setting,
as those in the $p$-biased setting are essentially the same.
The following pair of lemmas shows that
globalness is preserved by restrictions.

\begin{lem} \label{lem:globalrestrict}
If $\f \subset \tbinom{[n]}{k}$ is $(a,\eps)$-global
and $I \sub J \sub [n]$ with $|I|<a$ and $|J|<n/2k$
then $\f^I_J$ is $(a-|I|,2\eps)$-global.
\end{lem}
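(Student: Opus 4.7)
The plan is to directly unpack the definition of globalness for $\f^I_J$. Fix any $K \sub [n] \sm J$ with $|K| \le a - |I|$; the goal is to show $\mu((\f^I_J)^K_K) \le 2\eps$. Since $I \sub J$ and $K \cap J = \es$, the set $L := I \cup K$ is disjoint union and satisfies $|L| = |I| + |K| \le a$. Unpacking definitions gives the key identification
\[
(\f^I_J)^K_K \;=\; \f^{I \cup K}_{J \cup K},
\]
now regarded as a $(k-|I|-|K|)$-uniform family on the smaller ground set $[n] \sm (J \cup K)$. By the $(a,\eps)$-globalness of $\f$ applied to $L$, we have $\mu(\f^L_L) \le \eps$, where this is the uniform measure on $\binom{[n] \sm L}{k-|L|}$.

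Next I would compare the two measures. Every set in $\f^{I\cup K}_{J \cup K}$ is just a set in $\f^{I \cup K}_{I \cup K}$ that happens to avoid the extra forbidden coordinates $J \sm I$, so $|\f^{I \cup K}_{J \cup K}| \le |\f^{I \cup K}_{I \cup K}|$. The two measures differ only because they normalise by binomial coefficients on ground sets of different sizes, so
\[
\mu\bigl((\f^I_J)^K_K\bigr) \;\le\; \mu(\f^{I \cup K}_{I \cup K}) \cdot \frac{\binom{n-|I|-|K|}{k-|I|-|K|}}{\binom{n-|J|-|K|}{k-|I|-|K|}} \;\le\; \eps \cdot \frac{\binom{n-|I|-|K|}{k-|I|-|K|}}{\binom{n-|J|-|K|}{k-|I|-|K|}}.
\]

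The main (and only) step requiring real work is to bound the binomial ratio by $2$, which is where the hypothesis $|J| < n/(2k)$ enters. Writing the ratio as
\[
\prod_{i=0}^{|J|-|I|-1}\Bigl(1 + \tfrac{k-|I|-|K|}{n - k - i}\Bigr)
\]
and taking logarithms, routine estimation using $\log(1+x) \le x$ bounds $\log$ of the ratio by a quantity of order $k\,(|J|-|I|)/(n-k-|J|)$. The condition $|J| < n/(2k)$ exactly ensures $k|J| < n/2$, and for $k$ not comparable to $n$ (which holds in the regimes the lemma is applied) this makes the bracketed product at most $2$. The main obstacle here is just bookkeeping of the constant; no new idea beyond the direct estimate is required.
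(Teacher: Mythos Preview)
Your proof is correct and follows essentially the same route as the paper: fix $K$, identify $(\f^I_J)^K_K = \f^{I\cup K}_{J\cup K}$, apply $(a,\eps)$-globalness to $I\cup K$, and then bound the resulting binomial ratio by $2$ using $|J|<n/2k$. The only cosmetic difference is that the paper packages the last step via the previously recorded estimate $\mu(\g)\ge\big(1-\tfrac{|J\sm I|}{n-k}\big)^{k}\mu(\g^{\es}_{J\sm I})$ rather than expanding the product as you do; your observation that the constant $2$ relies on $k$ being small relative to $n$ matches the paper's implicit usage.
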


\begin{lem} \label{lem:globalrestrict'}
If $\f \subset \{0,1\}^n$ under $\mu_p$ is $(a,\eps)$-global
and $I \sub J \sub [n]$ with $|I|<a$ and $|J|<1/2p$
then $\f^I_J$ is $(a-|I|,2\eps)$-global.
\end{lem}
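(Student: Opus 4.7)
The plan is to reduce globalness of the restriction $\f^I_J$ to globalness of $\f$ itself via a direct set-theoretic identification of $(\f^I_J)^K_K$ as a further restriction of $\f^M_M$ for an appropriately enlarged set $M$. Fix $K \sub [n] \sm J$ with $|K| \le a - |I|$, and set $M = I \cup K$ and $L = J \sm I$. Since $K \sub [n] \sm J \sub [n] \sm I$ we have $|M| = |I| + |K| \le a$, and $M$ is disjoint from $L$. Unpacking the definitions one checks
\[
(\f^I_J)^K_K = \{A \sub [n]\sm(M\cup L) : A \cup M \in \f\} = (\f^M_M)^\es_L,
\]
where the outer restriction excludes all coordinates of $L$ from a subfamily of $\{0,1\}^{[n]\sm M}$.

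Next, I would relate these two measures by conditioning. For $\mathbf{B} \sim \mu_p$ on $[n]\sm M$, independence of the coordinates in $L$ from those in $[n]\sm(M\cup L)$ gives
\[
\mu_p(\f^M_M) \ge \Pr[\mathbf{B} \in \f^M_M,\ \mathbf{B}\cap L = \es] = (1-p)^{|L|}\, \mu_p\bigl((\f^M_M)^\es_L\bigr).
\]
Combining with the globalness hypothesis $\mu_p(\f^M_M) \le \eps$ (valid since $|M| \le a$) yields $\mu_p((\f^I_J)^K_K) \le \eps (1-p)^{-|L|} \le \eps (1-p)^{-|J|}$.

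The only remaining quantitative point is to show that the factor $(1-p)^{-|J|}$ is at most $2$ under the hypothesis $|J|<1/(2p)$. Equivalently, I would verify $(1-p)^{1/(2p)} \ge 1/2$ for $p\in(0,1/2]$. A single-variable check shows the function $p \mapsto (2p)^{-1}\log(1-p)$ is monotone decreasing on $(0,1/2]$ (its derivative has the sign of $-\bigl[\log(1-p)+p/(1-p)\bigr]$, which is negative), and attains the value $-\log 2$ at $p=1/2$; exponentiating gives $(1-p)^{1/(2p)} \ge 1/2$ throughout the range. Since $|L| \le |J| < 1/(2p)$, this delivers the required bound $\mu_p((\f^I_J)^K_K) \le 2\eps$.

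I do not expect a real obstacle: the argument is a direct set-theoretic identification followed by a cheap probabilistic identity and a one-line calculus estimate. The constraint $|J| < 1/(2p)$ in the hypothesis is chosen precisely to accommodate the factor-of-$2$ loss in the conclusion; the same scheme with $|J| < c/p$ in place of $1/(2p)$ would produce a constant of $e^c$ in place of $2$.
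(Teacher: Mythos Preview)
Your proof is correct and follows essentially the same approach as the paper. The paper omits the proof of this lemma, saying it is essentially the same as the uniform case (Lemma~\ref{lem:globalrestrict}); your argument is precisely the $p$-biased analogue, identifying $(\f^I_J)^K_K$ with $(\f^{I\cup K}_{I\cup K})^\es_{J\sm I}$ and bounding the measure loss by $(1-p)^{-|J|}<2$, with the added care of verifying the last numerical inequality explicitly.
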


\begin{proof}[Proof of Lemma \ref{lem:globalrestrict}]
For any $K \sub [n] \sm J$ with $|K| \le a-|I|$,
we have $\mu(\f^{I \cup K}_{I \cup K}) \le \eps$,
so $\mu((\f^I_J)^K_K) \le
\big(1-\tfrac{|J \sm I|}{n-k}\big)^{-k} \eps < 2\eps$.
\end{proof}

The next pair shows that globalness implies uncapturability.

\begin{lem} \label{lem:globaluncap}
If $\f \subset \tbinom{[n]}{k}$ is $(1,\eps)$-global
with $\eps =  \mu(\f) n/2ak$
then $\f$ is $(a,\mu(\f)/2)$-uncapturable.
\end{lem}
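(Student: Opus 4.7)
The plan is to invoke the union bound estimate for the uniform measure stated in the preliminaries, namely $\mu(\f) \le \mu(\f^\es_J) + (k/n) \sum_{j \in J} \mu(\f^j_j)$, which follows from partitioning $\f$ into edges disjoint from $J$ and edges meeting $J$, then applying the obvious inclusive bound $|\{A \in \f : J \cap A \ne \es\}| \le \sum_{j \in J} |\f * j| = \sum_{j \in J} |\f^j_j|$ and rescaling via $\binom{n-1}{k-1}/\binom{n}{k} = k/n$.

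Fixing any $J \sub [n]$ with $|J| \le a$, the $(1,\eps)$-globalness hypothesis gives $\mu(\f^j_j) \le \eps$ for every $j \in J$, so the union bound specializes to $\mu(\f) \le \mu(\f^\es_J) + ak\eps/n$. Substituting the assumed value $\eps = \mu(\f) n/(2ak)$ yields $\mu(\f) \le \mu(\f^\es_J) + \mu(\f)/2$, hence $\mu(\f^\es_J) \ge \mu(\f)/2$, which is exactly the required $(a,\mu(\f)/2)$-uncapturability. There is no real obstacle here: the lemma is essentially a one-line calculation once the union bound has been set up, with the specific value of $\eps$ in the hypothesis calibrated precisely to absorb the contribution of the links on the right-hand side.
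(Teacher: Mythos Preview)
Your proof is correct and is essentially identical to the paper's proof, which reads in its entirety: ``If $|J| \le a$ then $\mu(\f^\es_J) \ge \mu(\f) - \eps ak/n \ge \mu(\f)/2$.'' You have simply unpacked the same union-bound computation with more commentary.
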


\begin{lem} \label{lem:globaluncap'}
If $\f \subset \{0,1\}^n$ under $\mu_p$ is $(1,\eps)$-global
with $\eps =  \mu_p(\f)/2ap$
then $\f$ is $(a,\mu_p(\f)/2)$-uncapturable.
\end{lem}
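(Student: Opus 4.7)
The plan is to deduce uncapturability from globalness by a direct union bound argument on the `link' structure, exploiting the fact that in the $p$-biased setting the contribution of sets hitting a small set $J$ of coordinates is dominated by $p$ times the degrees.

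First I would fix an arbitrary $J \sub [n]$ with $|J| \le a$ and decompose the mass of $\f$ according to whether a random set $\mathbf{A} \sim \mu_p$ avoids $J$ or hits it:
\[ \mu_p(\f) = \mu_p\bigl(\{A \in \f : A \cap J = \es\}\bigr) + \mu_p\bigl(\{A \in \f : A \cap J \ne \es\}\bigr). \]
The first summand factors, by independence of the coordinates in $J$ and $[n]\sm J$, as $(1-p)^{|J|}\mu_p(\f^\es_J)$. For the second summand, I would apply a union bound over $j \in J$, noting that $\mu_p(\{A \in \f : j \in A\}) = p\,\mu_p(\f^j_j)$, so this term is at most $p \sum_{j\in J}\mu_p(\f^j_j)$.

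Next I would invoke the $(1,\eps)$-globalness hypothesis: each $\mu_p(\f^j_j) \le \eps$, so the hit-contribution is at most $|J|p\eps \le ap\eps = \mu_p(\f)/2$ by the choice $\eps = \mu_p(\f)/2ap$. Rearranging the decomposition yields
\[ (1-p)^{|J|}\mu_p(\f^\es_J) \ge \mu_p(\f) - \mu_p(\f)/2 = \mu_p(\f)/2, \]
which already gives $\mu_p(\f^\es_J) \ge \mu_p(\f)/2$ since $(1-p)^{|J|} \le 1$. As $J$ was arbitrary with $|J| \le a$, this establishes $(a,\mu_p(\f)/2)$-uncapturability.

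There is no real obstacle here: the argument is essentially a one-line inclusion-exclusion / union bound, and the choice of $\eps$ in the statement is precisely calibrated so that the error from the $|J|$ `bad' coordinates absorbs half of the mass of $\f$. The only mild subtlety worth flagging is that the inequality $(1-p)^{|J|} \le 1$ means we do not even need to pay the $(1-p)^{-|J|}$ factor in the conclusion, so the bound $\mu_p(\f)/2$ is obtained cleanly without any restriction like $|J| < 1/2p$ (in contrast to Lemma \ref{lem:globalrestrict'}).
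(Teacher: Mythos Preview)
Your proof is correct and is essentially the same one-line union bound the paper uses: the paper omits the $\mu_p$ proof, deferring to the uniform version, which reads ``if $|J|\le a$ then $\mu(\f^\es_J)\ge \mu(\f)-\eps ak/n\ge \mu(\f)/2$'', and the $p$-biased analogue is exactly your computation (with $ap$ in place of $ak/n$). Your extra care with the $(1-p)^{|J|}$ factor is fine but unnecessary, since the paper's stated union bound $\mu_p(\f)\le \mu_p(\f^\es_J)+p\sum_{j\in J}\mu_p(\f^j_j)$ already absorbs it.
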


\begin{proof}[Proof of Lemma \ref{lem:globaluncap}]
If $|J| \le a$ then
$\mu(\f^\es_J) \ge \mu(\f) - \eps ak/n \ge \mu(\f)/2$.
\end{proof}

Uncapturability does not imply globalness,
but we do have a partial converse:
by taking restrictions we can upgrade uncapturable families
to families that are global or large.

\begin{lem} \label{lem:upgrade1}
Suppose $\bB \in (0,.1)$ and
$\f_i \subset \tbinom{[n]}{k_i}$ with $2r < k_i < \bB n/2rm$
are $(rm,\dD_i)$-uncapturable for $i \in [m]$.
Then there are pairwise disjoint $S_1,\dots,S_m$
with each $|S_i| \le r$ such that,
setting $\g_i = (\f_i)^{S_i}_S$ where $S = \bigcup_i S_i$,
whenever $\mu(\g_i) < \bB$ we have
$S_i = \es$ and $\g_i$ is $(r,2\bB)$-global
with $\mu(\g_i) > \dD_i$.
\end{lem}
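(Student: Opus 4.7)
The plan is a greedy construction. Initialize $S_i = \es$ for all $i \in [m]$ and at each stage write $S = \bigcup_j S_j$ and $\g_i = (\f_i)^{S_i}_S$. Iterate the following: pick any $i$ such that $S_i = \es$, $\mu(\g_i) < \bB$, and $\g_i$ is not $(r, 2\bB)$-global; since $\mu(\g_i) < 2\bB$, failure of $(r,2\bB)$-globalness yields a witness $T \sub [n] \sm S$ with $0 < |T| \le r$ and $\mu((\g_i)^T_T) > 2\bB$, and we set $S_i \leftarrow T$. Each iteration removes one index from $\{i : S_i = \es\}$, so the procedure halts after at most $m$ steps with pairwise disjoint $S_1, \dots, S_m$ of size at most $r$ and $|S| \le rm$.

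To verify the conclusion at termination, consider an index $i$. If $S_i = \es$, the stopping rule forces either $\mu(\g_i) \ge \bB$ (so the desired implication is vacuous) or $\g_i$ is $(r, 2\bB)$-global, in which case the $(rm, \dD_i)$-uncapturability of $\f_i$ combined with $|S| \le rm$ gives $\mu(\g_i) = \mu((\f_i)^\es_S) \ge \dD_i$, which is the content of the implication. If instead $S_i = T_i \ne \es$, it was assigned at some step $t$ when the restricted family satisfied $\mu(\g_i) > 2\bB$. The claim is that this lower bound degrades by less than $\bB$ during the remaining steps, so $\mu(\g_i) > \bB$ at termination and the implication is again vacuous for this index. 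Indeed, if $B$ denotes the elements added to $S$ after step $t$, then $|B| \le rm$, and a union bound on the event that a uniformly random $(k_i - |T_i|)$-subset of $[n] \sm S^{(t)}$ meets $B$ yields a measure decay of at most $rm \cdot k_i/(n - rm)$; the hypotheses $k_i < \bB n/(2rm)$ and $k_i > 2r$ force $rm < n/4$, so this correction lies below $2\bB/3$ and $\mu(\g_i) > 4\bB/3 > \bB$ at the end.

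The main technical obstacle is precisely this decay estimate for previously processed indices: once $S_i$ is fixed, the associated restriction $\g_i$ continues to evolve as $S$ grows, and one must ensure that a dense restriction cannot later become sparse. The uniformity bound $k_i < \bB n/(2rm)$ is exactly what keeps the cumulative decay below $\bB$ and so secures the greedy procedure; no further interaction between distinct indices needs to be tracked.
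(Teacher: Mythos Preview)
Your proof is correct and takes essentially the same greedy approach as the paper: select for each index a small restriction set witnessing a density boost, and use the union-bound decay estimate to show that the boosted families stay above $\bB$ once the full $S$ is fixed. The paper organises the same idea as a one-shot maximal collection of sets $S_i$ with $\mu((\f_i)^{S_i}_{S_i}) > 1.5\bB$, which makes the decay estimate the only nontrivial step for processed indices but then requires a short computation (via the lower-bound restriction estimate) to certify $(r,2\bB)$-globalness for the unprocessed indices; your iterative version with threshold $2\bB$ gets globalness for free from the stopping rule.
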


\begin{lem} \label{lem:upgrade1'}
Suppose $\bB \in (0,.1)$ and
$\f_i \subset \tbinom{[n]}{k_i}$ with $k_i < \bB n/2rm$
are $(rm,\dD_i)$-uncapturable for $i \in [m]$.
Then there are pairwise disjoint $S_1,\dots,S_m$
with each $|S_i| \le r$ such that,
setting $\g_i = (\f_i^\ua)^{S_i}_S$ where $S = \bigcup_i S_i$
and $p_i = k_i/(n-|S|)$,
whenever $\mu_{p_i}(\g_i) < \bB$ we have
$S_i = \es$ and $\g_i$ is $(r,2\bB)$-global
with $\mu_{p_i}(\g_i) > \dD_i/4$.
\end{lem}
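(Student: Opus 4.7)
The plan is to mimic the proof of Lemma \ref{lem:upgrade1}, adapted to the monotone $p$-biased setting by consistently working with the up-closures $\f_i^\ua$ in place of $\f_i$.

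I would begin with a \emph{transfer step} converting uniform uncapturability into a $p$-biased density bound for the up-closure: for any $J\subseteq[n]$ with $|J|\le rm$ and $p=k_i/(n-|J|)$, one has $\mu_p((\f_i^\ua)^\es_J)\ge \delta_i/2$. This follows by a Kruskal--Katona style sampling argument, since a sample $\mathbf{B}\sim\mu_p$ on $[n]\setminus J$ has $|\mathbf{B}|\ge k_i$ with probability at least $1/2-o(1)$, and conditional on $|\mathbf{B}|=k'\ge k_i$, monotonicity of $\f_i^\ua$ combined with the uniform density bound $|(\f_i)_J^\es|\ge\delta_i\tbinom{n-|J|}{k_i}$ gives $\mathbb{P}[\mathbf{B}\in\f_i^\ua\mid|\mathbf{B}|=k']\ge\delta_i$.

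The core is a greedy procedure, processing $i=1,\dots,m$ in order. Writing $S_{<i}=\bigcup_{j<i}S_j$, $p_i^*=k_i/(n-|S_{<i}|)$, and $\h_i=(\f_i^\ua)^\es_{S_{<i}}$, I set $S_i=\es$ if $\h_i$ is sufficiently global under $\mu_{p_i^*}$; otherwise I pivot, choosing $T\subset[n]\setminus S_{<i}$ with $|T|\le r$ and $\mu_{p_i^*}((\h_i)^T_T)$ above an appropriately tuned threshold, and take $S_i=T$. The hypothesis $k_i<\beta n/(2rm)$ gives $|S_{>i}|p_i^*\le\beta$ and $p_i/p_i^*\le 1+O(\beta)$; a monotone coupling argument then bounds the difference between the relevant $\mu_{p_i^*}$-densities at time $i$ and the final $\mu_{p_i}$-densities of $\g_i$ by an $O(\beta)$ term. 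When $S_i=\es$, $(r,2\beta)$-globalness of $\g_i$ under $\mu_{p_i}$ comes from Lemma \ref{lem:globalrestrict'} applied to $\h_i$, while the density bound $\mu_{p_i}(\g_i)>\delta_i/4$ follows by applying the transfer step directly with $J=S$ (rather than $J=S_{<i}$), thereby sidestepping any restriction loss entirely. When $S_i\ne\es$, the pivot threshold is chosen large enough that the $O(\beta)$ loss still leaves $\mu_{p_i}(\g_i)\ge\beta$, so the hypothesis ``$\mu_{p_i}(\g_i)<\beta$'' of the conclusion is simply not triggered.

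The main obstacle is the tight interplay between the pivot and globalness thresholds: Lemma \ref{lem:globalrestrict'} imposes an unavoidable factor-of-$2$ loss in the globalness parameter under restriction, while the pivot threshold must exceed the globalness threshold by enough to absorb the $O(\beta)$ density loss and yet still yield $(r,2\beta)$-globalness at the end. The restriction $\beta\in(0,0.1)$ provides precisely enough slack for these choices to be compatible. Working with $\f_i^\ua$ rather than with $\f_i$ is essential throughout, both for the Kruskal--Katona transfer and for the monotone behaviour of $\mu_p$ under ground-set shrinkage that makes the coupling estimates valid.
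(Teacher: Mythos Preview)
Your strategy is essentially the paper's: work with the up-closures $\f_i^\ua$, select the $S_i$ by a threshold test, and recover $\mu_{p_i}(\g_i)>\delta_i/4$ from uniform uncapturability via Lemma~\ref{lem:bin} (your ``transfer step''; the constant $1/2$ versus $1/4$ is immaterial). The paper's proof, however, takes a \emph{maximal} collection $(S_i:i\in I)$ of pairwise disjoint sets with $|S_i|\le r$ and $\mu_{p_i}\bigl((\f_i^\ua)^{S_i}_{S_i}\bigr)>1.5\beta$, testing the threshold on the \emph{unrestricted} up-closure. Maximality then says directly that for every $i\notin I$ and every $R\subset[n]\setminus S$ with $|R|\le r$ one has $\mu_{p_i}\bigl((\f_i^\ua)^R_R\bigr)\le 1.5\beta$; passing to $(\g_i)^R_R=\bigl((\f_i^\ua)^R_R\bigr)^\es_S$ costs only the explicit factor $(1-p_i)^{-|S|}$, which under $k_i<\beta n/(2rm)$ and $\beta<0.1$ is well below $4/3$, yielding $(r,2\beta)$-globalness.

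Your sequential variant instead tests $(r,\theta)$-globalness of the already-restricted $\h_i=(\f_i^\ua)^\es_{S_{<i}}$ and then invokes Lemma~\ref{lem:globalrestrict'} as a black box to pass to $\g_i=(\h_i)^\es_{S_{>i}}$. That lemma gives only a factor-$2$ blow-up, so to land at $(r,2\beta)$-globalness you are forced to take $\theta\le\beta$. On the pivot side, failure of $(r,\theta)$-globalness yields some $T$ with $\mu_{p_i^*}\bigl((\h_i)^T_T\bigr)>\theta$, and the subsequent restriction by $S_{>i}$ loses up to $|S_{>i}|\,p_i^*$, which is of order $\beta/2$ (not $o(\beta)$); to reach $\mu_{p_i}(\g_i)\ge\beta$ you would need $\theta\ge 3\beta/2$. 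These two constraints on the single threshold $\theta$ are incompatible for every $\beta\in(0,0.1)$, so the ``precisely enough slack'' claim does not hold. The fix is simple: either replace the black-box factor $2$ by the actual factor $(1-p_i^*)^{-|S_{>i}|}=1+O(\beta)$ in the globalness step, or---equivalently to the paper's argument---test the $1.5\beta$ threshold on $(\f_i^\ua)^T_T$ rather than on $(\h_i)^T_T$, so that the globalness conclusion for $i$ with $S_i=\es$ comes from maximality over $T$ disjoint from $S_{<i}$ (hence from $S$) and needs no appeal to Lemma~\ref{lem:globalrestrict'} at all.
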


\begin{proof}[Proof of Lemma \ref{lem:upgrade1}]
Let $(S_i: i \in I)$ be a maximal collection
of pairwise disjoint sets with $|S_i| \le r$
and $\mu((\f_i)^{S_i}_{S_i}) > 1.5\bB$.
Let $S = \bigcup_{i \in I} S_i$ and
$\g_i = (\f_i)^{S_i}_S$ for each $i \in [m]$,
where $S_i=\es$ for $i \in [m] \sm I$.
For any $i \in I$ we have $\mu(\g_i)
> \mu((\f_i)^{S_i}_{S_i}) - |S \sm S_i|k_i/n > \bB$.
Now consider $i$ with $\mu(\g_i) < \bB$.
Then $i \notin I$, so $S_i = \es$ and
$\mu(\g_i) > \dD_i$ by uncapturability.
Furthermore, for any $R \sub [n] \sm S$ with $|R| \le r$
we have $\mu((\f_i)^R_R) \le 1.5\bB$,
so $(\g_i)^R_R = ((\f_i)^R_R)^\es_S$
has $\mu((\g_i)^R_R) \le
\big(1-\tfrac{|S|}{n-k_i}\big)^{-k_i} \mu((\f_i)^R_R) < 2\bB$.
\end{proof}

We conclude this subsection with a lemma on
decomposing any family according to its vertex degrees,
where to make an analogy with the regularity method
we think of high degree vertex links as `structured'
and the low degree remainder as `pseudorandom'.

\begin{lem} \label{lem:decomp}
Let $\f \subset \tbinom{[n]}{k}$
and $J = \{i: \mu(\f^i_i)>\eps\}$.
If $|J|<n/2k$ then $\g=\f^\es_J$ is $(1,2\eps)$-global,
and so $(a,\mu(\g)/2)$-uncapturable with $a = \mu(\g) n/4k\eps$,
\end{lem}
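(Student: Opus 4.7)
The plan is to establish globalness of $\g$ directly, then obtain uncapturability as an immediate corollary via Lemma~\ref{lem:globaluncap}.

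For the globalness, fix $i \in [n] \sm J$, so $\mu(\f^i_i) \le \eps$ by the choice of $J$. Unpacking definitions, $\g^i_i$ consists of those $(k-1)$-sets $A \sub [n] \sm (J \cup \{i\})$ with $A \cup \{i\} \in \f$, so $\g^i_i \sub \f^i_i$. Bounding $|\g^i_i| \le |\f^i_i| \le \eps \tbinom{n-1}{k-1}$ and normalising by the natural denominator on the reduced ground set $[n] \sm J$ yields
\[
\mu(\g^i_i) \;=\; \frac{|\g^i_i|}{\tbinom{n-|J|-1}{k-1}} \;\le\; \eps \cdot \frac{\tbinom{n-1}{k-1}}{\tbinom{n-|J|-1}{k-1}} \;=\; \eps \prod_{j=0}^{|J|-1} \frac{n-1-j}{n-k-j}.
\]
It then suffices to show the ratio of binomials is at most $2$. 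This is a standard estimate: the hypothesis $|J| < n/2k$ (which, together with the implicit $k \le n/2$, gives $|J|/(n-k) \le 1/k$) ensures $\prod_{j=0}^{|J|-1}(1 + \tfrac{k-1}{n-k-j}) \le (1 - |J|/(n-k))^{-k} \le 2$, exactly the kind of bookkeeping used in the proof of Lemma~\ref{lem:globalrestrict}. This establishes $\mu(\g^i_i) \le 2\eps$ for every $i \in [n] \sm J$, i.e.\ $\g$ is $(1,2\eps)$-global.

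With this in hand, Lemma~\ref{lem:globaluncap} applied on the ground set $[n] \sm J$ (of size $n - |J| \ge n/2$) with $2\eps$ in place of $\eps$ delivers uncapturability of $\g$ at depth $a = \mu(\g)(n-|J|)/(4\eps k)$, from which the stated $a = \mu(\g) n/(4 \eps k)$ follows after absorbing the harmless factor $(n-|J|)/n$ into the constants.

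There is no genuine conceptual obstacle: the lemma is essentially a repackaging argument that converts the trivial degree bound on $[n] \sm J$ into each of the two pseudorandomness notions. The only subtlety is the binomial-ratio estimate used to pass from $\f^i_i$ to $\g^i_i$, where the hypothesis $|J| < n/2k$ is exactly what is needed to absorb the change of ground set into a factor of $2$.
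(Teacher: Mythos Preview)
Your proof is correct and follows essentially the same route as the paper's: bound $\mu(\g^j_j)$ by $\mu(\f^j_j)$ times the binomial ratio coming from the change of ground set (the paper writes this as $(1-|J|/(n-k))^{-k}$), then invoke Lemma~\ref{lem:globaluncap}. Your remark about the factor $(n-|J|)/n$ in the final $a$ is a genuine but harmless imprecision that the paper shares and simply glosses over.
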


\begin{proof}
If $j \in [n] \sm J$ then
$\mu(\f^j_j) \le \eps$ by definition of $J$,
so $\mu(\g^j_j) \le \big(1-\tfrac{|J|}{n-k}\big)^{-k} \mu(\f^j_j) < 2\eps$.
The lemma follows by Definition \ref{def:pr}
and Lemma \ref{lem:globaluncap}.
\end{proof}

\subsection{Embeddings}

Here we will prove Theorem \ref{thm:junta}
assuming two fundamental embedding results,
which will be proved in Section \ref{sec:shadow}.
The first of these shows that sufficiently large families
contain a cross copy of any expanded hypergraph $G^+$.
Our bound on $\mu(\f_{i})$ is sharper for larger $k_i$:\
when $k_i=O(1)$ it is a constant,
which is relatively weak (but still useful),
whereas when $k_i \gg \log n$
it is $O(sk_i/n) = O(\cc{G} k_i/n)$,
which is tight up to the constant factor.

\begin{lem} \label{lem:embedlarge}
Given $G \in \g(r,s,\DD)$, $C \gg r\DD$
and $C \leq k_i \leq n/Cs$ for all $i \in [s]$,
any $\f_i \subset\tbinom{[n]}{k_i}$
with all $\mu(\f_{i}) \ge e^{-k_i/C}+ Csk_i/n$
cross contain $G^+$.
\end{lem}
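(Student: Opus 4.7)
The plan is a two-step embedding: first embed the base $r$-graph $G$ into a common ``fat shadow'' of the $\f_i$, then lift each embedded edge to a full $k_i$-edge of $\f_i$ via cross matching, keeping all added vertices pairwise disjoint. To prepare, apply Lemma \ref{lem:upgrade1'} to the up-closures $\f_i^\ua$: this yields pairwise disjoint sets $T_1,\dots,T_s \sub [n]$ of bounded size $r' = O(\log\eps^{-1})$, to be incorporated into the expansion of the $i$-th edge of $G$, such that the restricted families $\g_i := (\f_i^\ua)^{T_i}_T$ (with $T = \bigcup_i T_i$) are either of constant density in the $p_i$-biased cube on $[n]\sm T$ (with $p_i = k_i/(n-|T|)$), or are $(r',2\eps)$-global with $\mu_{p_i}(\g_i) \ge \tfrac{1}{4}e^{-k_i/C}$.

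Next, boost the density using the sharp threshold theorem (Theorem \ref{theorem: quasirandom sharp threshold theorem}): iterate over a chain of biases $p_i = q_0 < q_1 < \cdots < q_t = q$ with $q_{j+1}/q_j = 1 + \aA$, each step multiplying $\mu_{q_j}(\g_i^\ua)$ by $1/\eps$. Globalness is preserved under up-closure and restriction, so starting from $\mu_{p_i}(\g_i) \ge e^{-k_i/C}/4$ we can iterate $t = O(k_i/C)$ times to reach a bias $q \le C'sk_i/n$ at which $\mu_q(\g_i^\ua) \ge 1 - 1/(2s)$. A random $qn$-subset $R \sub [n]\sm T$ then contains, for every $i$ simultaneously, a dense collection of $r$-sets that extend inside $R$ to $\OO((qn)^{k_i-r})$ edges of $\f_i^\ua$ --- this is the common fat shadow.

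Greedily embed $G$ into this fat shadow: bounded maximum degree $\DD$ and small order of $G$ relative to $|R|$ make this routine, and it yields for each edge $e \in G$ (assigned to $\f_i$) an image $\phi(e)$ with $\OO((qn)^{k_i-r})$ valid completions in $\f_i$ to $k_i$-sets containing $T_i$. Finally, lift via cross matching: the $s$ link families $\f_i^{\phi(e) \cup T_i}$, restricted to the unused vertices of $[n]$, meet the density and globalness requirements of the cross matching theorem (Theorem \ref{thm: cross EMC bound}), which produces pairwise disjoint completion sets $S_e$; the ``star'' alternative in that theorem is ruled out by the uncapturability inherited from Step~1 via Lemma \ref{lem:globaluncap'}. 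The room condition $k_i \le n/Cs$ gives $\sum_e |S_e| \le sk/C \ll n$, leaving ample space for the matching.

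The main technical obstacle is the chain of sharp-threshold boosts: maintaining globalness across all $O(k_i/C)$ iterations while keeping the parameters in the regime required by Theorem \ref{theorem: quasirandom sharp threshold theorem} forces careful parameter tracking, and it is here that the exponent $1/C$ in $e^{-k_i/C}$ is essentially determined by the constant implicit in the sharp-threshold theorem. A secondary subtlety is synchronising the random choice of $R$ so that a common fat shadow exists for all $s$ families at once, handled by a union bound exploiting $s \ll n/k$.
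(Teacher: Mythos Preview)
Your proposal takes a much harder route than necessary and contains a genuine gap. The central issue is the claim that iterating the sharp threshold theorem $t = O(k_i/C)$ times (each iteration multiplying the bias by $1+\aA$ and the density by $1/\eps$) reaches a bias $q \le C'sk_i/n$. This is false for large $k_i$: after $t$ iterations the bias is $(1+\aA)^t p_i$, which is exponential in $k_i/C$, so the condition $q \le C' s k_i/n$ forces $(1+\aA)^{O(k_i/C)} \le C's$, i.e.\ $k_i = O(C\log s)$. But the lemma allows $k_i$ as large as $n/Cs$, and in that regime your boosting scheme sends the bias far beyond anything usable for the lifting step. The same obstruction persists if you try to use Theorem~\ref{thm:trad} or Theorem~\ref{thm:sharpset} instead: sharp threshold arguments fundamentally trade density gain for bias increase (or for restriction size), and starting from $e^{-k_i/C}$ the cost is always at least polynomial in $k_i$.

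The paper's proof avoids this entirely by invoking the Fairness Proposition (Proposition~\ref{prop:Fair}) of Keller--Lifshitz: if $\mu(\f_i) \ge e^{-k_i/C}$ then for all but an $\eps$-fraction of $r$-sets $e$ one has $\mu((\f_i)^e_e) \ge (1-\eps)\mu(\f_i)$. This immediately gives that each fat $r$-shadow $\g_i$ (at level $Csk_i/2n$) satisfies $\mu(\g_i) \ge 1-\eta$, with no boosting, no globalness upgrade, and no passage to the biased cube. Lemma~\ref{lem:nearcomplete} then greedily embeds $G$ into the near-complete $\g_i$'s, and a union bound plus Lemma~\ref{lem:hls} (the original Huang--Loh--Sudakov lemma, not Theorem~\ref{thm: cross EMC bound}) lifts to a cross matching. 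The entire proof is under ten lines. The Fairness Proposition is the key black box you are missing; once you have it, none of the machinery in your proposal (Lemma~\ref{lem:upgrade1'}, iterated sharp thresholds, random $R$, ruling out the star alternative of Theorem~\ref{thm: cross EMC bound}) is needed.
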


When the uniformities $k_i$ are small we cannot improve
this cross containment result, as below density $e^{-\OO(k_i)}$
the families $\f_i$ may have disjoint supports.
However, when finding $G^+$ in a single family $\f$
we can get a much better bound on the density,
and moreover it suffices to assume
that $\f$ is sufficiently uncapturable, as follows.

\begin{lem} \label{lem:embeduncap}
Given $G \in \g(r,s,\DD)$, $C \gg C_1 \gg C_2 \gg r\DD$
and $C \leq k \leq n/Cs$,
any $(C_1 s, sk/C_2 n)$-uncapturable
$\f \subset\tbinom{[n]}{k}$ contains $G^+$.
\end{lem}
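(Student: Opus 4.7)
The plan is to reduce this single-family uncapturability result to the multi-family density version in Lemma~\ref{lem:embedlarge} through a two-stage embedding: first embed $G$ as an $r$-uniform subhypergraph of a suitable fat shadow $\h$ of $\f$, then lift each embedded $r$-edge to a $k$-edge of $\f$ using the link density that defines $\h$.

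I define the fat shadow $\h := \{T \in \tbinom{[n]}{r} : |\f^T_T| \ge \alpha \tbinom{n-r}{k-r}\}$ for a threshold $\alpha$ chosen below. The key point is that uncapturability of $\f$ transfers to $\h$: applying the averaging identity $\sum_{T \in \binom{[n]\sm J}{r}} |(\f^\es_J)^T_T| = \tbinom{k}{r}|\f^\es_J|$ for any $J \sub [n]$ with $|J| \le C_1 s$, and using the hypothesised lower bound $\mu(\f^\es_J) \ge \dD$, I deduce that at least a $(\dD - O(\alpha))$-fraction of the $r$-subsets of $[n]\sm J$ lie in $\h$. Thus $\h$ is $(C_1 s, \dD/2)$-uncapturable as an $r$-uniform family, provided $\alpha \ll \dD$.

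For Stage~1, I embed $G$ in $\h$ by processing its edges in an order in which each edge contributes at least one new vertex (arranging the edges of each component in a spanning-tree-like order, with any extra edges verified post hoc). At each step, the already-used vertices number at most $rs \ll C_1 s$, so the uncapturability of $\h$ supplies many candidates, and the bounded degree $\DD$ limits the number of simultaneous link constraints to at most $\DD$. For Stage~2, each link $\f^{\phi(e)}_{\phi(e)}$ has $\ge \alpha \tbinom{n-r}{k-r}$ elements, so a greedy choice of pairwise disjoint $(k-r)$-extensions, each avoiding at most $sk$ previously used vertices, succeeds provided $\alpha \gg sk^2/n$.

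The hard part will be a parameter clash: Stage~1 wants $\alpha \ll \dD = sk/(C_2 n)$ while Stage~2 wants $\alpha \gg sk^2/n$, which together force $k \lesssim 1/C_2$ and contradict $k \ge C$. I would resolve this by preprocessing $\f$ with Lemma~\ref{lem:upgrade1}: either $\f$ admits a small restriction $\f^S_S$ of density at least a positive constant (in which case Lemma~\ref{lem:embedlarge} applied to the restriction, combined with a further use of uncapturability of $\f$ to locate the remainder of $G^+$ outside of $S$, suffices), or else $\f$ is already global at constant scale, in which case the hypercontractive concentration results of Section~\ref{spectrum} upgrade the fat shadow to be dense enough to carry the embedding at a threshold $\alpha$ compatible with Stage~2. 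Matching these two regimes at their boundary, while keeping track of how each loss in globalness and uncapturability reduces the effective constants, is the technical heart of the proof and is what dictates the hierarchy $C \gg C_1 \gg C_2 \gg r\DD$.
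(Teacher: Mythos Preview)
Your two-stage outline (embed $G$ in a fat shadow of $\f$, then lift to $G^+$) matches the paper's scheme, and you correctly isolate the parameter clash. But both halves of your proposed resolution have real gaps. In Case~(a), a single dense restriction $\f^S_S$ is not enough: finding $G^+$ inside $\f^S_S$ yields edges $E_i\cup S\in\f$ that all share $S$, so they do not form $G^+$ in $\f$, and your phrase ``locate the remainder of $G^+$ outside of $S$'' does not name a mechanism. The paper instead takes a \emph{maximal} collection $\b$ of pairwise disjoint sets $B_i$ with $|B_i|\le r+1$ and $\mu(\f^{B_i}_{B_i})>\beta:=e^{-k/C_1}+C_1sk/n$; if $|\b|\ge s$ one applies Lemma~\ref{lem:embedlarge} to the $s$ families $\f^{B_i}_B$ and reattaches the disjoint $B_i$'s, while if $|\b|<s$ then $\g:=\f^\es_B$ is $(r+1,2\beta)$-global with $\mu(\g)>sk/C_2n$ by uncapturability.

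In Case~(b) the missing ingredient is not ``hypercontractive concentration'' applied to the fat shadow but the lifting Lemma~\ref{lem:matchuncap}. Your greedy Stage~1 embedding cannot work: $(C_1s,\dD/2)$-uncapturability of $\h$ says nothing about links of partially embedded edges, and at density $\Theta(sk/n)$ no direct embedding of $G$ into an $r$-graph is possible. The paper never embeds into the fat shadow constructively; instead it shows that $\pl^r_c\g$ (with $c=\mu(\g)/2\tbinom{C_2}{r}$) is $G$-free, since a copy $A_1,\dots,A_s$ of $G$ in $\pl^r_c\g$ would yield link families $\g_i=\g^{A_i}_A$ that inherit globalness from $\g$ (Lemma~\ref{lem:globalrestrict}), hence are $(C_1s,c/4)$-uncapturable (Lemma~\ref{lem:globaluncap}), and then Lemma~\ref{lem:matchuncap} supplies the cross matching that lifts to $G^+$. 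But $\pl^r_c\g$ being $G$-free forces, via Lemma~\ref{lem:fat} and Lemma~\ref{lem:DRC} applied at the intermediate uniformity $\ell=C_2$, the bound $\mu(\g)\le 2(s/n)^2$, contradicting $\mu(\g)>sk/C_2n$. The sharp-threshold machinery enters only through Lemma~\ref{lem:matchuncap}; the shadow-side argument is Kruskal--Katona plus the Tur\'an bound of Theorem~\ref{thm:CFS}.
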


We conclude this section by deducing
our junta approximation theorem
from the above lemmas.

\begin{proof}[Proof of Theorem \ref{thm:junta}]
Let $G \in \g(r,s,\DD)$ and $C \gg C_1 \gg C_2 \gg r\DD \eps^{-1}$.
Consider any $G^+$-free ${\cal F} \sub \tbinom {[n]}{k}$
with $C \leq k \leq \tfrac {n}{Cs}$.
Let $J = \{ i \in [n]: \mu(\mc{F}^i_i) \ge \bB \}$,
where $\bB := e^{-k/C_1} + C_1 sk/n$.
We need to show $|J| \leq \cc {G}-1$
and $|{\cal F}^\es_J| \leq \eps |{\cal S}_{n,k,\cc {G}-1}|$.

The bound on $|J|$ follows from Lemma \ref{lem:embedlarge}.
Indeed, supposing for a contradiction $|J| \ge \cc{G}$,
we may fix a minimal crosscut $S$ of $G^+$
and distinct $j_s \in J$ for each $s \in S$.
Let $I = \{i_s: s \in S\}$ and
$\mc{F}_s = \mc{F}^{i_s}_I$ for $s \in S$.
By definition of $J$, for each $s \in S$ we have
$\mu(\mc{F}_s) > \bB - |I|k/n > \bB/2$,
so by Lemma \ref{lem:embedlarge}
the families $(\mc{F}_s: s \in S)$ cross contain
the exclusive links $((G^+)^s_s: s \in S)$.
However, this contradicts ${\cal F}$ being $G^+$-free.

As $|J|<s \le n/Ck$ we can apply Lemma \ref{lem:decomp}
to see that $\g=\f^\es_J$ is
$(a,\mu(\g)/2)$-uncapturable with $a = \mu(\g) n/4k\bB$.
However, by Lemma \ref{lem:embeduncap}
$\g$ is $(C_1 s, sk/C_2 n)$-capturable,
so we must have $\mu(\g)/2 < sk/C_2 n$,
or $a < C_1 s$, so again
$\mu(\g) < 4\bB C_1 sk/n < sk/C_2 n$.
As $\mu({\cal S}_{n,k,\cc {G}-1}) > .9(\cc {G}-1)k/n$
and $s \le \DD\cc{G}$ we deduce
$|{\cal F}^\es_J| = |\g| < \eps |{\cal S}_{n,k,\cc {G}-1}|$.
\end{proof}

\section{Matchings} \label{sec:match}

The main result of this section is the following lemma
on cross containment of matchings in uncapturable families,
which will be used for `lifting' (as described in the previous section)
and also in the proof of the Huang--Loh--Sudakov Conjecture.

\begin{lem} \label{lem:matchuncap}
Let $C \gg C_1 \gg C_2 \gg 1$ and $\f_i \sub \tbinom{[n]}{k_i}$
with $k_i \le n/Cs$ for $i \in [s]$.
Suppose $\f_i$ is $(C_1 m, mk_i/C_2 n)$-uncapturable for $i \in [m]$
and $\mu(\f_i) > C_1 sk_i/n$ for $i>m$.
Then $\f_1,\dots,\f_s$ cross contain a matching.
\end{lem}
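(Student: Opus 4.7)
The approach is to use Lemma~\ref{lem:upgrade1} to replace the uncapturable families by families that are either dense or global on a slightly restricted ground set, and then construct a cross matching greedily in this restricted setting, exploiting both the dense and global outcomes of the upgrade.

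First I would apply Lemma~\ref{lem:upgrade1} to the uncapturable families $\f_1, \ldots, \f_m$ with $r = C_1$ (so that the parameter $rm$ of the lemma matches the uncapturability parameter $C_1 m$) and $\bB$ a small constant of order $C_1/C$; the lemma's hypothesis $k_i < \bB n/(2 r m)$ then follows from $k_i \le n/Cs$ and $m \le s$. This yields pairwise disjoint $S_1, \ldots, S_m \sub [n]$ with each $|S_i| \le C_1$, and setting $S = \bigcup_i S_i$ and $\g_i = (\f_i)^{S_i}_S$ for $i \le m$, for each such $i$ either $\mu(\g_i) \ge \bB$ (the ``dense'' outcome), or $S_i = \es$ and $\g_i$ is $(C_1, 2\bB)$-global with $\mu(\g_i) > m k_i / (C_2 n)$ (the ``global'' outcome). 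For $i > m$ I would set $\g_i = (\f_i)^\es_S$; its density satisfies $\mu(\g_i) \ge \mu(\f_i) - |S|k_i/n \ge C_1(s-m)k_i/n$, which is positive whenever $s > m$ (the case $s = m$ simply has no dense families).

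Next, I would greedily construct a cross matching $A_i \in \g_i$ pairwise disjoint in $[n] \sm S$, processing the global $\g_i$'s before the dense ones. At each step, with $U$ denoting the union of previously chosen sets, the vertex-link bound $\mu((\g_i)^v_v) \le 2\bB$ from $(C_1, 2\bB)$-globalness combined with a union bound gives $\mu((\g_i)^\es_U) \ge \mu(\g_i) - 2\bB |U| k_i/n$ for global $\g_i$, while the trivial degree estimate gives $\mu((\g_i)^\es_U) \ge \mu(\g_i) - |U| k_i/n$ for dense $\g_i$. Since $|U| \le s k_{\max} \le n/C$, and by processing the global families first $|U|$ is kept bounded by $m k_{\max}$ during the crucial ``global phase,'' the hierarchy $C \gg C_1 \gg C_2 \gg 1$ ensures both lower bounds remain strictly positive throughout the procedure. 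Consequently each greedy pick succeeds, and the sets $B_i := A_i \cup S_i$ lie in $\f_i$ and are pairwise disjoint, providing the desired cross matching.

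The main obstacle is verifying the greedy step for the global $\g_i$'s, whose measure is as small as $\Omega(m k_i / (C_2 n))$: the inequality $|U| < m/(2\bB C_2)$ needed at each such step forces a careful ordering of the greedy process (global first, sorted by increasing $k_i$ within each class) and a delicate calibration of the parameters. An alternative route, should the bookkeeping above fall short in some regime, would be to argue by iteration: if the greedy process stalls at some stage, then the current restricted families must contain additional vertex-degree structure, allowing one to re-apply Lemma~\ref{lem:upgrade1} after a further restriction and continue.
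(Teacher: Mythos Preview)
Your greedy strategy has a genuine gap: it breaks down whenever the uniformities $k_i$ are not bounded by an absolute constant. In the global phase, after picking $j$ sets you have $|U|$ of order $j k_{\max}$, and for the next global family $\g_i$ with $\mu(\g_i) > m k_i/(C_2 n)$ you need $2\bB |U| k_i/n < m k_i/(C_2 n)$, i.e.\ $|U| < m/(2\bB C_2)$. Since $|U|$ can be as large as $(m-1)k_{\max}$, this forces $k_{\max} < 1/(2\bB C_2)$, a fixed constant in the hierarchy; but the lemma allows $k_i$ up to $n/Cs$, which is typically enormous. Sorting by $k_i$ does not help: the same calculation at the last global step still gives $k_{\max}$ bounded by a constant. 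Your fallback of re-running Lemma~\ref{lem:upgrade1} does not make progress either, because the obstruction is not hidden vertex-degree structure but simply that the chosen edges use up too many vertices relative to the tiny density $m k_i/(C_2 n)$.

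The paper's proof avoids any greedy edge-selection. It moves to the $p$-biased setting via Lemma~\ref{lem:upgrade1'}, and then the crucial extra step is to apply the sharp threshold result (Theorem~\ref{thm:sharpset}, a consequence of Theorem~\ref{theorem: quasirandom sharp threshold theorem}) to each global $\g_i$: a small further restriction $R_i$ of bounded size boosts $\mu_{2p_i}(\g_i^{R_i})$ from the tiny $\Theta(m p_i/C_2)$ all the way up to $7m p_i$. Only after this boost does one invoke an extremal cross-matching lemma (Lemma~\ref{lem:matchlargelimit}). The point is that globalness alone is not strong enough to survive removing $\Theta(mk)$ vertices when the starting density is $\Theta(mk/n)$; one needs the sharp threshold machinery from Part~\ref{part:sharp} to amplify the density first.
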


We start in the first subsection by recalling some basic probabilistic tools,
and also our new sharp threshold result from Part \ref{part:sharp}.
Next we present some extremal results on cross matchings in the second subsection.
We conclude by proving the uncapturability result in the third subsection.

\subsection{Probabilistic tools and sharp thresholds}

We start with the following lemma that will be used to pass between
the uniform and $p$-biased measures.

\begin{lem}
	\label{lem:bin}
	Let $n,k \in {\mathbb N}$ with $k=pn \leq n$.
	Then ${\mathbb P}\big (\Bin (n,p) \geq k \big ) \geq 1/4$.
	Thus if ${\cal A} \subset \tbinom {[n]}{k}$ we have
	$\mu _p({\cal A}^{\uparrow }) \geq \mu ({\cal A})/4$.
\end{lem}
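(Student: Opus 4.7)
The plan is to split the claim into its two parts: first establish the probabilistic inequality $\Pr[\Bin(n,p) \ge k] \ge 1/4$ for $k = pn$ an integer, then deduce the consequence for $\mu_p(\mathcal{A}^{\uparrow})$ by a simple coupling.

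For the probabilistic inequality, the cleanest route is the classical fact that when the mean $np$ of a binomial distribution is an integer, the median of $\Bin(n,p)$ equals $np$ (Jogdeo--Samuels 1968), which immediately gives the stronger bound $\Pr[\Bin(n,p) \ge np] \ge 1/2$. If one prefers to avoid citing this, an elementary argument also works: examining the ratio $\Pr[X{=}j{+}1]/\Pr[X{=}j] = (n-j)p / ((j+1)(1-p))$ shows the pmf is unimodal with mode exactly at $j = k = np$, and one can then argue directly (e.g.\ pairing up mass via the identity $\mathbb{E}[X - k] = 0$) that at least a $1/4$-fraction of the mass lies in $\{j \ge k\}$. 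Since we need only the bound $1/4$, there is significant slack here.

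For the deduction of the second part, the strategy is to couple $\mu_p$ to the uniform measure on $\binom{[n]}{k}$. Sample $\mathbf{B} \sim \mu_p$, and conditional on $|\mathbf{B}| \ge k$ let $\mathbf{A}$ be a uniformly random $k$-subset of $\mathbf{B}$. Since $\mu_p$ is permutation-invariant, conditional on $|\mathbf{B}| = m$ the set $\mathbf{B}$ is uniform over $\binom{[n]}{m}$, and so the marginal distribution of $\mathbf{A}$ (conditional on $|\mathbf{B}| \ge k$) is uniform over $\binom{[n]}{k}$. By construction $\mathbf{A} \subseteq \mathbf{B}$, so $\mathbf{A} \in \mathcal{A}$ implies $\mathbf{B} \in \mathcal{A}^{\uparrow}$. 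Combining with the first part yields
\[
\mu_p(\mathcal{A}^{\uparrow}) \;\ge\; \Pr[|\mathbf{B}| \ge k] \cdot \Pr[\mathbf{A} \in \mathcal{A} \mid |\mathbf{B}| \ge k] \;\ge\; \tfrac{1}{4}\, \mu(\mathcal{A}).
\]

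I do not expect any serious obstacle: the binomial tail bound is a well-known fact, and the coupling in the second step is transparent. The only small point worth verifying carefully is that the marginal of $\mathbf{A}$ is uniform on $\binom{[n]}{k}$, which is an immediate consequence of the permutation-invariance of $\mu_p$ together with the uniform choice of $\mathbf{A}$ within $\mathbf{B}$.
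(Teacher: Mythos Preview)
Your proposal is correct. For the first statement both you and the paper appeal to a known result (the paper cites Greenberg--Mohri; your reference to Jogdeo--Samuels on the median of the binomial is equally valid and in fact gives the stronger bound $1/2$).

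For the second statement the paper invokes the LYM inequality to say that $|\mathcal{A}^{\uparrow} \cap \tbinom{[n]}{j}| \ge \mu(\mathcal{A})\tbinom{n}{j}$ for each $j \ge k$, then sums over levels weighted by the binomial distribution. Your coupling argument is a different packaging of the same idea: once you observe that, conditional on $|\mathbf{B}| = m \ge k$, the uniform $k$-subset $\mathbf{A} \subseteq \mathbf{B}$ is marginally uniform on $\tbinom{[n]}{k}$, the inequality $\Pr[\mathbf{B} \in \mathcal{A}^{\uparrow} \mid |\mathbf{B}| = m] \ge \mu(\mathcal{A})$ \emph{is} the LYM inequality at level $m$. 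So the two arguments are equivalent in content; yours has the mild advantage of being self-contained (it re-proves LYM in passing rather than citing it), while the paper's is a line shorter if one is happy to quote LYM.
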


\begin{proof}
	The first statement appears in \cite{greenberg}. The second
	holds as $\big |{\cal A}^{\uparrow }
	\cap \tbinom {[n]}{j} \big | \geq \aA \tbinom {n}{j}$
	for $j\geq k$ by the LYM inequality, and so
	$\mu _p({\cal A}^{\uparrow }) \geq \sum
	_{j = k}^n {\mathbb P}\big (\Bin (n,p) = j\big )
	\mu \big ({\cal A}^{\uparrow } \cap \tbinom {[n]}{j} \big )
	\geq {\mathbb P}\big ( \Bin (n,p)
	\geq k \big ) \aA \geq \aA /4$.
\end{proof}

We will also need the following well-known Chernoff bound
(see \cite[Theorem 2.8]{JLR}),
as applied to sums of Bernoulli random variables,
i.e.\ random variables which take values in $\{0, 1\}$;
if these are identically distributed
then we obtain a binomial variable.
The inequality can also be applied to a hypergeometric
random variable (see \cite[Remark 2.11]{JLR}),
i.e.\ $|S \cap T|$ with $S \in \tbinom{X}{s}$
and uniformly random $T \in \tbinom{X}{t}$
for some $X$, $s$ and $t$.

\begin{lem} \label{conc}
Let $X$ be a sum of independent Bernoulli random variables and $0<a<3/2$.
Then $\mb{P}\big[|X - \mb{E} X| \geq a\mb{E} X\big]
\le 2e^{-\frac{a^2}{3}\mb{E} X}$.
\end{lem}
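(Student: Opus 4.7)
The plan is to apply the standard Chernoff moment-generating-function method. Write $X = \sum_{i=1}^n X_i$ with each $X_i$ Bernoulli of mean $p_i$ and set $\mu = \mb{E}[X] = \sum_i p_i$. For any $t > 0$, Markov's inequality applied to $e^{tX}$ gives
\[
\mb{P}\bigl[X \geq (1+a)\mu\bigr] \leq e^{-t(1+a)\mu}\,\mb{E}[e^{tX}] = e^{-t(1+a)\mu}\prod_{i=1}^n \mb{E}[e^{tX_i}],
\]
and the elementary estimate $\mb{E}[e^{tX_i}] = 1 + p_i(e^t - 1) \leq \exp\bigl(p_i(e^t-1)\bigr)$ together with independence yields $\mb{E}[e^{tX}] \leq \exp\bigl(\mu(e^t - 1)\bigr)$. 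Optimizing by taking $t = \log(1+a)$ produces the classical upper-tail bound
\[
\mb{P}\bigl[X \geq (1+a)\mu\bigr] \leq \exp\bigl(-\mu\bigl((1+a)\log(1+a) - a\bigr)\bigr).
\]

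A symmetric argument applied to $-X$, equivalently optimizing over $t < 0$ at $t = -\log(1/(1-a))$, would give the lower-tail bound
\[
\mb{P}\bigl[X \leq (1-a)\mu\bigr] \leq \exp\bigl(-\mu\bigl((1-a)\log(1-a) + a\bigr)\bigr)
\]
for $0 < a < 1$; when $a \geq 1$ the event $X \leq (1-a)\mu$ is either vacuous or reduces to $X = 0$, which has probability $\prod_i(1-p_i) \leq e^{-\mu} \leq e^{-a^2\mu/3}$ (using $a < 3/2 < \sqrt{3}$). A union bound over the two tails introduces the factor of $2$.

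It remains to verify the elementary inequalities $(1+a)\log(1+a) - a \geq a^2/3$ on $(0, 3/2)$ and $(1-a)\log(1-a) + a \geq a^2/3$ on $(0,1)$. Both expressions Taylor expand near $a = 0$ as $a^2/2 - a^3/6 + O(a^4)$, which already exceeds $a^2/3$ for small $a$, while a monotonicity check together with a direct evaluation at the endpoint $a = 3/2$ (respectively as $a \to 1$) closes the gap on the full intervals. This is the only mildly delicate step; since the lemma is a textbook Chernoff bound, I anticipate no conceptual obstacles, only routine calculus.
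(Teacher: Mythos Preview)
Your proof is correct and is exactly the standard argument. The paper itself does not prove this lemma; it simply states it as the well-known Chernoff bound and refers to \cite[Theorem 2.8]{JLR} for a proof, so there is nothing further to compare.
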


Next we recall our sharp threshold result
for global functions that we proved in Part \ref{part:sharp}
which will play a crucial role in this section,
and so for all subsequent applications
of Lemma \ref{lem:matchuncap}.

\qrsharpthreshold*

We will apply the following two consequences of this result.

\begin{thm} \label{thm:sharpset}
Suppose $\f \subset \{0,1\}^n$ is monotone with $\mu_p(\f)=\mu$.
\begin{enumerate}
\item If $\mu \ll r^{-1} \ll \eps$ then
there is $R\subset [n]$ with $|R| \leq r$
and $\mu_{2p}(\f^R_R) \ge \mu/\eps$.
\item If $p \ll K^{-1} \ll \eta \ll 1$ then
there is $R \subset [n]$ with $|R| \le K\log \mu^{-1}$
and $\mu_{Kp}(\f^R_R) \ge \mu^\eta$.
\end{enumerate}
\end{thm}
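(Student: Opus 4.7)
The plan is to derive both parts from a single application scheme of Theorem \ref{theorem: quasirandom sharp threshold theorem} (qrsharpthreshold). First I would specialize qrsharpthreshold to $\aA = 1$, which fixes an absolute constant $C_0$ with the property that for any $\eps' \in (0,1/2)$, writing $r'' = C_0\log(\eps')^{-1}$ and $\dD'' = 10^{-3r''-1}(\eps')^3$, every monotone $(r'',\dD'')$-global $f$ with $\mu_p(f) \le \dD''$ satisfies $\mu_{2p}(f) \ge \mu_p(f)/\eps'$. The basic maneuver throughout is a dichotomy: either the relevant function is $(r'',\dD'')$-global (so qrsharpthreshold applies and doubles $p$ at the cost of a factor $1/\eps'$ in measure), or some set $R$ of size at most $r''$ already witnesses $\mu_p(f_R^R) > \dD''$, in which case monotonicity makes the conclusion automatic.

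For Part 1 I would apply this dichotomy just once, with $\eps' = \eps$. The hypothesis $\mu \ll r^{-1} \ll \eps$ is tailored so that $r'' = C_0\log\eps^{-1} \le r$, $\mu \le \dD''$, and $\dD'' \ge \mu/\eps$ (since $\dD''$ is a fixed polynomial in $\eps$ and $\mu$ is superpolynomially smaller than $\eps$). Then either $\f$ is $(r'',\dD'')$-global and qrsharpthreshold gives $\mu_{2p}(\f) \ge \mu/\eps$ with $R = \es$; or there is $R$ of size at most $r'' \le r$ with $\mu_p(\f_R^R) > \dD'' \ge \mu/\eps$, and by monotonicity of $\f_R^R$ we conclude $\mu_{2p}(\f_R^R) \ge \mu_p(\f_R^R) > \mu/\eps$.

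For Part 2 a single application cannot produce the required ratio $\mu^{\eta-1}$, so I would iterate. Set $t = \lfloor \log_2 K\rfloor$ and $\eps' = \mu^{(1-\eta)/t}$, chosen so that the total available measure growth $(1/\eps')^t = \mu^{-(1-\eta)}$ brings $\mu$ up to $\mu^\eta$ in exactly $t$ applications. Starting from $R_0 = \es$ and $p_0 = p$, at step $i$ I would halt immediately if $\mu_{p_i}(\f_{R_i}^{R_i}) \ge \mu^\eta$; otherwise, if $\f_{R_i}^{R_i}$ is $(r'',\dD'')$-global at $\mu_{p_i}$, invoke qrsharpthreshold to set $R_{i+1} = R_i$ and $p_{i+1} = 2p_i$; else extend $R$ by the witnessing set of size at most $r''$ and halt.

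The main obstacle is the parameter calibration: the non-global branch only helps if $\dD'' \ge \mu^\eta$. A direct calculation with $\eps' = \mu^{(1-\eta)/t}$ gives $\dD'' \approx \mu^{C_1(1-\eta)/t}$ for some absolute constant $C_1 = 3C_0\log 10 + 3$, so the requirement becomes $t \ge C_1/\eta$, equivalently $K \ge 2^{C_1/\eta}$, which is precisely what $K^{-1} \ll \eta$ provides. Granted this, the non-global branch yields $\mu_{p_i}(\f_{R_{i+1}}^{R_{i+1}}) > \dD'' \ge \mu^\eta$ with $p_i \le Kp$, and monotonicity finishes; if only the global branch ever occurs then after $t$ steps we reach $p_t \le 2^t p \le Kp$ with measure at least $\mu \cdot (1/\eps')^t = \mu^\eta$. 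The final size bound $|R| \le r'' = C_0(1-\eta)\log\mu^{-1}/t$ is much smaller than the allowed $K\log\mu^{-1}$, so no further accounting is needed. The condition $\mu_{p_i}(\f_{R_i}^{R_i}) \le \dD''$ required to apply qrsharpthreshold in the global branch is automatic, since otherwise we would already have halted with $\mu^\eta$.
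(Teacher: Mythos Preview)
Your proposal is correct and follows essentially the same approach as the paper: both parts use the global/non-global dichotomy coming from Theorem~\ref{theorem: quasirandom sharp threshold theorem} with $\aA=1$, applied once for Part~1 and iterated for Part~2. The only real difference is in the Part~2 calibration: the paper fixes $\eps' = \mu^{\eta^2}$ and iterates at most $\eta^{-2}$ times (so the requirement becomes $K \ge 2^{\eta^{-2}}$), whereas you fix the number of iterations as $t = \lfloor \log_2 K \rfloor$ and back out $\eps' = \mu^{(1-\eta)/t}$ (giving the requirement $K \ge 2^{C_1/\eta}$). Both are covered by $K^{-1} \ll \eta$, and the resulting bounds on $|R|$ and on the final bias are comfortably inside what the theorem asks for.
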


\begin{proof}
Let $f$ be the monotone Boolean characteristic function of $\f$.

For (1) we apply
Theorem	\ref{theorem: quasirandom sharp threshold theorem}
with $\aA=1$ and the same $\eps$,
If $f$ is not $(r,\dD)$-global then for some $R$ with $|R| \ge r$
we have $\mu_{2p}(\f^R_R) \ge \mu_p(\f^R_R) = \mu_p(f_{R\to 1})
\ge \dD \ge \mu/\eps$. On the other hand,
if $f$ is $(r,\dD )$-global then we can take $R=\es$, as
Theorem	\ref{theorem: quasirandom sharp threshold theorem}
gives $\mu_{2p}(\f) \ge \mu/\eps$.

For (2), we repeatedly apply
Theorem	\ref{theorem: quasirandom sharp threshold theorem}
with $\aA=1$ and $\eps = \mu^{\eta^2}$,
so $r=C\log \eps^{-1} = C\eta^2\log \mu^{-1}$
and $\dD=10^{-3r-1}\eps^3 \ge \mu^\eta$,
as we may assume $\eta \ll C^{-1}$.
We can assume that $f$ is $(r,\dD)$-global,
otherwise we immediately obtain $R$ as required,
so $\mu_{2p}(\f) \ge \mu/\eps = \mu^{1-\eta^2}$.
Repeating the argument, if we do not find $R$
then after $t \le \eta^{-2}$ iterations
we reach $\mu_{2^t p}(\f) \ge \dD \ge \mu^\eta$,
so we can take $R=\es$.
\end{proof}

\subsection{Extremal results}

In this subsection we adapt the method
of \cite[Lemma 3.1]{huang2012size}
to prove a variant form of the following result
of Huang, Loh and Sudakov \cite{huang2012size}.

\begin{lem} \label{lem:hls}
Let $k_1,\ldots, k_s,n \in {\mathbb N}$
with $\sum _{i\in [s]} k_i \leq n$.
Suppose $\f _i \subset \tbinom {[n]}{k_i}$ for $i\in [s]$
do not cross contain a matching.
Then $\mu (\f _i) \leq k_i(s-1)/n$ for some $i\in [s]$.
\end{lem}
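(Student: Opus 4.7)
The plan is to argue by induction on $s$. The base case $s=1$ is immediate since $\mu(\f_1)>0$ forces $\f_1\ne\es$. For the inductive step, I would argue by contradiction: assume $\mu(\f_i) > k_i(s-1)/n$, equivalently $|\f_i| > (s-1)\binom{n-1}{k_i-1}$, for every $i\in[s]$, yet the families do not cross contain an $s$-matching.

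First I would invoke the inductive hypothesis inside each edge of $\f_s$. Fix $F\in\f_s$ and consider the restricted families $\f_i\cap\binom{[n]\sm F}{k_i}$ for $i\in[s-1]$. Since $\sum_{i<s}k_i\le n-k_s$, the inductive hypothesis applies on the ground set $[n]\sm F$ of size $n-k_s$; moreover these restricted families cannot cross contain an $(s-1)$-matching, as such a matching together with $F$ would form a cross $s$-matching. Hence for each $F$ there exists $i(F)\in[s-1]$ with $|\f_{i(F)}\cap\binom{[n]\sm F}{k_{i(F)}}|\le(s-2)\binom{n-k_s-1}{k_{i(F)}-1}$. Pigeonhole then produces an index $i^*\in[s-1]$ and a subfamily $\g\sub\f_s$ with $|\g|\ge|\f_s|/(s-1)>\binom{n-1}{k_s-1}$ such that $i(F)=i^*$ for all $F\in\g$. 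Writing $k:=k_{i^*}$, the number of $B\in\f_{i^*}$ meeting any fixed $F\in\g$ is at least $|\f_{i^*}|-(s-2)\binom{n-k_s-1}{k-1}$.

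Next I would double count pairs $(F,B)\in\g\times\f_{i^*}$ with $F\cap B\ne\es$: the lower bound $|\g|\bigl(|\f_{i^*}|-(s-2)\binom{n-k_s-1}{k-1}\bigr)$ from the previous step should be matched against the upper bound $|\f_{i^*}|\bigl(\binom{n}{k_s}-\binom{n-k}{k_s}\bigr)$ obtained by summing over $B$, together with $|\f_{i^*}|>(s-1)\binom{n-1}{k-1}$ and $|\g|>\binom{n-1}{k_s-1}$, aiming for a contradiction. The hard part, I expect, is that this plain double count does not close across the full range $\sum k_i\le n$: already the union bound on a uniformly random ordered partition gives $\Pr[\text{all }B_i\in\f_i]\ge\sum_i\mu(\f_i)-(s-1)$, and $\sum_i\mu(\f_i)>(s-1)\sum_i k_i/n\le s-1$ is strictly positive only in the extremal regime $\sum_i k_i=n$. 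To bridge this gap I would first apply simultaneous shifts $S_{ij}$ to each $\f_i$, verifying that shifting preserves both the density hypothesis and the absence of a cross matching, so that WLOG each $\f_i$ is compressed. Then shiftedness combined with $|\f_i|>|\s_{n,k_i,[s-1]}|$ forces $\f_i$ to contain some $k_i$-set disjoint from $[s-1]$, and hence (by further shifts) the canonical set $\{s,\ldots,s+k_i-1\}$; from this I would try to build a cross matching whose $i$-th edge has the form $\{i\}\cup B_i$ with pairwise disjoint $B_i\sub[s,n]$, which fits in the available room because $\sum_i(k_i-1)=\sum_i k_i-s\le n-s$.
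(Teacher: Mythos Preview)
The paper does not prove this lemma; it is quoted from Huang--Loh--Sudakov. The nearest thing in the paper is the proof of the variant Lemma~\ref{lem:matchlarge} via Lemma~\ref{lem: forcing sequences}: compress each $\f_i$ with $C_{1,n},\dots,C_{n-1,n}$, split according to membership of $n$, and induct on $n$ (not on $s$) down to the tight case $n=\sum_i k_i$. Your induction on $s$ and double count is a different route, and as you yourself note, the double count does not close.

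Your shifting patch has a genuine gap at the last step. After shifting you correctly obtain $\{s,\dots,s+k_i-1\}\in\f_i$, hence $\tbinom{[s+k_i-1]}{k_i}\subset\f_i$; but this alone is not enough to build a cross matching. A set $\{i\}\cup B_i$ with $i\le s$ and $B_i\subset[s,n]$ is dominated by $\{s,\dots,s+k_i-1\}$ only when the $j$-th smallest element of $B_i$ is at most $s+j$. Once you also demand $s\notin B_i$ (needed so that $A_i$ avoids $A_s\ni s$), every nonempty $B_i$ is forced to start at exactly $s+1$, so no two of them can be disjoint whenever at least two of the $k_i$ exceed $1$. Already $s=2$, $k_1=k_2=2$, $n=4$ defeats your construction: it asks for disjoint singletons $B_1,B_2\subset\{3\}$, whereas the lemma itself holds here (each $|\f_i|\ge4>\tbinom{4}{2}/2$, so $\f_1$ must meet $\{[4]\sm A:A\in\f_2\}$). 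You are discarding too much information by passing to a single shifted witness; the missing ingredient is precisely the induction on $n$ after compression, as in Lemma~\ref{lem: forcing sequences} and the original HLS argument.
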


We will prove the following variant that allows
a few families to be significantly smaller.

\begin{lem}\label{lem:matchlarge}
Let $1 \le m \leq s$, $k_1,\ldots, k_s \geq 0$
and $n \geq \sum _{i\in [s]} k_i$. Suppose
${\cal F}_i \subset \tbinom {[n]}{k_i}$ with
$\mu ( {\cal F}_i ) > 2k_im/n$ for
$i\in [m]$ and
$\mu ( {\cal F}_i ) > 2k_is/n$ for
$i\in [m+1,s]$.
Then $\{ {\cal F}_i \}_{i\in [s]}$
cross contain a matching.
\end{lem}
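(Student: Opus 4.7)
The approach is induction on $s - m$, the number of ``large'' families, adapting the strategy underlying Lemma \ref{lem:hls}. When $s - m = 0$, every $\mu(\mc{F}_i) > 2 k_i s/n \ge k_i(s-1)/n$, so Lemma \ref{lem:hls} directly yields a cross matching. For the inductive step $s > m$, I would aim to find an edge $A_s \in \mc{F}_s$ such that the restricted families $\mc{G}_i := (\mc{F}_i)^\es_{A_s} \sub \binom{[n] \sm A_s}{k_i}$ still satisfy the lemma's hypothesis with parameters $(n', s', m') = (n - k_s,\, s - 1,\, m)$. Then the inductive hypothesis produces a cross matching in $\mc{G}_1, \ldots, \mc{G}_{s-1}$, which together with $A_s$ is the desired cross matching in $(\mc{F}_i)_{i \in [s]}$.

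The required conditions on $A_s$ are $\mu_{n'}(\mc{G}_i) > 2 k_i m/n'$ for $i \in [m]$ and $\mu_{n'}(\mc{G}_i) > 2 k_i(s-1)/n'$ for $i \in [m+1, s-1]$. The central averaging identity
\[
\E_{A \sim \binom{[n]}{k_s}} \mu_{n'}\big((\mc{F}_i)^\es_A\big) = \mu(\mc{F}_i),
\]
obtained by double-counting pairs $(A, B)$ with $A \in \binom{[n]}{k_s}$, $B \in \mc{F}_i$, and $A \cap B = \es$, shows that the restricted densities on average equal the originals. The slack between the hypothesis $\mu(\mc{F}_i) > 2 k_i \kappa_i / n$ (with $\kappa_i = m$ or $s - 1$) and the threshold $2 k_i \kappa_i / n'$ required for the induction then allows a Markov-type bound on the proportion of $A \in \binom{[n]}{k_s}$ that are ``bad'' for a particular $i$. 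A union bound over $i \in [s-1]$ together with the density $\mu(\mc{F}_s) > 2 k_s s/n$ should then produce a valid $A_s \in \mc{F}_s$, completing the inductive step.

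The main obstacle is the possible positive correlation between the event $\{A \in \mc{F}_s\}$ and the bad events, which could occur if $\mc{F}_s$ is concentrated near high-degree vertices of some $\mc{F}_i$. To handle this I would use the pair-counting bound
\[
\sum_{A \in \mc{F}_s} \big|\mc{F}_i \sm (\mc{F}_i)^\es_A\big| \le \sum_{v \in [n]} d_{\mc{F}_s}(v) \, d_{\mc{F}_i}(v),
\]
which expresses the correlation in terms of joint vertex degrees, combined with either a Cauchy--Schwarz estimate or a case split on whether $\mc{F}_s$ has a vertex of unusually high degree (in the latter case passing to that vertex's link and iterating, which preserves the hypothesis with adjusted parameters). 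The constraint $\sum_i k_i \le n$ from the hypothesis is what ensures these estimates leave enough room to push the induction through for all regimes of $(s, m)$, including the difficult range $m \le (s-1)/2$ where a direct application of Lemma \ref{lem:hls} is insufficient.
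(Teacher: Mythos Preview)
Your inductive step has a concrete gap that the vague ``Cauchy--Schwarz or case split'' suggestion does not close. For $i\in[m]$ the threshold you must verify after removing $A_s$ is $2k_im/n'$ with $n'=n-k_s<n$, which is \emph{strictly larger} than $2k_im/n$; since the hypothesis only guarantees $\mu(\mc{F}_i)>2k_im/n$ (possibly by an arbitrarily small amount), the identity $\E_A\,\mu_{n'}((\mc{F}_i)^\es_A)=\mu(\mc{F}_i)$ places the average \emph{below} the target threshold, and no Markov-type inequality can then bound the fraction of bad $A$. The ``slack'' you invoke points the wrong way for all $m$ of these families. (For $i\in[m+1,s-1]$ there is genuine slack, since $k_s<n/2s$ gives $s/n>(s-1)/n'$, but this does nothing for the small families.) Your later correlation discussion acknowledges the difficulty but does not resolve it; in particular it is unclear how a high-degree case split on $\mc{F}_s$ would repair the fact that the very threshold has increased.

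The paper takes a completely different route that avoids this problem. It proves a general compression lemma (Lemma~\ref{lem: forcing sequences}): a density vector forces a cross matching for all $n\ge\sum_i k_i$ if and only if it does so for $n=\sum_i k_i$. This is established by induction on $n$, compressing each family towards $[n-1]$ and showing that for an appropriate choice of which families to restrict by $n\in A$ versus $n\notin A$, the restricted densities still exceed the required thresholds on $[n-1]$. With this reduction in hand, Lemma~\ref{lem:matchlarge} becomes a one-line calculation in the extremal case $n=\sum_i k_i$: the hypotheses force $k_i<n/2m$ for $i\in[m]$ and $k_i<n/2s$ for $i\in[m+1,s]$, whence
\[
n=\sum_{i\in[s]}k_i < m\cdot\frac{n}{2m}+(s-m)\cdot\frac{n}{2s}\le n,
\]
a contradiction, so the statement holds vacuously. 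All the work is in the compression lemma, and the induction variable there is $n$, not $s-m$.
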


We also require the following version for the $p$-biased measure,
which we will deduce from Lemma \ref{lem:matchlarge}
by a limit argument similar to those in
\cite{dinur2005hardness, frankl2003weighted}.

\begin{lem} \label{lem:matchlargelimit}
Let $m \leq s$ and $p_1,\ldots ,p_s> 0$ with $\sum _{i\in [s]} p_i
\leq 1/2$. Suppose that ${\cal F}_1,\ldots {\cal F}_s \subset
\{0,1\}^n$ are monotone families with
$\mu _{p_i}\big ( {\cal F}_i \big ) \geq 3mp_i$ for $i\in [m]$
and $\mu _{p_i}\big ( {\cal F}_i \big ) \geq 3sp_i$
for $i\in [m+1,s]$. Then $\{{\cal F}_i\}_{i\in [s]}$
cross contain a matching.
\end{lem}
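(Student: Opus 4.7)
We reduce Lemma \ref{lem:matchlargelimit} to Lemma \ref{lem:matchlarge} by a blow-up argument. Let $N$ be a large integer to be chosen later. Partition $[nN]$ into $n$ blocks $B_1, \ldots, B_n$ of size $N$, and let $\pi : 2^{[nN]} \to 2^{[n]}$ be the projection $\pi(A) = \{j \in [n] : A \cap B_j \ne \es\}$. Set $q_i := 1 - (1-p_i)^{1/N}$, chosen so that when $\mathbf{A} \sim \mu_{q_i}$ on $\{0,1\}^{nN}$ the events $\{B_j \cap \mathbf{A} \ne \es\}$ are independent across $j$ with probability $1 - (1-q_i)^N = p_i$ each, whence $\pi(\mathbf{A})$ is distributed as $\mu_{p_i}$ on $\{0,1\}^n$. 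Define the monotone blow-up family $\widetilde{\f}_i := \{A \sub [nN] : \pi(A) \in \f_i\}$; then $\mu_{q_i}(\widetilde{\f}_i) = \mu_{p_i}(\f_i) \ge 3mp_i$ (resp.\ $3sp_i$ for $i > m$).

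Take $k_i = \lceil q_i nN \rceil$. As $N \to \infty$ we have $k_i \to -n\log(1-p_i)$, hence $\sum_i k_i \le -n\log \prod_i(1-p_i) \le -n\log(1/2) = n\log 2$, which is safely below $nN$ once $N$ is large. By the LYM property of the monotone family $\widetilde{\f}_i$ (namely that $k \mapsto \mu_k(\widetilde{\f}_i)$ is non-decreasing) combined with concentration of $\mathrm{Bin}(nN, q_i)$ near its mean $nN q_i \approx k_i$, one verifies that $\mu(\widetilde{\f}_i \cap \tbinom{[nN]}{k_i}) \ge \mu_{p_i}(\f_i) - o_N(1)$. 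Setting $\widehat{\f}_i := \widetilde{\f}_i \cap \tbinom{[nN]}{k_i}$, we thus have $\mu(\widehat{\f}_i) \ge 3mp_i - o_N(1)$, while the density thresholds $2k_i \max(m,s)/(nN)$ appearing in Lemma \ref{lem:matchlarge} are $o_N(1)$. Applying Lemma \ref{lem:matchlarge} for $N$ sufficiently large yields a cross matching $A_1, \ldots, A_s \sub [nN]$ with $A_i \in \widehat{\f}_i$ pairwise disjoint in $[nN]$, and so each $F_i := \pi(A_i) \in \f_i$.

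The remaining step is to convert this blow-up cross matching into a genuine cross matching of $\{\f_i\}_{i \in [s]}$, i.e.\ to arrange that the $F_i \in \f_i$ are pairwise disjoint in $[n]$. This does not follow automatically, since two disjoint sets $A_i, A_{i'} \sub [nN]$ may both meet a common block $B_j$ (in different elements), giving $j \in \pi(A_i) \cap \pi(A_{i'})$; this is the main obstacle of the argument. The plan for resolving it is to exploit the factor-$3$ slack in the density hypotheses of Lemma \ref{lem:matchlargelimit} (compared to the factor-$2$ in Lemma \ref{lem:matchlarge}) via a randomized / averaging step over the cross matching produced by the blow-up: together with monotonicity of each $\f_i$, which lets us enlarge the uniquely-hit portion $\pi(A_i) \setminus \bigcup_{i' \ne i} \pi(A_{i'})$ by reallocating overlap blocks, we extract pairwise disjoint representatives $F_i' \in \f_i$ as required.
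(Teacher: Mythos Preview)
Your blow-up construction runs into a genuine obstacle at the projection step, and your proposed fix does not work. If disjoint $A_i,A_{i'}\subset[nN]$ both meet a block $B_j$ and you ``reallocate'' $j$ to (say) $i$, the resulting set $F_{i'}'$ is a proper \emph{subset} of $\pi(A_{i'})$. But monotone families are closed \emph{upward}, not downward, so $F_{i'}'\in\f_{i'}$ does not follow from $\pi(A_{i'})\in\f_{i'}$. No averaging step repairs this: the families $\widetilde{\f}_i$ are invariant under permutations within blocks, so the only information carried by $A_i$ is $\pi(A_i)$ itself, and shrinking that loses membership. (A secondary issue: your mean $nNq_i\to -n\log(1-p_i)$ is bounded in $N$, so $\mathrm{Bin}(nN,q_i)$ converges to a Poisson and does \emph{not} concentrate at $k_i=\lceil nNq_i\rceil$; with your choice of $k_i$ the tail $\Pr[\mathrm{Bin}>k_i]$ is roughly $1/2$, not $o_N(1)$. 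This is fixable by taking $k_i$ larger but still bounded.)

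The paper sidesteps the projection difficulty entirely by a different limiting construction: rather than blowing up each coordinate into a block, it \emph{appends dummy coordinates}. Set $\mathcal{G}_i=\mathcal{F}_i\times\{0,1\}^{[N]\setminus[n]}\subset\{0,1\}^N$, so $A\in\mathcal{G}_i$ iff $A\cap[n]\in\mathcal{F}_i$. Then $\mu_{p_i}(\mathcal{G}_i)=\mu_{p_i}(\mathcal{F}_i)$, and now $\mathrm{Bin}(N,p_i)$ has mean $Np_i\to\infty$, so it genuinely concentrates near $k_i\approx Np_i$; one finds layers $k_i$ with $\mu\bigl(\mathcal{G}_i\cap\tbinom{[N]}{k_i}\bigr)>\mu_{p_i}(\mathcal{F}_i)-\eps$, which exceeds $2k_im/N\approx 2mp_i$ (resp.\ $2sp_i$) by the factor-$3$ hypothesis. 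Lemma~\ref{lem:matchlarge} then yields pairwise disjoint $A_1,\ldots,A_s\subset[N]$, and the projections $F_i:=A_i\cap[n]\in\mathcal{F}_i$ are automatically pairwise disjoint. The key difference is that intersection with $[n]$ preserves disjointness, whereas your block-hitting map $\pi$ does not.
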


We introduce the following terminology.
Given ${\bf a} = (a_1,\ldots, a_s) \in {\mathbb R}^s$
and $n,k_1,\ldots, k_s \geq 0$ we say ${\bf a}$ is
\emph{forcing} for $(n,k_1,\ldots, k_s)$
if any families ${\cal F}_1,\ldots , {\cal F}_s$
with  ${\cal F}_i \subset \tbinom {[n]}{k_i}$ and
$\mu ({\cal F}_i) > \frac {a_ik_i}{n}$
for all $i\in [s]$ cross contain an $s$-matching.
We say ${\bf a} = (a_1,\ldots, a_s) \in {\mathbb R}^s$
is \emph{forcing} if it is forcing
for $(n,k_1,\ldots, k_s)$ whenever $n\geq \sum _{i\in [s]}k_i$
and \emph{exactly forcing} if it is forcing
for $(n,k_1,\ldots, k_s)$ whenever $n = \sum _{i\in [s]}k_i$.
Any forcing sequence is clearly exactly forcing;
we establish the converse.

\begin{lem} \label{lem: forcing sequences}
A sequence ${\bf a} \in {\mathbb R}^s$
is forcing if and only if it is exactly forcing.
\end{lem}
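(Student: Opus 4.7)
The forward direction is tautological: if ${\bf a}$ is forcing, then in particular it is forcing in the case $n=\sum_i k_i$, which is exactly the exactly-forcing condition.

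For the converse, assume ${\bf a}=(a_1,\dots,a_s)$ is exactly forcing. Given $n\geq N:=\sum_i k_i$ and families $\f_i\subset\binom{[n]}{k_i}$ with $\mu(\f_i)>a_ik_i/n$, my goal is to produce a cross $s$-matching. The plan is to reduce to the boundary case $n=N$, where the exactly-forcing hypothesis applies directly. I would attempt this either by downward induction on $n$---finding, at each step, a vertex $v\in [n]$ for which the restricted families $(\f_i)^\es_{\{v\}}\subset\binom{[n]\sm\{v\}}{k_i}$ still satisfy $\mu_{n-1}((\f_i)^\es_{\{v\}})>a_ik_i/(n-1)$ for every $i$---or, more directly, by passing in one step to a random $N$-subset $T\subset[n]$ on which the induced densities $\mu_T(\f_i\cap\binom{T}{k_i})$ simultaneously exceed the thresholds $a_ik_i/N$, so that exactly forcing applied to $T$ yields the desired cross matching. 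In either version, a cross matching at the smaller universe lifts immediately to a cross matching in the original families.

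The main obstacle in either version is that the thresholds \emph{increase} under restriction ($a_ik_i/(n-1)>a_ik_i/n$, or $a_ik_i/N>a_ik_i/n$) while averaging only preserves densities in expectation: $\E_v[\mu_{n-1}((\f_i)^\es_{\{v\}})]=\mu(\f_i)$ and $\E_T[\mu_T(\f_i\cap\binom{T}{k_i})]=\mu(\f_i)$. Thus naive uniform averaging is insufficient, and one must exploit the strict margins $\eps_i:=\mu(\f_i)-a_ik_i/n>0$ together with concentration properties of the hypergeometric distribution to select $v$ (or $T$) on which all $s$ strict inequalities are simultaneously upgraded to the larger thresholds. A shifting / left-compression preprocessing step that replaces each $\f_i$ by a canonical left-shifted representative of the same size---which does not create any new cross matching---would likely be used to reduce this selection problem to a structural analysis of the cross-sections of the $\f_i$ along $T$. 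Quantifying the trade-off between the strict margin $\eps_i$ and the threshold gap $a_ik_i/N-a_ik_i/n=a_ik_i(n-N)/(nN)$ uniformly over all $i\in[s]$ is the technical crux of the argument.
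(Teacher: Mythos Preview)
Your forward direction is fine. For the converse, your plan is in the right neighborhood---compression plus downward induction on $n$---but the specific reduction you describe has a genuine gap that is not a mere technicality.

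The pure-restriction step you propose cannot be made to work. You want a vertex $v$ with $\mu\big((\f_i)^{\es}_{\{v\}}\big)>a_ik_i/(n-1)$ simultaneously for all $i$, and you correctly note that averaging only gives $\E_v[\mu]=\mu(\f_i)>a_ik_i/n$, which is strictly below the target. Compression does not rescue this: even for a fully left-compressed family $\g_i$, deleting the minimum-degree vertex $n$ yields $\mu(\g_i(\bar n))\ge\mu(\g_i)$, but not $\mu(\g_i(\bar n))>a_ik_i/(n-1)$. The margin $\eps_i$ can be arbitrarily small compared to the threshold gap $a_ik_i(n-N)/(nN)$, and no concentration argument helps when the expectation itself lies below the target. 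The random-$N$-subset version fails for the same reason.

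The missing idea is to allow, for each $i$ \emph{independently}, a choice between the restriction $\g_i(\bar n)\subset\tbinom{[n-1]}{k_i}$ and the \emph{link} $\g_i(n)\subset\tbinom{[n-1]}{k_i-1}$. After compressing each family with $C_{1,n},\dots,C_{n-1,n}$, one shows that for \emph{any} subset $I\subset[s]$ the mixed collection $\{\g_i(n):i\in I\}\cup\{\g_i(\bar n):i\notin I\}$ is still cross free of a matching: a hypothetical cross matching would use vertex $n$ in each $i\in I$, but since $n>\sum_j k_j$ and the $\g_i$ are $C_{j,n}$-compressed, those copies of $n$ can be replaced by distinct unused vertices in $[n-1]$, producing a cross matching in the original $\g_i$. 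Now the averaging identity
\[
\mu(\g_i)=\tfrac{n-k_i}{n}\,\mu(\g_i(\bar n))+\tfrac{k_i}{n}\,\mu(\g_i(n))
\]
is \emph{exactly} balanced against the pair of thresholds $a_ik_i/(n-1)$ and $a_i(k_i-1)/(n-1)$: if both options failed their respective thresholds, the right-hand side would be at most $a_ik_i/n$, contradicting the strict hypothesis. So each $i$ has at least one good option, and one applies the induction hypothesis on $n-1$ to the resulting mixed collection (with the appropriate $k_i$ or $k_i-1$). A separate induction on $s$ handles the degenerate case $k_i=0$.

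In short: compression is not merely preprocessing for a vertex-selection problem; it is what makes the link/restriction dichotomy compatible with the cross-matching-free hypothesis, and the link option is what makes the arithmetic close.
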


We require the following compression operators.
Given distinct $i,j \in [n]$ and $F\subset [n]$, we let
	\begin{align*}
		C_{i,j}(F) :=
			\begin{cases}
               (F\sm \{j\}) \cup \{i\}   & \mbox {if }
               j\in F, i \notin F;\\
               F              & \mbox{otherwise.}
             \end{cases}
	\end{align*}
Given ${\cal F} \subset \{0,1\}^n$, we let
$C_{i,j}({\cal F}) = \{C_{i,j}(F): F \in {\cal F}\} \cup
\{F \in {\cal F}: C_{i,j}(F) \in {\cal F}\}$.
We say ${\cal F}$ is $C_{i,j}$-compressed
if $C_{i,j}({\cal F}) = {\cal F}$.

\begin{proof}[Proof of Lemma \ref{lem: forcing sequences}]
A forcing sequence is clearly exactly forcing,
so it remains to prove the converse.
We argue by induction on $s$; the base case $s=1$ is clear.
Suppose that ${\bf a} \in {\mathbb R}^s$ is exactly forcing.
We fix $k_1,\ldots, k_s \geq 0$ and show by induction
on $n\geq \sum _{i\in [s]} 	k_i$ that ${\bf a}$
is forcing for $(n,k_1,\ldots, k_s)$,
i.e.\ any families ${\cal F}_1,\ldots , {\cal F}_s$
with  ${\cal F}_i \subset \tbinom {[n]}{k_i}$ and
$\mu ({\cal F}_i) > \frac {a_ik_i}{n}$
for all $i\in [s]$ cross contain an $s$-matching.
The base case $n=\sum _{i\in [s]} 	k_i$
holds as ${\bf a}$ is exactly forcing.

First suppose $k_i = 0$ for some $i\in [s]$;
without loss of generality $i = s$.
Then ${\bf a}' = (a_1,\ldots, a_{s-1})$
is exactly forcing, and so forcing by induction on $s$.
Thus ${\cal F}_1,\ldots, {\cal F}_{s-1}$
cross contain an $(s-1)$-matching.
Combined with $\es \in {\cal F}_s$
we find a cross $s$-matching
in ${\cal F}_1,\ldots, {\cal F}_s$, as required.
	
We may now assume $k_i \geq 1$ for all $i\in [s]$.
We suppose for contradiction that
${\cal F}_1,\ldots, {\cal F}_s$
do not cross contain an $s$-matching.
Let ${\cal G}_1,\ldots, {\cal G}_s$ be obtained
from ${\cal F}_1, \ldots, {\cal F}_s$
by successively applying the compression operators
$C_{1,n}, C_{2,n},\ldots, C_{n-1,n}$.
As is well-known (e.g.\ see \cite[Lemma 2.1 (iii)]{huang2012size}),
${\cal G}_1,\ldots, {\cal G}_s$
do not cross contain an $s$-matching
and are $C_{j,n}$-compressed for all $j\in [n-1]$.
For each $i\in [s]$ let
		\begin{align*}
			{\cal G}_i(n) &:= \big \{ A \subset [n-1]: A \cup \{n\} \in
			{\cal G}_i \big \} \subset \tbinom {[n-1]}{k_i-1};\\ \quad
			{\cal G}_i({\overline n}) &:= \big \{ A \subset [n-1]: A \in
			{\cal G}_i \big \} \subset \tbinom {[n-1]}{k_i}.
		\end{align*}
We now claim that if $I \subset [s]$ then
$\{ {\cal H}_i\}_{i\in [s]}$ are cross free of an $s$-matching,
where ${\cal H}_i = {\cal G}_i(n)$ for $i\in I$
and ${\cal H}_i = {\cal G}_i({\overline n })$ for $i\notin I$.
For contradiction, suppose $\{A_i\}_{i\in [s]}$
is such a cross matching in $\{ {\cal H}_i\} _{i\in [s]}$.
Then $A_i \cup \{n\} \in {\cal G}_i$ for all $i\in I$
and $A_i \in {\cal G}_i$ for $i\notin I$.
However, as ${\cal G}_i$ is $C_{j,n}$-compressed for all $j\in [n-1]$
and $n > \sum _{i\in [s]}k_i$, there are distinct $j_i \in [n] \sm
\big ( \cup _{i\in [s]}A_i \big )$ for all $i\in I$
such that $A_i \cup \{j_i\} \in {\cal G}_i$.
Then $\{A_i \cup \{j_i\}\}_{i\in I} \cup \{A_i\}_{i\in [s]\sm I}$
is a cross $s$-matching in $\{ {\cal G}_i\}_{i\in [s]}$, a contradiction.
Thus the claim holds.
	
By induction on $n$, it now suffices to show that
for each $i\in [s]$ either
$\mu ({\cal G}_i(n)) > a_i(k_i-1)/(n-1)$
or $\mu ({\cal G}_i({\overline n}) ) > a_i k_i/(n-1)$;
indeed, we then obtain the required contradiction by
setting $I = \{i\in [s]: \mu ({\cal G}_i(n)) > a_i(k_i-1)/(n-1)\}$
in the above claim. But this is clear, as otherwise
		\begin{equation*}
			\frac {a_i k_i}{n} < \mu ({\cal G}_i) =
			\Big ( \frac {n-k_i}{n} \Big )
			\mu ({\cal G}_i({\overline n}))
			+ \Big ( \frac {k_i}{n} \Big ) \mu
			({\cal G}_i (n)) \leq \Big ( \frac {n-k_i}{n} \Big ) \Big ( \frac {a_ik_i}{n-1} \Big ) + \Big ( \frac {k_i}{n} \Big ) \Big ( \frac {a_i(k_i-1)}{n-1} \Big ) = \frac {a_ik_i}{n},
		\end{equation*}
	a contradiction. This completes the proof.
\end{proof}

We conclude this subsection by deducing
Lemmas \ref{lem:matchlarge} and \ref{lem:matchlargelimit}.

\begin{proof}[Proof of Lemma \ref{lem:matchlarge}]
By Lemma \ref{lem: forcing sequences}
it suffices to prove the statement
under the assumption $n = \sum _{i\in [s]} k_i$.
Note first that if $n=0$ then ${\cal F}_i=\{\es\}$
for all $i \in [s]$ which clearly
cross contain an $s$-matching.
Thus we may assume $n>0$.
For any $i \in [m]$ we have
$2k_im/n < \mu ( {\cal F}_i ) \leq 1$,
so $k_i < n/2m$, and similarly
$k_i < n/2s$ for $i\in [m+1,s]$. But now
$n = \sum _{i\in [s]} k_i
< m \cdot n/2m + (s-m) \cdot n/2s < n$
is a contradiction.
 \end{proof}

\begin{proof}[Proof of Lemma \ref{lem:matchlargelimit}]
Let $N^{-1} \ll \eps \ll \min _{i\in [s]} p_i$ and
${\cal G}_i = {\cal F}_i \times \{0,1\}^{[N]\sm [n]}
\sub \{0,1\}^N$ for each $i \in [s]$.
Then each $\mu _{p_i}({\cal G}_i) = \mu _{p_i}({\cal F}_i)$.
Writing $I_i = \big [(1-\eps )Np_i, (1+\eps )Np_i \big ]$,
by Lemma \ref{conc} each
$\mu_{p_i} \big( \cup_{k \notin I_i} \tbinom{[N]}{k} \big) < \eps$,
so there are $k_i \in I_i$ such that each
$\mu \big( {\cal G}_i \cap \tbinom{[N]}{k_i} \big)
> \mu _{p_i}({\cal F}_i) - \eps $,
which is at least $2mk_i /N$ for $i\in [m]$
and $2sk_i /N$ for $i\in [m+1,s]$.	
The result now follows from Lemma \ref{lem:matchlarge}.
\end{proof}

\subsection{Capturability}

In this subsection we conclude this section by proving its main lemma
on cross matchings in uncapturable families. The idea of the proof
is to take suitable restrictions that boost the measure of the families
so that we can apply the extremal result from the previous subsection.
However, uncapturability is not preserved by restrictions,
so we first upgrade to globalness, which is preserved by restrictions.
We also pass from the setting of uniform families to that of biased measures,
which allows us to apply our sharp threshold result, and also has the
technical advantage that we do not need to assume any lower bound
on the uniformity of our families.

\begin{proof}[Proof of Lemma \ref{lem:matchuncap}]
Let $C \gg C_1 \gg C_2 \gg 1$ and $\f_i \sub \tbinom{[n]}{k_i}$
with $k_i \le n/Cs$ for $i \in [s]$.
Suppose $\f_i$ is $(C_1 m, mk_i/C_2 n)$-uncapturable for $i \in [m]$
and $\mu(\f_i) > C_1 sk_i/n$ for $i>m$. We need to show that
$\f_1,\dots,\f_s$ cross contain a matching.

We start by upgrading uncapturability
to globalness and moving to biased measures.
By Lemma \ref{lem:upgrade1'} with $r=C_1$ and $\bB = C_1^{-2}$
there are pairwise disjoint $S_1,\dots,S_m$
with each $|S_i| \le r$ such that,
setting $\g_i = (\f_i^\ua)^{S_i}_S$ where $S = \bigcup_i S_i$
and $p_i = k_i/(n-|S|)$,
whenever $\mu_{p_i}(\g_i) < C_1^{-2}$ we have
$S_i = \es$ and $\g_i$ is $(C_1,2C_1^{-2})$-global
with $\mu_{p_i}(\g_i) > mk_i/4C_2 n > mp_i/5C_2$.
We note by Lemma \ref{lem:globaluncap'} that
$\g_i$ is $(a,mp_i/10C_2)$-uncapturable,
where $a = (mp_i/5C_2)/(4p_iC_1^{-2}) > C_1 m$.

Next we will choose pairwise disjoint $R_1,\dots,R_m \sub [n] \sm S$
with each $|R_i| < C_1/8$, write $R_{{<}j} = \bigcup_{i<j} R_i$,
and define families $\g_i^j$ by
$\g_i^j = (\g_i)^\es_{R_{{<}j}}$ for $i \ge j$
or $\g_i^j = (\g_i)^{R_i}_{R_{{<}j}}$ for $i < j$.

We claim that we can choose each $R_i$
to ensure $\mu_{2p_i}(\g_i^i) \ge 7mp_i$.
To see this, first note that $\g_i^{i-1} = (\g_i)^\es_{R_{{<}i}}$
has $\mu_{p_i}(\g_i^{i-1}) \ge mp_i/10C_2$ by uncapturability.
If $\mu_{p_i}(\g_i^{i-1}) \ge 7mp_i$ we let $R_i=\es$ to obtain
$\mu_{2p_i}(\g_i^i) = \mu_{2p_i}(\g_i^{i-1})
\ge \mu_{p_i}(\g_i^{i-1}) \ge 7mp_i$.
Otherwise, as $mp_i < 2C^{-1} \ll C_1^{-1} \ll C_2^{-1}$
we can apply Theorem \ref{thm:sharpset}.1
with $\eps^{-1} = 70C_2$ and $r=C_1/8$ to choose $R_i$ with $|R_i | \leq r$
so that $\g_i^i = (\g_i^{i-1})^{R_i}_{R_i}$ has
$\mu_{2p_i}(\g_i^i) > \mu_{p_i}(\g_i^{i-1})/\eps \ge 7mp_i$.
Either way the claim holds.

By Lemma \ref{lem:globalrestrict'}
each $\g_i^i$ with $i \in [m]$ is $(C_1/2,4C_1^{-2})$-global,
so  $\g_i^m = (\g_i^i)^\es_{\bigcup_{j>i} R_j}$ has $\mu_{2p_i}(\g_i^m)
\ge \mu_{2p_i}(\g_i^i) - m(C_1/8)  \cdot 4C_1^{-2} \cdot 2p_i \ge 3m(2p_i)$.
For $i>m$ we have $\mu(\f_i) > C_1 sk_i/n$,
so $\mu_{p_i}(\g_i^i) > \mu_{p_i}(\f_i)/4 - m(C_1/8) p_i > 3sp_i$.
By Lemma \ref{lem:matchlargelimit},
$\g_1^m,\dots,\g_s^m$ cross contain a matching;
hence so do $\f_1,\dots,\f_s$.
\end{proof}

\section{Shadows and embeddings} \label{sec:shadow}

In this section we will complete the proof
of our junta approximation theorem
by implementing the strategy described above
of finding embeddings in fat shadows.
We start in the first subsection
by defining and analysing fat shadows.
In the second subsection we find shadow embeddings.
We then conclude in the final subsection with lifted embeddings
(using the lifting result from the previous section)
that prove Lemmas \ref{lem:embedlarge} and \ref{lem:embeduncap},
thus proving Theorem \ref{thm:junta'}.

\subsection{Fat shadows}

In this subsection we present various lower bounds
on the density of fat shadows, defined as follows.

\begin{defn}
The $c$-fat $r$-shadow of $\f\subset \tbinom{[n]}{k}$ is
$\pl^r_c \f := \{ A \in \tbinom{[n]}{r} : \mu(\f^A_A) \ge c\}$.

The $c$-fat shadow of $\f$ is
$\pl_c \f := \bigcup_{r \le k} \pl^r_c \f$.
\end{defn}

The following simple `Markov' bound is useful
when $\f$ is nearly complete.

\begin{lem} \label{lem:markov}
If $\mu(\f) \ge 1-cc'$ then
$\mu(\pl^r_{1-c} \f) \ge 1-c'$.
\end{lem}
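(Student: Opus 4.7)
The plan is to apply Markov's inequality to the complement family. Let $\overline{\f} = \tbinom{[n]}{k} \sm \f$, so that the hypothesis becomes $\mu(\overline{\f}) \le cc'$. Observe that $A \in \pl^r_{1-c}\f$ iff $\mu(\f^A_A) \ge 1-c$ iff $\mu(\overline{\f}^A_A) \le c$, so it suffices to show
\[ \Pr_{A \in \binom{[n]}{r}}\bigl[ \mu(\overline{\f}^A_A) > c \bigr] \le c'. \]

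The key step is a double-counting computation showing that the average of $\mu(\overline{\f}^A_A)$ over $A \in \tbinom{[n]}{r}$ equals $\mu(\overline{\f})$. Indeed, summing $|\overline{\f}^A_A|$ over $A$ counts pairs $(A,F)$ with $A \in \tbinom{[n]}{r}$, $F \in \overline{\f}$, $A \sub F$, which is $|\overline{\f}|\tbinom{k}{r}$; dividing by $\tbinom{n}{r}\tbinom{n-r}{k-r}$ and using the identity $\tbinom{n}{k}\tbinom{k}{r} = \tbinom{n}{r}\tbinom{n-r}{k-r}$ gives $\mathbb{E}_A[\mu(\overline{\f}^A_A)] = \mu(\overline{\f}) \le cc'$.

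Then Markov's inequality applied to the nonnegative random variable $\mu(\overline{\f}^A_A)$ yields $\Pr_A[\mu(\overline{\f}^A_A) > c] \le cc'/c = c'$, completing the proof.

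There is no real obstacle here; this is a one-line Markov argument once one sets up the complement and checks the averaging identity, which is why the lemma is labelled a `Markov' bound in the excerpt.
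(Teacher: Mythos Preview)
Your proof is correct and is essentially the same argument as the paper's. The paper phrases it via a uniformly random pair $A\subset B$ with $|A|=r$, $|B|=k$ and the chain $cc'\ge\mb{P}(B\notin\f)\ge c\cdot\mb{P}(A\notin\pl^r_{1-c}\f)$; your passage to the complement and explicit invocation of Markov's inequality is exactly this computation in different notation (the averaging identity $\mathbb{E}_A[\mu(\overline{\f}^A_A)]=\mu(\overline{\f})$ is precisely $\mb{P}(B\notin\f)=\mb{E}_A[\mb{P}(B\notin\f\mid A)]$).
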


\begin{proof}
Consider uniformly random $A \sub B \sub [n]$
with $|A|=r$ and $|B|=k$. For any $A \notin \pl^r_{1-c} \f$
we have $\mb{P}(B \notin \f \mid A) \ge c$,
so $cc' \ge \mb{P}(B \notin \f)
\ge c \cdot \mb{P}(A \notin \pl^r_{1-c} \f)
= c(1-\mu(\pl^r_{1-c} \f))$.
\end{proof}

Another bound is given
the following Fairness Proposition
of Keller and Lifshitz \cite{keller2017junta}.

\begin{prop}[Fairness Proposition] \label{prop:Fair}
Let $C \gg r/\eps$ and $\f\subset \tbinom{[n]}{k}$
with $k \ge r$ and $\mu\left(\f\right)\ge e^{-k/C}$.
For $c=(1-\eps)\mu(\f)$ we have
$\mu(\pl^r_c \f ) \ge 1-\eps$.
\end{prop}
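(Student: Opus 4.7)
The plan is to prove the proposition by induction on $r$, with the case $r=1$ as the technical heart. For the inductive step, given $r \geq 2$, write a uniformly random $r$-set as $A = A' \cup \{i\}$ with $A'$ uniform in $\tbinom{[n]}{r-1}$ and $i$ uniform in $[n] \sm A'$. Applying the inductive hypothesis to $\f$ with parameters $(r-1, \eps/2)$, all but an $(\eps/2)$-fraction of $A'$ satisfy $\mu(\f^{A'}_{A'}) \ge (1-\eps/2)\mu(\f) \ge e^{-k/C}/2$. For each such good $A'$, the link $\f^{A'}_{A'}$ is a $(k-r+1)$-uniform family on $[n] \sm A'$; since $C \gg r/\eps$ the constants can be chosen so that this link again satisfies the hypothesis of the base case (possibly with slightly worse $\eps$), so invoking the $r=1$ case with parameter $\eps/2$ yields that for all but a further $(\eps/2)$-fraction of $i$, $\mu((\f^{A'}_{A'})^i_i) \ge (1-\eps)\mu(\f)$. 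A union bound completes the step. The boundary case $k=r$ is immediate: $\mu(\f^A_A) = \mathbf{1}[A \in \f]$, so the claim reduces to $\mu(\f) \ge 1-\eps$, which follows from $\mu(\f) \ge e^{-r/C}$ and $C \gg r/\eps$.

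For the base case $r=1$, set $B = \{i \in [n] : d_\f(i) < (1-\eps)\mu(\f)\binom{n-1}{k-1}\}$; the goal is $|B| \le \eps n$. Suppose for contradiction that $\beta := |B|/n > \eps$. Partitioning $\f$ by whether $F \cap B = \es$, bounding $|\f_B^\es| \le \binom{n-|B|}{k}$ and summing the defining inequality over $i \in B$ yield, after dividing by $\binom{n}{k}$,
\[
  \mu(\f)\bigl(1-\beta(1-\eps)k\bigr) < (1-\beta)^k \le e^{-\beta k}.
\]
If $\beta k \le 1/2$ this gives $\mu(\f) \le 2e^{-\eps k}$, contradicting $\mu(\f) \ge e^{-k/C}$ whenever $k(\eps - 1/C) \ge \log 2$. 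In the complementary regime of small $k$, one instead uses the trivial bound $d_\f(i) \le \binom{n-1}{k-1}$ for $i \notin B$ together with the degree-sum identity $\sum_i d_\f(i) = n\mu(\f)\binom{n-1}{k-1}$; rearranging gives $\beta \le (1-\mu(\f))/(\eps\mu(\f) + 1-\mu(\f)) \lesssim (k/C)/(\eps e^{-k/C})$, which is $\le \eps$ once $C \gtrsim k/\eps^2$. The two regimes together cover all $k \ge 1$ for $C$ sufficiently large.

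The main obstacle is optimizing the constants so that the stated $C \gg r/\eps$ suffices. The coarse two-regime argument above yields $C \gg 1/\eps^3$ in the base case, and compounding through the induction gives $C \gg r/\eps^3$, which is consistent with the paper's convention that $C \gg r/\eps$ means $C$ is taken sufficiently large as a function of $r/\eps$. For a tighter dependence one could use an entropy argument in the base case: writing $p_i = d_\f(i)/|\f|$, the hypothesis $|\f| \ge \binom{n}{k}e^{-k/C}$ combined with subadditivity $\log|\f| \le \sum_i H(p_i)$ and the maximum $\sum_i H(p_i) \le nH(k/n)$ under the constraint $\sum p_i = k$ yields an entropy deficit of $O(k/C)$; strong concavity of $H$ near $k/n$ then forces all but $O(n/(\eps^2 C))$ of the marginals $p_i$ to lie within $(1\pm\eps)k/n$, giving the desired bound on $|B|$.
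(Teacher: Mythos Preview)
The paper does not prove this proposition; it is imported from Keller and Lifshitz, so there is no paper proof to compare against. Your inductive scheme, the $k=r$ boundary case, and the Markov-type bound $\beta \le (1-\mu)/(1-\mu+\eps\mu)$ for the small-$k$ regime are all correct (the last gives $\beta \le \eps$ once $\mu \ge 1-O(\eps^2)$, i.e.\ once $k \le O(C\eps^2)$).

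The genuine gap is in the large-$k$ regime of the base case $r=1$. You argue: if $\beta k \le 1/2$ then $\mu < 2e^{-\eps k}$, contradicting $\mu \ge e^{-k/C}$ whenever $k(\eps-1/C) \ge \log 2$. But under this last condition, together with the standing assumption $\beta > \eps$, you have $\beta k > \eps k \ge \log 2 + k/C > 1/2$, so the hypothesis $\beta k \le 1/2$ is \emph{never satisfied} in this regime and your argument there is vacuous. Once $\beta k \ge 1/(1-\eps)$ your displayed inequality $\mu(1-\beta(1-\eps)k) < (1-\beta)^k$ is trivially true (its left side is nonpositive), and no contradiction follows from it.

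The fix is to replace the union bound by concentration. From $\sum_{i\in B} d_\f(i) = \sum_{F\in\f}|F\cap B|$ one gets $\mathbb{E}_{F\in\f}[|F\cap B|] < (1-\eps)\beta k$, so (by Markov) at least an $\eps/2$-fraction of $F\in\f$ satisfy $|F\cap B| \le (1-\eps/2)\beta k$. But for a uniformly random $F \in \tbinom{[n]}{k}$, the quantity $|F\cap B|$ is hypergeometric with mean $\beta k$, and by the paper's Chernoff bound (Lemma~\ref{conc}) at most a $2e^{-\eps^2\beta k/12}$-fraction of all $k$-sets have $|F\cap B| \le (1-\eps/2)\beta k$. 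Combining, $(\eps/2)\mu < 2e^{-\eps^3 k/12}$, which contradicts $\mu \ge e^{-k/C}$ for $C$ sufficiently large as a function of $1/\eps$. Together with your Markov argument for small $k$, this covers all cases. Your entropy sketch is a valid alternative for large $k$, but note that the $O(\log k)$ Stirling defect in $nH(k/n)-\log\tbinom{n}{k}$ means it too only handles $k$ moderately large, so the small-$k$ Markov argument is still needed.
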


When the above bounds are not applicable we rely on the following
lemma, whose proof will occupy the remainder of this subsection.

\begin{lem} \label{lem:fat}
Let $\f \subset \tbinom{[n]}{k}$, $r < \ell \le k$
and $\h = \{B \in \tbinom{[n]}{\ell}: \pl^r B \sub \pl^r_c \f \}$,
where $c = \mu(\f)/2\tbinom{\ell}{r}$.
Then $\mu(\h) \ge \mu(\f)/2$.
Thus $\mu(\pl^r_c \f) \ge (\mu(\f)/2)^{r/\ell}$.
Furthermore, if $G \in \g'(r,s,\DD)$, $C \gg r\DD$
and $\pl^r_c \f$ is $G$-free then $\mu(\pl^r_c \f) \ge
 \big( (\mu(\f)/2-(s/n)^{\ell/C}) n/s\ell^2 \big)^{r/(\ell-1)}$.
\end{lem}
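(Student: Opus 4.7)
The plan is to prove the three parts of Lemma~\ref{lem:fat} in sequence.

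For the first, I would use double counting on pairs $(A,B)$ with $A\in\f$ and $B\in\tbinom{A}{\ell}$. Any $B\notin\h$ contains some $R\in\tbinom{B}{r}$ with $|\f^R_R|<c\binom{n-r}{k-r}$, so the number of bad pairs is at most $\sum_{R\notin\pl^r_c\f}|\f^R_R|\binom{k-r}{\ell-r}<c\binom{n}{r}\binom{n-r}{k-r}\binom{k-r}{\ell-r}$. Applying the identity $\binom{n}{r}\binom{n-r}{k-r}\binom{k-r}{\ell-r}=\binom{\ell}{r}\binom{n}{k}\binom{k}{\ell}$ and substituting $c=\mu(\f)/(2\binom{\ell}{r})$ bounds this by $|\f|\binom{k}{\ell}/2$, i.e.\ half of the total $|\f|\binom{k}{\ell}$ pairs. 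So at least $|\f|\binom{k}{\ell}/2$ pairs have $B\in\h$, and dividing by the $\binom{n-\ell}{k-\ell}$ choices of $A\supset B$ for each $B\in\h$ gives $|\h|\ge\mu(\f)\binom{n}{\ell}/2$.

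For the second, each $B\in\h$ has $\pl^rB\sub\pl^r_c\f$ by definition, so $\pl^r\h\sub\pl^r_c\f$. I would apply the Lov\'asz form of Kruskal--Katona to $\h$: writing $|\h|=\binom{x}{\ell}$ for some real $x\ge\ell$ gives $|\pl^r\h|\ge\binom{x}{r}$, and the first part ensures $x\ge n(\mu(\f)/2)^{1/\ell}$, yielding $\mu(\pl^r_c\f)\ge(\mu(\f)/2)^{r/\ell}$.

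For the third, set $\mu_0=\mu(\f)/2-(s/n)^{\ell/C}$ (the bound is trivial if $\mu_0\le 0$) and $\mu_*=\mu_0n/(s\ell^2)$. The key step is to locate a vertex $v\in[n]$ whose link $\h^v_v=\{B'\in\tbinom{[n]\sm\{v\}}{\ell-1}:B'\cup\{v\}\in\h\}$ satisfies $\mu(\h^v_v)\ge\mu_*$. Given such $v$, Kruskal--Katona applied to the $(\ell-1)$-graph $\h^v_v$ on $[n]\sm\{v\}$ gives $|\pl^r\h^v_v|\ge\binom{y}{r}$ for $\binom{y}{\ell-1}=|\h^v_v|\ge\mu_*\binom{n-1}{\ell-1}$, i.e.\ $y\ge(n-1)\mu_*^{1/(\ell-1)}$. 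Since every $B'\in\h^v_v$ extends to $B'\cup\{v\}\in\h$, we have $\pl^r\h^v_v\sub\pl^r\h\sub\pl^r_c\f$, so taking densities yields $\mu(\pl^r_c\f)\ge\mu_*^{r/(\ell-1)}$ up to a negligible $(n-1)/n$ correction.

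The existence of such $v$ is forced by $G$-freeness. Suppose no such $v$ exists, so $\h$ is $(1,\mu_*)$-global. By Lemma~\ref{lem:globaluncap} this upgrades to $(a,\mu(\h)/2)$-uncapturability with $a=\mu(\h)n/(2\mu_*\ell)=\mu(\h)s\ell/(2\mu_0)\ge s\ell/2$, since $\mu(\h)\ge\mu_0$. For $C\gg r\DD$, suitable choices of constants give both $a\ge C_1s$ and the density threshold $\mu(\h)/2\ge s\ell/(C_2n)$ required by Lemma~\ref{lem:embeduncap} applied to $\h$ with $k=\ell$; the subtracted term $(s/n)^{\ell/C}$ is calibrated precisely to absorb this density threshold in the tight parameter range. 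That lemma then produces $G^+\sub\h$, which in turn gives $G\sub\pl^rG^+\sub\pl^r\h\sub\pl^r_c\f$, contradicting the $G$-freeness hypothesis. The main obstacle will be this parameter bookkeeping --- verifying that the uncapturability level delivered by Lemma~\ref{lem:globaluncap} meets the hypothesis of Lemma~\ref{lem:embeduncap} for all relevant choices of $\mu(\f)$, $\ell$, $s$ and $n$ --- but this reduces to checking constants once the hierarchy $C\gg C_1\gg C_2\gg r\DD$ is fixed.
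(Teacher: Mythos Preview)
Your arguments for the first two parts are correct and essentially identical to the paper's, just phrased as double counting rather than probability. The divergence is in the third part, and there your plan has two real problems.

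First, a circularity issue. The proof of Lemma~\ref{lem:embeduncap} explicitly invokes Lemma~\ref{lem:fat} (its first part, to bound $\mu(\h)\ge\mu(\g)/2$). One can in principle untangle this, since you establish the first part independently, but you cannot simply cite Lemma~\ref{lem:embeduncap} as a black box without acknowledging and resolving this dependency.

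Second, and more seriously, a parameter gap. Lemma~\ref{lem:embeduncap} requires the uniformity of the input family to satisfy $C\le k$ in a hierarchy $C\gg C_1\gg C_2\gg r\Delta$, whereas Lemma~\ref{lem:fat} only assumes $r<\ell\le k$, so $\ell$ may be as small as $r+1$. Concretely, your uncapturability computation yields $a\ge s\ell/2$, but to invoke Lemma~\ref{lem:embeduncap} you need $a\ge C_1 s$, which forces $\ell\ge 2C_1$. This is not available. Your contradiction argument therefore only covers the regime $\ell\gg r\Delta$, which suffices for the applications in the paper but not for the lemma as stated.

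The paper sidesteps both issues by using the more elementary Lemma~\ref{lem:kmv} in place of Lemma~\ref{lem:embeduncap}. Since $\pl^r\h\subset\pl^r_c\f$ is $G$-free, the $\ell$-graph $\h$ is $G^+$-free, and Lemma~\ref{lem:kmv} gives $\mu(\pl\h)\ge(\mu(\h)-(s/n)^{\ell/C})n/(s\ell^2)$ directly, via a deletion argument reducing to Theorem~\ref{thm:CFS}. Then Kruskal--Katona on the $(\ell-1)$-shadow yields the exponent $r/(\ell-1)$. This route rests only on Theorem~\ref{thm:CFS} and Kruskal--Katona, requires no lower bound on $\ell$ beyond $\ell>r$, and has no circular dependence. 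Your high-degree-vertex idea is morally the same endpoint (passing to level $\ell-1$ before applying Kruskal--Katona), but the paper reaches it without invoking the heavier embedding machinery.
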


We require several further lemmas
for the proof of Lemma \ref{lem:fat}.
We start by stating a consequence
of the Lov\'asz form \cite{Lovasz2}
of the  Kruskal--Katona theorem
\cite{katona2009theorem, kruskal1963number}.

\begin{lem} \label{lem:KK}
If $1 \leq \ell \leq k \leq n$ and
${\cal A} \subset \tbinom {[n]}{k}$
then $\mu ( \pl^\ell({\cal A}) )
\geq \mu ({\cal A})^{\ell /k}$.
\end{lem}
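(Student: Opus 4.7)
The plan is to derive this measure-theoretic form directly from the Lovász formulation of the Kruskal--Katona theorem, which states: if ${\cal A} \subset \tbinom{[n]}{k}$ with $|{\cal A}| = \binom{x}{k}$ for some real $x \geq k$, then $|\partial^\ell({\cal A})| \geq \binom{x}{\ell}$, where $\binom{x}{j} := x(x-1)\cdots(x-j+1)/j!$.

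First I would choose the real number $x \in [k, n]$ so that $|{\cal A}| = \binom{x}{k}$; here $x \leq n$ because $|{\cal A}| \leq \binom{n}{k}$ and the function $y \mapsto \binom{y}{k}$ is strictly increasing on $[k,\infty)$. By the Lovász form, $|\partial^\ell({\cal A})| \geq \binom{x}{\ell}$. Rewriting both densities as telescoping products,
\[
\mu({\cal A}) = \frac{\binom{x}{k}}{\binom{n}{k}} = \prod_{i=0}^{k-1}\frac{x-i}{n-i}, \qquad \mu(\partial^\ell({\cal A})) \geq \frac{\binom{x}{\ell}}{\binom{n}{\ell}} = \prod_{i=0}^{\ell-1}\frac{x-i}{n-i},
\]
so the desired inequality $\mu(\partial^\ell({\cal A})) \geq \mu({\cal A})^{\ell/k}$ reduces to
\[
\prod_{i=0}^{\ell-1}\frac{x-i}{n-i} \;\geq\; \left(\prod_{i=0}^{k-1}\frac{x-i}{n-i}\right)^{\ell/k}.
\]

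Taking logarithms and setting $b_i := \log\frac{n-i}{x-i} \geq 0$ (nonnegative since $x \leq n$), this is exactly the averaging inequality
\[
\frac{1}{\ell}\sum_{i=0}^{\ell-1} b_i \;\leq\; \frac{1}{k}\sum_{i=0}^{k-1} b_i,
\]
which holds for all $\ell \leq k$ provided the sequence $(b_i)$ is non-decreasing. The key step is therefore to verify this monotonicity, which follows from a routine derivative computation: regarding $i$ as a continuous variable, $\frac{d}{di}\bigl(\log(n-i) - \log(x-i)\bigr) = \frac{1}{x-i} - \frac{1}{n-i} \geq 0$, again using $x \leq n$.

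The only real obstacle is the clean bookkeeping for the averaging step; Lovász's sharpening of Kruskal--Katona is used as a black box, and there are no analytic subtleties beyond the monotonicity of $(b_i)$. Edge cases ($|{\cal A}|=0$ is trivial with the convention $0^0=1$; $\ell = k$ gives equality) are immediate from the formulation.
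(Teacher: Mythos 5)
Your proof is correct and takes essentially the same route as the paper: apply the Lov\'asz form of Kruskal--Katona with a real parameter, write both densities as products of linear factors, and compare prefix geometric means using the monotonicity of those factors. If anything, your bookkeeping with the exact ratios $(x-i)/(n-i)$ is a bit more careful than the paper's, which writes $|\mathcal{A}| = \binom{\beta n}{k}$ and works with the (trivially decreasing) factors $\beta - i/n$ in place of the exact density ratios.
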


\begin{proof}
We define $\bB \in [0,1]$ by
$|{\cal A}| = \tbinom{\bB n}{k}$, so that
$\mu({\cal A}) = \prod_{i=0}^{k-1} (\bB-i/n)$.
By the Lov\'asz form of Kruskal--Katona, we have
$| \pl^\ell {\cal A}| \ge \tbinom{\bB n}{\ell}$,
so $\mu ( \pl^\ell({\cal A}) )^k
\ge \prod_{i=0}^{\ell-1} (\bB-i/n)^k
\ge \mu({\cal A})^\ell$.
\end{proof}

Next we require an estimate on the Tur\'an numbers of
$r$-partite $r$-graphs, which follows from
\cite[Theorem 2]{CFS} due to Conlon, Fox and Sudakov.
(Recall that $\g'(r,s,\DD)$ is the family
of $r$-partite $r$-graphs with $s$ edges
and maximum degree $\DD$.)

\begin{thm} \label{thm:CFS}
Let $F \in \g'(r,s,\DD)$  and $C \gg r\DD$.
Then any $F$-free $\h \sub \tbinom{[n]}{r}$ with $n > Cs$
has $\mu(\h) < (s/n)^{1/C}$.
\end{thm}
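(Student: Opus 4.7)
The plan is to prove Theorem \ref{thm:CFS} by a dependent random choice (DRC) embedding argument in the style of Conlon, Fox and Sudakov. Writing $d := \mu(\h) \geq (s/n)^{1/C}$, the goal is to exhibit an injective map $\phi \colon V(F) \to [n]$ with $\phi(e) \in \h$ for every $e \in E(F)$.

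First, I would reduce to the partite case. Partitioning $[n]$ uniformly at random into $U_1 \sqcup \cdots \sqcup U_r$ and passing to the sub-hypergraph $\h'$ consisting of edges with exactly one vertex in each $U_i$ gives $\mb{E}|\h'| = (r!/r^r)|\h|$, and each $|U_i|$ concentrates near $n/r$. So we may fix an outcome in which $\h'$ still has density $\Theta(d)$ inside $U_1 \times \cdots \times U_r$, with each $|U_i| \geq n/2r$. It now suffices to embed $F$ into $\h'$ with $\phi(V_i) \sub U_i$ for each $i$, where $V_1,\ldots,V_r$ is the $r$-partition of $F$.

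Second, I would perform the embedding part-by-part, processing $V_1, V_2, \ldots, V_r$ in order. When a vertex $u \in V_i$ is added, the at most $\Delta$ edges of $F$ through $u$ whose other $r-1$ vertices are already embedded impose the same number of codegree constraints on $\phi(u) \in U_i$. The central tool is the DRC codegree lemma: after discarding a small collection of bad $(r-1)$-tuples in the product $U_1 \times \cdots \times U_{i-1} \times U_{i+1} \times \cdots \times U_r$, every $\Delta$-family of remaining tuples has common codegree in $U_i$ of size at least $d^{O(\Delta)} \cdot |U_i|$. Applying this iteratively over the $r$ parts and extending the embedding greedily produces the desired copy of $F$ provided the common codegree at every stage exceeds the number of vertices of $F$ still to be placed.

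The main obstacle is the quantitative bookkeeping. Each of the $r$ DRC extractions costs a factor of roughly $d^{O(\Delta)}$ in the effective density, so after all of them we need $d^{O(r\Delta)} \cdot n \gtrsim |V(F)| \leq rs$, i.e.\ $d^{O(r\Delta)} \gtrsim s/n$. Substituting $d \geq (s/n)^{1/C}$, this becomes $(s/n)^{O(r\Delta/C)} \gtrsim s/n$, which holds precisely when $C \gg r\Delta$. Verifying this carefully --- including the logarithmic losses from choosing DRC parameters $t$ at each extraction and from the concentration step in the random partition --- is the technical heart of the argument.
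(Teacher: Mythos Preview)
The paper does not prove this theorem; it simply quotes it as a consequence of \cite[Theorem 2]{CFS}. Your plan to use dependent random choice is the right idea and matches the cited source, but your implementation sketch has a real gap for $r\ge 3$.

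In an $r$-partite $r$-graph, every edge of $F$ has exactly one vertex in each part $V_j$. So when you process $V_1,V_2,\ldots,V_r$ in order and reach $u\in V_i$ with $i<r$, \emph{no} edge through $u$ has all its other $r-1$ vertices already embedded (each such edge still has a vertex in $V_{i+1},\ldots,V_r$). Your greedy scheme therefore imposes no codegree constraint whatsoever until the final part $V_r$, at which point all $\le\Delta$ edges through each $u\in V_r$ become active simultaneously. For that last step to succeed you would need your \emph{arbitrary} injective embedding of $V_1,\ldots,V_{r-1}$ to have landed entirely on ``good'' $(r-1)$-tuples. Your stated DRC lemma --- ``discard a small collection of bad $(r-1)$-tuples so that every $\Delta$-family of remaining tuples has large common codegree in $U_i$'' --- is not what dependent random choice delivers (DRC produces a large subset of one side of a bipartite structure, not a near-complete set of good tuples on the other side), and it is not clear how to obtain such a statement.

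The actual argument in Conlon--Fox--Sudakov recurses on the uniformity: one uses DRC to find a large $A\subset U_r$ such that every $\Delta$-subset of $A$ has a dense common link (an $(r-1)$-partite $(r-1)$-graph on $U_1\times\cdots\times U_{r-1}$), embeds $V_r$ into $A$, and then recurses into the common link with the $(r-1)$-partite link hypergraph of $F$. Each level of recursion costs a factor of roughly $d^{O(\Delta)}$ in density, and there are $r-1$ levels compounding multiplicatively in the exponent, so the total cost is $d^{O(\Delta^{r-1})}$, not $d^{O(r\Delta)}$ as you write. (The paper's commented-out version of the CFS statement has exactly the condition $N\ge(\eps/2)^{-c\Delta^{r-1}}M$.) This does not damage the conclusion of Theorem~\ref{thm:CFS}, since the hypothesis $C\gg r\Delta$ permits $C$ to be an arbitrary function of $r$ and $\Delta$; but your bookkeeping is too optimistic and the iteration you describe does not work as written.
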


We note that the following lemma is immediate
from Theorem \ref{thm:CFS} and Lemma \ref{lem:KK}.

\begin{lem} \label{lem:DRC}
Let $G \in \g'(r,s,\DD)$, $C \gg r\DD$, $C \le k \le n/Cs$
and  $\f\subset \tbinom{[n]}{k}$.
If $\pl^r \f$ is $G$-free then $\mu(\f)\le (s/n)^{k/C}$.
\end{lem}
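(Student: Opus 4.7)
The plan is to combine the two ingredients cited just before the statement. Since $\pl^r \f \sub \tbinom{[n]}{r}$ is $G$-free and $G \in \g'(r,s,\DD)$, Theorem \ref{thm:CFS} applies once we verify its hypothesis $n > C_1 s$, where $C_1$ denotes the constant produced by that theorem applied to parameters $r,\DD$; since the hypotheses give $k \ge C$ and $n \ge Csk$, we get $n \ge C^2 s$, which is certainly at least $C_1 s$ provided $C$ is chosen sufficiently large relative to $r\DD$ (which is permissible since we assume $C \gg r\DD$). Theorem \ref{thm:CFS} then yields
\[
\mu(\pl^r \f) < (s/n)^{1/C_1}.
\]

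Next, Lemma \ref{lem:KK} (Kruskal--Katona in Lov\'asz form) applied to $\f \sub \tbinom{[n]}{k}$ with shadow parameter $r \le k$ gives
\[
\mu(\pl^r \f) \ge \mu(\f)^{r/k}.
\]

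The proof is concluded by chaining these two bounds:
\[
\mu(\f)^{r/k} \le \mu(\pl^r \f) < (s/n)^{1/C_1},
\]
so $\mu(\f) < (s/n)^{k/(rC_1)}$. Taking $C$ large enough that $C \ge rC_1$ (again allowed by $C \gg r\DD$) gives the claimed bound $\mu(\f) \le (s/n)^{k/C}$.

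There is essentially no obstacle here; the only thing worth double-checking is the bookkeeping of constants, namely that the single hypothesis $C \gg r\DD$ in the statement is strong enough to absorb both the constant needed for Theorem \ref{thm:CFS} (which demands a large constant depending on $r\DD$) and the factor $r$ lost when inverting the Kruskal--Katona exponent $r/k$. Both fit, so the lemma follows directly as the paper indicates.
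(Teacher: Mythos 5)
Your proof is correct and is exactly the argument the paper intends: the paper states Lemma \ref{lem:DRC} is immediate from Theorem \ref{thm:CFS} (applied to the $G$-free shadow $\pl^r\f$) and Lemma \ref{lem:KK} (to pass from $\mu(\pl^r\f)$ back to $\mu(\f)^{r/k}$), which is precisely your chain of inequalities. Your bookkeeping of the constants ($n\ge C^2 s$ and absorbing the factor $r$ into $C$) is also fine.
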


Our next lemma is an adaptation of one due to
Kostochka, Mubayi and Verstra{\"e}te \cite{kostochka2015turan}.

\begin{lem} \label{lem:kmv}
Suppose $G \in \g'(r,s,\DD)$, $C \gg r\DD$
and $\f$ is a $G^+$-free $k$-graph on $[n]$.
Then $\mu(\pl \f) \ge (\mu(\f)-(s/n)^{k/C}) n/sk^2$.
\end{lem}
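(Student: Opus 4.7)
If $\mu(\f) \le (s/n)^{k/C}$ the right-hand side is non-positive and the bound is vacuous; I therefore assume $\mu(\f) > (s/n)^{k/C}$. The plan is to show that $\f$ contains $G^+$ unless $\h := \pl\f = \pl^{k-1}\f$ already meets the claimed lower bound, thereby extracting the inequality from the $G^+$-freeness of $\f$.

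The main tool is the double count $k|\f| = \sum_{H \in \h} d(H)$, where $d(H) := |\{v \in [n]\sm H : H\cup\{v\}\in\f\}|$. I introduce the high-degree sub-shadow $\h^\ast := \{H \in \h : d(H) \ge D\}$ with parameter $D \asymp sk^2$, chosen so that extension vertices will always be plentiful during lifting. A Markov cut-off yields $\mu(\h^\ast) \ge \mu(\f) - D|\h|/(k\tbinom{n}{k})$; thus whenever $|\h|$ violates the target bound $|\h| \gtrsim \mu(\f)\tbinom{n}{k-1}\cdot n/(sk^2)$, we obtain $\mu(\h^\ast) \ge \mu(\f)/2 > (s/n)^{k/C}$. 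A second application of Lemma~\ref{lem:DRC} to $\h^\ast$ (as a $(k-1)$-graph on $[n]$) then produces a copy of $G$ inside $\pl^r\h^\ast \sub \pl^r\f$. I would lift this copy to $G^+\sub\f$ edge by edge: processing the edges $e_1,\ldots,e_s$ of $G$ in turn, I select $H_{e_i}\in\h^\ast$ containing $e_i$ whose $(k-1-r)$-tail $H_{e_i}\sm e_i$ avoids the $O(sk)$ vertices already committed, then an extension vertex $v_{e_i}$ with $H_{e_i}\cup\{v_{e_i}\}\in\f$ avoiding the committed vertices---possible because $d(H_{e_i})\ge D\gg sk$.

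The main obstacle will be ensuring the greedy choice of $H_{e_i}$ succeeds simultaneously for all $i$ under the disjointness constraints, since the tail $H_{e_i}\sm e_i$ is dictated by $H_{e_i}$ rather than chosen freely. I plan to handle this by a random-restriction trick: rather than invoking Lemma~\ref{lem:DRC} on all of $\h^\ast$, I would apply it to the induced sub-shadow on a random subset $U\sub[n]$ of size $\ge n/2$ disjoint from the committed vertices, which is permissible because $v(G)+sk \le n/2$ (using $k\le n/Cs$) and because the expected density of $\h^\ast$ inside $\tbinom{U}{k-1}$ equals $\mu(\h^\ast)$, so with positive probability it remains at least $\mu(\h^\ast)/2$. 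A successful embedding of $G^+$ into $\f$ contradicts the $G^+$-freeness of $\f$, forcing $|\h|\gtrsim(\mu(\f)-(s/n)^{k/C})\tbinom{n}{k-1}\cdot n/(sk^2)$; dividing by $\tbinom{n}{k-1}$ gives the stated inequality, with the precise constants arising from careful optimisation of $D$ and the restriction parameters.
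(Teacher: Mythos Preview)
Your overall shape --- argue that if $\mu(\pl\f)$ is too small then some dense sub-object contains a copy of $G$ which lifts to $G^+\subset\f$ --- is the paper's, and the Markov bound giving $\mu(\h^*)\gtrsim\mu(\f)$ is fine up to constants. The gap is in the lifting, and the random-restriction patch does not close it.

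From $e_i\in\pl^r\h^*$ you know only that \emph{some} $H_{e_i}\in\h^*$ contains $e_i$, possibly a unique one; nothing lets you steer its $(k{-}1{-}r)$-tail away from previously committed vertices. Your proposed fix is circular in one reading and ineffective in the other. If $U$ is chosen before any commitments, then finding $G$ inside $\pl^r(\h^*[U])$ still produces witnesses $H_{e_i}\subset U$ that may collide with each other inside $U$; restricting to a random half of $[n]$ does nothing to separate the $s$ tails from one another. If instead $U$ is to avoid the committed vertices, those vertices only exist after the embedding you are trying to construct inside $U$.

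The paper resolves this by working on the $k$-graph side rather than the shadow side. One repeatedly deletes from $\f$ all edges through any $(k{-}1)$-set with at most $ks$ extensions, obtaining $\g\subset\f$ with $|\g|\ge|\f|-ks\,|\pl\f|$, hence $\mu(\pl\f)\ge(\mu(\f)-\mu(\g))\,n/sk^2$. The crucial gain is a closure property your $\h^*$ lacks: in $\g$, \emph{every} $(k{-}1)$-subset of \emph{every} edge has more than $ks$ extensions in $\g$. This supports an iterated swap --- starting from any edge $e_A\in\g$ containing the embedded $r$-set, whenever $e_A$ contains an unwanted vertex one trades it for a fresh one and lands on another edge of $\g$, so the process can be repeated until the $s$ edges form a copy of $G^+$. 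Hence $\pl^r\g$ is $G$-free, Lemma~\ref{lem:DRC} gives $\mu(\g)\le(s/n)^{k/C}$, and the inequality follows. By contrast, extending some $H\in\h^*$ to $\f$ and then dropping a vertex need not return to $\h^*$, which is exactly why a one-shot choice of each $H_{e_i}$ cannot be repaired greedily.
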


\begin{proof}
We define $\g \sub \f$ by starting with $\g=\f$
and then repeating the following procedure:
if there is any $A \in \pl\g$ with $|\g^A_A| \le ks$
then remove from $\g$ all edges containing $A$.
This terminates with some $\g$
such that $|\g^A_A|>rs$ for all $A \in \pl\g$
and $|\g| \ge |\f|-ks|\pl\f|$, so
$\mu(\pl\f) \ge (\mu(\f)-\mu(\g))n/sk^2$.

We will now show that $\pl_r \g$ is $G$-free,
which will complete the proof due to Lemma \ref{lem:DRC}.
To see this, we suppose that $\phi(G)$ is a copy of $G$ in $\pl_r\g$
and will obtain a contradiction by finding a copy of $G^+$ in $\g$.
To do so, we start by fixing for each edge $A$ of $G$
an edge $e_A$ of $\g$ containing $\phi(A)$.
Then we repeat the following procedure:
while some $e_A$ contains some $\phi(x)$ with $x \notin A$,
replace $e_A$ by some edge $(e_A \sm \{\phi(x)\}) \cup \{v\}$
with $v \notin \im\phi$. As $|\g^A_A|>ks$ for all $A \in \pl\g$
we can always choose $v$ as required. The procedure terminates
with a copy of $G^+$, so the proof is complete.
\end{proof}

We conclude this subsection with the proof of its main lemma.

\begin{proof}[Proof of Lemma \ref{lem:fat}]
Consider uniformly random $(A,B,C)$
with $C \subset B \subset A \subset [n]$
and $|C|=r$, $|B|=\ell$, $|A|=k$.
Write $p = \mb{P}(A \in \f, C \notin \pl^r_c \f)$
and $q = \mb{P}(A \in \f, B \notin \h)$.

For any $C \notin \pl^r_c \f$ we have
$\mb{P}(A \in \f \mid C) = \mu(\f^C_C) \le c$, so $p \le c$.
On the other hand, $p \ge q \tbinom{\ell}{r}^{-1}$,
as for any $A \in \f$ and $B \notin \h$ we have
$\mb{P}(C \notin \pl^r_c \f \mid A,B) \ge \tbinom{\ell}{r}^{-1}$.
We deduce $q \le \tbinom{\ell}{r} c = \mu(\f)/2$.

Thus $\mu(\h) = \mb{P}(B \in \h)
\ge \mb{P}(A \in \f) - q \ge \mu(\f)/2$.

As $\pl^r \h \subset \pl^r_c \f$, Lemma \ref{lem:KK} gives
$\mu(\pl^r_c \f) \ge (\mu(\f)/2)^{r/\ell}$.

Now suppose $G \in \g'(r,s,\DD)$ and  $\pl^r_c \f$ is $G$-free.
Then $\h$ is $G^+$-free, so Lemma \ref{lem:kmv} gives
$\mu(\pl \h) \ge (\mu(\h)-(s/n)^{\ell/C}) n/s\ell^2$.
As $\pl^r \pl \h \subset \pl^r_c \f$,
Lemma \ref{lem:KK} gives the required bound.
\end{proof}

\subsection{Shadow embeddings}

The following lemma implements a simple greedy algorithm
for cross embedding any bounded degree $r$-graph
in a collection of nearly complete $r$-graphs
(more generally, we also allow smaller edges).

\begin{lem} \label{lem:nearcomplete}
Let $0 < \eta \ll (r\DD)^{-1}$ and
$G = \{e_1,\dots,e_s\}$ be a hypergraph
of maximum degree $\DD$ with each $|e_i|=r_i \le r$.
Suppose for each $i \in [s]$ that
$\g_i$ is an $r_i$-graph on $[n]$,
where  $n \ge 2rs$ and $\mu(\g_i)>1-\eta$.
Then $\g_1,\dots,\g_s$ cross contain $G$.
\end{lem}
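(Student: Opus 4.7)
The plan is to apply the Lov\'asz Local Lemma (LLL) to a uniformly random injection $\phi \colon V(G) \to [n]$, which is well defined because $|V(G)| \le rs \le n/2$. For each $i \in [s]$, let $A_i$ denote the bad event $\{\phi(e_i) \notin \g_i\}$. Since $\phi|_{e_i}$ is a uniform random injection of $e_i$ into $[n]$, its image $\phi(e_i)$ is a uniform random $r_i$-subset of $[n]$, so $\Pr[A_i] = 1 - \mu(\g_i) < \eta$.

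Next I would analyse the dependency structure. The event $A_i$ depends only on $\phi|_{e_i}$, so $A_i$ is (lopsi)independent of every $A_j$ with $e_j \cap e_i = \es$. Since each vertex of $G$ lies in at most $\DD$ edges and $|e_i| \le r$, each $A_i$ has at most $r\DD$ neighbours in its dependency graph. The LLL for random injections (a standard instance of the lopsided LLL of Erd\H os--Spencer) then guarantees $\Pr[\bigcap_i \overline{A_i}] > 0$ as long as $e\,\eta\,(r\DD + 1) < 1$, which follows from the hypothesis $\eta \ll (r\DD)^{-1}$. Any such $\phi$ provides the required cross embedding.

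The main obstacle is justifying the LLL in the random-injection model, where the coordinates of $\phi$ are not mutually independent. One clean way is to invoke the lopsided LLL directly, verifying by a monotonicity argument that conditioning on $\bigcap_{j \in S} \overline{A_j}$ with each $e_j$ disjoint from $e_i$ cannot increase $\Pr[A_i]$. Alternatively, one can pass to an i.i.d.\ model in which $\phi(v)$ is sampled independently and uniformly from $[n]$ for each $v \in V(G)$, introducing auxiliary bad events for the $\binom{|V(G)|}{2}$ possible vertex coincidences; since $n \ge 2rs$, each such coincidence event has probability $1/n$, and their combined contribution to the LLL condition is absorbed into the factor $r\DD$. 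A purely deterministic greedy approach that processes vertices in a good order and maintains a "fat shadow" invariant for each partially embedded edge (so that $\phi(e_i \cap V_{\text{embedded}})$ retains many completions in $\g_i$) appears to require $\eta \le \DD^{-\Omega(r)}$ via iterated Markov-type estimates, so the probabilistic route seems essentially necessary to achieve the sharp bound $\eta \ll (r\DD)^{-1}$ claimed in the lemma.
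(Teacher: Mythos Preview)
Your reading of the hypothesis is off, and this drives the whole proposal in the wrong direction. In this paper the notation $a \ll b$ means ``$a$ is sufficiently small as a function of $b$'' (stated explicitly in the Definitions subsection), so $\eta \ll (r\DD)^{-1}$ only says that $\eta$ may be taken as small as one likes in terms of $r$ and $\DD$; a bound like $\eta \le (2\DD)^{-r}$ is perfectly admissible. Consequently the deterministic fat-shadow greedy argument you dismiss in your last paragraph is \emph{exactly} the paper's proof: one embeds $v_1,\dots,v_m$ one at a time, maintaining for every edge $e_j$ the invariant $\phi(e_j\cap V_t)\in \pl_{c_{jt}}\g_j$ with $c_{jt}=1-\eta(2\DD)^{|e_j\cap V_t|}$. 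A Markov count shows that for each of the $\le \DD$ edges through $v_{t+1}$ fewer than $n/2\DD$ images are bad, so fewer than $n/2$ images are excluded by the invariant and fewer than $n/2$ by injectivity. There is no ``sharp bound $\eta \ll (r\DD)^{-1}$'' to beat.

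Your LLL route also has a real gap. The lopsided-independence claim that conditioning on $\bigcap_{j\in S}\overline{A_j}$ with $e_j\cap e_i=\es$ cannot increase $\Pr[A_i]$ is false for random injections: take $n=4$, $e_1=\{u\}$, $e_2=\{v\}$, $\g_1=\g_2=\{1,2,3\}$; then $\Pr[A_1]=1/4$ but $\Pr[A_1\mid \overline{A_2}]=1/3$. What is true is the uniform bound $\Pr[A_i\mid \phi|_{V(G)\sm e_i}]\le \eta\tbinom{n}{r_i}/\tbinom{n-m+r_i}{r_i}\le 2^{r}\eta$ (using $m\le rs\le n/2$), which makes the LLL condition $\eta\lesssim (2^{r}r\DD)^{-1}$, i.e.\ no better than the greedy bound. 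The i.i.d.\ alternative is not a free lunch either: the coincidence events $B_{uv}$ have dependency degree $\sim 2|V(G)|\sim 2rs$, not $O(r\DD)$, so they are not ``absorbed into the factor $r\DD$'' without further work.
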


\begin{proof}
Write $V(G) = \{v_1,\dots,v_m\}$.
We may assume that $G$ has no isolated vertices,
so $m \le \sum_i d_G(v_i) \le rs \le n/2$.
We will construct an injection $\phi:V(G) \to [n]$
such that each $\phi(e_j) \in \g_j$.
To do so, we define $\phi$ sequentially so that,
for each $0 \le t \le m$ the definition of $\phi$
on $V_t := \{ v_i: i \le t\}$ is \emph{$t$-good},
meaning that for each edge $e_j$ we have
\begin{equation} \label{eq:notbad}
\phi(e_j \cap V_t) \in \pl_{c_{jt}} \g_j,
\text { where }
c_{jt} = 1 - \eta (2\DD)^{|e_j \cap V_t|}.
\end{equation}
Note that \eqref{eq:notbad} holds
whenever $e_j \cap V_t = \es$,
as $\mu(\g_j) > 1- \eta $;
in particular, \eqref{eq:notbad} holds when $t=0$.

It remains to show for any $0 \le t < m$ that we can
extend any $t$-good embedding $\phi$ to a $(t+1)$-good embedding.
To see this, first note that we only need to check
\eqref{eq:notbad} when $e_j$ is one of at most $\DD$
edges containing $v_{t+1}$. Fix any such edge $e_j$,
let $f = \phi(e_j \cap V_t)$,
and let $B_j$ be the set of $x \in [n]$
such that choosing $\phi(v_{t+1})=x$
would give $\phi(e_j \cap V_{t+1})
= f \cup \{x\} \notin \pl_{c_{j(t+1)}} \g_j$. Then
\[ |B_j| \eta (2\DD)^{|f|+1} \le \sum_{x \in B}
\left( 1-\mu \big( (\g_j)^{f \cup \{x\}}_{f \cup \{x\}}
\big) \right)
\le n(1-\mu((\g_j)^f_f)) < n \eta (2\DD)^{|f|},\]
so $|B_j| < n/2\DD$. Summing over at most $\DD$ choices
of $j$ forbids fewer than $n/2$ choices of $x$.
The requirement that $\phi$ be injective also forbids
fewer than $n/2$ vertices, so we can extend $\phi$ as required.
\end{proof}

\subsection{Lifted embeddings}

We conclude this section by proving
the two embedding lemmas assumed above,
thus completing the proof of Theorem \ref{thm:junta'}.

\begin{proof}[Proof of Lemma \ref{lem:embedlarge}]
Suppose $n, s, k_1,\ldots, k_s \in {\mathbb N}$
with $C \leq k_i \leq \frac {n}{Cs}$ for all $i\in [s]$,
and $\f _i \subset\tbinom{\left[n\right]}{k_i}$
with each $\mu(\f_{i}) \ge e^{-{k_i}/{C}}+ C sk_i/n$.
Let $\eta$ be as in Lemma \ref{lem:nearcomplete}.
We can assume $C$ is large enough so that Proposition \ref{prop:Fair}
gives $\mu\left({\cal G}_i\right)\ge 1-\eta$  for each $i\in [s]$,
where ${\cal G}_i$ is the $r$-graph on $[n]$ consisting
of all $e \in \tbinom{[n]}{r}$ with
$\mu((\f_{i} )_e^e ) \ge Cs k_i/2n$.
By Lemma \ref{lem:nearcomplete} we can find
$R_1,\ldots ,R_s$ forming a copy of $G$
with $R_{i} \in {\cal G}_i$ for all $i\in [s]$.
Let $R=R_{1}\cup\cdots\cup R_{s}$.
By the union bound,
each $\mu \big ( (\f _i)^{R_i}_{R} \big ) \geq
\mu \big ( (\f _i)^{R_i}_{R_i} \big )
- |R| k_i/n \geq Cs k_i/4n$ for $C \ge 8$,
so Lemma \ref{lem:hls}
gives a cross matching $E_1,\ldots, E_s$
in $(\f _1)^{R_1}_R,\ldots ,(\f _s)^{R_s}_R$.
Now $\f_{1},\ldots,\f_{s}$ cross contain a copy of $G^+$
with edges $R_1 \cup E_1, \ldots, R_s \cup E_s$.
\end{proof}

\begin{proof}[Proof of Lemma \ref{lem:embeduncap}]
Let $G \in \g(r,s,\DD)$ and $C \gg C_1 \gg C_2 \gg r\DD$.
Suppose for a contradiction that
$\f \subset\tbinom{[n]}{k}$ with $C \leq k \leq n/Cs$
is $(C_1 s, sk/C_2 n)$-uncapturable but $G^+$-free.

Let $\b$ be a maximal collection of pairwise disjoint sets
where each $B \in \b$ has $|B| \le r+1$
and $\mu(\f^B_B) > \bB := e^{-k/C_1} + C_1 sk/n$.
We claim that $|\b|<s$. To see this, suppose for a contradiction
that we have distinct $B_1,\dots,B_s$ in $\b$.
Let $B = \bigcup_{i=1}^s B_i$ and
$\f_i = \f^{B_i}_B$ for $i \in [s]$.
Then each $\mu(\f_i) > \bB - |B|k/n  > e^{-k/C_1} + C_1 sk/2n$.
Now Lemma \ref{lem:embedlarge} gives a cross copy of $G^+$
in $\f_1,\dots,\f_s$, contradicting $\f$ being $G^+$-free,
so $|\b|<s$, as claimed.

Now let $\g = \f^\es_B$ with $B = \bigcup \b$.
Then $\g$ is $(r+1,2\bB)$-global by definition of $\b$
and $\mu(\g) > sk/C_2 n$ by uncapturability of $\f$.
Let $\h = \{B \in \tbinom{[n]}{C_2}: \pl^r B \sub \pl^r_c \g \}$,
where $c = \mu(\g)/2\tbinom{C_2}{r} > sk/nC_2^{2r}$.
We have $\mu(\h) \ge \mu(\g)/2$ by Lemma \ref{lem:fat}.
We will show that $\pl^r \h$ is $G$-free. Then Lemma \ref{lem:DRC}
with $C_2/2 \gg r\DD$ in place of $C$ will give the contradiction
$sk/C_2 n < \mu(\g) \le 2\mu(\h) \le (s/n)^2$.

It remains to show that $\pl^r \h$ is $G$-free.
Suppose for a contradiction
that $A_1,\dots,A_s$ is a copy of $G$ in $\pl^r \h$.
Let $A = \bigcup_{i=1}^s A_i$ and
$\g_i = \g^{A_i}_A$ for $i \in [s]$.
Then each $\g_i$ is $(1,4\bB)$-global
by Lemma \ref{lem:globalrestrict}
with $\mu(\g_i) > c - |A| \cdot 2\bB k/n > c/2$.
Now each $\g_i$ is $(C_1 s,c/4)$-uncapturable
by Lemma \ref{lem:globaluncap},
so $\g_1,\dots,\g_s$ cross contain a matching
by Lemma \ref{lem:matchuncap} with $m=s$.
However, this contradicts $\f$ being $G^+$-free.
\end{proof}

\section{Refined junta approximation} \label{sec:refine}

In this final section of the part
we will prove Theorem \ref{thm:juntarefined},
our refined junta approximation result,
which will play a key role in the proofs
of our results in the next part.
We start in the first subsection by setting out
the strategy of the proof and implementing it
assuming an embedding lemma, whose proof will
then occupy the remainder of the section.

\subsection{Strategy}

Our embedding strategy considers a setup below
that blends the two embedding strategies used in
the proof of Theorem \ref{thm:junta}:
it has elements of Lemma \ref{lem:embedlarge}
(mapping a crosscut to a junta)
and of Lemma \ref{lem:embeduncap}
(embedding in the fat shadow
and lifting via uncapturability).

\begin{setup} \label{set:embed}
Let $G \in \g'(r,s,\DD)$.
Let $S$ be a crosscut in $G^+(r+1)$ with $|S|=\sS:=\cc{G}$.
Suppose $S_1 \sub S$ with $|S_1|=\sS_1 \le \sS$
and $\{ G^x_x: x \in S_1 \}$ vertex disjoint.
Let $H_1,\dots,H_{\sS_1}$ be the inclusive links
$G*x = \{e \in G: x \in e\}$ for $x \in S_1$
and $H_{\sS_1+1},\dots,H_\sS$ be the exclusive links
$G^x_x$ for $x \in S \sm S_1$.
Let $V_1 = \bigcup_{i=1}^{\sS_1} V(H_i)$ and suppose
$\{j: V(H_j) \cap V_1 \ne \es \} = [\sS_2]$.
Let $H'_i = H_i$ for $i \in [\sS_1]$ and
$H'_i = \{ e \cap V_1: e \in H_i \}$ for $i \in [\sS_1+1,\sS_2]$.
\end{setup}

We note that $\sS \le s \le \DD\sS$. To use Setup \ref{set:embed}
for embedding $G^+$ in $\f \subset\tbinom{[n]}{k}$
it suffices to find $J = \{j_{\sS_1+1},\dots,j_{\sS}\} \sub [n]$ and
a cross copy of $H_1^+,\dots,H_{\sS}^+$ in $\f_1,\dots,\f_{\sS}$,
where $\f_i = \f^\es_J$ for $i \in [\sS_1]$
and $\f_i = \f^{j_i}_J$ for $i \in [\sS_1+1,\sS]$.
This will be achieved by the following lemma.

\begin{lem} \label{lem:embedmix}
Let $C \gg C_1 \gg \tT^{-1} \gg \eps^{-1} \gg r\DD$ and $C < k < n/Cs$.
Let $G,H_1,\dots,H_\sS$ be as in Setup \ref{set:embed} with $\sS_1 \le \tT \sS$.
Let $\f_i \subset\tbinom{[n]}{k}$ for $i \in [\sS_1]$ and
$\f_i \subset\tbinom{[n]}{k-1}$ for $i \in [\sS_1+1,\sS]$.
Suppose $\f_i$ is $(C_1 \sS_1, \eps \sS_1 k/n)$-uncapturable for $i \in [\sS_1]$,
that $\mu(\f_i) \ge 1-\tT$ for $i \in [\sS_1+1,\sS_2]$, and
$\mu(\f_i) \ge \bB := e^{-k/C_1} + C_1 sk/n$ for $i \in [\sS_2+1,\sS]$.
Then $\f_1,\dots,\f_\sS$ cross contain $H_1^+,\dots,H_\sS^+$.
\end{lem}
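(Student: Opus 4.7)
The plan is to construct the cross embedding in three phases indexed by the three regimes of $i$. Setup \ref{set:embed} provides two useful structural properties: the vertex sets $V(H_i)$ for $i \in [\sS_1]$ are pairwise disjoint, using that $\{G^x_x : x \in S_1\}$ are vertex disjoint and $S$ is a crosscut of $G$ (so no two vertices of $S_1$ lie in a common edge); and $V(H_i) \cap V_1 = \es$ for $i \in (\sS_2, \sS]$ by definition of $\sS_2$. Consequently phases 1 (for $[\sS_1]$) and 3 (for $(\sS_2, \sS]$) essentially decouple, while phase 2 (for $(\sS_1, \sS_2]$) must extend a partial embedding inherited from phase 1.

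Phase 1 embeds $H_1^+, \dots, H_{\sS_1}^+$ into $\f_1, \dots, \f_{\sS_1}$ by a joint adaptation of the proof of Lemma \ref{lem:embeduncap}. For each $i \in [\sS_1]$ I would take a maximal collection $\b_i$ of pairwise disjoint $B \sub [n]$ of size $\le r+1$ with $\mu((\f_i)^B_B) > \bB$. If $|\b_i| \ge |E(H_i)|$ for all $i$ then Lemma \ref{lem:embedlarge} embeds each $H_i^+$ directly into suitably chosen dense restrictions; otherwise the remainders $\g_i = (\f_i)^\es_{\bigcup \b_i}$ are $(r+1, 2\bB)$-global with $\mu(\g_i) \ge \eps \sS_1 k/n$ by uncapturability, Lemma \ref{lem:fat} produces a set family whose $r$-shadow lies in $\pl^r_c \g_i$ and supports a copy of $H_i$, and Lemma \ref{lem:matchuncap} jointly lifts the corresponding $r$-edges across all $i \in [\sS_1]$ to pairwise disjoint $k$-edges of the $\f_i$. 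Phase 3 is then nearly automatic: given $U \sub [n]$ of size $O(\sS k\DD) \ll n$ used so far, for each $i \in (\sS_2, \sS]$ freshly embed $H_i^+$ into $(\f_i)^\es_U$ via Lemma \ref{lem:embedlarge}; the measure drop $|U|k/n$ is dominated by $\mu(\f_i) \ge \bB$, and disjointness follows from $V(H_i) \cap V_1 = \es$ together with avoidance of $U$.

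Phase 2 is the principal obstacle. For $i \in (\sS_1, \sS_2]$, $\phi_1|_{V(H_i) \cap V_1}$ is pre-assigned by phase 1, and extending to $V(H_i^+)$ in $\f_i$ requires, for each edge $e$ of $H_i^+$ with $e \cap V_1 \ne \es$, that $\phi_1(e \cap V_1)$ lies in the $c$-fat $|e \cap V_1|$-shadow of $\f_i$; by Lemma \ref{lem:markov}, $\mu(\f_i) \ge 1-\tT$ bounds the bad complement by $O(\tT)$. The key observation is that these shadow conditions are local in the vertex $z \in V_1$ being placed: each $z$ participates in at most $\DD^2$ shadow conditions (at most $\DD$ indices $i$ with $z \in V(H_i)$, each contributing at most $\DD$ edges through $z$), so after fixing $\phi_1|_{V_1 \sm \{z\}}$ the bad values for $\phi_1(z)$ have relative density $O(\DD^2 \tT) \ll 1$ given $\tT^{-1} \gg r\DD^2$. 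Folding this avoidance into the choice of anchors $\b_i$ and into the vertex selection in the Lemma \ref{lem:matchuncap} lifting step of phase 1 costs only an $O(\DD^2 \tT)$-factor in measure, which is comfortably absorbed by the uncapturability budget under the hierarchy $\tT^{-1} \gg \eps^{-1} \gg r\DD$ and $\sS_1 \le \tT \sS$. Once a compatible $\phi_1$ is in hand, Lemma \ref{lem:nearcomplete} greedily extends $\phi_1$ to $V(H_i^+)$ inside the near-complete restricted family.
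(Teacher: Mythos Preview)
Your three-phase scaffolding matches the paper's overall strategy, but there is a genuine gap in how Phases 1 and 2 are made compatible, and the proposal misses the case analysis on $k$ that determines which tools are needed.

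The concrete failure is in Phase 2. You correctly identify that each $z \in V_1$ must be placed outside a bad set of density $O(\DD^2\tT)$ coming from the fat-shadow constraints of the $\f_j$ with $j \in (\sS_1,\sS_2]$. But in your Phase 1 the core vertices of $H_i$ are placed \emph{en bloc}, by selecting some $B_i$ from the $\ell$-family $\h_i$ produced by Lemma~\ref{lem:fat}; there is no vertex-by-vertex step into which this avoidance can be ``folded''. The anchors $\b_i$ only determine the restriction to $\g_i$, not the image $\phi_1(V(H_i))$, and the lifting via Lemma~\ref{lem:matchuncap} selects the \emph{expansion} vertices of $H_i^+$, not the core vertices $V_1$ on which the Phase-2 constraints live. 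The paper resolves this by reversing the order: it performs the shadow embedding for all of $[\sS_2]$ \emph{first and jointly}, using the purpose-built Lemma~\ref{lem:embedfat}, which embeds $V_1$ one $V(H_t)$ at a time via the $r$-partite Tur\'an bound of Theorem~\ref{thm:CFS} and at each step deletes exactly the bad set you describe before invoking the Tur\'an bound. Only afterwards is the lifting for $[\sS_1]$ performed.

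A second gap is the assumption that Lemma~\ref{lem:matchuncap} suffices for lifting in all regimes. After restricting to a fat-shadow $r$-edge, the residual density is $c_i \approx \mu(\g_i)/k^r$, which can be as small as $\eps\sS_1/(nk^{r-1})$; the uncapturability one can extract from $(r{+}1,2\bB)$-globalness via Lemma~\ref{lem:globaluncap} is then far weaker than the hypothesis of Lemma~\ref{lem:matchuncap}. The paper's proof splits into three ranges of $k$. When $k \ge \sqrt{C_1}\log(n/\sS_1)$ it first applies the enhanced upgrading Lemma~\ref{lem:upgrade3} (not Lemma~\ref{lem:upgrade1}) so that $\mu(\g_i) \ge e^{-k/\sqrt{C_1}}$, after which Fairness gives near-complete fat shadows and Lemma~\ref{lem:matchuncap} applies directly. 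For smaller $k$ it must use the sharper Lemma~\ref{lem:matchuncap+}, whose uncapturability hypothesis is only $(2dm,(2mk/n)^d)$; and when $k < 2C_1\log(s/\sS_1)$ it must additionally pause after the shadow embedding of $H'_1,\dots,H'_{\sS_2}$ to lift just the $[\sS_1]$ part before extending to $(\sS_1,\sS]$ via Lemma~\ref{lem:embedlarge}, precisely because the full image $\im\phi$ would otherwise be too large to preserve uncapturability. None of these distinctions or tools appear in your proposal.
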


Next we deduce Theorem \ref{thm:juntarefined} from Lemma \ref{lem:embedmix}.

\begin{proof}[Proof of Theorem \ref{thm:juntarefined}]
Let $G \in \g(r,s,\DD)$ with $\cc{G}=\sS$ and
$C \gg C_1 \gg \tT^{-1} \gg \dD^{-1} \gg \eps^{-1} \gg r\DD$.
Suppose $\f \sub \tbinom {[n]}{k}$
with $C \leq k \leq n/Cs$ is $G^+$-free
with $|\f| > |\s_{n,k,\sS-1}| - \dD \tbinom{n-1}{k-1}$.
We need to find $J \in \tbinom{[n]}{\sS-1}$ with
$|\f^\es_J| \leq \eps \tbinom{n-1}{k-1}$.

As in the proof of Theorem \ref{thm:junta} we let
$J = \{ i \in [n]: \mu(\f^i_i) \ge \bB \}$,
where $\bB := e^{-k/C_1} + C_1 sk/n$.
We recall that $|J| \leq \sS-1$ and
$\f^\es_J$ is $(a,\mu(\f^\es_J)/2)$-uncapturable
with $a = \mu(\f^\es_J) n/4k\bB$.
Replacing `$\eps$' in that proof by $.1\tT^2$ we obtain
$|\f^\es_J| \leq .1\tT^2 |\s_{n,k,\sS-1}|
\le .2\tT^2 (\sS-1) \tbinom{n-1}{k-1}$.
We may assume $\sS \ge 2\tT^{-1}$, otherwise
$|\f^\es_J| \leq \tT \tbinom{n-1}{k-1}$.
As $|\f^\es_J| \ge |\f| - |\s_{n,k,J}|
\ge (.9(\sS-1-|J|) - \dD) \tbinom{n-1}{k-1}$
we deduce $|J| > (1-.3\tT^2)(\sS-1)$, so
$1 \le \sS_1 := \sS - |J| \le 1 + .3\tT^2 \sS \le \tT \sS$.

Now we let $S,S_1,H_1,\dots,H_\sS$ be as in Setup \ref{set:embed},
where we can greedily choose $S_1 \sub S$ with $|S_1|=\sS_1$
such that $\{ G^x_x: x \in S_1 \}$ are vertex disjoint,
as any partial choice of $S_1$ forbids at most
$\sS_1 (\DD r)^2 < \sS$ vertices of $S$.
We write $J = \{j_{\sS_1+1},\dots,j_{\sS}\}$,
let $\f_i = \f^\es_J$ for $i \in [\sS_1]$
and $\f_i = \f^{j_i}_J$ for $i \in [\sS_1+1,\sS]$,
where we can assume $|\f_{\sS_1+1}| \ge \dots \ge |\f_\sS|$.
We note that $\mu(\f_{\sS_2}) > 1-\tT$,
as otherwise we would have the contradiction $|\f| < |\f^\es_J| +
\big( \sS_2-\sS_1 + (\sS-\sS_2)(1-\tT) \big) \tbinom{n-1}{k-1}
< \big( (1+.2\tT^2)\sS - \sS_1 - \tT(\sS-\sS_2) \big) \tbinom{n-1}{k-1}
< |\s_{n,k,\sS-1}| - \dD \tbinom{n-1}{k-1}$.

Now we must have $\mu(\f^\es_J) \le \eps \sS_1 k/n$;
otherwise $\f^\es_J$ is $(C_1 \sS_1, \eps \sS_1 k/2n)$-uncapturable,
so $\f_1,\dots,\f_\sS$ cross contain $H_1^+,\dots,H_\sS^+$
by Lemma \ref{lem:embedmix}, contradicting $\f$ being $G^+$-free.
As $|\f^\es_J| \ge |\f| - |\s_{n,k,J}|
\ge (.9(\sS_1-1) - \dD) \tbinom{n-1}{k-1}$
we deduce $.9(\sS_1-1) - \dD \le \eps \sS_1$,
so $\sS_1=1$ and $\mu(\f^\es_J) \le \eps k/n$.
\end{proof}

The remainder of the section will be devoted
to the proof of Lemma \ref{lem:embedmix}.
Similarly to the proofs of our previous embedding results
(Lemmas \ref{lem:embedlarge} and \ref{lem:embeduncap}),
the strategy will be to find shadow embeddings
and then lifting embeddings. However, there are further
technical challenges to overcome in the current setting,
particularly when the uniformity $k$ of our families is small,
when we need to `pause' the shadow embedding
after embedding $H'_i = H_i$ for $i \in [\sS_1]$,
then lift this part of the embedding,
then complete the shadow embedding,
and finally lift the remainder of the embedding.
The shadow embedding lemma will be presented
in the next subsection. The third subsection
contains further results on upgrading
uncapturability to globalness,
which we call `enhanced upgrading',
as they obtain globalness parameters that are
significantly stronger than one might expect,
and this will be a crucial technical ingredient of the proof.
In the fourth subsection we establish an improved lifting result
that allows for a much weaker uncapturability assumption
than that in Lemma \ref{lem:matchuncap}.
We conclude with the proof of Lemma \ref{lem:embedmix}
in the final subsection.

\subsection{Shadow embeddings}

Here we extend the argument used in Lemma \ref{lem:nearcomplete}
to prove the following lemma that will be applied
to show that the fat shadows of $\f_1,\dots,\f_\sS$
as in Lemma \ref{lem:embedmix} cross contain $H_1,\dots,H_\sS$.
Whereas before we were embedding into nearly complete hypergraphs,
now many of our hypergraphs will be quite sparse,
which makes the embedding more challenging:
the idea is to replace the naive greedy arguments
by Theorem \ref{thm:CFS}, here making key use of
our observation that we can assume $G$ is $r$-partite.

\begin{lem} \label{lem:embedfat}
Let $C \gg \eta^{-1} \gg K \gg r\DD$ and $0<\tT<\eta$.
Let $G,H_1,\dots,H_\sS$ be as in Setup \ref{set:embed}
and $\g_1,\dots,\g_\sS \subset\tbinom{[n]}{r}$ with $n > C\sS$.
Suppose $\mu(\g_i) \ge 1-\eta$ for $i \in [\sS_2+1,\sS]$,
$\mu(\g_i) \ge 1-\tT$ for $i \in [\sS_1+1,\sS_2]$
and $\mu(\g_i) \ge \tT^{1/2r} + n^{-1/K} + r\DD\sS_1/n$
 for $i \in [\sS_1]$. Let $c = 1 - \tT^{1/r}$.
Then $\pl_c \g_1, \dots, \pl_c \g_{\sS_2}$
cross contain $H'_1,\dots,H'_{\sS_2}$
and $\g_1,\dots,\g_\sS$
cross contain $H_1,\dots,H_\sS$.
\end{lem}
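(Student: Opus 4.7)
The plan is to build an embedding $\phi\colon V(G) \to [n]$ in two stages, yielding the first claim after Stage 1 and the second after Stage 2. First I would apply Lemma \ref{lem:markov} iteratively to convert near-completeness of each $\g_i$ ($i > \sS_1$) into near-completeness of each of its fat shadows: with $c = 1-\tT^{1/r}$, one obtains $\mu(\pl_c^{r'}\g_i) \ge 1 - \tT^{1-1/r}$ for every $r' \in [r-1]$ and every $i \in [\sS_1+1,\sS_2]$, and the analogous bound with $\eta$ replacing $\tT$ for $i \in [\sS_2+1,\sS]$. These near-complete fat shadows are the targets for the greedy construction below.

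Stage 1 embeds $V_1 = \bigcup_{i \in [\sS_1]} V(H_i)$ so that for each $i \in [\sS_2]$ the partial image of $H'_i$ lies in $\pl_c\g_i$. Since the links $H_1,\dots,H_{\sS_1}$ are vertex-disjoint, I would embed them sequentially: at step $i$, $H_i$ is an $r$-partite $r$-graph with at most $\DD$ edges and maximum degree $\DD$, so it suffices to invoke Theorem \ref{thm:CFS} on a restricted subfamily $\g_i^{\ast} \sub \g_i$. The restriction discards those edges $E \in \g_i$ that either meet the (at most $r\DD\sS_1$) previously used vertices, or contain a subset $f$ which, together with already-placed parts of $V_1$, would force $\phi(e \cap V_1) \notin \pl_c^{|e \cap V_1|}\g_{i'}$ for some $e \in H_{i'}$ with $i' \in [\sS_1+1,\sS_2]$. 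A union bound over these constraint types yields a density loss of order $\tT^{1-1/r}$, which together with the vertex loss keeps $\mu(\g_i^{\ast})$ above the threshold required by Theorem \ref{thm:CFS}; the hypothesis $\mu(\g_i) \ge \tT^{1/2r} + n^{-1/K} + r\DD\sS_1/n$ is precisely what furnishes this margin.

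Stage 2 extends $\phi$ from $V_1$ to $V(G)$ by embedding the vertices of $V(H_i)\sm V_1$ for $i \in [\sS_1+1,\sS]$. For $i \in [\sS_2+1,\sS]$ the set $V(H_i)$ is disjoint from $V_1$ and $\g_i$ has density $\ge 1-\eta$, so Lemma \ref{lem:nearcomplete} applied against the near-complete fat shadow $\pl_c^{r-1}\g_i$ places $H_i$ without collision. For $i \in [\sS_1+1,\sS_2]$, Stage 1 has already placed $\phi(e \cap V_1) \in \pl_c^{|e \cap V_1|}\g_i$ for each $e \in H_i$, and I would finish each edge greedily one vertex at a time, losing at most a $\tT^{1/r}$-fraction of candidates per step by the fat shadow property; this fits comfortably inside the vertex budget since $r\DD\sS \ll n$. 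Together these produce the desired cross embedding of $H_1,\dots,H_\sS$ in $\g_1,\dots,\g_\sS$.

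The main obstacle is Stage 1: making the one-shot sparse Conlon--Fox--Sudakov embedding compatible with the many fat shadow constraints coming from the denser families. The resolution is to perform the restriction to $\g_i^{\ast}$ \emph{before} invoking Theorem \ref{thm:CFS}; this is affordable precisely because the dense fat shadows have measure so close to $1$, and the gap $\tT^{1/2r}$ in the lemma hypothesis absorbs the union-bound cost.
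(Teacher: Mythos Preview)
Your overall strategy matches the paper's proof: iterated fat shadows for the dense families, sequential embedding of the sparse links $H_1, \dots, H_{\sS_1}$ via the Conlon--Fox--Sudakov Tur\'an bound applied to a suitably restricted host, and finishing with the greedy Lemma~\ref{lem:nearcomplete}. However, there is a genuine gap in Stage~1.

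At step $t$ you propose to embed $H_t$ into the restricted family $\g_t^\ast$; a copy of $H_t$ inside $\g_t^\ast$ only guarantees that every subset of a \emph{single} embedded edge of $H_t$ lies in the required fat shadows. But the constraints you must preserve are of the form $\phi(A \cap V(H_t)) \in \pl_c \g_{i'}$ for edges $A$ of the exclusive links $H_{i'}$ with $i' \in [\sS_1+1,\sS_2]$, and when $r \ge 3$ the set $A \cap V(H_t)$ need not be contained in any single edge of $H_t$. For a concrete obstruction, take $r=3$ with edges $\{x_t,a,b\}$, $\{x_t,c,d\}$ and $\{x_{i'},a,d\}$ in $G$; then $A = \{a,d\}$ has $A \cap V(H_t) = \{a,d\}$ spanning two distinct edges of the star $H_t$. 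A CFS embedding of $H_t$ alone therefore gives no control over $\phi(\{a,d\})$, and your union bound on ``bad subsets of $E$'' does not see this constraint.

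The paper's fix is small but essential: at step $t$ one embeds not $H_t$ but the full neighbourhood $N_t := \{e \in G : e \cap V(H_t) \ne \es\}$ into the restricted host $\h_t$, and then restricts the resulting injection to $V(H_t)$. For every relevant $A \in H_{i'}$ the $G$-edge $A \cup \{x_{i'}\}$ (or $A$ itself when $x_{i'}$ is an expansion vertex) lies in $N_t$, so $A \cap V(H_t)$ is a subset of a single embedded edge of $N_t$ and hence inherits the fat-shadow property from the restriction. Since $N_t$ is still an $r$-partite $r$-graph with $O(r\DD^2)$ edges and maximum degree $\le \DD$, Theorem~\ref{thm:CFS} still applies with the same density threshold. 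With this one modification your plan goes through essentially as written; your Stage~2 already agrees with the paper's.
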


\begin{proof}
For each $i \in [\sS_1+1,\sS_2]$ we define
$\g^r_i, \dots, \g^0_i$ recursively by $\g^r_i = \g_i$ and
$\g^{j-1}_i = \pl^{j-1}_{1-\tT^{1/r}} \g^j_i$ for $j \in [r]$.
Clearly each $\g^j_i \sub \pl_{c_j} \g_i$
where $c_j = 1 - (r-j)\tT^{1/r}$.

We claim that each $\mu(\g^j_i) \ge 1-\tT^{j/r}$.
To see this, we argue by induction on $r-j$.
For $r-j=0$ we have $\mu(\g^r_i) \ge 1-\tT$ by assumption.
For the induction step, consider any $j \in [r]$
and uniformly random $A \sub B \sub [n]$
with $|A|=j-1$ and $|B|=j$.
Given any $A \notin \g^{j-1}_i$
we have $\mb{P}(B \notin \g^j_i) \ge \tT^{1/r}$,
so $1 - \mu(\g^j_i) \ge \tT^{1/r} (1 - \mu(\g^{j-1}_i))$.
The claim follows.

Next we will construct a cross embedding
$\phi$ of $H'_1,\dots,H'_{\sS_2}$ in
$\pl_c \g_1, \dots, \pl_c \g_{\sS_2}$.
We recall that $H'_i=H_i$ for $i \in [\sS_1]$
and all $H'_i$ are defined on $V_1$, which is
the disjoint union of $V(H_1),\dots,V(H_{\sS_1})$.
We proceed in $\sS_1$ steps,
defining $\phi$ on $V(H_t)$ at step $t$.
When $\phi$ has been defined on
$U_t := \bigcup_{i \le t} V(H_i)$,
we say $\phi$ is \emph{$t$-good} if
$\phi(e \cap U_t) \in \g^{|e \cap U_t|}_i$
for each $i \in [\sS_2]$ and $e \in \g_i$
with $e \cap U_t \ne \es$.

We note that if $\phi$ is $t$-good then
$\phi(H_i) \sub \g^r_i = \g_i = \pl_c \g_i$ for all $i \in [t]$
and if $\phi$ is $\sS_1$-good then
$\phi(H_i) \sub \pl_c \g_i$ for all $i \in [\sS_2]$.
As $\phi$ defined on $U_0=\es$ is trivially $0$-good,
it remains to show for any $t \in [\sS_1]$ that we can extend
any $(t-1)$-good $\phi$ to a $t$-good embedding.

For clarity of exposition, we start by showing the case $t=1$.
Obtain $\h_1$ from $\g_1$ by removing any edge $e$ such that
$f \notin \g^{|f|}_i$ for some $\es \ne f \subset e$
and $i \in [\sS_2]$ with $V(H_i) \cap V(H_1) \ne \es$.
There are at most $r\DD^2$ such $i$,
so by a union bound and the above claim we have
$\mu(\h_1) \ge \mu(\g_1) -  r\DD^2 2^r \tT^{1/r} > n^{-1/K}$.
We can assume that $G$ is $r$-partite,
so by Theorem \ref{thm:CFS} we can find an embedding $\phi'_1$
of $N_1 := \{e \in G: e \cap V(H_1) \ne \es \}$ in $\h_1$.
Now $\phi = \phi'\mid_{V(H_1)}$ is $1$-good.

Now we consider general $t \in [\sS_1]$.
Obtain $\h_t$ from $(\g_t)^\es_{\phi(U_{t-1})}$
by removing any edge $e$ such that $f \notin \g^{|f|}_i$
for some  $\es \ne f \sm \phi(A') \subset e$
where $A \in H_i$ with $V(H_i) \cap V(H_t) \ne \es$
and $A' = A \cap U_{t-1}$.
For any such non-empty $A'$, as $\phi$ is $(t-1)$-good
we have $\phi(A') \in \g^{|A'|}_i$,
so $\mu((\g^j_i)^{A'}_{A'}) \ge 1 - (j-|A'|) \tT^{1/r}$
for any $|A'| \le j \le r$. Thus a union bound gives
$\mu(\h_t) \ge \mu(\g_t) - |U_{t-1}|k/n - r\DD^2 2^r r\tT^{1/r} > n^{-1/K}$.
Now as in the case $t=1$ we obtain a $t$-good extension
by embedding $N_t := \{e \in G: e \cap V(H_t) \ne \es \}$
in $\h_t$ and restricting to $V(H_t)$.

Thus we have constructed a cross embedding
$\phi$ of $H'_1,\dots,H'_{\sS_2}$ in
$\pl_c \g_1, \dots, \pl_c \g_{\sS_2}$.
To complete the proof we extend $\phi$
to a cross embedding $H_1,\dots,H_\sS$
in $\g_1,\dots,\g_\sS$, which requires
$\phi(e \sm V_1) \in (\g_i)^{e \cap V_1}_{e \cap V_1}$
for all $e \in H_i$, $i \in [\sS_1+1,\sS]$;
this is possible by Lemma \ref{lem:nearcomplete}.
\end{proof}

\subsection{Enhanced upgrading}

This subsection provides further results
on upgrading uncapturability to globalness with enhanced parameters
that will be crucial in later proofs. We start by showing that
every family has a restriction that is global or large.

\begin{lem} \label{lem:focus}
Let $b,r \in \mb{N}$, $\aA>1$ and
$\f \subset \tbinom{[n]}{k}$ with $k \ge br$.
Then there is $B \sub [n]$ with $|B| \le br$
such that if $\mu(\f^B_B) < \aA^b \mu(\f)$
then $\f^B_B$ is $(r,\aA\mu(\f^B_B))$-global
with $\mu(\f^B_B) \ge \aA^{1_{B \ne \es}}\mu(\f)$.
\end{lem}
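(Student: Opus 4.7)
\textbf{Proof plan for Lemma \ref{lem:focus}.}
The plan is to run an iterative greedy restriction algorithm: at each step we either declare the current restriction to be global in the required sense, or we find a small set whose restriction boosts the measure by a factor of at least $\aA$. Since the measure is bounded by $1$ and each iteration adds at most $r$ coordinates, the process terminates within $b$ rounds.

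More precisely, start with $B_0 = \es$ and $t=0$. Given $B_t$ with $|B_t| \le tr$, consider $\f^{B_t}_{B_t} \subset \tbinom{[n]\sm B_t}{k-|B_t|}$ (note $|B_t| \le tr \le br \le k$ by hypothesis, so this makes sense). If $\f^{B_t}_{B_t}$ is $(r,\aA\mu(\f^{B_t}_{B_t}))$-global, halt and output $B = B_t$. Otherwise, by the definition of globalness, there is $R \sub [n] \sm B_t$ with $|R| \le r$ (and $|R| \ge 1$, assuming $\mu(\f)>0$, which we may) such that $\mu((\f^{B_t}_{B_t})^R_R) > \aA\mu(\f^{B_t}_{B_t})$; set $B_{t+1} = B_t \cup R$ and iterate. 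If we have not halted by the end of iteration $t=b$, halt and output $B = B_b$.

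The key invariants, proved by trivial induction on $t$, are $|B_t| \le tr$ and $\mu(\f^{B_t}_{B_t}) \ge \aA^t \mu(\f)$. Thus if the algorithm halts via the globalness test at some $t \le b$, we output $B$ with $|B| \le br$, $\f^B_B$ is $(r,\aA\mu(\f^B_B))$-global, and $\mu(\f^B_B) \ge \aA^t \mu(\f)$; in particular, since $B \neq \es$ forces $t \ge 1$ (each iteration strictly enlarges $B$), we get $\mu(\f^B_B) \ge \aA^{\mathbf{1}_{B \ne \es}} \mu(\f)$ as required. If instead the algorithm halts after $b$ iterations without success, then $\mu(\f^B_B) \ge \aA^b \mu(\f)$, so the hypothesis $\mu(\f^B_B) < \aA^b \mu(\f)$ of the conditional conclusion fails and there is nothing to prove.

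There is no real obstacle here — the argument is a clean potential/termination argument and the bookkeeping is straightforward; the only thing to watch is that the $k \ge br$ hypothesis is exactly what makes all the intermediate restrictions $\f^{B_t}_{B_t}$ non-trivially defined, and that $|R| \ge 1$ at each step (guaranteed by $\mu(\f)>0$, with the degenerate case $\mu(\f)=0$ handled by taking $B = \es$).
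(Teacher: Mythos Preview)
Your proposal is correct and follows essentially the same approach as the paper's proof: an iterative greedy restriction that either certifies $(r,\aA\mu)$-globalness or boosts the measure by a factor $\aA$, terminating after at most $b$ rounds. The paper's proof is terser but identical in substance; your added bookkeeping (the invariants, the $k\ge br$ check, the $\mu(\f)=0$ edge case) is fine and not in conflict with anything in the paper.
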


\begin{proof}
We consider $\f_0,\f_1,\dots$, where $\f_0=\f$, and if $i<b$
and $\f_i$ is not $(r,\aA\mu(\f_i))$-global then we let
$\f_{i+1} = (\f_i)^{B_i}_{B_i}$ so that $|B_i| \le r$
and $\mu(\f_{i+1}) > \aA \mu(\f_i)$. When this sequence
terminates at some $\f_t$ we let $B = \bigcup_{i \le t} B_i$.
Clearly $\f^B_B=\f_t$ has the required properties.
\end{proof}

By iterating the previous result we obtain the following upgrading lemma.

\begin{lem} \label{lem:upgrade2}
Suppose $b,r,m \in \mb{N}$ and for each $i \in [m]$ that $\aA_i>1$
and $\f_i \subset \tbinom{[n]}{k_i}$ with $rb \le k_i \le n/2rm\aA_i$
is $(rbm,\bB_i)$-uncapturable with $\aA_i^b \bB_i > 2rmk_i/n$.
Then there are disjoint $B_1,\dots,B_m$
with each $|B_i| \le rb$ such that,
setting $\g_i = (\f_i)^{B_i}_B$ where $B = \bigcup_i B_i$,
if $\mu(\g_i) < \aA_i^b \bB_i/2$ then
$\g_i$ is $(r,4\aA_i \mu(\g_i))$-global
with $\mu(\g_i) > \aA_i^{1_{B_i \ne \es}} \bB_i/2$.
\end{lem}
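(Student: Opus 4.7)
The natural approach is to iterate Lemma \ref{lem:focus} sequentially across the $m$ families while maintaining disjointness of the chosen sets $B_i$. Process the families in the order $i = 1, \dots, m$. At step $i$, let $B^{<i} = B_1 \cup \cdots \cup B_{i-1}$ and $\f'_i := (\f_i)^{\es}_{B^{<i}}$, which lives on $[n] \sm B^{<i}$. The $(rbm, \bB_i)$-uncapturability of $\f_i$ together with $|B^{<i}| \le rb(i-1) \le rbm$ gives $\mu(\f'_i) \ge \bB_i$, and $k_i \ge rb$ by hypothesis. Apply Lemma \ref{lem:focus} to $\f'_i$ with parameters $(b, r, \aA_i)$ to obtain $B_i \sub [n] \sm B^{<i}$ with $|B_i| \le rb$ such that, writing $\f''_i := (\f'_i)^{B_i}_{B_i}$, either (a) $\mu(\f''_i) \ge \aA_i^b \bB_i$, or (b) $\f''_i$ is $(r, \aA_i \mu(\f''_i))$-global with $\mu(\f''_i) \ge \aA_i^{1_{B_i \ne \es}} \bB_i$.

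Setting $B = \bigcup_i B_i$ and $B^{>i} = B \sm (B^{<i} \cup B_i)$, we have $|B^{>i}| \le rbm$, and the desired $\g_i := (\f_i)^{B_i}_B$ is identified with the subfamily of $\f''_i$ whose edges avoid $B^{>i}$. The key step is a transfer of measure and globalness from $\f''_i$ to $\g_i$. The uniformity bound $k_i \le n/(2rm\aA_i)$ together with $|B^{>i}| \le rbm$ implies that a uniformly random $(k_i - |B_i|)$-subset of $[n] \sm (B^{<i} \cup B_i)$ meets $B^{>i}$ with probability at most $b/\aA_i$; conditioning on this event and its complement yields the estimates $\mu(\g_i) \ge \mu(\f''_i) - b/\aA_i$ and $\mu((\g_i)^R_R) \le \mu((\f''_i)^R_R)/(1 - b/\aA_i)$ for every $R \sub [n] \sm B$ with $|R| \le r$.

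In case (a), combining with the hypothesis $\aA_i^b \bB_i > 2rmk_i/n$ gives $\mu(\g_i) \ge \aA_i^b \bB_i/2$, which is the vacuous branch of the conclusion. In case (b), the transfer yields $\mu(\g_i) \ge \mu(\f''_i)/2 \ge \aA_i^{1_{B_i \ne \es}} \bB_i/2$ and, invoking globalness of $\f''_i$, $\mu((\g_i)^R_R) \le 2\mu((\f''_i)^R_R) \le 2\aA_i \mu(\f''_i) \le 4\aA_i \mu(\g_i)$, exactly the required conclusion. The main subtlety lies in the transfer step: the hypotheses $k_i \le n/(2rm\aA_i)$ and $\aA_i^b \bB_i > 2rmk_i/n$ are precisely tuned so that restricting $\f''_i$ to avoid $B^{>i}$ costs at most a factor of $2$ in measure and globalness, which is exactly the loss absorbed by the $4\aA_i$ (as opposed to $\aA_i$) in the conclusion's globalness parameter.
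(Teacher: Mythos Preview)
Your overall strategy matches the paper's: apply Lemma \ref{lem:focus} sequentially to $(\f_i)^{\es}_{B^{<i}}$, obtain $B_i$, and then control the passage from $\f''_i$ to $\g_i = (\f''_i)^{\es}_{B^{>i}}$. The gap is in that last transfer step for case (b).

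Your stated transfer estimate is the \emph{additive} bound $\mu(\g_i) \ge \mu(\f''_i) - b/\aA_i$, coming from the crude event that a random edge meets $B^{>i}$. From this you claim $\mu(\g_i) \ge \mu(\f''_i)/2$, but that would require $\mu(\f''_i) \ge 2b/\aA_i$. In case (b) we only know $\mu(\f''_i) \ge \bB_i$ (or $\aA_i\bB_i$), and nothing in the hypotheses forces $\bB_i$ to dominate $b/\aA_i$; indeed $\bB_i$ may be polynomially small in $k_i/n$. So the additive loss can swamp $\mu(\f''_i)$ entirely, and the chain $\mu((\g_i)^R_R) \le 2\aA_i\mu(\f''_i) \le 4\aA_i\mu(\g_i)$ breaks at the last step. (Separately, your second transfer estimate $\mu((\g_i)^R_R) \le \mu((\f''_i)^R_R)/(1-b/\aA_i)$ presupposes $b/\aA_i < 1$, which is also not guaranteed.)

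The missing idea, which the paper supplies, is that in case (b) you already know $\f''_i$ is $(r,\aA_i\mu(\f''_i))$-\emph{global}, and globalness converts the loss from additive to multiplicative. Concretely, Lemma \ref{lem:globaluncap} turns $(1,\aA_i\mu(\f''_i))$-globalness into $(n/2\aA_i k_i,\, \mu(\f''_i)/2)$-uncapturability; since $|B^{>i}| \le rbm$ falls inside this range one gets $\mu(\g_i) \ge \mu(\f''_i)/2$ directly. The globalness of $\g_i$ then follows from Lemma \ref{lem:globalrestrict} rather than from your conditioning estimate. Your case (a) argument (where $\mu(\f''_i)$ is large) is fine in spirit and coincides with the paper's union-bound step there.
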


\begin{proof}
We will choose $B_1,\dots,B_m$ sequentially and define
$\f^0_i,\dots,\f^m_i$ for $i \in [m]$ by $\f^0_i = \f_i$,
$\f^i_j = (\f^{i-1}_j)^{\es}_{B_i}$ for $j \ne i$
and $\f^i_i = (\f^{i-1}_i)^{B_i}_{B_i}$.
At step $i$, we have $\mu(\f^{i-1}_i) \ge \bB_i$
by uncapturability of $\f_i$, so by Lemma \ref{lem:focus}
we can choose $B_i$ with $|B_i| \le rb$
such that if $\mu(\f^i_i) < \aA_i^b \mu(\f^{i-1}_i)$
then $\f^i_i$ is $(r,\aA\mu(\f^i_i))$-global
with $\mu(\f^i_i) \ge \aA_i^{1_{B_i \ne \es}} \bB_i$.
After step $m$, for any $i \in [m]$
we have $\g^m_i = \g_i = (\f_i)^{B_i}_B$.
If $\mu(\f^i_i) \ge \aA_i^b \mu(\f^{i-1}_i)$ then
$\mu(\g_i) \ge \aA_i^b \bB_i - rmk_i/n \ge \aA_i^b \bB_i/2$.
Otherwise, $\f^i_i$ is $(r,\aA_i\mu(\f^i_i))$-global
with $\mu(\f^i_i) \ge \aA_i^{1_{B_i \ne \es}}\mu(\f)$,
and $(n/2k_i\aA_i,\mu(\f^i_i)/2)$-uncapturable
by Lemma \ref{lem:globaluncap},
so $\mu(\g_i) > \mu(\f^i_i)/2 \ge \aA_i^{1_{B_i \ne \es}} \bB_i/2$,
and $\g_i$ is $(r,4\aA_i\mu(\g_i))$-global
by Lemma \ref{lem:globalrestrict}.
\end{proof}

For our final upgrading lemma we apply the previous one twice:
the idea is that the globalness from the first application provides
the second application with much better uncapturability.

\begin{lem} \label{lem:upgrade3}
Suppose $b,r,m \in \mb{N}$ and for each $i \in [m]$
that $\f_i \subset \tbinom{[n]}{k_i}$ with $rb \le k_i \le n/2rmb^2$
is $(2m,\bB_i)$-uncapturable with $\bB_i > 8rmk_i/bn$.
Then there are disjoint $B_1,\dots,B_m$
with each $|B_i| \le rb+2$ such that,
setting $\g_i = (\f_i)^{B_i}_B$ where $B = \bigcup_i B_i$,
if $\mu(\g_i) < 2^b \bB_i/8$ then
$\g_i$ is $(r,8\mu(\g_i))$-global
with $\mu(\g_i) > 2^{1_{B_i \ne \es}} \bB_i/8$.
\end{lem}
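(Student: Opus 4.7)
The plan is to apply Lemma \ref{lem:upgrade2} twice, using Lemma \ref{lem:globaluncap} as the essential bridge between the two applications. The key observation is that after one application of Lemma \ref{lem:upgrade2}, the resulting families are either already measure-boosted or $(1,\cdot)$-global; in the latter case, Lemma \ref{lem:globaluncap} converts this globalness into very strong uncapturability, which is exactly what enables a second, more powerful application of Lemma \ref{lem:upgrade2} with the full parameters $r$ and $b$.

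Concretely, the first application uses parameters $r'=1$, $b'=2$, $\aA_i=2$: the $(r'b'm,\bB_i)=(2m,\bB_i)$-uncapturability hypothesis matches what we assume, and the multiplicative condition $4\bB_i>2mk_i/n$ follows from $\bB_i>8rmk_i/(bn)$ with $r,b\ge 1$. This produces disjoint $B_i^{(1)}$ with $|B_i^{(1)}|\le 2$ and corresponding $\g_i^{(1)}=(\f_i)^{B_i^{(1)}}_{B^{(1)}}$. Each index $i$ then falls into one of two cases: either (A) $\mu(\g_i^{(1)})\ge 2\bB_i$ (already boosted), or (B) $\g_i^{(1)}$ is $(1,8\mu(\g_i^{(1)}))$-global with $\mu(\g_i^{(1)})\ge 2^{1_{B_i^{(1)}\ne\es}}\bB_i/2$.

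For the second application, I first focus on case (B), where Lemma \ref{lem:globaluncap} promotes the $(1,8\mu(\g_i^{(1)}))$-globalness into $(a,\mu(\g_i^{(1)})/2)$-uncapturability on $[n]\sm B^{(1)}$ with $a=(n-|B^{(1)}|)/(16k_i)$. Since $k_i\le n/(2rmb^2)$, this $a$ is of order $rmb^2$, comfortably exceeding $rbm$. We may then apply Lemma \ref{lem:upgrade2} to the case-(B) families with $r'=r$, $b'=b$, $\aA_i=2$, and new uncapturability threshold $\bB_i''=\mu(\g_i^{(1)})/2\ge\bB_i/4$; the multiplicative condition $2^b\bB_i''>2rmk_i/(n-|B^{(1)}|)$ reduces, via $\bB_i>8rmk_i/(bn)$, to $2^b>b$, which holds for all $b\ge 1$. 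This yields disjoint $B_i^{(2)}\sub[n]\sm B^{(1)}$ of size at most $rb$. Setting $B_i=B_i^{(1)}\cup B_i^{(2)}$ and $\g_i=(\f_i)^{B_i}_B$ gives $|B_i|\le rb+2$, and the required conclusion follows from the second application: the bound $\bB_i''\ge\bB_i/4$ translates the threshold $\mu(\g_i)<2^b\bB_i''/2$ into $\mu(\g_i)<2^b\bB_i/8$, while the multiplicative lower bound $\mu(\g_i)>2^{1_{B_i^{(2)}\ne\es}}\bB_i''/2\ge 2^{1_{B_i^{(2)}\ne\es}}\cdot 2^{1_{B_i^{(1)}\ne\es}}\bB_i/8\ge 2^{1_{B_i\ne\es}}\bB_i/8$ supplies the measure lower bound.

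The main obstacle will be case (A), where the measure has been boosted to at least $2\bB_i$ but no globalness information is produced, so these families cannot be fed into the second application as stated. Simply setting $B_i^{(2)}=\es$ for such indices can cause $\mu(\g_i)$ to drop below the required threshold once we restrict away from the $B^{(2)}$ coordinates chosen by the case-(B) families, since $|B^{(2)}|k_i/n$ can dominate $\bB_i$ when $\bB_i$ is very small. I expect the cleanest fix to be a short iteration: for each case-(A) family we apply the first step once more, which either produces globalness (so the family rejoins the case-(B) treatment at the next stage) or pushes $\mu(\g_i)$ past the threshold $2^b\bB_i/8$, at which point the conclusion of the lemma is vacuously satisfied. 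Tracking constants carefully so that the total restriction size stays within the $rb+2$ budget, and so that the $+2$ slack absorbs the iteration cost, will be the main technical challenge of the proof.
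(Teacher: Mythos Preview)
Your two-step architecture is exactly the paper's, and your treatment of case~(B) is correct: globalness from the first step feeds Lemma~\ref{lem:globaluncap} to produce enough uncapturability for a second call to Lemma~\ref{lem:upgrade2}. The gap is in case~(A), and your proposed iteration is both unnecessary and unlikely to fit the size budget: each repeat of the first step with $\aA_i=2,\ b'=2$ gains you at most a factor~$4$ in measure, so reaching $2^b\bB_i/8$ could take $\Theta(b)$ rounds at cost $2$ coordinates each.

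The fix is a one-parameter change. In the first application of Lemma~\ref{lem:upgrade2} take $\aA_i=b$ (keeping $r'=1$, $b'=2$). This still uses only the $(2m,\bB_i)$-uncapturability you have, and still gives $|S_i|\le 2$. Now case~(A) reads $\mu(\h_i)\ge b^2\bB_i/2$, which is strong enough that a crude union bound already yields $(rbm,\bB_i/4)$-uncapturability: for $|J|\le rbm$ one has $|J|k_i/n\le rbm\cdot\bB_i b/(8rm)=b^2\bB_i/8$, so $\mu((\h_i)^\es_J)\ge b^2\bB_i/2-b^2\bB_i/8\ge\bB_i/4$. In case~(B) the family is now $(1,4b\mu(\h_i))$-global with $\mu(\h_i)>\bB_i/2$, and Lemma~\ref{lem:globaluncap} gives $(n/8bk_i,\bB_i/4)$-uncapturability; here $n/8bk_i\ge rbm$ because $k_i\le n/2rmb^2$. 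Thus \emph{every} $\h_i$ is $(rbm,\bB_i/4)$-uncapturable, and you can feed all of them simultaneously into a second application of Lemma~\ref{lem:upgrade2} with $(\aA_i,r,b)=(2,r,b)$ and threshold $\bB_i/4$. No case split, no iteration, and the total size is $|S_i|+|S'_i|\le 2+rb$.
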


\begin{proof}
We start by applying Lemma \ref{lem:upgrade2}
with $(b,1,2)$ in place of $(\aA_i,r,b)$.
This gives disjoint $S_1,\dots,S_m$
with each $|S_i| \le 2$ such that,
setting $\h_i = (\f_i)^{S_i}_S$ where $S = \bigcup_i S_i$,
if $\mu(\h_i) < b^2 \bB_i/2$ then
$\h_i$ is $(1,4b\mu(\h_i))$-global
with $\mu(\h_i) > \bB_i/2$.

We claim that each $\h_i$ is $(rbm,\bB_i/4)$-uncapturable.
Indeed, this holds by a union bound
if $\mu(\h_i) \ge b^2 \bB_i/2$, as then
$\mu((\h_i)^\es_B) \ge \mu(\h_i) - |J|k_i/n \ge \bB_i/4$
whenever $|J| \le rbm$, as $\bB_i \ge 8rmk_i/bn$.
On the other hand, if $\h_i$ is $(1,4b\mu(\h_i))$-global
with $\mu(\h_i) > \bB_i/2$
then $\h_i$ is $(n/2bk_i,\mu(\h_i)/2)$-uncapturable
by Lemma \ref{lem:globaluncap},
so $(rbm,\bB_i/4)$-uncapturable, as $k_i \le n/2rmb^2$.

Now we can apply Lemma \ref{lem:upgrade2} again
to $\h_1,\dots,\h_m$ with $(2,r,b)$ in place of $(\aA_i,r,b)$.
This gives disjoint $S'_1,\dots,S'_m$
with each $|S'_i| \le rb$ such that,
setting $\g_i = (\h_i)^{S'_i}_{S'}$ where $S' = \bigcup_i S'_i$,
if $\mu(\g_i) < 2^b \bB_i/8$ then
$\g_i$ is $(r,8\mu(\g_i))$-global
with $\mu(\g_i) > 2^{1_{S'_i \ne \es}} \bB_i/8$.
Thus $B_i=S_i \cup S'_i$ for $i \in [m]$ are as required.
\end{proof}

\subsection{Refined capturability for matchings}

Here we prove the following sharper version of Lemma \ref{lem:matchuncap},
obtaining cross matchings under a much weaker uncapturability condition.

\begin{lem} \label{lem:matchuncap+}
Let $C \gg K \gg d \ge 1$ and $\f_i \sub \tbinom{[n]}{k_i}$
with $k \le k_i \le Kk$ for $i \in [s]$, where $2d \le k \le n/Cs$.
Suppose $\f_i$ is $(2dm, (2mk_i/n)^d )$-uncapturable for $i \in [m]$
and $\mu(\f_i) > 12(s + Km\log\tfrac{n}{mk})k_i/n$ for $i>m$.
Then $\f_1,\dots,\f_s$ cross contain a matching.
\end{lem}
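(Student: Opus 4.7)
The plan is to adapt the proof of Lemma \ref{lem:matchuncap}, using the enhanced upgrading of Lemma \ref{lem:upgrade3} in place of Lemma \ref{lem:upgrade1'} and iterating the sharp threshold boost of Theorem \ref{thm:sharpset}.2, in order to bridge the much larger gap between the weak uncapturability density $(2mk_i/n)^d$ and the $\Theta(mp_i)$ density required by Lemma \ref{lem:matchlargelimit}. First I would pass to the $p$-biased setting by setting $p_i = k_i/n$ and considering $\f_i^\ua$: by Lemma \ref{lem:bin}, the hypotheses become $\mu_{p_i}((\f_i^\ua)^\es_J) \gtrsim (2mp_i)^d$ for all $|J| \le 2dm$ and $i \le m$, and $\mu_{p_i}(\f_i^\ua) \gtrsim (s + Km\log(n/mk))p_i$ for $i > m$.

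Next I would apply Lemma \ref{lem:upgrade3} with $b$ of order $d$ and $r$ a constant to obtain disjoint sets $B_1,\dots,B_m$ of size $O(d)$ each such that each $\g_i := (\f_i^\ua)^{B_i}_B$ (for $i \le m$) is either already of substantial measure, or is $(r, 8\mu_{p_i}(\g_i))$-global with $\mu_{p_i}(\g_i) \ge 2^b (2mp_i)^d/8$. The improved globalness parameter obtained here is the crucial strengthening over the $(r, 2C_1^{-2})$-globalness used in the proof of Lemma \ref{lem:matchuncap}, and is what provides the `room' required by the subsequent boosting.

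The key step is then an iterative boosting: for each $i \le m$ I would apply Theorem \ref{thm:sharpset}.2 repeatedly with a small constant $\eta$. At iteration $j$ one selects a restriction $R_i^{(j)}$ of size $\le K\log(1/\mu_{j-1})$, multiplying the noise rate by $K$ and raising the measure from $\mu_{j-1}$ to $\mu_{j-1}^\eta$; after $t=O(\log d)$ iterations the exponent $d\eta^t$ drops below $1$, so the measure exceeds $\Theta(m p_i^{(t)})$ at the final noise rate $p_i^{(t)}=K^t p_i$. The geometric decay of $\log(1/\mu_j) = d\eta^j \log(1/(2mp_i))$ across iterations keeps the total restriction per family bounded by $O(Kd\log(n/mk))$, so the union $\bigcup_i (B_i \cup R_i)$ has total size $O(Kdm\log(n/mk))$, which fits inside the tolerance $12Km\log(n/mk)\cdot k_i/n$ from the $i > m$ hypothesis provided $K$ is chosen large enough relative to $d$ -- precisely what the assumption $K \gg d$ permits.

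The main obstacle is the bookkeeping required to coordinate the choices across all families: the sets $R_1,\dots,R_m$ must be pairwise disjoint and disjoint from $B$; globalness of each $\g_i$ must be preserved through every restriction step via Lemma \ref{lem:globalrestrict'}, with losses controlled by $K \gg d$; and the final noise rate $K^t p_i$ must remain small enough that $\sum_i p_i^{(t)} < 1/2$ to allow the application of Lemma \ref{lem:matchlargelimit}, which uses $k \le n/Cs$ with $C \gg K$. Once all constraints are balanced, the restricted monotone families satisfy the hypotheses of Lemma \ref{lem:matchlargelimit}, and the resulting cross matching lifts to the desired cross matching in $\f_1,\dots,\f_s$.
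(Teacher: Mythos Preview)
Your overall strategy---upgrade uncapturability to globalness, pass to the biased setting, boost via Theorem~\ref{thm:sharpset}.2, then apply Lemma~\ref{lem:matchlargelimit}---matches the paper's. But the specific upgrading step you propose does not go through.

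\textbf{The gap.} You want to apply Lemma~\ref{lem:upgrade3} with $\bB_i=(2mk_i/n)^d$ and $b$ of order $d$. That lemma requires $\bB_i > 8rmk_i/(bn)$, i.e.\ $(2mk_i/n)^d \gtrsim mk_i/(dn)$. Since $mk_i/n \ll 1$, the left side is at most $2mk_i/n$ (and for $d\ge 2$ is far smaller), so the inequality fails for every $d\ge 1$; no choice of $b$ can rescue this, as increasing $b$ helps only linearly while $\bB_i$ shrinks exponentially in $d$. Thus the ``$(r,8\mu(\g_i))$-global'' conclusion you rely on is unavailable. (There is also a minor type mismatch: Lemma~\ref{lem:upgrade3} is stated for uniform families $\f_i\subset\tbinom{[n]}{k_i}$, not for the monotone closures $\f_i^\ua$ you apply it to.)

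\textbf{What the paper does instead.} The paper uses Lemma~\ref{lem:upgrade2} (not \ref{lem:upgrade3}) in the uniform setting, with the crucial choice $\aA_i=\sqrt{n/mk_i}$ and $b=2d$, so that $\aA_i^b\bB_i=(n/mk_i)^d(2mk_i/n)^d=2^d>2mk_i/n$, and the hypothesis is satisfied. This yields only the weaker $(1,4\aA_i\mu(\g_i))$-globalness, but via Lemma~\ref{lem:globaluncap} that still gives uncapturability for sets of size $n/(8\aA_i k_i)=\tfrac18\sqrt{nm/k_i}$, which suffices to absorb the later restrictions of total size $Km\log\tfrac{n}{mk}$ since $n/mk\ge C\gg K$. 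After passing to $\h_i=\g_i^\ua$ with $p_i=k_i/n$, a \emph{single} application of Theorem~\ref{thm:sharpset}.2 with $\eta<1/(2d)$ (and $\sqrt{K}$ in place of $K$) already boosts $\mu_{p_i}\approx(mp_i)^d$ to $\mu_{Kp_i}\ge\mu^\eta>\sqrt{mp_i}>3m(Kp_i)$; your proposed $O(\log d)$-fold iteration is unnecessary, and would in any case send the noise rate to $K^{O(\log d)}p_i$, which is not controlled by the hierarchy $C\gg K\gg d$.
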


\begin{proof}
We start by upgrading uncapturability to globalness.
We apply Lemma \ref{lem:upgrade2} with
$r=1$, $b=2d$, $\aA_i = \sqrt{n/mk_i}$, $\bB_i = (mk_i/n)^d$
noting that each $rb \le k_i \le n/2rm\aA_i$
and $\aA_i^b \bB_i = 2^d > 2rmk_i/n$,
obtaining $B = \bigcup_i B_i$ with each $|B_i| \le 2d$
such that each $\g_i = (\f_i)^{B_i}_B$
is $(r,4\aA_i \mu(\g_i))$-global with $\mu(\g_i) > (2mk_i/n)^d/2$.
We note by Lemma \ref{lem:globaluncap} that
$\g_i$ is $(n/8\aA_ik_i,(2mk_i/n)^d/4)$-uncapturable.
Now we pass to the biased setting:
we let $p_i = k_i/n$ and note that $\h_i = \g_i^\ua$
is $(n/8\aA_ik_i,(2mk_i/n)^d/16)$-uncapturable by Lemma \ref{lem:bin}.

Now we will apply Lemma \ref{thm:sharpset}.2 to choose $S_1,\dots,S_m$
with each $|S_i| < K\log\tfrac{n}{mk}$ and define
$\h^0_i,\dots,\h^m_i$ for $i \in [s]$ by $\h^0_i = \h_i$,
$\h^i_j = (\h^{i-1}_j)^{\es}_{S_i}$ for $j \ne i$
and $\h^i_i = (\h^{i-1}_i)^{S_i}_{S_i}$.
At step $i$, we have $\mu(\h^{i-1}_i) \ge (2mk_i/n)^d/16$
by uncapturability of $\h_i$,
as $\sum_{j<i} |S_j| < Km\log\tfrac{n}{mk}$
and $n/8\aA_ik_i \ge \tfrac{1}{8} \sqrt{nm/Kk}$,
using $n/mk \ge C \gg K$.

Applying Lemma \ref{thm:sharpset}.2 with $\eta < 1/2d$ and
$\sqrt{K}$ in place of $K$ we obtain $S_i \subset [n]$ with
$|S_i| \le \sqrt{K}\log \mu(\h^{i-1}_i)^{-1} < K\log\tfrac{n}{mk}$
and $\mu_{Kp_i}(\h^i_i) \ge \mu^\eta > \sqrt{mp_i}$,
so $\mu_{Kp_i}(\h^m_i) \ge  \sqrt{mp_i} - |S|Kp_i > 3m(Kp_i)$.
For $i>m$, by Lemma \ref{lem:bin} and a union bound we have
$\mu_{p_i}(\h^m_i) > \mu(\f_i)/4 - |S|p_i > 3sp_i$.
Thus by Lemma \ref{lem:matchlargelimit} there is a cross matching
in $\h^m_1,\dots,\h^m_s$, and so in $\f_1,\dots,\f_s$.
\end{proof}

\subsection{Lifted embeddings}

We conclude this section by proving Lemma \ref{lem:embedmix}
which completes the proof of Theorem \ref{thm:juntarefined}.
As mentioned earlier, the proof becomes more complicated
as the uniformity $k$ of our family decreases.
When it is quite large we can bound the fat shadow using Fairness,
but otherwise we must rely on the weaker estimates from Lemma \ref{lem:fat},
so there are additional technical challenges, resolved by enhanced upgrading
and in one case pausing the shadow embedding for a preliminary lifting step.

\begin{proof}[Proof of Lemma \ref{lem:embedmix}]
Let $C \gg C_1 \gg \tT^{-1} \gg \eps^{-1} \gg r\DD$ and $C < k < n/Cs$.
Let $G,H_1,\dots,H_\sS$ be as in Setup \ref{set:embed} with $\sS_1 \le \tT \sS$.
Let $\f_i \subset\tbinom{[n]}{k}$ for $i \in [\sS_1]$ and
$\f_i \subset\tbinom{[n]}{k-1}$ for $i \in [\sS_1+1,\sS]$.
Suppose $\f_i$ is $(C_1 \sS_1, \eps \sS_1 k/n)$-uncapturable for $i \in [\sS_1]$,
that $\mu(\f_i) \ge 1-\tT$ for $i \in [\sS_1+1,\sS_2]$, and
$\mu(\f_i) \ge \bB := e^{-k/C_1} + C_1 sk/n$ for $i \in [\sS_2+1,\sS]$.
We need to show that $\f_1,\dots,\f_\sS$ cross contain $H_1^+,\dots,H_\sS^+$.

We consider cases according to the size of $k$.
We start with the case $k \ge \sqrt{C_1} \log \tfrac{n}{\sS_1}$,
for which we will use enhanced upgrading.
We apply Lemma \ref{lem:upgrade3} to $\f_1,\dots,\f_{\sS_1}$
with $m=\sS_1$, $b = C_1 + \log_2 \tfrac{s}{m}$,
each $\bB_i = \eps mk/n$ and $2r$ in place of $r$, noting that
$2rb \le k \le n/2rmb^2$ and $\bB_i > 8rmk/bn$.
This gives disjoint $B_1,\dots,B_m$
with each $|B_i| \le 2rb+2$ such that,
setting $\g_i = (\f_i)^{B_i}_B$ where $B = \bigcup_i B_i$,
if $\mu(\g_i) < 2^b \eps mk/8n$ then
$\g_i$ is $(2r,8\mu(\g_i))$-global
with $\mu(\g_i) > \eps mk/8n > m/n \ge e^{-k/\sqrt{C_1}}$.
For $i \in [\sS_1+1,\sS]$, writing $\g_i = (\f_i)^\es_B$,
we have $\mu(\g_i) \ge \mu(\f_i) - |B|k/n \ge e^{-k/C_1} + C_1 sk/2n$.

By Fairness (Proposition \ref{prop:Fair}),
with $\sqrt{C_1}$ in place of $C$,
writing $c_i=(1-\eps)\mu(\g_i)$ for $i \in [\sS]$ we have
$\mu(\pl^{r'}_{c_i} \g_i ) \ge 1-\eps$ for $r' \in \{r-1,r\}$,
so $\pl_{c_1} \g_1,\dots,\pl_{c_\sS} \g_\sS$ cross contain
a copy $\phi(H_1),\dots,\phi(H_\sS)$ of $H_1,\dots,H_\sS$
by Lemma \ref{lem:nearcomplete}. We write $V' = \im \phi$
and consider $\h_1,\dots,\h_s$ corresponding
to the edges $A_1,\dots,A_s$ of $H_1,\dots,H_\sS$,
where for each edge $A_j$ of $H_i$ with $i \in [\sS]$
we let $\h_j = (\g_i)^{\phi(A_j)}_{V'}$.
To complete the proof of this case it suffices to show
that $\h_1,\dots,\h_s$ cross contain a matching.

To do so, we verify the conditions of Lemma \ref{lem:matchuncap}.
Consider any $A_j \in H_i$. If $i>\sS_1$ or $i \in [\sS_1]$
with $\mu(\g_i) \ge 2^b \eps mk/8n > C_1^2 sk/n$
then $\mu(\h_j) \ge c_i - |V'|k/n > C_1 sk/3n$.
Now consider $i \in [\sS_1]$ such that
$\g_i$ is $(2r,8\mu(\g_i))$-global with $\mu(\g_i) > \eps mk/8n$.
Then $\h_j$ and $\h'_j = (\g_i)^{\phi(A_j)}_{\phi(A_j)}$
are $(r,16\mu(\g_i))$-global by Lemma \ref{lem:globalrestrict}.
As $\mu(\h'_j) > c_i = (1-\eps)\mu(\g_i)$,
by Lemma \ref{lem:globaluncap}
$\h'_j$ is $(n/40k,\mu(\h'_j)/2)$-uncapturable,
so $\mu(\h_j) \ge \mu(\h'_j)/2 > \eps mk/20n$,
and $\h_j$ is $(n/80k,\mu(\h_j)/2)$-uncapturable
again by Lemma \ref{lem:globaluncap}.
Thus the required conditions hold.

Henceforth we can assume $k < \sqrt{C_1} \log \tfrac{n}{\sS_1}$.
In this case we upgrade uncapturability to globalness
using Lemma \ref{lem:upgrade1} to obtain
disjoint $S_1,\dots,S_{\sS_1}$
with each $|S_i| \le 2r$ such that,
setting $\g_i = (\f_i)^{S_i}_S$ where $S = \bigcup_i S_i$,
whenever $\mu(\g_i) < \bB$ we have
$S_i = \es$ and $\g_i$ is $(2r,2\bB)$-global
with $\mu(\g_i) > \eps \sS_1 k/n$.
For $i>\sS_1$ we set $\g_i = (\f_i)^\es_S$ and note that
$\mu(\g_i) \ge \mu(\f_i) - |S|k/n > \bB/2$.
As before, for any $i \notin [\sS_1+1,\sS_2]$
with $\mu(\g_i)>\bB/2$ Fairness gives
$\mu(\pl^{r'}_{c_i} \g_i ) \ge 1-\eps$ for $r' \in \{r-1,r\}$,
where $c_i=(1-\eps)\mu(\g_i)$. For $i \in [\sS_1+1,\sS_2]$
we have the better bound $\mu(\pl^{r'}_{c_i} \g_i ) \ge 1-\sqrt{\tT}$
where $c_i = 1-\sqrt{\tT}$ from Lemma \ref{lem:markov}.
For $i \in I := \{i : \mu(\g_i)<\bB/2 \}$
we note that $\g_i$ is $G^+$-free, as $S_i=\es$,
so we can bound the fat shadow by Lemma \ref{lem:fat}:
we take $\ell=k$, use $(2\eps)^{-1} \gg r\DD$ in place of $C$,
and write $c_i = \mu(\g_i)/2\tbinom{k}{r} \ge \mu(\g_i)/2k^r$,
to obtain \[ \mu(\pl^r_{c_i} \g_i) \ge
 \big( (\mu(\g_i)/2-(s/n)^{2k\eps}) n/sk^2 \big)^{r/(k-1)}
 \ge z:= (\sS_1/sk^2)^{2r/k} - (s/n)^{r\eps}.\]

Next we consider the case that $k \ge 2C_1 \log \tfrac{s}{\sS_1}$.
Then $z \ge 1-\eps$, so $\pl_{c_1} \g_1,\dots,\pl_{c_\sS} \g_\sS$
cross contain a copy $\phi(H_1),\dots,\phi(H_\sS)$ of $H_1,\dots,H_\sS$
by Lemma \ref{lem:nearcomplete}. With notation as in the previous case,
it remains to show that $\h_1,\dots,\h_s$ cross contain a matching.
To do so, we verify the conditions of Lemma \ref{lem:matchuncap+},
taking $m = |I|$, $d=2$ and $K = \eps^{-1}$.
Consider any $A_j \in H_i$. If $i \notin I$ then
$\mu(\h_j) \ge \bB/3 - |\im\phi|k/n
> 12(s + \eps^{-1}|I|\log\tfrac{n}{k|I|} )k/n$,
as $|I|/n \le \sS_1/n < e^{-k/\sqrt{C_1}}$,
so $|I|k/n \cdot \log\tfrac{n}{k|I|} < k^2 e^{-k/\sqrt{C_1}} < \bB^2$.
Now suppose $i \in I$, so that $\g_i$ is $(2r,2\bB)$-global
with $\mu(\g_i) > \eps \sS_1 k/n$.
Then $\h_j$ and $\h'_j = (\g_i)^{\phi(A_j)}_{\phi(A_j)}$
are $(r,4\bB)$-global by Lemma \ref{lem:globalrestrict}.
As $\mu(\h'_j) > c_i \ge \mu(\g_i)/2k^r$,
by Lemma \ref{lem:globaluncap}
$\h'_j$ is $(a,\mu(\h'_j)/2)$-uncapturable, where
$a = \mu(\g_i) n/8k\bB > \eps \sS_1/8\bB > rs \ge |\im\phi|$
as $\sS_1/s \ge e^{-k/2C_1} \ge \sqrt{\bB}$, since
$ks/n < k\DD \sS_1/n < \DD k e^{-k/\sqrt{C_1}}$.
Hence $\mu(\h_j) \ge \mu(\h'_j)/2 > \mu(\g_i)/4k^r > 2(2|I|k/n)^2$,
and $\h_j$ is $(4|I|,\mu(\h_j)/2)$-uncapturable
again by Lemma \ref{lem:globaluncap}.
Thus the required conditions hold.

It remains to consider the case $k < 2C_1 \log \tfrac{s}{\sS_1}$.
We start by applying \ref{lem:embedfat}
to $(\pl^r_{c_i} \g_i: i \in [\sS_2])$ with
$\tT_0 = \sqrt{\sS_1/\sS} \le \sqrt{\tT}$ in place of $\tT$,
recalling for $i \in [\sS_1+1,\sS_2]$ that
$\mu(\pl^r_{c_i} \g_i) \ge 1-\sqrt{\tT} \ge 1-\tT_0$ and
$\mu(\pl^r_{c_i} \g_i) \ge 1-\eps$ for $i \in [\sS_1] \sm I$,
and noting for $i \in I$ that
$\mu(\pl^r_{c_i} \g_i) \ge \tT_0^{1/2r} + n^{-\eps} + r\DD\sS_1/n$.
This gives a cross embedding $\phi$ of $H'_1,\dots,H'_{\sS_2}$
in $(\pl_{cc_i} \g_i: i \in [\sS_2])$, where $c = 1 - \tT_0^{1/r}$.

Next we extend $(\phi(H'_i): i \in [\sS_1]) = (\phi(H_i): i \in [\sS_1])$
to a cross embedding $(\phi(H^+_i): i \in [\sS_1])$ in $(\g_i: i \in [\sS_1])$,
by finding a cross matching in $(\h_j: j \in [s_1])$ corresponding
to the edges $A_1,\dots,A_{s_1}$ of $H_1,\dots,H_{\sS_1}$,
where for each edge $A_j$ of $H_i$ with $i \in [\sS_1]$
we let $\h_j = (\g_i)^{\phi(A_j)}_{\im\phi}$.
This is possible by Lemma \ref{lem:matchuncap+},
which applies similarly to the previous case,
where for uncapturability of $\h'_j$ we note that
now $|\im\phi| \le rs_1 \le r\DD\sS_1$.

Finally, we extend to a cross embedding
$(\phi(H^+_i): i \in [\sS])$ in $(\g_i: i \in [\sS])$
by finding a cross copy of $(A_j \sm V_1: s_1 < j \le s)$
in $(\h_j: s_1 < j \le s)$,
where for each edge $A_j$ of $H_i$ with $\sS_1 < i \le \sS$
we let $\h_j = (\g_i)^{\phi(A_j \cap V_1)}_{\im\phi}$.
This is possible by Lemma \ref{lem:embedlarge}, as each
$\mu(\h_j) \ge \mu(\g_i)-\DD\sS_1 k^2/n > \bB/4$,
using $k < 2C_1 \log \tfrac{s}{\sS_1}$ and $\sS_1 \le \tT \sS$.
\end{proof}

\newpage

\part{Exact Tur\'an results} \label{part:turan}

This final part of our paper contains our exact results
on the Tur\'an numbers of expanded hypergraphs.
We prove the Huang--Loh--Sudakov Conjecture
on cross containment of matchings in the first section.
The second section contains the proof of our Tur\'an result
for critical graphs (Theorem \ref{thm:critical}).
We conclude in the third section by proving
the F\"uredi--Jiang--Seiver conjecture on expanded paths;
the proof will apply to any graph satisfying
a certain generalised criticality condition.

\section{The Huang--Loh--Sudakov Conjecture} \label{sec:hls}

Here we prove Theorem \ref{thm: cross EMC bound},
which establishes the Huang--Loh--Sudakov Conjecture.
In the first subsection we prove a strong stability version
that has independent interest.
We then deduce the exact result in the second subsection.

\subsection{A strong stability result}

Here we  prove the following strong approximate version
of the Huang--Loh--Sudakov conjecture, which will be refined
to obtain the exact result in the following subsection.

\begin{thm}	\label{thm:HLSapprox}
Let $0 < C^{-1} \ll \eps$ and $\f_i \sub \tbinom{[n]}{k_i}$
with $C \le k_i \le n/Cs$ for all $i \in [s]$.
If $\f_1,\ldots,\f_s$ are cross free of a matching and each
$|\f_i| \geq |\s_{n,k_i,s-1}| - (1-\eps)\tbinom{n-1}{k_i-1}$
then there is $J \in \tbinom{[n]}{s-1}$ so that
$| \f_i \sm \s_{n,k_i,J} | \leq \eps \tbinom{n-1}{k_i-1}$
for all $i\in [s]$.
\end{thm}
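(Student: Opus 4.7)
The plan is to adapt the junta approximation machinery of Theorem \ref{thm:juntarefined} to the cross-uniformity setting. I would identify a common junta $J$ from the high-degree vertices across all the families, show it has size at most $s-1$ (producing a cross $s$-matching to derive a contradiction otherwise), and then control the residues $(\f_i)^\es_J$ via their uncapturability. Throughout, parameters will be chosen hierarchically as $C^{-1} \ll \bB \ll \eps \ll 1$.

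Concretely, I would set $\bB := e^{-k/C_1} + C_1 s k/n$ for a suitable $C_1$ and define $J_i := \{v \in [n] : |(\f_i)^v_v| \ge \bB \tbinom{n-1}{k_i-1}\}$ for each $i$, letting $J := \bigcup_i J_i$. To bound $|J|$, assume $|J| \ge s$ and consider the bipartite graph $B$ between $[s]$ and $J$ with $i \sim v$ whenever $v \in J_i$. If $B$ has a matching $(i, v_i)_{i \in [s]}$ of size $s$, then setting $V = \{v_1,\ldots,v_s\}$ each restricted family $(\f_i)^{v_i}_V$ has measure at least $\bB/2$, so Lemma \ref{lem:matchlarge} yields a cross matching $(E_i)_{i \in [s]}$ on $[n] \sm V$, and $(\{v_i\} \cup E_i)_{i \in [s]}$ is the desired cross $s$-matching. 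Otherwise König's theorem gives a vertex cover $C_{\mathrm{fam}} \cup C_{\mathrm{vert}}$ of size less than $s$; writing $I = [s] \sm C_{\mathrm{fam}}$ and $K = C_{\mathrm{vert}}$, one has $|K| < |I|$ and $J_i \sub K$ for all $i \in I$. The residues $(\f_i)^\es_K$ then have no high-degree vertex outside $K$ (so are uncapturable via Lemma \ref{lem:globaluncap}) and, by the size hypothesis $|\f_i| \geq (s-2+\eps)\tbinom{n-1}{k_i-1}$, have measure at least $(|C_{\mathrm{fam}}|-1+\eps)k_i/n$. A nested Hall argument on the sub-bipartite graph $(C_{\mathrm{fam}}, J \sm K)$ would furnish distinct representatives $v_j \in J_j \sm K$ for $j \in C_{\mathrm{fam}}$, and Lemma \ref{lem:matchuncap} (or its refined version \ref{lem:matchuncap+}) applied to the combined collection --- the global residues for $I$ and the high-density restrictions $(\f_j)^{v_j}$ for $C_{\mathrm{fam}}$ --- produces a cross $s$-matching and the desired contradiction.

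With $|J| \leq s-1$, the residues $\g_i := (\f_i)^\es_J$ are $(1,2\bB)$-global by Lemma \ref{lem:decomp}. If some $|\g_{i_0}| > \eps \tbinom{n-1}{k_{i_0}-1}$, then $\g_{i_0}$ is highly uncapturable by Lemma \ref{lem:globaluncap'}; Hall's theorem applied within $J$ (using that each $\f_i$ with $i \ne i_0$ has large density concentrated on $J$) provides distinct $v_i \in J_i$ for $i \in [s] \sm \{i_0\}$, and Lemma \ref{lem:matchuncap+} with $\g_{i_0}$ as the uncapturable family then yields a cross $s$-matching, contradicting the hypothesis. The main obstacle is the König/Hall case in the second step: leveraging the failure of Hall's theorem in the initial bipartite graph to set up a nested matching argument while preserving the uncapturability and density parameters required by Lemma \ref{lem:matchuncap+}. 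The threshold $\bB$ must be chosen small enough to make the junta $J$ tight, yet large enough that the various restriction and union-bound losses are absorbed in the invocations of the embedding lemmas.
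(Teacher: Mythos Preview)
Your K\"onig decomposition is the right general idea, but it has a genuine gap.  Take the case $\ell=s-1$ (max matching one short) with a K\"onig cover having $|C_{\mathrm{fam}}|=1$ and $|K|=s-2$; this can certainly occur (e.g.\ $J_1=\{a,b\}$ and $J_2=\cdots=J_s=K$ with $|K|=s-2$, so $|J|=s$).  Then for each $i\in I$ the residue $(\f_i)^{\es}_{K\cup V}$ has measure only about $\eps k_i/n$, yet you need Lemma~\ref{lem:matchuncap} with $m=|I|=s-1$, i.e.\ uncapturability at level $(s-1)k_i/C_2 n$.  As $C_2$ cannot depend on $s$, this fails for large $s$.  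The underlying reason is that a minimum vertex cover may put almost everything into $K$ and almost nothing into $C_{\mathrm{fam}}$, leaving you with too many ``uncapturable'' families whose residues are too small.

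The paper sidesteps this by working with the maximum matching itself rather than a K\"onig cover.  All $\ell$ matched families are used as ``large'' (via their matched high-degree vertex) and only the $s-\ell$ unmatched families are ``uncapturable''.  For an unmatched $h$ the residue is taken over $J_h$ (not over the whole $A$), and one checks the tailored bound $\mu\big((\f_h)^{\es}_{J_h}\big)\ge 0.1\eps(s-|J_h|)k_h/n$ of Lemma~\ref{lem:preHLS}: either it holds (cross matching, contradiction) or the size hypothesis on $\f_h$ forces $|J_h|=s-1$, hence $\ell=s-1$ and $J_h=A$.  A swap (replace $(a_i,i)$ by $(a_i,s)$ in the matching, possible since $a_i\in J_s=A$, so that $i$ becomes the unmatched family) then yields $J_i=A$ and small residue for every $i$.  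This gives the common $J=A$ directly, without ever bounding $|\bigcup_i J_i|$.

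Two secondary issues: (i) your threshold $\bB$ should be a small absolute constant with $\bB\ll\eps$ (as in Lemma~\ref{lem:preHLS}), not $e^{-k/C_1}+C_1 sk/n$; that definition is lifted from the junta approximation setting and is far too small here.  (ii) Even if $|J|\le s-1$ were established, your final Hall step for distinct $v_i\in J_i$ ($i\ne i_0$) is not justified: the $J_i$'s may coincide or be empty.
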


The idea of the proof will be to consider
$A = \{a_1,\ldots, a_{\ell }\} \sub [n]$ maximal
such that there are distinct $b_1,\ldots, b_{\ell }$
so that all $(\f_{b_i})^{a_i}_{a_i}$ are large.
This motivates the setting of the following lemma.  

\begin{lem} \label{lem:preHLS}
Let $0 < C^{-1} \ll \bB \ll \eps \leq 1$
and $m,\ell , n,s,k_1,\ldots, k_s \in {\mathbb N}$
with $\ell \leq m \leq s$ and each $k_i \leq n/Cs$.
Suppose ${\cal F}_i \subset \tbinom {[n]}{k_i}$
and $J_i := \big \{j\in [n]:
\mu \big ( ({\cal F}_i)^{j}_j \big ) \geq \bB \big \}$
for each $i \in [s]$ are such that
\begin{enumerate}[(a)]
\item there are distinct $a_1,\ldots, a_{\ell}\in [n]$
with $a_i \in J_i$ for $i\in [\ell ]$;
\item $\mu \big ( ({\cal F}_i)^{\es }_{J_i}) \geq \eps (m-|J_i|)k_i/n$
and $J_i \subset A := \{a_1,\ldots, a_{\ell }\}$ for each $i \in [\ell +1,m]$;
\item $\mu \big ( {\cal F}_i \big ) \geq Ck_is/n$ for all $i\in [m+1,s]$.
\end{enumerate}
Then $\f_1,\ldots,\f_s$ cross contain a matching.
\end{lem}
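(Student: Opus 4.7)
The plan is to build the cross-matching in three stages matching the three blocks of families in (a)--(c): for $i \in [\ell]$ we shall use an edge passing through the designated representative $a_i$; for $i \in [\ell+1,s]$ we work in $[n] \setminus A$ and invoke Lemma \ref{lem:matchuncap} to find the remaining $s - \ell$ edges; we then glue the two parts by a greedy extension.

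First, for every $i \in [\ell+1,s]$ set $\g_i := (\f_i)^{\es}_A$, regarded as a family on the ground set $[n]\setminus A$. By definition of $J_i$, every $j \notin J_i$ satisfies $\mu((\f_i)^j_j) < \bB$; since restricting away from $A$ only changes link densities by a bounded factor (using $|A|k_i/n \le sk_i/n \le 1/C$), each $\g_i$ is $(1, O(\bB))$-global at every vertex outside $J_i$. For $i \in [\ell+1,m]$ the hypothesis $J_i \subset A$ implies that \emph{every} vertex of the ground set $[n]\sm A$ is a low-degree vertex for $\f_i$, so $\g_i$ is itself $(1, O(\bB))$-global; condition (b) and the same restriction estimate give $\mu(\g_i) \ge \eps(m-|J_i|)k_i/n - O(\ell \bB k_i/n) \ge \eps(m-\ell)k_i/(2n)$ when $m>\ell$ (the $m=\ell$ case is handled directly by step three). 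For $i \in [m+1,s]$, condition (c) and the restriction bound give $\mu(\g_i) \ge Csk_i/(2n)$.

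Next, by Lemma \ref{lem:globaluncap} applied to $\g_i$ with $i\in[\ell+1,m]$, the family $\g_i$ is $(a_i,\mu(\g_i)/2)$-uncapturable, where $a_i \gtrsim \mu(\g_i)(n-\ell)/(k_i\bB) \gtrsim \eps(m-\ell)/\bB \gg m-\ell$ because $\bB \ll \eps$. We now apply Lemma \ref{lem:matchuncap} to the $(s-\ell)$ families $(\g_{\ell+1},\ldots,\g_s)$ on ground set $[n]\setminus A$, with the first $m-\ell$ being uncapturable and the last $s-m$ dense: the required thresholds $\mu(\g_i)/2 \ge (m-\ell)k_i/(C_2 n)$ and $\mu(\g_i) > C_1(s-\ell)k_i/n$ are absorbed by our lower bounds thanks to the hierarchy $C^{-1} \ll \bB \ll \eps$. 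This yields a cross-matching $E_{\ell+1}, \ldots, E_s$ with $E_i \in \f_i$ and $E_i \cap A = \es$ for every $i \in [\ell+1,s]$.

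Finally, pick $E_1,\ldots,E_\ell$ greedily: at step $i$, choose $E_i \in \f_i$ containing $a_i$ and disjoint from $U_i := (A \setminus \{a_i\}) \cup \bigcup_{j<i} E_j \cup \bigcup_{j>\ell} E_j$. By (a) we have $\mu((\f_i)^{a_i}_{a_i}) \ge \bB$, and excluding the at most $|U_i| \le s k_{\max}$ further forbidden vertices reduces the density of $(\f_i)^{a_i}_{\{a_i\}\cup U_i}$ by at most $|U_i| k_i/(n-|U_i|) = O(s k_i^2/n) = O(1/C) \ll \bB$, so the extension succeeds. The resulting $E_1,\ldots,E_s$ is the desired cross-matching. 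The main obstacle is that condition (b) provides only a density lower bound rather than uncapturability; the point of introducing $\g_i$ and using $J_i \subset A$ is precisely to promote this density bound to an uncapturability statement via the $(1,O(\bB))$-globalness inherited from the defining property of $J_i$, after which Lemma \ref{lem:matchuncap} finishes the tail and the greedy step is routine.
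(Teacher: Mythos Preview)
Your treatment of the families with $i\in[\ell+1,s]$ is essentially the paper's argument. The gap is in the final greedy step for $i\in[\ell]$: the density loss you quote is off by a factor of $k_i$. Removing $|U_i|$ vertices from the $(k_i-1)$-uniform link $(\f_i)^{a_i}_{a_i}$ drops its density by at most $|U_i|(k_i-1)/(n-1)$; with $|U_i|\le sk_{\max}$ this is of order $sk_{\max}k_i/n\le k_i/C$, \emph{not} $O(1/C)$. Since the lemma allows $k_i$ as large as $n/(Cs)$, when $k_i\gg \bB C$ this loss exceeds $\bB$ and there is no reason an edge of $\f_i$ through $a_i$ should avoid the $\Theta(\sum_j k_j)$ already-used vertices.

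The paper avoids this by not deferring the $i\in[\ell]$ families to a greedy step at all: it sets $\g_i=(\f_i)^{a_i}_A$ for $i\in[\ell]$ and $\g_i=(\f_i)^{\es}_A$ for $i>\ell$, and applies Lemma~\ref{lem:matchuncap} once to all $s$ families on $[n]\setminus A$, with $\g_{\ell+1},\dots,\g_m$ in the uncapturable role and the remaining $\ell+(s-m)$ families in the dense role. The point is that passing from $(\f_i)^{a_i}_{a_i}$ to $\g_i$ removes only $|A|-1<\ell\le s$ vertices, so $\mu(\g_i)\ge\bB-sk_i/n\ge\bB-1/C\ge\bB/2$, and $\bB/2\gg sk_i/n$ because $C^{-1}\ll\bB$; hence $\g_i$ meets the density threshold of Lemma~\ref{lem:matchuncap}. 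Your proof is repaired by exactly this modification: feed the $\ell$ restricted links into the single call to Lemma~\ref{lem:matchuncap} as additional dense families, rather than handling them greedily afterwards.
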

	
\begin{proof}
It suffices to check the conditions of Lemma \ref{lem:matchuncap}
for $\g_1,\ldots,\g_s$ defined by
$\g_i = (\f_i)^{a_i}_A$ for $i \in [\ell]$
and $\g_i = (\f_i)^\es_A$ otherwise.
We do so with $m-\ell$ in place of $m$
and $(\g_i: \ell < i \le m)$ in place of $\f_1,\dots,\f_m$.
For $i \in [s] \sm [m]$ we have
$\mu(\g_i) \ge \mu(\f_i)-|A|k/n \ge Ck_is/2n$.
Similarly, for $i \in [\ell]$ we have
$\mu(\g_i) \ge \mu((\f_i)^{a_i}_{a_i})-|A|k/n \ge \bB/2 \ge Ck_is/2n$.
For $i \in [\ell+1,m]$ we note by definition of $J_i$
that $\g_i$ is $(1,2\bB)$-global with
$\mu(\g_i) \ge \mu( (\f_i)^\es_{J_i} ) - |A \sm J_i| \bB k/n
\ge \eps (m-\ell)k_i/n$,
so $(\eps (m-\ell)/4\bB ,\eps (m-\ell)k_i/2n)$-uncapturable
by Lemma \ref{lem:globaluncap}.
Thus the required conditions hold.
\end{proof}

We deduce our stability result as follows.
	
\begin{proof}[Proof of Theorem \ref{thm:HLSapprox}]
Let $0 < C^{-1} \ll \bB \ll \eps \le 1/2$
and $\f_i \sub \tbinom{[n]}{k_i}$
with $k_i \le n/Cs$ for all $i \in [s]$.
Let $J_1,\dots,J_s$ be as in Lemma \ref{lem:preHLS}.
Let $A = \{a_1,\ldots, a_{\ell }\} \sub [n]$ be maximal
such that there are distinct $b_1,\ldots, b_{\ell }$ with
$a_i \in J_{b_i}$ for all $i\in [\ell ]$. Without loss of generality
we may assume $b_i = i$ for all $i\in [\ell ]$. By maximality,
we have $J_i \subset \{a_1,\ldots, a_{\ell }\}$
for all $i \in [\ell +1, s]$.
		
We may assume $\ell<s$, and that
$\mu \big ( ( {\cal F}_h )^{\es }_{J_h} \big ) < .1\eps (s-|J_h|)k_h/n$
for some $h\in [\ell +1,s]$, otherwise Lemma \ref{lem:preHLS}
provides the required cross matching. Noting that
$|\s_{n,k_h,s-1}| - (1-\eps)\tbinom{n-1}{k_h-1}
\le |\f_h| \le |\s_{n,k_h,J_h}| + .1\eps (s-|J_h|)\tbinom{n-1}{k_h-1}$,
we see that $|J_h|=s-1=\ell$, $h=s$ and $J_h=A$.
Now for each $i \in [s-1]$, as $a_i \in A=J_h$
we can apply the same argument switching the roles
of ${\cal F}_i$ and ${\cal F}_h$ to deduce
$\mu \big ( ( {\cal F}_i )^{\es }_{J_i} \big ) < .1\eps k_h/n$
and $J_i=A$. The theorem follows.
\end{proof}

\subsection{The exact result}

To complete the proof of the Huang--Loh--Sudakov Conjecture
we will upgrade the approximate result of the previous subsection
to an exact result via the following bootstrapping lemma
(stated in a more general form than needed here
as we will also use it for our other Tur\'an results).

\begin{lem} \label{lem:bootstrap}
Let $C \gg \bB^{-1} \gg d \ge 1$
and $\f_i \subset \tbinom {[n]}{k_i}$
for all $i\in [s]$ with $\sum_{i=1}^s k_i \le n/C$.
Suppose $\f_1,\dots,\f_s$ are cross free
of some hypergraph  $G = \{e_1,\dots,e_s\}$
with $|e_i|=k_i$ for each $i \in [s]$ and
$e_s \cap \bigcup_{i=1}^{s-1} e_i = \es$.
If $\sum_{i=1}^{s-1} (1-\mu(\f_i)) \le \aA \in (0,\bB)$
then $\mu(\f_s) \le (\aA k_s/n)^d$.
\end{lem}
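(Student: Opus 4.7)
The plan is to argue by contradiction: assume $\mu(\f_s) > (\aA k_s/n)^d$ and construct a cross copy of $G$ in $\f_1, \ldots, \f_s$. The key structural observation is that $e_s$ is disjoint from $\bigcup_{i<s}e_i$, so any cross copy of $G$ factors as an edge $E_s \in \f_s$ together with a cross copy of $\{e_1,\ldots,e_{s-1}\}$ in the restricted families $((\f_i)^\es_{E_s}: i<s)$ on $[n]\sm E_s$. Since $\sum_{i<s}(1-\mu(\f_i)) \le \aA$ is small, a typical restriction leaves these families nearly complete; however, the target bound $(\aA k_s/n)^d$ is much smaller than $\aA$, so a direct Markov argument on this factorization yields only $\mu(\f_s)\le \aA$ and loses all factors of $k_s/n$.

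To extract the stronger bound I would apply Lemma \ref{lem:focus} to $\f_s$ with $b=d$, $r=1$, and the focus parameter chosen so large that case (i) of that lemma would force $\mu((\f_s)^B_B) \ge (n/(\aA k_s))^d \mu(\f_s) \ge 1$, which is impossible; hence we are in case (ii) and obtain $B \sub [n]$ with $|B|\le d$ such that $(\f_s)^B_B$ is $(1,\dD)$-global with sufficiently small $\dD$ and $\mu((\f_s)^B_B) \ge \aA^{-|B|}\mu(\f_s)$. Lemma \ref{lem:globaluncap} then promotes this global family to an uncapturable one. Applying Lemma \ref{lem:matchuncap+} to the families $((\f_i)^\es_B: i<s)$ together with $(\f_s)^B_B$ --- with $(\f_s)^B_B$ playing the role of the single sparse uncapturable family and the $(\f_i)^\es_B$ playing the role of the dense families, which they still are since $|B| \le d \le n/C$ --- produces a cross matching $(E_i: i \in [s])$ inside $[n]\sm B$. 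Lifting this matching by appending $B$ to $E_s$ (that is, placing $B$ inside the image of $e_s$) yields a cross copy of $G$ in $\f_1,\ldots,\f_s$, contradicting cross-freeness.

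The main obstacle will be calibrating parameters so that this pipeline actually closes: one must simultaneously (a) choose the boost parameter in Lemma \ref{lem:focus} to match the $(\aA k_s/n)^d$ gap between the hypothesis and the ``trivial'' star density $(k_s/n)^d$; (b) ensure the resulting globalness threshold $\dD$ is compatible with the uncapturability parameter $(2mk_i/n)^d$ required by Lemma \ref{lem:matchuncap+}; and (c) track the blow-up $\aA' \le \aA(1+O(d/C))$ that $\aA$ suffers under the restriction $(\f_i)^\es_B$, keeping it below the threshold needed for the matching/embedding step. The hypothesis $C \gg \bB^{-1} \gg d \ge 1$ is exactly tailored for this: $C$ large absorbs the multiplicative errors from the $d$ restrictions, while $\bB^{-1} \gg d$ ensures $\aA < \bB$ is small enough that the $d$-th power boost $(n/(\aA k_s))^d$ is the correct scale to trigger Lemma \ref{lem:matchuncap+}.
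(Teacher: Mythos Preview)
Both concrete steps of your plan fail. The embedding step invokes Lemma~\ref{lem:matchuncap+}, which produces a cross \emph{matching}, i.e.\ pairwise disjoint $E_i\in\f_i$; but Lemma~\ref{lem:bootstrap} only assumes $e_s\cap\bigcup_{i<s}e_i=\es$, so the edges $e_1,\dots,e_{s-1}$ may overlap one another --- and in the application inside the proof of Lemma~\ref{lem:crit} they do. A cross matching is then not a cross copy of $G$, and appending $B$ to $E_s$ does not repair this. The boosting step also does not close: with $b=d$, $r=1$ and focus ratio $a=n/(\aA k_s)$ chosen so that case~(i) of Lemma~\ref{lem:focus} is impossible, case~(ii) delivers only $(1,a\mu)$-globalness, and Lemma~\ref{lem:globaluncap} converts this into uncapturability for sets of size at most $n/(2a(k_s-|B|))\approx\aA/2<1$, which is vacuous. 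More intrinsically, a single coordinate restriction boosts density by at most a factor $n/k_s$, so $d$ of them take $(\aA k_s/n)^d$ only to about $\aA^d<\aA$; restriction alone can never push $\f_s$ above the $\aA$ barrier that any final embedding argument has to beat.

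The paper's route is different on both counts. For the boost it passes to the up-closure $\g_s=\f_s^{\uparrow}\cap\tbinom{[n]}{k}$ at the very large uniformity $k=n-n/C$ and uses Kruskal--Katona (in its upper-shadow form) to show $\mu(\g_s)>\sqrt\aA$. For the embedding it takes a uniformly random injection $\phi:V(G')\to[n]$, where $G'$ enlarges $e_s$ to size $k$, and observes that the failure probability is at most $(1-\mu(\g_s))+\sum_{i<s}(1-\mu(\f_i))<1-\sqrt\aA+\aA<1$; this union bound is indifferent to whether $e_1,\dots,e_{s-1}$ intersect.
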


\begin{proof}
Let $k = n - n/C$ and $\g_s = \f_s^\ua \cap \tbinom{[n]}{k}$.
Then $\f_1,\dots,\f_{s-1},\g_s$ are cross free of $G'$
obtained from $G$ by enlarging $e_s$ to $e'_s$ of size $k$.
Suppose for contradiction that $\mu(\f_s) > (\aA k_s/n)^d$.
Let $t \in [k_s]$ be minimal so that
$|\f_s| = (\aA k_s/n)^d \tbinom{n}{k_s} \ge \tbinom{n-t}{k_s-t}$.
Then $(\aA k_s/n)^d < (k_s/n)^{t-1}$,
so if $t>2d$ then $\aA < (k_s/n)^{t/2d}$.
By Kruskal-Katona $|\g_s| \ge \tbinom{n-t}{k-t}$,
so $\mu(\g_s) \ge (1-2/C)^t > \sqrt{\aA}$,
as if $t \le 2d$ then $(1-2/C)^t > (1-2/C)^{2d} > \sqrt{\bB}$
or otherwise $\aA^{2d/t} < k_s/n \le C^{-1} < (1-2/C)^{4d}$.
Now we let $\phi:V(G') \to [n]$ be a uniformly random injection.
Let $E$ be the event that $\phi(e'_s) \notin \g_s$
or $\phi(e_i) \notin \f_i$ for some $i \in [s-1]$.
Then $1 = \mb{P}(E) \le 1-\mu(\g_s) + \sum_{i \in [s-1]} (1-\mu(\f_i))
< 1 - \sqrt{\aA} + \aA$, contradiction.
\end{proof}

Theorem \ref{thm: cross EMC bound} will now follow by combining
Theorem \ref{thm:HLSapprox} and Lemma \ref{lem:bootstrap}.

\begin{proof}[Proof of Theorem \ref{thm: cross EMC bound}]
Let $0 < C \ll \eps \ll 1$ and $\f_i \subset \tbinom {[n]}{k_i}$
with $|{\cal F}_i| \geq |{\cal S}_{n,k_i,s-1}|$
and $k_i \le n/C$ for all $i\in [s]$,
Suppose $\f_1,\dots,\f_s$ have no cross matching.
By Theorem \ref{thm:HLSapprox} there is $J \in \tbinom{[n]}{s-1}$
such that $\mu \big (  (\f_i)^{\es }_J \big ) = \eps_i k_i/|V|$
with $V = [n]\sm J$ and $\eps_i \leq \eps$ for all $i\in [s]$.
We may assume that $\eps_s$ is maximal.

Next we claim that we can list the elements of $J$
as ${\bf j} = (j_1,\ldots, j_{s-1})$ so that
\[ M_{\bf j} := \sum _{i\in [s-1]}
\mu \big (({\cal F}_i)^{j_i}_J\big )
\geq s-1 - \eps_s. \]
To see this, we note that
$ \mb{E}_{\bf j} M_{\bf j} = \mb{E}_{i \in [s-1]}
\sum _{j\in J}  \mu \big (({\cal F}_i)^{j}_J \big )$
when ${\bf j}$ is uniformly random.
As each $({\cal F}_i)^{I}_J \subset
({\cal S}_{n,k_i,s-1}) ^{I}_J$ whenever
$\es \ne I \subset J$ and
$\mu ({\cal F}_i) \geq \mu ({\cal S}_{n,k_i,s-1})$,
we have $0 \le \mu(\f_i) - \mu(\s_{n,k_i,s-1})
\le \mu( (\f_i)^\es_J) - k|V|^{-1} \sum_{j \in J} (1-\mu((\f_i)^j_J))$,
so $\sum_{j \in J} \mu((\f_i)^j_J)) \ge s-1-\eps_s$.
The claim follows.

Now let ${\cal H}_i = ({\cal F}_i)^{j_i}_J \subset \tbinom {V}{k-1}$
for all $i\in [s-1]$, and ${\cal H}_s =
({\cal F}_s)^{\es }_J \subset \tbinom {V}{k-1}$.
Then ${\cal H}_1,\ldots, {\cal H}_s$ have no cross matching,
$\sum _{i\in [s-1]} (1-\mu ({\cal H}_i)) \leq \eps_s$
and $\mu ({\cal H}_s) = \eps_s k_s/|V|$.
Therefore $\eps_s=0$ by	Lemma \ref{lem:bootstrap} with $d=1$.
By choice of $\eps_s$ we deduce $\eps_i = 0$ for all $i\in [s]$.
Thus ${\cal F}_i = {\cal S}_{n,k_i,J}$ for all $i\in [s]$.
\end{proof}

\section{Critical graphs}

In this section we prove Theorem \ref{thm:critical},
which gives exact Tur\'an results for expanded
critical $r$-graphs of bounded degree. In fact,
we will prove the following strong stability version.

\begin{thm} \label{thm:crit+}
Let $G \in \g(r,\DD,s)$ be critical and $C \gg \bB^{-1} \gg dr\DD$.

Suppose $\f \subset \tbinom{[n]}{k}$ with $C \le k \le n/Cs$
is $G^+$-free and $|\f| \ge |\s_{n,k,\sS-1}|-\eps\tbinom{n-1}{k-1}$
with $\eps \in (0,\bB)$.

Then there is $J \in \tbinom{[n]}{\sS-1}$ with
$|\f \sm \s_{n,k,J}| \le \eps^d \tbinom{n-1}{k-1}$.

Furthermore, if $k \le \sqrt{n}$
and $|\f| \ge |\s_{n,k,J}|-\bB\tbinom{n-r}{k-r}$
then $\f \subset \s_{n,k,J}$.
\end{thm}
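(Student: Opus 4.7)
The plan is to combine Theorem \ref{thm:juntarefined} with a bootstrapping argument via Lemma \ref{lem:bootstrap} exploiting the criticality of $G$, and then upgrade to the exact conclusion via an embedding argument made possible by $k\le\sqrt n$. I first apply Theorem \ref{thm:juntarefined}, choosing its conclusion parameter $\eps_0$ sufficiently small in terms of $\bB,d,r,\DD$, to obtain $J\in\tbinom{[n]}{\sS-1}$ with $|\f\sm\s_{n,k,J}|\le\eps_0\tbinom{n-1}{k-1}$. Combined with the hypothesis this also yields $|\s_{n,k,J}\sm\f|\le(\eps+\eps_0)\tbinom{n-1}{k-1}$, so $\f$ is close to the star $\s_{n,k,J}$ in both directions.

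To bootstrap to the $\eps^d$ bound I exploit criticality. Fix a critical edge $e$ of $G$ and a minimum transversal $T$ of $G\sm e$ with $|T|=\sS-1$; the minimality $\tau(G)=\sS$ forces $T\cap e=\es$, else $T$ would also transverse $G$. Fix a bijection $\psi\colon T\to J$. For each injection $\phi\colon V(G)\to[n]$ with $\phi|_T=\psi$ and $\phi(V(G)\sm T)\sub[n]\sm J$, let $n'=n-|V(G)|$, $k'=k-r$, and define the $k'$-uniform families $\f_i^\phi:=\{A\in\tbinom{[n]\sm\phi(V(G))}{k'}:\phi(e_i)\cup A\in\f\}$ on $[n]\sm\phi(V(G))$, where $e_1,\dots,e_{s-1}$ are the edges of $G\sm e$ and $e_s=e$. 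Extending $\phi$ to a copy of $G^+$ in $\f$ amounts to choosing a $k'$-uniform $s$-matching cross-contained in $\f_1^\phi,\dots,\f_s^\phi$. The critical point is that the edge of this auxiliary matching corresponding to $e^+$ is genuinely disjoint from the others — only the expansion vertices are in play, with the base vertices $\phi(e)$ already absorbed into the definition of $\f_s^\phi$ — so Lemma \ref{lem:bootstrap} applies and gives $\mu(\f_s^\phi)\le(\aA_\phi\cdot k'/n')^d$ with $\aA_\phi:=\sum_{i=1}^{s-1}(1-\mu(\f_i^\phi))$. Each missing set in $\f_i^\phi$ traces to an element of $\s_{n,k,J}\sm\f$, so a double count averaging over $\phi$ (using that each $E\in\f^\es_J$ appears as $\phi(e)\cup(E\sm\phi(e))$ with a fixed multiplicity of valid $\phi$) converts the per-$\phi$ bound into the global estimate $|\f^\es_J|\le\eps^d\tbinom{n-1}{k-1}$, giving the first conclusion.

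For the exact conclusion, suppose for contradiction that $E\in\f^\es_J$. Choose an injection $\phi\colon V(G)\to[n]$ with $\phi(T)=J$, $\phi(e)\sub E$, and $\phi(V(G)\sm T\sm e)\sub[n]\sm J\sm E$; this is consistent since $T\cap e=\es=J\cap E$, and it forces the expansion $S_e=E\sm\phi(e)$. To extend $\phi$ to a copy of $G^+$ in $\f$, it now suffices, for each $t\in T$, to embed the inclusive link $(G\sm e)*t$ using $\phi(t)\in J$ together with fresh expansion vertices in $[n]\sm\phi(V(G))\sm E$. The assumption $k\le\sqrt n$ gives $\tbinom{n-r}{k-r}/\tbinom{n-1}{k-1}\le(k/n)^{r-1}\to 0$, so the bound $|\s_{n,k,J}\sm\f|\le\bB\tbinom{n-r}{k-r}+\eps^d\tbinom{n-1}{k-1}$ translates into each inclusive link $\f*j$ for $j\in J$ (restricted to the remaining unused vertices) being nearly complete. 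Lemma \ref{lem:nearcomplete} applied to this collection of near-complete links then delivers the required embedding and hence a copy of $G^+$ in $\f$, contradicting $G^+$-freeness; thus $\f^\es_J=\es$ and $\f\sub\s_{n,k,J}$.

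The main obstacle is the bootstrapping step: executing the double count over $\phi$ so that the per-$\phi$ bounds from Lemma \ref{lem:bootstrap} aggregate cleanly to the global bound $|\f^\es_J|\le\eps^d\tbinom{n-1}{k-1}$. This requires balancing $\eps_0$ against $\eps$ (roughly $\eps_0$ should be a fixed function of $\eps, s, r, \DD$ satisfying $\eps\ll\eps_0\ll(r\DD)^{-1}$ so that Theorem \ref{thm:juntarefined} applies and $(C\eps_0)^d\le\eps^d (n/k)^{d-1}$) and handling an exceptional set of $\phi$ with large $\aA_\phi$ via Markov together with the trivial bound $\mu(\f_s^\phi)\le 1$. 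Once this is in hand, the exact step is a comparatively clean near-complete embedding argument whose only subtlety lies in verifying the near-completeness of the individual links under the mild hypothesis on $|\f|$.
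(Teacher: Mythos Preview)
Your proof has a genuine gap in the bootstrapping step. The averaging over $\phi$ cannot close, and the difficulty you flag as ``the main obstacle'' is in fact fatal to this route.

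Concretely: Lemma~\ref{lem:bootstrap} gives $\mu(\f_s^\phi)\le(\aA_\phi k'/n')^d$ \emph{only when} $\aA_\phi<\bB$. For bad $\phi$ you fall back to $\mu(\f_s^\phi)\le 1$. Now $\mb{E}_\phi[\aA_\phi]$ is of order $\DD(\eps+\eps_0)$, so Markov gives $\mb{P}[\aA_\phi>\bB]=\Theta(\eps/\bB)$. The contribution of the exceptional set to $\mb{E}_\phi[\mu(\f_s^\phi)]$ is therefore $\Theta(\eps/\bB)$, while the target (after your double count) is $\mu(\f^\es_J)\lesssim\eps^d k/n$. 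Since $\eps/\bB\gg\eps^d k/n$ by many orders of magnitude (note the factor $k/n$), no amount of balancing $\eps_0$ against $\eps$ repairs this; the suggested condition $(C\eps_0)^d\le\eps^d(n/k)^{d-1}$ addresses the good $\phi$ but does nothing for the exceptional set.

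The paper proceeds differently: it does not average but argues by contradiction via a dedicated Lemma~\ref{lem:crit}. Assuming $\mu(\f^\es_J)\ge\eps^d k/n$, one finds a \emph{single} injection $\phi$ on the small set $A=A_s\cap\bigcup_{i<s}A_i$ for which simultaneously (a) the restricted families $\g_j$, $j<s$, remain near-complete (a union bound over the $O(r\DD)$ edges meeting $A_s$, using Lemma~\ref{lem:markov}), and (b) the restricted family $\g_s$ has $\mu(\g_s)$ at least a definite fraction of $\mu(\f_\sS)$. Point (b) is the crux and requires Fairness (Proposition~\ref{prop:Fair}) when $\mu(\f_\sS)\ge e^{-k\bB}$, and the fat-shadow estimate (Lemma~\ref{lem:fat}) together with globalness arguments when $\mu(\f_\sS)<e^{-k\bB}$; in the latter regime one also uses that $\f_\sS$ is $G^+$-free to improve the fat-shadow bound. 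Once such a $\phi$ is found, a single application of Lemma~\ref{lem:bootstrap} yields the contradiction. Your scheme has no analogue of step (b): averaging over $\phi$ cannot guarantee that $\g_s$ retains enough mass after restriction.

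Your treatment of the exact conclusion for $k\le\sqrt{n}$ is essentially on track and matches the paper's argument in spirit: one fixes $R\in\tbinom{A}{r}$ for a hypothetical $A\in\f^\es_J$, observes that the first statement (applied with $d=1$ and $\eps=\bB\tbinom{n-r}{k-r}/\tbinom{n-1}{k-1}$) forces $|\s_{n,k,J}\sm\f|\le 2\bB\tbinom{n-r}{k-r}$, checks that this makes the relevant restricted link families near-complete (here $k\le\sqrt n$ is used so that $\tbinom{n-k-r}{k-r}\gtrsim\tbinom{n}{k-r}$), and finishes with Lemma~\ref{lem:embedlarge}.
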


In the first subsection we will describe the strategy
of the proof and complete the proof,
assuming a certain bootstrapping lemma
that will be proved in the second subsection.

\subsection{Strategy}

Recall that an $r$-graph $G$ is {\em critical}
if it has an edge $e$ such that
$\cc{G \sm e} = \tau(G \sm e) < \tau(G) = \cc{G}$.
Thus we can adopt the following set-up.

\begin{setup} \label{set:crit}
Let $G \in \g'(r,s,\DD)$ be critical.
Fix a crosscut $S$ in $G^+(r+1)$ with $|S|=\sS:=\cc{G}$ and
$\{G^x_x: x \in S\} = \{H_i: i \in [\sS]\}$ with $|H_\sS|=1$.
Let $I = \{i \in [\sS-1]: V(H_i) \cap V(H_\sS) \ne \es\}$.
\end{setup}

The following bootstrapping lemma will be proved in the next subsection.
It shows that if we cannot find a cross embedding of $H^+_1,\dots,H^+_\sS$
as in the above set up, if all but one of the families are nearly complete
then the last must be very small.

\begin{lem} \label{lem:crit}
Let $G,H_1,\dots,H_\sS$ be as in Setup \ref{set:crit}.
Let $C \gg \bB^{-1} \gg dr\DD$ and $\f_i \subset \tbinom{[n]}{k_i}$
with $k_i \in [k/2,k]$ for $i \in [\sS]$, where $C \le k \le n/Cs$.
Suppose $\f_\sS$ is $G^+$-free,
$\sum_{i=1}^{\sS-1} (1-\mu(\f_i)) \le \eps \le \bB$,
$\mu(\f_\sS) \ge \eps^d k/n$ and
$1-\mu(\f_i) \le \eps_0 := 2\eps/\sS$ for all $i \in I$.
Then $\f_1,\dots,\f_\sS$ cross contain $H^+_1,\dots,H^+_\sS$.
\end{lem}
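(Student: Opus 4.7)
The plan is to adapt the Kruskal--Katona bootstrapping argument of Lemma \ref{lem:bootstrap} to the critical setup. Since $|H_\sS|=1$, the hypergraph $H_\sS^+$ is a single $k_\sS$-edge, so realising the cross embedding amounts to placing one edge of $\f_\sS$ containing the forced image $T:=\phi(V(H_\sS))$ of the $r-1$ ``base'' vertices.

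First, I would enlarge $\f_\sS$ via Kruskal--Katona exactly as in Lemma \ref{lem:bootstrap}: set $k^* := n - n/C_0$ for sufficiently large $C_0$ and define $\g_\sS := \f_\sS^\ua \cap \tbinom{[n]}{k^*}$; the parameter hierarchy $C \gg \bB^{-1} \gg dr\DD$ together with $\mu(\f_\sS) \ge \eps^d k/n$ lets me repeat the Kruskal--Katona computation from that lemma to conclude $\mu(\g_\sS) \ge 1-\sqrt{\eps}$. Independently, Lemma \ref{lem:KK} applied to $\f_\sS$ (using $r-1 \ll k_\sS$) gives $\mu(\pl^{r-1}\f_\sS) \ge \mu(\f_\sS)^{(r-1)/k_\sS} \ge 1-\sqrt{\eps}$, so for all but a $\sqrt{\eps}$-fraction of $(r-1)$-subsets $T \sub [n]$ the link $\f_\sS^T$ is non-empty; for each such \emph{good} $T$ I fix an edge $A_0(T) \in \f_\sS$ with $T \sub A_0(T)$.

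Second, I would construct $\phi$ in two stages. Choose a uniform random injection $\psi$ of the base $V(G)$ into $[n]$; then $T := \psi(V(H_\sS))$ is a uniform $(r-1)$-set and hence good with probability at least $1-\sqrt{\eps}$, so we may condition on this event. Route the $k_\sS - (r-1)$ expansion vertices of the critical edge into $A_0(T) \sm T$ (so that $\phi(H_\sS^+) = A_0(T) \in \f_\sS$ is guaranteed), and place the remaining expansion vertices, which belong to $H_i^+$ for $i < \sS$, uniformly at random in the unused portion of $[n]$. A union bound over the at most $\DD$ edges of each $H_i^+$ gives total failure probability at most $\sum_{i<\sS} \DD (1-\mu(\f_i)) \le \DD \eps$, so a valid $\phi$ exists with positive probability.

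The main obstacle is the interaction between the fixed $T$ and the neighbour families $\f_i$ for $i \in I$: once $T$ is pinned, the marginal placements of the edges of $H_i^+$ that meet $V(H_\sS)$ are no longer uniform, so the naive union bound weakens precisely on these ``pinned'' edges. The tighter hypothesis $1-\mu(\f_i) \le \eps_0 = 2\eps/\sS$ for $i \in I$ is designed exactly to absorb this: since $|I| \le r\DD$ and each such $H_i^+$ contributes at most $\DD$ pinned edges, the contribution of pinned edges to the failure probability is at most $r\DD^2 \eps_0 \le 2r\DD^2 \eps / \sS$, which is dominated by the generic $\DD\eps$ bound. Combining this with the above yields the required cross embedding.
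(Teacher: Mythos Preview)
Your argument contains a genuine gap at its very first step. The claim that Kruskal--Katona yields $\mu(\pl^{r-1}\f_\sS) \ge 1-\sqrt{\eps}$ is false: Lemma \ref{lem:KK} only gives $\mu(\pl^{r-1}\f_\sS) \ge \mu(\f_\sS)^{(r-1)/k_\sS}$, and the hypothesis is merely $\mu(\f_\sS) \ge \eps^d k/n$ with $\eps$ allowed anywhere in $(0,\bB]$. If, say, $\eps \le e^{-k}$ (which is consistent with $\eps \le \bB$ since $k \ge C \gg \bB^{-1}$), then $\mu(\f_\sS)^{(r-1)/k_\sS} \le (e^{-dk})^{(r-1)/k_\sS} \le e^{-d(r-1)/2}$, which is bounded away from $1$, while $1-\sqrt{\eps}$ is essentially $1$. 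The same error appears in your reading of Lemma \ref{lem:bootstrap}: the Kruskal--Katona step there yields $\mu(\g_s) > \sqrt{\aA}$, not $1-\sqrt{\aA}$; the lemma succeeds because $\sqrt{\aA} > \aA$, not because the blown-up family is almost complete. So neither the ``good $T$'' nor the enlarged $\g_\sS$ is available in the generality you need, and the rest of the argument collapses.

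The paper's proof is organised around precisely this difficulty. Rather than asking for an almost-complete shadow, it fixes only the small set $A = A_s \cap \bigcup_{i<s} A_i$ and then feeds the restricted families $\g_j = (\f_i)^{\phi(A\cap A_j)}_{\phi(A)}$ into Lemma \ref{lem:bootstrap}, which only needs $\mu(\g_s) \ge (\aA k/n)^{O(d)}$. Finding $\phi$ with this much weaker guarantee still requires a case split: when $\mu(\f_\sS) \ge e^{-k\bB}$ one uses Fairness (Proposition \ref{prop:Fair}); when $\mu(\f_\sS) < e^{-k\bB}$ one appeals to the fat-shadow estimate of Lemma \ref{lem:fat}, and in the hardest subcase must further exploit the hypothesis that $\f_\sS$ is $G^+$-free (to show $\pl^r_c\f_\sS$ is $G$-free, via Lemma \ref{lem:matchuncap+}) in order to access the sharper bound in Lemma \ref{lem:fat}. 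None of this machinery is bypassable by a direct Kruskal--Katona shadow bound.
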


We conclude this subsection
by deducing Theorem \ref{thm:crit+} from Lemma \ref{lem:crit}.

\begin{proof}[Proof of Theorem \ref{thm:crit+}]
By Theorem \ref{thm:juntarefined} (refined junta approximation)
there is $J \in \tbinom{[n]}{\sS-1}$ such that
$|\f \sm {\cal S}_{n,k,J}| = \dD \tbinom{n-1}{k-1}$
with $\dD^{-1} \gg dr\DD$. We write $J = \{j_1,\dots,j_{\sS-1}\}$,
$\f_i = \f^{j_i}_J$ for $i \in [\sS-1]$ and $\f_\sS=\f^\es_J$.
Note that $\f_\sS$ is $G^+$-free.
We may assume $I = [|I|]$ and $|\f_1| \ge \dots \ge |\f_{\sS-1}|$. Now
\begin{align*}\mu(\f) & \le \mu(\f^\es_J) + \mu(\s_{n,k,J})
- \tfrac{k-1}{n-|J|} \sum_{i=1}^{\sS-1} (1-\mu(\f_i)) \\
& \le \dD k/n + \mu(\f) + \eps k/n
- \tfrac{k}{2n} \sum_{i=1}^{\sS-1} (1-\mu(\f_i)),
\end{align*}
so $\sum_{i=1}^{\sS-1} (1-\mu(\f_i)) \le 2(\eps+\dD)$.
Now for each $i \in I$ we have
$1-\mu(\f_i) \le 4r\DD(\eps+\dD)/\sS$
as if $\sS \le 2|I| \le 2r\DD$ this follows from
$1-\mu(\f_i) \le 2(\eps+\dD)$, or otherwise from
$1-\mu(\f_i) \le \tfrac{2(\eps+\dD)}{\sS-|I|}$.

As $\f_1,\dots,\f_\sS$ are cross free of $H^+_1,\dots,H^+_\sS$
as in Setup \ref{set:crit}, Lemma \ref{lem:crit}
with $(2r\DD(\eps+\dD),2d)$ in place of $(\eps,d)$
gives $\dD k/n = \mu(\f_\sS)<(2r\DD(\eps+\dD))^{2d} k/n$.
As $\eps^{-1},\dD^{-1} \gg dr\DD$ we have $((2r\DD)(\eps+\dD))^{2d}
= (2r\DD)^{2d} \sum_{i=0}^{2d} \tbinom{2d}{i} \eps^i \dD^{2d-i}
< (\eps^d + \dD)/2$, so $\dD < \eps^d$, i.e.\
$|\f^\es_J| = |\f_\sS| < \eps^d \tbinom{n-1}{k-1}$.

Finally, let $k \le \sqrt{n}$ and suppose for contradiction
that $|\f| \ge |\s_{n,k,J}|-\bB\tbinom{n-r}{k-r}$
but there is some $A \in \f \sm \s_{n,k,J}$.
By the previous statement with $d=1$ and
$\eps = \bB\tbinom{n-r}{k-r} \tbinom{n-1}{k-1}^{-1}$
we have $|\f^\es_J| \le \bB\tbinom{n-r}{k-r}$,
so $|\s_{n,k,J} \sm \f| \le 2\bB\tbinom{n-r}{k-r}$.
We fix any $R \in \tbinom{A}{r}$ and
a bijection $\phi:A_s \to R$, where $H_\sS=\{A_s\}$
and define $\g_1,\dots,\g_{s-1}$ by $\g_j = (\f_i)^{\phi(A'_j)}_A$
whenever $A_j$ is an edge of $H_i$ with $A'_j = A_j \cap A_s$.
For each $j \in [s-1]$, writing $r_j=|A'_j|+1 \in [r]$, we have
$\tbinom{n-k-r_j}{k-r_j} - |\g_j| \le |\s_{n,k,J} \sm \f|$,
so as $\tbinom{n-k-r}{k-r} \ge .1\tbinom{n}{k-r}$ for $k \le \sqrt{n}$
we have $1-\mu(\g_j) \le 20\bB < 1/2$.
However, now $\g_1,\dots,\g_{s-1}$ cross contain
$A_1 \sm A_s,\dots,A_{s-1} \sm A_s$ by Lemma \ref{lem:embedlarge},
so we have the required contradiction.
\end{proof}

\subsection{Bootstrapping}

Now we complete the proof of Theorem \ref{thm:crit+}
by proving Lemma \ref{lem:crit}. The idea is to reduce to the case that
the critical edge is disjoint from all other edges,
so that we can apply Lemma \ref{lem:bootstrap}.

\begin{proof}[Proof of Lemma \ref{lem:crit}]
Let $G,H_1,\dots,H_\sS$ be as in Setup \ref{set:crit}.
Let $C \gg \bB^{-1} \gg dr\DD$ and $\f_i \subset \tbinom{[n]}{k_i}$
with $k_i \in [k/2,k]$ for $i \in [\sS]$, where $C \le k \le n/Cs$.
Suppose $\sum_{i=1}^{s-1} (1-\mu(\f_i)) \le \eps \le \bB$,
$\mu(\f_\sS) \ge \eps^d k/n$ and
$1-\mu(\f_i) \le \eps_0 := 2\eps/\sS$ for all $i \in I$.

We need to show that
$\f_1,\dots,\f_\sS$ cross contain $H^+_1,\dots,H^+_\sS$.
Write $G = \{A_1,\dots,A_s\}$ where $H_\sS=\{A_s\}$
and $A = A_s \cap \bigcup_{i<s} A_i$.
It suffices to find an injection $\phi:A \to [n]$
such that Lemma \ref{lem:bootstrap} provides a cross embedding
of $e_1^+,\dots,e_s^+$ in $\g_1,\dots,\g_s$, where
for each edge $A_j \in H_i$ we define $e_j = A_j \sm A_s$
and $\g_j = (\f_i)^{\phi(A \cap A_j)}_{\phi(A)}$.
We note that if $A \cap A_j=\es$ then
$1-\mu(\g_j) \le 2(1-\mu(\f_i))$ for any choice of $\phi$.
Also, for uniformly random $\phi$ we have
$\mb{P}(\mu(\g_j) \ge 1 - \sqrt{\eps_0} ) > 1 - \sqrt{\eps_0}$
whenever $i \in I$ by Lemma \ref{lem:markov}.

Next suppose $\mu(\f_\sS) \ge e^{-k\bB}$.
Then Fairness (Proposition \ref{prop:Fair}) gives
$\mb{P}(\mu(\g_s) \ge \mu(\f_\sS)/2)>1/2$.
By a union bound we can fix $\phi$ with
$\sum_{i=1}^{s-1} (1-\mu(\g_i)) \le 2\eps + |I|\sqrt{\eps_0}
\le \aA := 2\DD\sqrt{\eps}$
and $\mu(\g_s) \ge \mu(\f_\sS)/2 \ge (\aA k/n)^{3d}$.
Then Lemma \ref{lem:bootstrap} applies as required.

It remains to consider the case $\mu(\f_\sS) < e^{-k\bB}$.
We will apply Lemma \ref{lem:fat} to show that we can fix $\phi$ with
$\sum_{i=1}^{s-1} (1-\mu(\g_i)) \le 2\eps + |I|\sqrt{\eps_0}
\le \aA := 2\DD\sqrt{\eps}$ as above and
$\mu(\g_s) \ge c := \mu(\f_\sS)/2k^r
\ge e^{k\bB} \mu(\f_\sS) \cdot \mu(\f_\sS)/2k^r
\ge \mu(\f_\sS)^2 \ge (\aA k/n)^{6d}$.
Again this will suffice by Lemma \ref{lem:bootstrap}.
Lemma \ref{lem:fat} with $\ell=k$ gives $\mb{P}(\mu(\g_s) \ge c)
\ge (\mu(\f_\sS)/2)^{r/k} \ge \eps^{1/4} n^{-2r/k}$,
so we are done unless $\eps^{1/4} n^{-2r/k} < |I|\sqrt{\eps_0}$,
which implies $\sS^2 n^{-8r/k} < (2\DD)^4 \eps$.
As $\eps \ll \DD^{-1}$ this implies $k < n^\bB$, say.
Furthermore, we can assume $\f_\sS$ is $(2r,\mu(\f_\sS) \bB n/sk)$-global,
otherwise we can apply the above argument with some $(\f_\sS)^R_R$
in place of $\f_\sS$ to get $\mb{P}(\mu(\g_s) \ge c)
\ge (\mu(\f_\sS) \bB n/2sk)^{r/k} \ge \eps^{1/4} s^{-2r/k} > |I|\sqrt{\eps_0} $.

Now we claim that $\pl^r_c \f_\sS$ is $G$-free.
This will suffice to complete the proof,
as then Lemma \ref{lem:fat} gives the improved estimate
$\mu(\pl^r_c \f_\sS) \ge (\eps^d/ks)^{2r/k} - (s/n)^\bB
> |I|\sqrt{\eps_0}$, using $s \le r\sS < n^{8r/k}$.
To see the claim, we suppose $\phi(G) \sub \pl^r_c \f_\sS$
and will obtain a contradiction by finding a cross matching
in $\h_1,\dots,\h_s$, where for each edge $A_j$ of $G$
we let $\h_j = (\f_\sS)^{\phi(A_j)}_{\im\phi}$.
We verify the conditions of Lemma \ref{lem:matchuncap+},
with $(s,s,d,2)$ in place of $(s,m,d,K)$.
As $\f_\sS$ is $(2r,\mu(\f_\sS) \bB n/sk)$-global,
each $\h_j$ is $(r,2\mu(\f_\sS) \bB n/sk)$-global
by Lemma \ref{lem:globalrestrict}.
Also, $\f_\sS$ is $(\bB^{-1} s,\mu(\f_\sS)/2)$-uncapturable
by Lemma \ref{lem:globaluncap}, so each
$\mu(\h_j) \ge \mu(\f_\sS)/2 \ge \eps^d k/2n$,
and each $\h_j$ is $(s/2\bB, \eps^d k/4n)$-uncapturable
by Lemma \ref{lem:globaluncap}.
As $\sS^2 n^{-8r/k} < (2\DD)^4 \eps$ and $k<n^\bB$
we have $\eps^d k/n > (3sk/n)^d$, and so the conditions of
Lemma \ref{lem:matchuncap+} hold. But this is a contradiction,
as then $\h_1, \ldots, \h_s$ cross contain a matching.
Therefore $\pl^r_c \f_\sS$ is $G$-free, as claimed.
\end{proof}

\section{The F\"uredi--Jiang--Seiver Conjecture}

In this section we prove the F\"uredi--Jiang--Seiver Conjecture
on the Tur\'an numbers of expanded paths. As previously mentioned,
for paths of odd length the conjecture follows from our result
on critical graphs (Theorem \ref{thm:critical}),
so it remains to consider paths of even length.
We will consider the more general setting of expansions
of (normal) graphs ($r$-graphs with $r=2$)
satisfying the following generalised criticality property.
Recall that we denote the crosscut and transversal
numbers of an $r$-graph $G$ by $\cc{G}$ and $\tau(G)$,
and that $\cc{G} \ge \tau(G)$.
Consider any $G$ with $\tau(G) = \cc{G}$.
We say $G$ is \emph{$a_1$-degree-critical}
if (i) $\cc{G-x}<\cc{G}$ for some $x$ of degree $|G^x_x| \le a_1$,
and (ii) $\tau(G-x)=\tau(G)$ for any $x$ with $|G^x_x|<a_1$.
We say $G$ is \emph{$a_2$-matching-critical}
if (i) $\cc{G \sm M}<\cc{G}$ for some matching $M$ with $|M| \le a_2$,
and (ii) $\tau(G \sm M)=\tau(G)$ for any matching $M$ with $|M|<a_2$.
We say $G$ is \emph{$(a_1,a_2)$-critical} if it is both
$a_1$-degree-critical and $a_2$-matching-critical.

We note that even paths and cycles are $(2,2)$-critical,
and that any $G$ is critical (in the sense defined above)
if and only if $G$ is $(a_1,1)$-critical,
where $a_1$ is the minimum possible degree of any vertex
belonging to any minimum size crosscut of $G^+$.
The significance of the generalised definition is that
it enables to show that the following natural construction
is extremal for the Tur\'an problem for $G^+$.
For any $T \sub [n]$ we write
$\g_{n,k}(T) = \{A \in \tbinom{[n]}{k}: T \sub A\}$
for the family in $\tbinom{[n]}{k}$ generated by $T$.
For $\mc{T} \subset \{0,1\}^n$ we write
$\g_{n,k}(\mc{T}) = \bigcup_{T \in \mc{T}} \g_{n,k}(T)$.
We let $\f_{n,k,G} = \g_{n,k}(\mc{T})$
where $\mc{T}$ is the disjoint union of $\cc{G}-1$ singletons
and a graph $F_{a_1a_2}$ with as many edges as possible subject to
having no vertex of degree $\ge a_1$ or matching of size $\ge a_2$.
Then $\f_{n,k,G}$ is $G^+$-free by definition of $(a,b)$-criticality.
We will show that it is extremal. When $G$ is a path of even length
this will complete the proof of the F\"uredi--Jiang--Seiver Conjecture.

\begin{thm} \label{thm:ab}
Let $G \in \g(2,\DD,s)$ be $(a_1,a_2)$-critical,
$C \gg a_2\DD$ and $C \leq k \leq n/Cs$.
Then $\ex(n,G^+(k)) = |\f_{n,k,G}|$.
\end{thm}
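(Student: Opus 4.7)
The plan is to mirror the proof of the critical case (Theorem \ref{thm:crit+}), replacing the single critical edge by the richer reduction afforded by $(a_1, a_2)$-criticality. First I would apply the refined junta approximation (Theorem \ref{thm:juntarefined}) to $\f$ to produce $J \in \binom{[n]}{\cc{G}-1}$ such that $|\f \sm \s_{n,k,J}| \le \dD \binom{n-1}{k-1}$ for $\dD$ as small as needed in terms of $a_1, a_2, \DD$. Writing $V = [n] \sm J$ and $\f^* = \f^\es_J \sub \binom{V}{k}$, the task reduces to pinning down the structure of $\f^*$.

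To this end, define the derived graph $H$ on $V$ by including $\{u,v\}$ whenever $\mu\bigl((\f^*)^{\{u,v\}}_{\{u,v\}}\bigr) \ge \bB$ for an appropriate threshold $\bB \ll 1$. The key structural claim is that $H$ has no vertex of degree $\ge a_1$ and no matching of size $\ge a_2$, which implies $|H| \le |E(F_{a_1, a_2})|$ by definition of $F_{a_1, a_2}$. To rule out a vertex $v \in V$ of degree $a_1$ in $H$, let $x \in V(G)$ witness $a_1$-degree-criticality with $\cc{G - x} \le \cc{G} - 1$, take a crosscut of $(G - x)^+$ of size $\le \cc{G} - 1$ inside $V(G) \sm \{x\}$, map this crosscut into $J$, and set $x \mapsto v$. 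Then the edges of $G - x$ cross-embed into $\f$ via Lemma \ref{lem:embedmix} (with $J$ playing the role of the crosscut), while the $\le a_1$ edges of $G$ incident to $x$ are embedded through $v$ using the heavy pairs $\{v, u_1\}, \dots, \{v, u_{a_1}\}$ together with Lemma \ref{lem:matchuncap+} applied to the corresponding link families. This yields a cross copy of $G^+$, contradicting $G^+$-freeness. An exactly parallel argument, using an $a_2$-matching-critical matching $M' \sub G$ witnessing $\cc{G \sm M'} \le \cc{G} - 1$ in place of $x$, rules out matchings of size $a_2$ in $H$.

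With $|H| \le |E(F_{a_1, a_2})|$ in hand, split $\f^* = \f^{\mathrm{heavy}} \cup \f^{\mathrm{light}}$, where $\f^{\mathrm{heavy}}$ consists of those edges of $\f^*$ containing some pair of $H$. Then $|\f^{\mathrm{heavy}}| \le |E(F_{a_1,a_2})| \binom{n-2}{k-2}$, matching the excess of $|\f_{n,k,G}|$ over $|\s_{n,k,J}|$, and it remains to show $\f^{\mathrm{light}} = \es$. For this I would bootstrap in the style of Lemma \ref{lem:crit}: for each pair $\{u,v\} \notin H$ we have $\mu((\f^*)^{\{u,v\}}_{\{u,v\}}) < \bB$, and iterated applications of Lemma \ref{lem:bootstrap} together with the embedding lemmas of Section \ref{sec:shadow} drive the link of every such pair below any polynomial threshold in $k/n$, eventually forcing $\f^{\mathrm{light}} = \es$.

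The main obstacle will be the cross-embedding used inside the structural claim: producing simultaneous heavy-pair link families at $v$ (or along a matching $M \sub H$) that jointly satisfy the uncapturability and globalness hypotheses of Lemma \ref{lem:matchuncap+} after restriction by $J$, across all uniformity regimes of $k$. Navigating these regimes will mirror the case analysis in the proof of Lemma \ref{lem:embedmix}, and an enhanced upgrading step such as Lemma \ref{lem:upgrade3} will almost certainly be needed to convert the initial $\bB$-heaviness of a pair into globalness strong enough for the embedding. A secondary issue is the exact bootstrap for $\f^{\mathrm{light}}$, which may require a dedicated companion lemma tailored to the $(a_1, a_2)$-critical setting, analogous to Lemma \ref{lem:crit} in the critical case.
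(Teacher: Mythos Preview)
Your outline is essentially the paper's. The paper too starts from Theorem \ref{thm:juntarefined} to produce $J$, studies the heavy-pair graph on $[n]\setminus J$, uses $(a_1,a_2)$-criticality to bound it, and bootstraps. The ``dedicated companion lemma \dots\ analogous to Lemma \ref{lem:crit}'' that you anticipate is exactly Lemma \ref{lem:ab}, the technical heart of this section.

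Two packaging differences are worth noting. First, Lemma \ref{lem:ab} does not output a cross-embedding directly (as you propose via Lemma \ref{lem:embedmix} and Lemma \ref{lem:matchuncap+}); its conclusion is that one of the $\f'_i$ is \emph{capturable}, and this is then played off against the assumed heaviness of the corresponding pair. Working with capturability absorbs the uncapturability/globalness case analysis you foresee for step~3 into a single black box, and also lets the paper first localize the heavy-pair graph $F$ to a bounded set (via two applications of Lemma \ref{lem:ab}, once with $r=2$ for $\f^\es_J$ and once with $r=1$ for each $\f^x_{J\cup\{x\}}$) before bounding $|F|$. Second, the paper does not attack $\f^{\mathrm{light}}=\emptyset$ directly; it proves the stability statement Theorem \ref{thm:ab+}, obtaining $\mu(\f\setminus\g)\le (\eps k/n)^d$ for a fixed copy $\g$ of $\f_{n,k,G}$ whenever $|\f|\ge|\f_{n,k,G}|-\eps\tbinom{n-2}{k-2}$, and then lets $\eps\to 0$ to recover Theorem \ref{thm:ab}.

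One point your step~3 glosses over: the heavy neighbours $u_1,\dots,u_{a_1}$ of $v$ (respectively, the endpoints of a heavy matching) may appear in edges of $G-x$ (respectively, $G\setminus M'$) that you want to embed through the $J$-crosscut, so the two embedding phases must agree on these shared vertices. The paper handles exactly this via the index set $I$ in Setup \ref{set:ab} and the extra hypothesis $1-\mu(\f_i)\le \eps_0$ for $i\in I$ in Lemma \ref{lem:ab}.
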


Moreover, we will prove the following strong stability version.

\begin{thm} \label{thm:ab+}
Let $G \in \g(2,\DD,s)$ be $(a_1,a_2)$-critical
and $C \gg \bB^{-1} \gg a_2d\DD$.

Suppose $\f \subset \tbinom{[n]}{k}$
with $C \le k \le n/Cs$ is $G^+$-free.
If $|\f| \ge |\s_{n,k,\sS-1}|$ then
$|\f \sm \g_{n,k}(\mc{T})| \le \bB^{-1} \tbinom{n-3}{k-3}$
for some $\mc{T} = \{\{x\}: x \in J\} \cup F$
where $J \in \tbinom{[n]}{\sS-1}$
and $F \subset \tbinom{[n] \sm J}{2}$ with $|F| \le |F_{a_1a_2}|$.

Moreover, if $|\f| \ge |\f_{n,k,G}| - \eps \tbinom{n-2}{k-2}$ with $\eps \in (0,\bB)$
then $\mu(\f \sm \g) \le (\eps k/n)^d$ for some copy $\g$ of $\f_{n,k,G}$,
where if $k \le \sqrt{n}$ then $\f \sub \g$.
\end{thm}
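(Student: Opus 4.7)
The plan is to imitate the structure of the proof of Theorem \ref{thm:crit+} but to upgrade the analysis to reflect the extra flexibility provided by $(a_1,a_2)$-criticality: in the vanilla critical case the residual family $\f^\es_J$ of edges missing the junta must be tiny, whereas here it must be approximately contained in the family generated by a graph $F$ on pairs satisfying two combinatorial constraints. First I would apply Theorem \ref{thm:juntarefined} with some $\dD$ satisfying $\bB^{-1} \gg \dD^{-1} \gg a_2 d \DD$ to obtain $J \in \tbinom{[n]}{\sS-1}$ with $|\f^\es_J| \le \dD \tbinom{n-1}{k-1}$, and set $\f_i = \f^{j_i}_J$ for $i \in [\sS-1]$ together with $\f_\sS = \f^\es_J$. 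The same calculation as at the start of the proof of Theorem \ref{thm:crit+} shows $\sum_{i<\sS}(1-\mu(\f_i))$ is small, and consequently $1 - \mu(\f_i) \le O((\eps+\dD)/\sS)$ for every $i$ whose link vertex lies in the neighborhood of the critical piece of $G$.

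Next I would identify the skeleton graph $F = \{ e \in \tbinom{[n]\sm J}{2} : \mu((\f_\sS)^e_e) \ge \bB^{1/2}\}$. The key claim is that $F$ has maximum degree less than $a_1$ and no matching of size $\ge a_2$. For the degree bound, suppose $x \in [n]\sm J$ has $a_1$ neighbors in $F$; invoking clause~(i) of $a_1$-degree-criticality, pick $x^* \in V(G)$ of degree at most $a_1$ whose removal reduces the crosscut. Setting up the embedding of $G^+$ with $x^*$ mapped to $x$, its neighbors to the $F$-neighbors of $x$, and the remaining $\sS-1$ crosscut vertices of $G^+ - x^*$ to distinct $j_i$, the fatness of $\f_1,\dots,\f_{\sS-1}$ together with Lemma \ref{lem:embedmix} would produce a cross copy of $G^+$ inside $\f$, a contradiction. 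An identical argument using $a_2$-matching-criticality rules out a matching of size $\ge a_2$ in $F$, so $|F| \le |F_{a_1 a_2}|$. Together with the upper bound $|\f_\sS \sm \g_{n,k}(F)| \le \bB^{1/2}\binom{n}{2}\tbinom{n-2}{k-2} \le \bB^{-1}\tbinom{n-3}{k-3}$ obtained by summing over pairs $e \notin F$, this proves the first assertion of the theorem with $\mc{T} = \{\{x\}: x \in J\} \cup F$.

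For the quantitative and exact statements I would prove a bootstrapping lemma in the spirit of Lemma \ref{lem:crit}. Fix the critical vertex $x^*$ (or critical matching) of $G$, let $H_\sS$ be the inclusive link of $x^*$ in $G$, and let $H_1,\dots,H_{\sS-1}$ be the remaining crosscut links. Assume $\f_1,\dots,\f_\sS$ are cross free of $H_1^+,\dots,H_\sS^+$, that $\sum_{i<\sS}(1-\mu(\f_i)) \le \eps$, and that $\mu(\f_\sS \sm \g_{n,k}(F_{a_1 a_2})) \ge (\eps k/n)^d$ in a suitable normalized sense; we want a contradiction. The dense case $\mu(\f_\sS \sm \g_{n,k}(F_{a_1 a_2})) \ge e^{-k\bB}$ reduces to Lemma \ref{lem:bootstrap} after choosing a random injection $\phi$ on the critical piece using Fairness, while the sparse case passes through the fat $2$-shadow via Lemma \ref{lem:fat}, arguing (via Lemma \ref{lem:matchuncap+}) that the fat shadow must contain a cross matching that would allow embedding $G$ itself. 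Applied with the parameters obtained from Theorem \ref{thm:juntarefined}, this gives $\mu(\f_\sS \sm \g_{n,k}(F)) \le (\eps k/n)^d$ for a choice of $F$ with $|F| \le |F_{a_1 a_2}|$, which on taking the closest copy of $F_{a_1 a_2}$ containing $F$ delivers $\g$ as required.

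Finally, the $k \le \sqrt{n}$ conclusion will follow by contradiction: assuming some $A \in \f \sm \g$, fix $R \in \tbinom{A}{2}$ and a bijection $\phi$ of the critical piece $H_\sS$ into $A$, then invoke Lemma \ref{lem:embedlarge} to extend this to a full $G^+$ embedding using the fatness $\tbinom{n-k-r}{k-r} \ge 0.1 \tbinom{n}{k-r}$ valid in this range, exactly as in the final paragraph of the proof of Theorem \ref{thm:crit+}. The main obstacle I expect is proving the bootstrapping lemma uniformly for both forms of criticality: the $a_2$-matching-critical case does not reduce to a single disjoint edge but to a matching of up to $a_2$ edges, and Lemma \ref{lem:bootstrap} must be iterated or adapted accordingly, while keeping careful track of the constraint that $\f_\sS$ is captured by a graph on pairs (rather than isolated vertices) on the residual ground set.
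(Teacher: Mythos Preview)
Your overall scaffold mirrors the paper's, but there is a genuine gap in the step where you pass from the junta remainder $\f_\sS = \f^\es_J$ to the pair-generated family $\g_{n,k}(F)$.

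You define $F = \{e \in \tbinom{[n]\sm J}{2} : \mu((\f_\sS)^e_e) \ge \bB^{1/2}\}$ and claim that summing over pairs $e \notin F$ gives $|\f_\sS \sm \g_{n,k}(F)| \le \bB^{1/2}\tbinom{n}{2}\tbinom{n-2}{k-2} \le \bB^{-1}\tbinom{n-3}{k-3}$. The second inequality is simply false: $\tbinom{n}{2}\tbinom{n-2}{k-2} = \tbinom{k}{2}\tbinom{n}{k}$, whereas $\tbinom{n-3}{k-3} \approx (k/n)^3\tbinom{n}{k}$, so you are asserting $\bB^{1/2}k^2 \lesssim \bB^{-1}k^3/n^3$. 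The first inequality is also not what a sum over pairs gives: summing $|(\f_\sS)^e_e|$ over $e \notin F$ counts every $A \in \f_\sS \sm \g_{n,k}(F)$ exactly $\tbinom{k}{2}$ times, so the correct conclusion is only $|\f_\sS \sm \g_{n,k}(F)| \le \bB^{1/2}\tbinom{n}{k}$, which is useless. More fundamentally, there is no reason a family of density $\dD k/n$ with all pair-links below $\bB^{1/2}$ should be small; a \emph{constant} threshold on pairs gives you nothing here.

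What the paper does instead is a two-level capturability argument. The key lemma (Lemma~\ref{lem:ab}, built on a multi-edge variant Lemma~\ref{lem:bootstrap2} of Lemma~\ref{lem:bootstrap}) shows that under the hypotheses you have on $\f_1,\dots,\f_{\sS-1}$, if the ``critical'' families $\f'_1,\dots,\f'_a$ are all sufficiently \emph{uncapturable} then one can cross-embed $H_1^+,\dots,H_{\sS-1}^+,B_1^+,\dots,B_a^+$. Contrapositively, some $\f'_i$ is capturable. Applied first with $r=2$ and all $\f'_i = \f^\es_J$, this yields a set $J'$ of size $\le b$ capturing $\f^\es_J$; applied again with $r=1$ and $\f'_i = \f^x_{J\cup\{x\}}$ for each $x \in J'$, it yields $J_x$ of size $\le b$ capturing each vertex link. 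Only now does one define $F$ (with threshold $bk/n$, not a constant), and one has automatically $F \subset \{xy: x\in J', y\in J_x\}$, so $|F| \le b^2$ and the remainder is genuinely small. The bound $|F|\le|F_{a_1a_2}|$ is then obtained by a third application of Lemma~\ref{lem:ab}, not via Lemma~\ref{lem:embedmix} as you propose (the latter requires uncapturability of the $[\sS_1]$-indexed families, which you have not established for the pair-links). Your proposed bootstrapping lemma is in the right spirit but needs to conclude \emph{capturability} rather than smallness, and must handle $a$ critical pieces simultaneously --- this is exactly where Lemma~\ref{lem:bootstrap2} replaces Lemma~\ref{lem:bootstrap}.
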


Throughout this section we adopt the following set up.

\begin{setup} \label{set:ab}
Let $G \in \g'(2,s,\DD)$ be $(a_1,a_2)$-critical with $\cc{G}=\sS$.
Let $\b = \{B_i: i \in [a]\}$ be a $r$-graph matching with $r \in [2]$,
and $\b' = \{B'_i: i \in [a]\} \subset G$,
where if $r=2$ then $a=a_2$ and each $B'_i = B_i$
or if $r=1$ then $a=a_1$ and each $B'_i = B_i \cup \{x\}$
for some vertex $x$ of degree $a$.
Let $S=\{s_1,\dots,s_{\sS-1}\}$ be a crosscut in $(G\sm\b')^+$
and let $H_i = G^{s_i}_{s_i}$ for $i \in [\sS-1]$.
Let $I = \{i \in [\sS-1]: V(H_i) \cap V(\b) \ne \es\}$.
\end{setup}

We prove a bootstrapping lemma in the next subsection
and then deduce Theorem \ref{thm:ab+} in the following subsection.

\subsection{Bootstrapping}

In this subsection we prove the following bootstrapping lemma,
which is analogous to Lemma \ref{lem:crit}, except that
rather than concluding that some family is small
we conclude that some family is capturable.

\begin{lem} \label{lem:ab}
With notation as in Setup \ref{set:ab},
let $C \gg \bB^{-1} \gg ad\DD$ and $C \le k \le n/Cs$.
Let $\f_i \subset \tbinom{[n]}{k_i}$
with $k_i \in [k/2,k]$ for $i \in [\sS-1]$
and $\f'_i \subset \tbinom{[n]}{k'_i}$
with $k'_i \in [k/2,k]$ for $i \in [a]$ be such that
$\f_1,\dots,\f_{\sS-1},\f'_1,\dots,\f'_a$ are cross free of
$H_1^+,\dots,H_{\sS-1}^+,B_1^+,\dots,B_a^+$.
Suppose $\sum_{i=1}^{s-1} (1-\mu(\f_i)) \le \eps \le \bB$
and $1-\mu(\f_i) \le \eps_0 := 2\eps/\sS$ for all $i \in I$.
Then some $\f'_i$ is $(\bB^{-1},\gG_i + (k/n)^d)$-capturable,
where $\gG_i < \eps^d$,
and if $\f'_i$ is $G^+$-free then $\gG_i < \eps^d k/n$.
\end{lem}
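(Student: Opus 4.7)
The proof will mirror that of Lemma~\ref{lem:crit}, adapted to the matching $\b$ of $a$ critical edges (or singletons, when $r=1$) in place of the single disjoint edge used there. We argue by contradiction: suppose that for every $i \in [a]$ the family $\f'_i$ is $(\bB^{-1},\gG_i + (k/n)^d)$-uncapturable, where $\gG_i = \eps^d$ in general and $\gG_i = \eps^d k/n$ whenever $\f'_i$ happens to be $G^+$-free. Our aim is to construct a cross-embedding of $H_1^+,\dots,H_{\sS-1}^+,B_1^+,\dots,B_a^+$ in the given families, contradicting the cross-freeness hypothesis.

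The embedding proceeds in three stages. First, since $\sum_{i \le \sS-1}(1-\mu(\f_i)) \le \eps \le \bB$, each $\f_i$ is nearly complete, so Fairness (Proposition~\ref{prop:Fair}) gives $\mu(\pl^{r'}_{c_i}\f_i) \ge 1-\eps$ for $r' \in \{1,2\}$ and $c_i=(1-\eps)\mu(\f_i)$; for indices $i \in I$ with $\mu(\f_i) \ge 1-\eps_0$, Lemma~\ref{lem:markov} gives a sharper shadow bound. Second, for each $\f'_i$ we apply enhanced upgrading (Lemma~\ref{lem:upgrade2} or Lemma~\ref{lem:upgrade3}) to convert uncapturability into globalness on a short junta, after which Lemma~\ref{lem:fat} yields a lower bound on the fat shadow of $\f'_i$; the Kruskal--Katona/Conlon--Fox--Sudakov branch of Lemma~\ref{lem:fat} is activated precisely when $\f'_i$ is $G^+$-free, and is what produces the improved $\eps^d k/n$ threshold. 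Combining these shadow bounds with Lemma~\ref{lem:nearcomplete} and Lemma~\ref{lem:embedfat} yields a cross-embedding of the skeleton $G = (\bigcup_j H_j) \cup \b'$ into the appropriate fat shadows. Third, we lift this skeleton embedding to the $k$-uniform setting by restricting each family to the complement of the skeleton image: near-completeness of the $\f_i$ and uncapturability of the $\f'_i$ are preserved by Lemmas~\ref{lem:globalrestrict} and~\ref{lem:globaluncap}, and the required cross-matchings are produced by Lemma~\ref{lem:matchuncap+}, whose $d$-parameter combined with the bootstrapping inequality of Lemma~\ref{lem:bootstrap} is what introduces the $(k/n)^d$ summand in the final uncapturability threshold.

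The main obstacle is coordinating the $\f'_i$ embeddings when $r=1$, where all $B'_i$ share the central vertex $x$ and so the $a$ families $\f'_i$ cannot be handled as independent pieces. We resolve this by fixing $\phi(x)$ first via a union-bound argument: the uncapturability $\gG_i + (k/n)^d$ of each $\f'_i$ allows us to select $\phi(x) \in [n]$ simultaneously so that all $a$ inclusive-link restrictions $(\f'_i)^{\phi(x)}_{\phi(x)}$ retain uncapturability with only mildly degraded parameters, after which the remaining singletons $B_i$ of $\b$ become truly disjoint and the matching argument decouples across $i \in [a]$. In the $r=2$ case the $B_i$ are already a disjoint matching and this coordination issue does not arise. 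A secondary delicate point is that the sharper bound $\gG_i < \eps^d k/n$ must be harvested from the specific $\f'_i$ that is $G^+$-free by optimising the choice of $i$ at the end of the shadow/lifting step, so that the $G^+$-free branch of Lemma~\ref{lem:fat} can be applied.
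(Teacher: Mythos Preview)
Your overall strategy---assume all $\f'_i$ are uncapturable, upgrade to globalness, embed the skeleton into fat shadows, then lift---matches the paper, and you correctly locate the $G$-free branch of Lemma~\ref{lem:fat} as the mechanism behind the sharpened $\eps^d k/n$ threshold. But two points need correction.

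First, the ``main obstacle'' you identify in the $r=1$ case rests on a misreading of Setup~\ref{set:ab}. The cross-freeness hypothesis involves $B_1^+,\dots,B_a^+$, not $(B'_1)^+,\dots,(B'_a)^+$; when $r=1$ the $B_i$ are distinct singletons and $\b$ is a genuine matching. Only the auxiliary edges $B'_i = B_i \cup \{x\}$ (used to select the crosscut $S$ of $G\sm\b'$) share $x$, and $x$ itself is never embedded. There is nothing to coordinate here, and the paper treats both values of $r$ uniformly by taking independent random injections $\phi_i : B_i \to [n]$.

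Second, and more substantively, Lemma~\ref{lem:bootstrap} is not adequate for the final lifting step. That lemma handles a \emph{single} distinguished family whose edge is disjoint from all others and requires every remaining family to be nearly complete; here you have $a$ distinguished families $\f'_1,\dots,\f'_a$, none of them nearly complete, so you cannot place $a-1$ of them on the ``nearly complete'' side. The paper introduces a new lemma (Lemma~\ref{lem:bootstrap2}) specifically for this: if the nearly-complete families have total defect $<1/2$ and each $\f'_i$ is $(C',(k'_i/n)^d)$-uncapturable, then a cross copy exists. Its proof pushes each $\f'_i$ up to uniformity $n/2a$ and applies the Dinur--Friedgut argument (Russo's Lemma plus Friedgut's junta theorem) to convert $(C',(k'_i/n)^d)$-uncapturability into measure $>1-1/2a$ at that scale, whereupon a single random injection succeeds. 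This Dinur--Friedgut step is the actual source of the $(k/n)^d$ summand. Lemma~\ref{lem:matchuncap+} does appear in the paper's argument, but only in the auxiliary step verifying that $\pl^2_{c_i}\g'_i$ is $G$-free when $\g'_i$ is $G^+$-free; it plays no role in lifting the $B_i$'s.
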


The proof requires the following lemma which is
analogous to Lemma \ref{lem:bootstrap}.

\begin{lem} \label{lem:bootstrap2}
Let $C \gg C' \gg ad$,
$\f_i \subset \tbinom {[n]}{k_i}$ for $i\in [s]$
and $\f'_i \subset \tbinom{[n]}{k'_i}$ for $i \in [a]$
with $\sum_{i=1}^s k_i + \sum_{i=1}^a k'_i \le n/C$.
Suppose $(\f_1,\dots,\f_s,\f'_1,\dots,\f'_a)$
are cross free of $G = (e_1,\dots,e_s,e'_1,\dots,e'_a)$
with each $|e_i|=k_i$, $|e'_i|=k'_i$ and $e \cap e'_i = \es$
for all $i \in [a]$ and $e'_i \ne e \in G$.
If $\sum_{i=1}^s (1-\mu(\f_i)) < 1/2$ then
some $\f'_i$ is $(C',(k'_i/n)^d)$-capturable.
\end{lem}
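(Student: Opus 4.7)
I would argue by contradiction: assume every $\f'_j$ is $(C', (k'_j/n)^d)$-uncapturable, and construct a cross embedding of $G$ in $(\f_1, \dots, \f_s, \f'_1, \dots, \f'_a)$, contradicting the cross-free hypothesis. The strategy mirrors Lemma~\ref{lem:bootstrap}, replacing the single disjoint edge there by a cross matching of all $a$ pairwise disjoint edges $e'_1, \dots, e'_a$.

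The main step is to find a cross matching $(A_1, \dots, A_a)$ with $A_j \in \f'_j$ pairwise disjoint. The key observation is that our uncapturability hypothesis, though apparently weak, can be strengthened to match the form required by Lemma~\ref{lem:matchuncap+}. Since $k'_j \le n/C$ and $C \ge (2a)^2$ (which follows from $C \gg C' \gg ad$), we have $(k'_j/n)^d \ge (2a)^{2d}(k'_j/n)^{2d} = (2ak'_j/n)^{2d}$; hence each $\f'_j$ is in fact $(C', (2ak'_j/n)^{2d})$-uncapturable. Since $C' \ge 4da$, Lemma~\ref{lem:matchuncap+} applied with $m = s = a$ and its parameter $d$ replaced by $2d$ produces the desired cross matching.

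Given the matching, I would complete the cross embedding of $G$ by placing $(e_1, \dots, e_s)$ into $(\f_1, \dots, \f_s)$ via a random injection $\phi: \bigcup_i V(e_i) \to W$, where $W := [n] \sm \bigcup_j A_j$ has $|W| \ge n - n/C$ since $\sum_j k'_j \le n/C$. Using the standard estimate $\mu_W(\f_i) \ge \mu(\f_i) - |W^c|k_i/n \ge \mu(\f_i) - k_i/C$, the union bound gives failure probability at most $\sum_i (1 - \mu_W(\f_i)) \le \sum_i (1 - \mu(\f_i)) + (\sum_i k_i)/C < 1$ for $C$ sufficiently large, so a successful $\phi$ exists. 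Combining $\phi$ with $(A_1, \dots, A_a)$ yields a cross embedding of $G$, the desired contradiction.

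The main obstacle is Step 1, specifically the application of Lemma~\ref{lem:matchuncap+}. Besides the density upgrade sketched above, that lemma requires the $k'_j$ to satisfy a bounded uniformity ratio $k \le k'_j \le Kk$ with $K \gg 2d$; this is automatic in the main applications (Theorem~\ref{thm:ab+}) where all $k'_j$ equal the expansion uniformity $k$, but in the full generality of Lemma~\ref{lem:bootstrap2} it may be necessary to group the $\f'_j$ by uniformity and apply the matching lemma separately to each group, or to adapt the proof of Lemma~\ref{lem:matchuncap+} to handle arbitrary uniformities directly.
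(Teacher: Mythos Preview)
Your Step 2 has a genuine gap. The density estimate $\mu_W(\f_i)\ge\mu(\f_i)-|W^c|k_i/n$ is correct, but after summing over $i$ the correction term is $|W^c|\big(\sum_i k_i\big)/n$; since the hypotheses only give $|W^c|\le n/C$ and $\sum_i k_i\le n/C$, this is of order $n/C^2$, which is unbounded as $n\to\infty$ (here $C$ depends only on $a,d$, not on $n$). So ``for $C$ sufficiently large'' cannot make $(\sum_i k_i)/C<1/2$. There is no easy repair: once the images $A_1,\dots,A_a$ are fixed, restricting the $\f_i$ to $W=[n]\sm\bigcup_j A_j$ can blow up each deficit $1-\mu(\f_i)$ by the factor $\tbinom{n}{k_i}/\tbinom{|W|}{k_i}$, which may be exponential in $k_i$.

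The paper avoids this by never fixing the $A_j$ in advance. It passes to the up-closures $\g_j=(\f'_j)^\uparrow\cap\tbinom{[n]}{n/2a}$ and uses a Dinur--Friedgut argument (Russo's Lemma together with Friedgut's junta theorem) to deduce from $(C',(k'_j/n)^d)$-uncapturability that $\mu(\g_j)>1-1/2a$. Then a \emph{single} random injection $\phi:V(G')\to[n]$ into the full ground set succeeds by the union bound $\sum_i(1-\mu(\f_i))+\sum_j(1-\mu(\g_j))<1/2+1/2$. Note that Lemma~\ref{lem:bootstrap} itself already follows this up-closure-then-random-injection paradigm (there via Kruskal--Katona, since only one family needs enlarging), so your proposal does not in fact mirror that lemma. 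The up-closure step is precisely what lets one handle the $e'_j$ simultaneously with the $e_i$ rather than sequentially, and so sidesteps both your Step 2 issue and the uniformity-ratio obstacle you flag in Step 1.
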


\begin{proof}
Let $k = n/2a$ and for each $i \in [a]$ let
$\g_i = (\f'_i)^\ua \cap \tbinom{[n]}{k}$.
Then $(\f_1,\dots,\f_s,\g_1,\dots,\g_a)$ are cross free of $G'$
obtained from $G$ by enlarging each $e'_i$ to $e^*_i$ of size $k$.
Suppose for contradiction that each
$\f'_i$ is $(C',(k'_i/n)^d)$-uncapturable.
Then an argument of Dinur and Friedgut \cite{dinur2009intersecting}
(apply Russo's Lemma and Friedgut's junta theorem)
shows that each $\mu(\g_i) > 1-1/2a$.
Consider a uniformly random injection $\phi:V(G') \to [n]$.
Let $E$ be the event that some $\phi(e_i) \notin \f_i$
or some $\phi(e^*_i) \notin \g_i$.
Then $1 = \mb{P}(E) \le \sum_{i \in [s]} (1-\mu(\f_i))
+ \sum_{i \in [a]} (1-\mu(\g_i)) < 1/2 + 1/2$, contradiction.
\end{proof}

\begin{proof}[Proof of Lemma \ref{lem:ab}]
With notation as in Setup \ref{set:ab},
let $C \gg \bB^{-1} \gg b \gg d \gg a\DD$
and $C \le k \le n/Cs$.
Let $\f_i \subset \tbinom{[n]}{k_i}$
with $k_i \in [k/2,k]$ for $i \in [\sS-1]$
and $\f'_i \subset \tbinom{[n]}{k'_i}$
with $k'_i \in [k/2,k]$ for $i \in [a]$ be such that
$\f_1,\dots,\f_{\sS-1},\f'_1,\dots,\f'_a$ are cross free of
$H_1^+,\dots,H_{\sS-1}^+,B_1^+,\dots,B_a^+$.
Suppose $\sum_{i=1}^{s-1} (1-\mu(\f_i)) \le \eps \le \bB$
and $1-\mu(\f_i) \le \eps_0 := 2\eps/\sS$ for all $i \in I$.
Suppose for contradiction that
each $\f'_i$ is $(\bB^{-1},\gG_i + (k/n)^d)$-uncapturable,
where either $\gG_i \ge \eps^d$
or $\f'_i$ is $G^+$-free and $\gG_i \ge \eps^d k/n$.

We start by upgrading uncapturability to globalness.
By Lemma \ref{lem:upgrade2} with $(b,4,a)$ in place of $(b,r,m)$
and each $\aA_i = n/kb$, $\bB_i = \gG_i + (k/n)^d$,
noting that $8b \le k \le n/8a(n/bk)$,
$4ba < \bB^{-1}$ and $(n/kb)^b (k/n)^d > n/k \gg 1$,
there is a set $S'$ partitioned into
$S'_1,\dots,S'_a$ with each $|S'_i| \le 8b$ such that
each $\g^0_i := (\f'_i)^{S'_i}_{S'}$ is $(8,4\mu(\g^0_i)n/kb)$-global
with $\mu(\g^0_i) > \aA_i^{1_{S'_i \ne \es}} \bB_i/2$.
We have $2\mu(\g^0_i) > \eps^d + (k/n)^d$,
unless $\f'_i$ is $G^+$-free and $S'_i=\es$,
in which case $\g^0_i$ is a restriction of $\f'_i$,
so is also $G^+$-free, with $2\mu(\g^0_i) > \eps^d k/n + (k/n)^d$.

Next we define $\g'_i := (\f'_i)^{S_i}_S$ with enhanced globalness,
obtaining $S$ partitioned into $S_1,\dots,S_a$ by letting
$S_i = S'_i$ if $\g^0_i$ is $(4,\mu(\g^0_i) \bB n/sk)$-global,
or otherwise letting $S_i = S'_i \cup R_i$ where $|R_i| \le 4$ and
$\g^1_i := (\g^0_i)^{R_i}_{R_i}$ has $\mu(\g^1_i) > \mu(\g^0_i) \bB n/sk$.
We also define $\g_i = (\f_i)^\es_S$ for $i \in [\sS-1]$
and note that each $1-\mu(\g_i) \le 2(1-\mu(\f_i))$.

By Lemma \ref{lem:globalrestrict},
each $\g^1_i$ or $\g'_i$ is $(4,2\mu(\g^0_i) \bB n/sk)$-global if $R_i=\es$
or $(4,8\mu(\g^0_i)n/kb)$-global otherwise. By Lemma \ref{lem:globaluncap},
each $\g^1_i$ is $(b/8,\mu(\g^1_i)/2)$-uncapturable,
so $\mu(\g'_i) > \mu(\g^1_i)/2 \ge \mu(\g^0_i)/2$.
Thus $2\bB^{-1} \mu(\g'_i) \ge \gG'_i + (k/n)^d$, and
\begin{enumerate}[(i)]
\item $\g'_i$ is $(4,8\mu(\g'_i)n/kb)$-global
with $\gG'_i \ge \eps^d/s$, or
\item $\g'_i$ is $G^+$-free
and $(4,2\mu(\g'_i) \bB n/sk)$-global
with $\gG'_i \ge \eps^d k/n$.
\end{enumerate}
Indeed, if option (i) does not hold then
$\g^0_i$ is $G^+$-free with $2\mu(\g^0_i) > \eps^d k/n + (k/n)^d$,
and also is $(4,\mu(\g^0_i) \bB n/sk)$-global,
so $R_i=\es$ and $\g'_i$ is a restriction of $\g^0_i$,
so is also $G^+$-free.

We will show that $\g_1,\dots,\g_{\sS-1},\g'_1,\dots,\g'_a$
cross contain $H_1^+,\dots,H_{\sS-1}^+,B_1^+,\dots,B_a^+$,
thus obtaining the required contradiction. It suffices to
find an injection $\phi:B \to [n]$, where $B=\bigcup_{i=1}^a B_i$,
such that Lemma \ref{lem:bootstrap2} provides a cross embedding
of $e_1^+,\dots,e_s^+$ in $\g_1,\dots,\g_s$, where
for each edge $A_j \in H_i$ we define $e_j = A_j \sm B$
and $\h_j = (\g_i)^{\phi(B \cap A_j)}_{\phi(B)}$,
or if $A_j=B_i$ we define $e_j = A_j \sm B = \es$
and $\h_j = (\g'_i)^{\phi(B_i)}_{\phi(B)}$.

We note that if $B \cap A_j=\es$ then each
$1-\mu(\h_j) \le 2(1-\mu(\g_i))$ for any $\phi$.
We consider $\phi$ obtained by choosing independent uniformly random
injections $\phi_i:B_i \to [n]$ for each $i \in [a]$.
Then $\mb{P}(\phi \text{ is injective}) \ge 1-2a^2/n$ and
$\mb{P}(\mu(\h_j) \ge 1 - \sqrt{\eps_0} ) > 1 - 2\sqrt{\eps_0}$
whenever $A_j \in \bigcup_{i \in I} H_i$ by Lemma \ref{lem:markov}.
We write $E_i$ for the event that $\phi_i(B_i) \in \pl_{c_i} \g'_i$,
where $c_i = b^{-.3} \mu(g'_i)$. It suffices to show that
conditional on $E_i$ each $\h'_i := (\g'_i)^{\phi_i(B_i)}_{\phi(B)}$
is $(\sqrt{b},(k/n)^{2d})$-uncapturable,
and that $\mb{P}(E_i) \ge \eps_0^{1/3a}$.

For uncapturability, we recall that
$\g'_i$ is $(4,8\mu(\g'_i)n/kb)$-global
with $2\bB^{-1} \mu(\g'_i) \ge (k/n)^d$.
Thus $\h'_i$ and $\h''_i := (\g'_i)^{\phi_i(B_i)}_{\phi_i(B_i)}$
are $(2,8\mu(\g'_i)n/kb)$-global by Lemma \ref{lem:globalrestrict}.
Conditional on $E_i$ we have $\mu(\h''_i) > c_i$,
so $\h''_i$ is $(b^{.7}/16,\mu(\h''_i)/2)$-uncapturable
by Lemma \ref{lem:globaluncap}.
Then $\mu(\h'_i) \ge \mu(\h''_i)/2 \ge b^{-.3} \mu(\g'_i)/4$,
so $\h'_i$ is $(b^{.7}/32,\mu(\h'_i)/2)$-uncapturable
by Lemma \ref{lem:globaluncap},
and so $(\sqrt{b},(k/n)^{2d})$-uncapturable.

It remains to show $\mb{P}(E_i) \ge \eps_0^{1/3a}$.
We may assume $\mu(\g'_i) < e^{-k\bB}$,
otherwise this holds easily by Fairness (Proposition \ref{prop:Fair}).
As $2\bB^{-1} \mu(\g'_i) \ge (k/n)^d$ this gives $k < n^\bB$.
By Lemma \ref{lem:fat} with $\ell=b^{.1}$ we are done unless
$\eps_0^{1/3a} > \mb{P}(E_i) = \mu(\pl_{c_i} \g'_i) \ge (\mu(\g'_i)/2)^{2/\ell}$,
which implies $\gG'_i + (k/n)^d \le 2\bB^{-1}\mu(\g'_i) < (\eps/s)^{b^{.05}}$.
As $\gG'_i < \eps^d/s$ we have option (ii) above, so $\g'_i$ is $G^+$-free.
As $\eps^d k/n \le \gG'_i < (\eps/s)^{b^{.05}}$
we also have $s < \eps n^{b^{-.05}}$.

Now we claim that $\pl^2_{c_i} \g'_i$ is $G$-free.
This will suffice to complete the proof,
as then Lemma \ref{lem:fat} gives the improved estimate
$\mu(\pl^2_{c_i} \g'_i) \ge (\eps^d k/sb + k/n - (s/n)^2)^{b^{-.02}}
> (\eps/s)^{b^{-.01}}$.
To see the claim, we suppose $\phi'(G) \sub \pl^2_{c_i} \g'_i$
and will obtain a contradiction by finding a cross matching
in $\a_1,\dots,\a_s$, where for each edge $A_j$ of $G$
we let $\a_j = (\g'_i)^{\phi'(A_j)}_{\im\phi'}$.
We verify the conditions of Lemma \ref{lem:matchuncap+},
with $(s,s,d,2)$ in place of $(s,m,d,K)$.
As $\g'_i$ is $(4,2\mu(\g'_i) \bB n/sk)$-global,
each $\h_j$ is $(2,4\mu(\g'_i) \bB n/sk)$-global
by Lemma \ref{lem:globalrestrict}.
Also, $\g'_i$ is $(s/4\bB,\mu(\g'_i)/2)$-uncapturable
by Lemma \ref{lem:globaluncap}, so each
$\mu(\h_j) \ge \mu(\g'_i)/2 \ge \bB \eps^d k/4n$,
and each $\h_j$ is $(s/8\bB, \bB \eps^d k/8n)$-uncapturable
by Lemma \ref{lem:globaluncap}.
As $s < \eps n^{b^{-.05}}$ and $k<n^\bB$ we have
$\bB \eps^d k/8n > (3sk/n)^d$,
so the required conditions hold.
\end{proof}

\subsection{Strong stability}

We conclude with the proof of the main result of this section.

\begin{proof}[Proof of Theorem \ref{thm:ab+}]
Let $G \in \g(2,\DD,s)$ be $(a_1,a_2)$-critical
and $C \gg \bB^{-1} \gg b \gg d \gg a_2 \DD$.
Suppose $\f \subset \tbinom{[n]}{k}$
with $C \le k \le n/Cs$ is $G^+$-free
and $|\f| \ge |\s_{n,k,\sS-1}|$.

By Theorem \ref{thm:juntarefined} (refined junta approximation)
there is $J \in \tbinom{[n]}{\sS-1}$ such that
$|\f \sm \s_{n,k,J}| = \dD \tbinom{n-1}{k-1}$
with $\dD^{-1} \gg bd\DD$. We write $J = \{j_1,\dots,j_{\sS-1}\}$,
let $\f_i = \f^{j_i}_J$ for $i \in [\sS-1]$,
say with $|\f_1| \ge \dots \ge |\f_{\sS-1}|$,
and note that $\f^\es_J$ is $G^+$-free.
As in the proof of Theorem \ref{thm:crit+},
we have $\sum_{i=1}^{\sS-1} (1-\mu(\f_i)) \le 2\dD$,
so $1-\mu(\f_i) \le 4r\DD\dD/\sS$
for any $i \le \min\{r\DD,\sS-1\}$.

As $G$ is $a_2$-matching-critical, we can define
$H^2_1,\dots,H^2_{\sS-1}$, $B^2_1,\dots,B^2_{a_2}$ and $I^2$
as in Setup \ref{set:ab} with $r=2$ and $a=a_2$,
where we identify $I^2$ with $[|I^2|]$.
Letting $\f'_i = \f^\es_J$ for $i \in [a_2]$,
we have  $\f_1,\dots,\f_{\sS-1},\f'_1,\dots,\f'_{a_2}$ cross free of
$(H^2_1)^+,\dots,(H^2_{\sS-1})^+,(B^2_1)^+,\dots,(B^2_{a_2})^+$,
so $\f^\es_J$ is $(b,(2\dD)^d k/n + (k/n)^d)$-capturable
by Lemma \ref{lem:ab}. We fix $J' \in \tbinom{[n \sm J]}{b}$
so that $\mu(\f^\es_{J \cup J'}) < (2\dD)^d k/n + (k/n)^d$.

As $G$ is $a_1$-degree-critical, we can define
$H^1_1,\dots,H^1_{\sS-1}$, $B^1_1,\dots,B^1_a$ and $I^1$
as in Setup \ref{set:ab} with $r=1$ and $a=a_1$,
where we identify $I^1$ with $[|I^1|]$. For each $x \in J'$,
letting $\f'_i = \f^x_{J \cup \{x\}}$ for $i \in [a_1]$,
we have  $\f_1,\dots,\f_{\sS-1},\f'_1,\dots,\f'_{a_1}$ cross free
of $(H^1_1)^+,\dots,(H^1_{\sS-1})^+,(B^1_1)^+,\dots,(B^1_{a_2})^+$,
so $\f^x_{J \cup \{x\}}$ is $(b,(2\dD)^d k/n + (k/n)^d)$-capturable
by Lemma \ref{lem:ab}. We fix $J_x \in \tbinom{[n] \sm (J \cup \{x\})}{b}$
so that $\mu((\f^x_{J \cup \{x\}})^\es_{J_x}) < (2\dD)^d k/n + (k/n)^d$.

Let $F = \{T \in \tbinom{[n] \sm J}{2}: \mu(\f^T_{T \cup J}) > bk/n\}$.
Then $F \subset F' := \{ xy: x \in J', y \in J_x\}$ and $|F'| \le b^2$.
Writing $\mc{T} = \{\{x\}: x \in J\} \cup F$, we have
$|\f \sm \g_{n,k}(\mc{T})| \le |\f^\es_{J \cup J'}|
+ \sum_{x \in J'} |\f^x_{J \cup \{x\} \cup J_x}|
+ \sum_{T \in F'} |\f^T_{T \cup J}|$,
so $\mu(\f \sm \g_{n,k}(\mc{T}))
\le ((2\dD)^d k/n + (k/n)^d)(1+bk/n) + (bk/n)^3$.
Writing $\g := \g_{n,k}(\mc{T})$,
as $|\f \sm \g| \ge  |\f \sm \s_{n,k,J}| - |\g_{n,k}(F)|$
we also have $\mu(\f \sm \g) \ge \dD k/n - (bk/n)^2$.
We deduce $\dD k/n \le (2\dD)^d k/n + 2(bk/n)^2$, so $\dD \le 3bk/n$,
giving $|\f \sm \g| \le 2b^3 \tbinom{n-3}{k-3}$.

To complete the proof of the first statement of the theorem,
it remains to show $|F| \le |F_{a_1a_2}|$. To see this, note that
otherwise $F$ contains some $F_0 = (T_i: i \in [a_r])$,
where $r=2$ and $F_0$ is a matching or $r=1$ and $F_0$ is a star.
Writing $\f'_i = \f^{T_i}_{J \cup T_i}$, we have
$\f_1,\dots,\f_{\sS-1},\f'_1,\dots,\f'_{a_2}$ cross free
of $(H^r_1)^+,\dots,(H^r_{\sS-1})^+,(B^r_1)^+,\dots,(B^r_{a_r})^+$,
so some $\f'_i$ is $(b/2,(k/n)^d)$-capturable by Lemma \ref{lem:ab}.
However, $\mu(\f'_i)>bk/n$ as $T_i \in \f$, so we have a contradiction.

Now suppose $|\f| \ge |\f_{n,k,G}| - \eps \tbinom{n-2}{k-2}$
with $\eps \in (0,\bB)$. We have
\[ \mu(\f)  \le \mu(\f \sm \g) + \mu(\g)
- \tfrac{k}{2n} \sum_{i=1}^{\sS-1} (1-\mu(\f_i))
- \tfrac{k^2}{2n^2} \sum_{T \in F} (1-\mu(\f^T_{J \cup T})), \]
where $\mu(\f \sm \g) \le 2(bk/n)^3$ and
$\mu(\g) \le \mu(\f_{n.k.G}) - (|F_{a_1a_2}|-|F|)k^2/2n^2
\le \mu(\f) + (|F_{a_1a_2}|-|F|+2\eps)k^2/2n^2$.
Thus $|F|=|F_{a_1a_2}|$,
so $\g := \g_{n,k}(\mc{T})$ is a copy of $\f_{n,k,G}$, and
\[\sum_{i=1}^{\sS-1} (1-\mu(\f_i))
+ \sum_{T \in F} (1-\mu(\f^T_{J \cup T})) \le 3\eps.\]
Next we suppose for contradiction that
$\mu(\f \sm \g) > (\eps k/n)^d$.
We fix some $T \in \tbinom{[n] \sm J}{2} \sm F$
with $\mu(\f^T_{J \cup T}) > (\eps k/n)^{d+2}$.
By maximality of $F_{a_1a_2}$ we can fix a matching
$T_1,\dots,T_{a_2}$ in $F$ with $T_{a_2}=T$.
Writing $\f'_i = \f^{T_i}_{J \cup T_i}$, we have
$\f_1,\dots,\f_{\sS-1},\f'_1,\dots,\f'_{a_2}$ cross free
of $(H^2_1)^+,\dots,(H^2_{\sS-1})^+,(B^2_1)^+,\dots,(B^2_{a_2})^+$.
Thus Lemma \ref{lem:bootstrap} gives the required contradiction,
so $\mu(\f \sm \g) \le (\eps k/n)^d$, as required.

Finally, let $k \le \sqrt{n}$ and suppose for contradiction
that there is some $A \in \f \sm \g$. From the previous statement
we have $|\g \sm \f| \le 2\bB \tbinom{n-2}{k-2}$.
We fix any $T \in \tbinom{[n] \sm J}{2}$ with $T \sub A$,
a matching $T_1,\dots,T_{a_2}$ in $F$ with $T_{a_2}=T$,
and a bijection $\phi:B^2_{a_2} \to T$.
Writing $A'_j = A_j \cap A_s$ for each edge $A_j$ of $G$,
where $A_s=B^2_{a_2}$, we define $\g_1,\dots,\g_{s-1}$ by
$\g_j = (\f_i)^{\phi(A'_j)}_A$ if $A_j \in H_i$ with $i \in [\sS-1]$
or $\g_j = (\f^\es_J)^{\phi(A'_j)}_A$ if $A_j = B^2_i$ with $i \in [a_2-1]$.
For each $j \in [s-1]$, writing $r_j=|A'_j|+1 \in [2]$, we have
$\tbinom{n-k-r_j}{k-r_j} - |\g_j| \le |\g \sm \f|$,
so as $\tbinom{n-k-2}{k-2} \ge .1\tbinom{n}{k-2}$ for $k \le \sqrt{n}$
we have $1-\mu(\g_j) \le 20\bB < 1/2$.
However, now $\g_1,\dots,\g_{s-1}$ cross contain
$A_1 \sm A_s,\dots,A_{s-1} \sm A_s$ by Lemma \ref{lem:embedlarge},
so we have the required contradiction.
\end{proof}

\section*{Concluding remarks}
\label{section: concluding remarks}

We are optimistic that our sharp threshold result
in the sparse regime will have many applications
in the same vein as the applications
of the classical sharp threshold results,
e.g.\ to Percolation \cite{benjamini2012sharp},
Complexity Theory \cite{friedgut1999sharp},
Coding Theory \cite{kudekar2017reed},
and Ramsey Theory \cite{friedgut2016sharp}.

In particular, it may be possible to estimate
the location of thresholds in the spirit of
the Kahn-Kalai conjecture \cite[Conjecture 2.1]{kahn2007thresholds}
that the threshold probability $p_c(H)$
for finding some graph $H$ in $G(n,p)$
should be within a log factor of
its `expectation threshold' $p_E(H)$
(the probability at which every subgraph $H'$ of $H$
we expect at least one copy of $H'$).
This question is interesting when $|V(H)|$
depends on $n$, e.g.\ if $H$ is a bounded degree spanning tree
it predicts $p_c(H) = O(n^{-1}\log n)$,
which was a longstanding open problem, recently
resolved by Montgomery \cite{montgomery2018trees}.

To obtain similar results from our sharp threshold theorem 
one needs to show that the property of containing $H$
is not `local': writing $\mu_p = \mb{P}(H \sub G(n,p))$,
this means that if we plant any set $E$ of
$O(\log \mu_p^{-1})$ edges we still have
$\mb{P}(H \sub G(n,p) \mid E \sub G(n,p)) \le  \mu_p^{O(1)}$.
An open problem is to apply this approach to estimate
other thresholds that are currently unknown,
e.g.\ the threshold for containing any given $H$
of maximum degree $\DD$.

Our variant of the Kahn-Kalai conjecture on isoperimetric stability
is only effective in the $p$-biased setting for small $p$,
whereas the corresponding known results
\cite{keller2018dnf, keevash2018edgeiso}
for the uniform measure are substantial weaker.
This leaves our current state of knowledge
in a rather peculiar state, as in many related problems
the small $p$ case seems harder than the uniform case!
A natural open problem is give a unified approach
extending both results for all $p$.

Another compelling open problem is to generalise Hatami's Theorem
to the sparse regime, i.e.\ to obtain a density increase
from $\mu_{p}\left(f\right) = o\left(1\right)$
to $\mu_{q}\left(f\right)\ge 1-\eps$
under some pseudorandomness condition on $f$;
we expect that a such result would have
profound consequences in Extremal Combinatorics.

Lastly, in relation to our hypergraph Tur\'an results,
our notion of generalised criticality seems very restrictive
(indeed, we do not know of any examples where it is applicable
besides those mentioned above),
so it would be interesting to find more refined parameters
of expanded hypergraphs that determine the Tur\'an number
${\sf ex}(n, G^{+}(k))$ for more general classes of graphs.

\subsection*{Acknowledgment} We would like to thank Yuval Filmus, Ehud Friedgut, Gil Kalai, Nathan
Keller, Guy Kindler, and Muli Safra for various helpful comments and suggestions.


\bibliographystyle{plain}
\bibliography{refs}

\end{document}